\theoremstyle{plain}
\newtheorem{thmA}{Theorem}
\newtheorem{lem}{Lemma}
\newtheorem{lemma}[lem]{Lemma}
\newtheorem{thm}[lem]{Theorem}
\newtheorem*{thm*}{Theorem}
\newtheorem{prop}[lem]{Proposition}
\newtheorem{proposition}[lem]{Proposition}
\newtheorem{cor}[lem]{Corollary}
\newtheorem{corollary}[lem]{Corollary}
\newtheorem{conj}[lem]{Conjecture}
\theoremstyle{definition}
\newtheorem{defn}[lem]{Definition}
\newtheorem{remark}[lem]{Remark}
\newtheorem*{questions}{Question}
\numberwithin{equation}{subsection}
\numberwithin{lem}{subsection}
\newcommand{\mathfont}{\mathbb}
\newcommand{\C}{\mathfont C}
\newcommand{\Z}{\mathfont Z}
\newcommand{\ZZ}{\mathfont Z}
\newcommand{\Q}{\mathfont Q}
\newcommand{\QQ}{\mathfont Q}
\newcommand{\Qbar}{{\overline{\QQ}}}
\newcommand{\F}{\mathfont F}
\newcommand{\bA}{\mathbb{A}}
\newcommand{\bC}{\mathbb{C}}
\newcommand{\bE}{\mathbb{E}}
\newcommand{\bF}{\mathbb{F}}
\newcommand{\bG}{\mathbb{G}}
\newcommand{\bL}{\mathbb{L}}
\newcommand{\bP}{\mathbb{P}}
\newcommand{\bQ}{\mathbb{Q}}
\newcommand{\bX}{\mathbb{X}}
\newcommand{\bZ}{\mathbb{Z}}
\newcommand{\cC}{\mathcal{C}}
\newcommand{\cE}{\mathcal{E}}
\newcommand{\cH}{\mathcal{H}}
\newcommand{\cI}{\mathcal{I}}
\newcommand{\cM}{\mathcal{M}}
\newcommand{\cO}{\mathcal{O}}
\newcommand{\cP}{\mathcal{P}}
\newcommand{\cT}{\mathcal{T}}
\newcommand{\cU}{\mathcal{U}}
\newcommand{\cV}{\mathcal{V}}
\DeclareFontFamily{OT1}{rsfs}{}
\DeclareFontShape{OT1}{rsfs}{n}{it}{<-> rsfs10}{}
\DeclareMathAlphabet{\mathscr}{OT1}{rsfs}{n}{it}
\newcommand{\into}{\hookrightarrow}
\DeclareMathOperator{\Hom}{Hom}
\DeclareMathOperator{\Ker}{ker}
\DeclareMathOperator{\Aut}{Aut}
\DeclareMathOperator{\Gal}{Gal}
\DeclareMathOperator{\tr}{tr}
\newcommand{\Tr}{\mathrm{Tr}}
\DeclareMathOperator{\Sym}{Sym}
\DeclareMathOperator{\Frob}{Frob}
\DeclareMathOperator{\et}{\acute{e}t}
\DeclareMathOperator{\Spec}{Spec}
\DeclareMathOperator{\GL}{GL}
\DeclareMathOperator{\SL}{SL}
\DeclareMathOperator{\PSL}{PSL}
\DeclareMathOperator{\PGL}{PGL}
\newcommand{\git}{\mathbin{
  \mathchoice{\mkern-3mu/\mkern-6mu/\mkern-3mu}
    {\mkern-3mu/\mkern-6mu/\mkern-3mu}
    {/\mkern-5mu/}
    {/\mkern-5mu/}}}
\newcommand{\ps}[1]{[\![#1]\!]}
\newcommand{\rightiso}{\stackrel{\sim}{\longrightarrow}}
\renewcommand{\top}{{\operatorname{top}}}
\newcommand{\Stab}{\operatorname{Stab}}
\newcommand{\BP}{\textbf{P}}
\newcommand{\Mon}{\operatorname{Mon}}
\newcommand{\an}{{\operatorname{an}}}
\newcommand{\Frac}{\operatorname{Frac}}
\newcommand{\ol}[1]{\overline{#1}}
\newcommand{\ul}[1]{\underline{#1}}
\newcommand{\ab}{{\operatorname{ab}}}
\newcommand{\abs}{{\operatorname{abs}}}
\newcommand{\pre}{{\operatorname{pre}}}
\newcommand{\sh}{\text{sh}}
\newcommand{\Epi}{{\operatorname{Epi}}}
\newcommand{\ext}{{\operatorname{ext}}}
\newcommand{\Inn}{\operatorname{Inn}}
\newcommand{\spmatrix}[4]{\left[\begin{smallmatrix}#1&#2\\#3&#4\end{smallmatrix}\right]}
\newcommand{\spvector}[2]{\left[\begin{smallmatrix}#1\\#2\end{smallmatrix}\right]}
\newcommand{\mf}[1]{\mathfrak{#1}}
\DeclareMathOperator{\Out}{Out}
\DeclareMathOperator{\rot}{rot}
\DeclareMathOperator{\Alt}{Alt}
\DeclareMathOperator{\Mod}{mod}
\DeclareMathOperator{\Soc}{Soc}
\newcommand{\Xbar}{\overline{X}}
\title[Tamely Ramified Covers of the Projective Line]{Tamely Ramified Covers of the Projective Line with Alternating and Symmetric Monodromy}
\date{March 2, 2022}
\author{Renee Bell}
\address{B\^at. 307,
Universit\'e Paris-Sud,
91405 Orsay Cedex,
France}
\email{renee.bell@universite-paris-saclay.fr}
\author{Jeremy Booher}
\address{School of Mathematics and Statistics, University of Canterbury, Private Bag 4800, Christchurch 8140, New Zealand}
\email{jeremy.booher@canterbury.ac.nz}
\author{William Y. Chen}
\address{Department of Mathematics, Columbia University, 2990 Broadway, New York, NY 10027}
\email{wchen@math.columbia.edu}
\author{Yuan Liu}
\address{Department of Mathematics, University of Michigan, 530 Church Street, Ann Arbor, MI 48104, USA}
\email{yyyliu@umich.edu}
\begin{document} 
\maketitle

\begin{abstract}
Let $k$ be an algebraically closed field of characteristic $p$ and $X$ the projective line over $k$ with three points removed.  
We investigate which finite groups $G$ can arise as the monodromy group of finite \'{e}tale covers of $X$ that are tamely ramified over the three removed points.  
 This provides new information about the tame fundamental group of the projective line. 
 In particular, we show that for each prime $p\ge 5$, there are families of tamely ramified covers with monodromy the symmetric group $S_n$ or alternating group $A_n$ for infinitely many $n$.  These covers come from the moduli spaces of elliptic curves with $\PSL_2(\F_\ell)$-structure, and the analysis uses work of Bourgain, Gamburd, and Sarnak, and adapts work of Meiri and Puder about Markoff triples modulo $\ell$. 
\end{abstract}


\tableofcontents

\section{Introduction}

\subsection{Three-Point Covers}
Let $k$ be an algebraically closed field and $X$ an algebraic curve over $k$.  If $k = \C$, it follows from the Riemann existence theorem that the \'{e}tale fundamental group of $X$, which we denote as $\pi_1(X)$, is the profinite completion of the topological fundamental group of the corresponding Riemann surface.  In this case, the \'{e}tale fundamental group is closely linked to topology.   
This connection is weaker in characteristic $p$, even for simple examples like $X = \bA^1_{\overline{\F}_p}$; Artin--Schreier theory implies that $\pi_1(\bA^1_{\ol{\bF}_p})$ is topologically infinitely generated, whereas the topological fundamental group of $(\bA^1_\C)^{\an}$ is trivial.  These Artin--Schreier covers of the affine line have no analog in characteristic zero, and in general 
covers of degree divisible by $p$ are responsible for much of the additional complexity that arises in characteristic $p$.

More precisely, suppose $X$ is a smooth affine curve over an algebraically closed field $k$ of characteristic $p$ obtained from a smooth projective connected curve $\Xbar$ of genus $g$ by removing $r$ points.  If $\widetilde{X}$ is a smooth lift of $X$ to an algebraically closed field of characteristic 0, then by the theory of specialization of the fundamental group, the maximal prime-to-$p$ quotients of the \'{e}tale fundamental groups are isomorphic, i.e. $\pi_1(X)^{(p')}\cong\pi_1(\widetilde{X})^{(p')}$ \cite[X, Cor 3.9]{SGA1}. 
This philosophy is extended in a conjecture of Abhyankar (now a theorem  of Harbater and Raynaud) which states that a finite group $G$ is a quotient of $\pi_1(X)$ if and only if its maximal prime-to-$p$ quotient is a quotient of $\pi_1(\widetilde{X})$ \cite{abhyankar57,harbater94,raynaud94}.  Since the fundamental group of the Riemann surface $\widetilde{X}(\bC)$ is free of rank $2g+r-1$, it follows that 
a finite group $G$ arises as the Galois group of a connected \'{e}tale cover of $X$ if and only if the maximal prime-to-$p$ quotient of $G$ is generated by $2g+r-1$ elements.

While Abhyankar's conjecture specifies the finite quotients of $\pi_1(X)$, this is not enough to determine $\pi_1(X)$ as it is not topologically finitely generated.  Furthermore, these results say nothing about the structure of the inertia groups of the covers under consideration.
By Grothendieck's theory of specialization \cite[XIII, Cor 2.12]{SGA1}, if $f : Y\rightarrow X$ is a $G$-Galois cover where $G$ is a quotient of $\pi_1(X)$ but not a quotient of $\pi_1(\widetilde{X})$, then $f$ is necessarily wildly ramified when extended to a branched cover of $\Xbar$.  Hence it is natural to study the difference between $\pi_1(\widetilde{X})$ and the tame fundamental group $\pi_1^t(X)$ which classifies \'{e}tale covers of $X$ that are tamely ramified when extended to a branched cover of $\Xbar$.  
So we are led to the following question:

 \begin{questions} \label{question}
 Which finite quotients of $\pi_1(\widetilde{X})$ are quotients of $\pi_1^t(X)$?  Equivalently, which finite groups generated by $2g+r-1$ elements arise as Galois groups of 
connected tamely ramified covers of $X$? 
 \end{questions}
 For examples of groups $G$ which are a finite quotient of $\pi_1(\widetilde{X})$ but not of $\pi_1^t(X)$, see \cite[p. 204]{kani86} and \cite[Proposition 4.3]{stevenson98}.  
  Since $\pi^t_1(X)$ is topologically finitely generated, a complete answer to this question would in some sense determine $\pi_1^t(X)$ (see \cite[Theorem 3.2.9]{rz10}).  
We are not aware of even a conjectural general answer, so we are interested in techniques for producing tamely ramified covers.

In this paper we will introduce a new technique for producing tamely ramified covers of $\bP^1_{\ol{\bF}_p} - \{0,1,\infty\}$ using moduli spaces of elliptic curves with $G$-structures. More precisely, by a three-point cover (in characteristic $p$) we mean a finite flat map of smooth curves $Y\rightarrow \bP^1_{\overline{\bF}_p}$ which is unramified away from $\{0,1,\infty\}$. A three-point cover is \emph{tame} if the ramification indices above $0,1,\infty$ are prime to $p$. One consequence of our methods is the following. 

\begin{thmA}[see Theorem \ref{thm:main_B_intro} below] \label{thm:main}
For any prime $p\ge 5$, there are infinitely many $n$ for which there exists a tame three-point cover defined over $\F_p$ which is Galois with Galois group isomorphic to $S_n$. The same is true for $A_n$, except we only show that the cover is defined over $\bF_{p^2}$.
\end{thmA}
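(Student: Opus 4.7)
The plan is to realize the desired Galois covers of $\mathbb{P}^1_{\ol{\mathbb{F}_p}} - \{0, 1, \infty\}$ as connected components of the forgetful map from a moduli space of elliptic curves with $\PSL_2(\mathbb{F}_\ell)$-structure down to the $j$-line. Fix a prime $\ell \neq p$. I would work with the moduli of pairs $(E, \phi)$, where $E$ is an elliptic curve over $\ol{\mathbb{F}_p}$ and $\phi\colon \pi_1(E \setminus O) \twoheadrightarrow \PSL_2(\mathbb{F}_\ell)$ is a surjection considered modulo $\Aut(\PSL_2(\mathbb{F}_\ell))$. The forgetful map on coarse spaces extends to a map $\ol{M}_\ell \to \mathbb{P}^1_j$ of smooth projective curves; after rescaling to move $j = 1728$ to $j = 1$, each connected component of this map becomes a candidate tame three-point cover.

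First I would verify tameness. A generic elliptic curve in characteristic $p \geq 5$ has automorphism group $\{\pm 1\}$, so the map is unramified away from $j = 0, 1728, \infty$. The ramification indices at these three points divide $3$, $2$, and $\ell$, respectively (the ramification at the cusp corresponds to a unipotent element of $\PSL_2(\mathbb{F}_\ell)$), and all three are prime to $p$ since $p \geq 5$ and $\ell \neq p$. Next I would identify the fibers: a geometric fiber over a non-special point is naturally identified with the set of $\Aut(\PSL_2(\mathbb{F}_\ell))$-orbits of generating pairs $(A, B) \in \PSL_2(\mathbb{F}_\ell)^2$ modulo conjugation, and the Fricke trace map $(A, B) \mapsto (\tr A, \tr B, \tr AB)$ relates this set to the Markoff surface $x^2 + y^2 + z^2 - xyz = 0$ over $\mathbb{F}_\ell$. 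The monodromy of the cover factors through the action of the mapping class group of the once-punctured torus, i.e., the action of $\Out(F_2) \cong \GL_2(\mathbb{Z})$ on surjections $F_2 \twoheadrightarrow \PSL_2(\mathbb{F}_\ell)$.

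At this point I would invoke the theorems of Bourgain--Gamburd--Sarnak and Meiri--Puder on the mapping class group action on Markoff triples modulo $\ell$, which for infinitely many $\ell$ assert that the induced permutation action on each orbit is either the full symmetric group or the full alternating group on that orbit. Translated through the dictionary above, this produces tame three-point covers with monodromy $S_n$ or $A_n$ for infinitely many $n$. The distinction between the $\mathbb{F}_p$ and $\mathbb{F}_{p^2}$ fields of definition comes from analyzing how the geometric Frobenius permutes the connected components of $\ol{M}_\ell$: this is governed by a quadratic invariant (a square-class condition on the Markoff variety), which explains why the alternating case may require a quadratic extension of the base.

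The main obstacle is transferring the Meiri--Puder conclusions about the \emph{full} mapping class group action to the more rigid topological monodromy representation of $\pi_1^t(\mathbb{P}^1 - \{0, 1, \infty\})$. The latter is generated by inertia loops at $0, 1, \infty$, whose images in $\Out(F_2)$ are specific Dehn twists, so one must ensure that the subgroup they generate already realizes the alternating or symmetric group action. One then rules out intermediate primitive permutation subgroups containing the requisite cycle types (via Jordan-type classification theorems). Synthesizing the topological monodromy calculation with the combinatorial Markoff-triple results is where the bulk of the technical work lies.
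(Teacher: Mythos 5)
Your overall route is the same as the paper's: moduli of elliptic curves with $\PSL_2(\bF_\ell)$-structure, tameness from the integral moduli interpretation, identification of the geometric fiber with Markoff triples via the Fricke trace map, and input from Bourgain--Gamburd--Sarnak and Meiri--Puder. However, two steps are stated incorrectly. First, tameness: requiring only $\ell\neq p$ is not enough. The stack $\cM(\PSL_2(\bF_\ell))^{\abs}$ is finite \'{e}tale over $\cM(1)$ only over $\bZ[1/|\PSL_2(\bF_\ell)|]$, and the ramification indices over $j=\infty$ are the orbit lengths of the cusp automorphism $\gamma_\infty$ on the fiber; these divide the exponent of $\PSL_2(\bF_\ell)$ and include divisors of $(\ell\pm1)/2$ (the rotation cycles on hyperbolic/elliptic conics), not just $\ell$. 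So if $p\mid(\ell^2-1)/2$ your cover need not reduce well, let alone tamely, at the cusp. The fix is what the paper does: demand $p\nmid\ell(\ell^2-1)/2$, which for fixed $p\ge 5$ still leaves a positive-density set of $\ell$ in each needed class mod $16$ after removing $\cE$ and imposing $\BP(\ell)$. Relatedly, you must restrict to the trace-invariant $-2$ substack (only that piece of the fiber is the Markoff surface; BGS/MP say nothing about other trace components), and the Galois cover of the theorem is the \emph{Galois closure} of the degree-$n_\ell$ coarse cover, whose tame good reduction and geometric connectedness require the normalization/Abhyankar argument of Lemma~\ref{lemma_smoothness_of_normalization} and Proposition~\ref{prop_tame}, not just the statement for the coarse cover itself.

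Second, the ``main obstacle'' you identify is misdiagnosed. The inertia images at $j=0$ and $j=1728$ are $\gamma_0$ and $\gamma_{1728}$, which generate the image of $\PSL_2(\bZ)$, i.e.\ the whole group $Q_\ell^+$ induced by $\Out^+(F_2)$; there is no smaller ``Dehn-twist-generated'' subgroup to worry about, and no Jordan-type classification is needed at this step (such classification results are used \emph{inside} the Meiri--Puder argument and in the paper's \S\ref{sec:largemarkoff}, for a different purpose). The only genuine discrepancy is the index-$\le 2$ passage from Meiri--Puder's $\Out(F_2)$-image $Q_\ell$ to $Q_\ell^+$, which the paper resolves via the fixed point $(3,3,3)$ of $\tau_{12}$ (Proposition~\ref{prop_Q_vs_Qplus}) and simplicity of the alternating group, with the alternating/symmetric dichotomy then decided by the explicit parities of $\gamma_0,\gamma_{1728}$ (Corollary~\ref{cor_alternating_vs_symmetric}). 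Finally, the $\bF_p$ versus $\bF_{p^2}$ issue is not about Frobenius permuting components --- the relevant component is geometrically connected and even carries a $\bQ$-rational point --- but about the sign of the Frobenius permutation on the fiber, i.e.\ whether the Galois closure in the $A_{n_\ell}$ case is geometrically connected already over $\bF_p$ or only over a quadratic extension (Proposition~\ref{prop_tame}(4) and Remark~\ref{rmk:frobeniusoddness}).
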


This is a consequence of Theorem~\ref{thm:main_B_intro} below, which together with Theorem~\ref{thm:main_A} gives more precise information about which $n$ occur. Note that by Abhyankar's conjecture, every symmetric and alternating group is the Galois group of a three-point cover. The main point of our result is that our covers are \emph{tame}, even though in all but finitely many cases $p$ \emph{divides} the order of the Galois group.  

To date, the main tool for understanding good reduction of three-point covers is the following criterion of Obus--Raynaud, which comes from a detailed analysis of the stable reduction of three-point covers in characteristic 0:

\begin{thm*}[Obus, Raynaud \cite{obus17,raynaud99}] Let $G$ be a finite group with cyclic $p$-Sylow subgroup. Let $K_0 := \text{Frac}(W(k))$, where $k$ is an algebraically closed field of characteristic $p$. Let $K/K_0$ be a finite extension of degree $e(K)$, where $e(K)$ is less than the number of conjugacy classes of order $p$ in $G$. If $f : Y\rightarrow \bP^1_K$ is a three-point $G$-Galois cover defined over $K$ (as a $G$-Galois cover), then $f$ has potentially good reduction, realized over a tame extension $L/K$ of degree dividing the exponent of the center $Z(G)$ of $G$. In particular, if $Z(G)$ is trivial, then $f$ has good reduction.
\end{thm*}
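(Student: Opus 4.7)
The plan is to analyze the stable reduction of the $G$-Galois cover $f : Y \to \bP^1_K$ and show that, under the hypothesis on $e(K)$, the stable model is actually smooth after a tame base change of degree dividing $\exp(Z(G))$. The starting point is the semistable reduction theorem for covers, refined in this context by Raynaud: after a suitable finite extension $L$ of $K_0$, the cover extends to a stable $G$-cover $\mathcal{Y} \to \mathcal{X}$ whose base $\mathcal{X}$ is a semistable model of $\bP^1$ with special fiber a tree of projective lines. One distinguished component, the \emph{original component}, carries the specializations of the three branch points $0, 1, \infty$ to distinct smooth points; the remaining components are called \emph{tails}. The goal is to rule out the existence of any tails.

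The next step is to exploit the cyclic $p$-Sylow hypothesis. Since a $p$-Sylow of $G$ is cyclic, the wild inertia at any closed point of $\mathcal{Y}$ is cyclic, and Raynaud's structure theorem for stable reduction of $G$-covers with this property applies: one attaches a combinatorial invariant (a Hurwitz tree, augmented in Obus's refinement) to the stable model, labeling each component with its inertia subgroup and each node with the conductor of the wild ramification there. The critical structural fact is that every tail carries at least one new branch point whose wild inertia is a non-trivial cyclic $p$-subgroup; a generator of this subgroup, well-defined up to conjugacy in $G$, yields a canonical conjugacy class of order-$p$ elements attached to the tail.

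From here the argument is a descent count. The absolute Galois group $\Gal(\ol{K}/K)$ acts on the stable model and in particular permutes the tails; a single $K$-rational piece of data gives rise to at most $e(K)$ distinct tails after passage to $L$. On the other hand, distinct tails (in this Galois-equivariant sense) correspond to \emph{distinct} conjugacy classes of order $p$ in $G$. Since the number of such conjugacy classes strictly exceeds $e(K)$ by hypothesis, the only consistent configuration is the absence of tails---so the stable model is smooth, giving potentially good reduction of $f$. The sharpened bound, that a tame extension $L/K$ of degree dividing $\exp(Z(G))$ suffices, comes from tracking the obstruction to descending the smooth $G$-cover from $L$ down to $K$: this obstruction lies in $H^1(\Gal(L/K), Z(G))$, which is killed by $\exp(Z(G))$. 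When $Z(G) = 1$, no extension is needed.

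The main obstacle is making the Hurwitz-tree counting rigorous: one must define the invariants attached to each tail precisely (via differential conductors or lifted Artin--Schreier--Witt data), verify that distinct tails do produce distinct conjugacy classes after Galois descent, and quantify how much base ramification is forced by the combinatorics. This is the content of the detailed analyses in \cite{raynaud99} and \cite{obus17}, and is where essentially all of the technical work resides.
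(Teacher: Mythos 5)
This theorem is not proved in the paper at all: it is quoted verbatim as background, with the proof deferred entirely to \cite{obus17} and \cite{raynaud99}, so there is no internal argument to measure your proposal against. Your proposal has the same character --- it is an outline of the stable-reduction strategy that, by your own closing paragraph, leaves ``essentially all of the technical work'' to the very references being cited --- so it cannot be accepted as a proof of the statement.

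Beyond that, the mechanism you sketch for the key step does not match how the hypothesis on $e(K)$ is actually used, and as stated it would not go through. It is not true in general that distinct tails of the stable reduction carry distinct conjugacy classes of order $p$, nor does $K$-rationality of the cover bound the number of tails by $e(K)$: the Galois action on the stable model factors through $\Gal(K^{\mathrm{st}}/K)$, which has no a priori relation to $e(K) = [K:K_0]$, and the tails of a three-point cover are already few in number by the vanishing-cycles formula, so no contradiction arises from ``more tails than $e(K)$.'' In Raynaud's and Obus's arguments the inequality enters differently: bad reduction forces the deformation data (differential/conductor invariants on the components of the stable model) to transform under $\Gal(\overline{K}/K)$ through a character whose order is at least the number of conjugacy classes of order $p$, so the field over which the stable model is defined --- and hence $K$ itself --- must be ramified over $K_0$ to at least that degree; the hypothesis $e(K) < \#\{\text{order-}p\text{ classes}\}$ rules this out, forcing the special fiber to be smooth. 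Likewise, your final claim that the descent obstruction lies in $H^1(\Gal(L/K),Z(G))$ is asserted rather than derived; for $G$-Galois covers the field-of-moduli versus field-of-definition obstructions involve the center in a more delicate way (typically $H^2$), and the precise statement that a tame extension of degree dividing $\exp Z(G)$ suffices is itself one of the substantive conclusions of the cited analyses, not a formal consequence. If you intend to supply a proof, these are the points where an actual argument, not a pointer to the literature, is required.
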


In particular, one may attempt to use group theory to construct a three-point cover in characteristic zero with Galois group $G$ and with ramification indices coprime to $p$, reduce the cover modulo $p$, and then apply the theorem to deduce the reduction modulo $p$ is the desired cover.
 For an example with $G = \GL_m(\F_q)$ for appropriately chosen $m$ and $q$, see \cite[\S6]{obus17}.  
 Note that the theorem does not apply to almost all the groups in Theorem~\ref{thm:main}.  For example, it does not apply to the symmetric group $G = S_n$ if $n\geq 2p$, since in that case the $p$-Sylow subgroup of $S_n$ is not cyclic.

Our technique takes the opposite perspective to that of Obus--Raynaud. Instead of constructing covers in characteristic 0 and showing that they have good reduction, we consider covers which arise as maps between moduli spaces defined integrally which are automatically smooth and tamely ramified by virtue of the moduli problem we consider.  
Then working over the generic fiber, we study the monodromy for these maps of moduli spaces to determine 
exactly which groups we've managed to realize as quotients of $\pi_1^t(\bP^1_{\ol{\bF}_p} - \{0,1,\infty\})$. For this latter part, which in general is a difficult combinatorial problem\footnote{This is related to the question of Nielsen equivalence generating pairs in combinatorial group theory \cite[\S2]{Pak00}.}, we crucially rely on input from the work of Meiri--Puder \cite{mp18} and Bourgain--Gamburd--Sarnak \cite{bgs1,bgs16}.

More precisely, for a prime $\ell\ge 5$, let $\cM(\PSL_2(\bF_\ell))^{\abs}$ denote the moduli stack of elliptic curves equipped with a $\PSL_2(\bF_\ell)$-Galois cover defined \'{e}tale locally on the base (to be made more precise in \S\ref{section_moduli_of_G_structures}). Let $\cM(1)$ denote the moduli stack of elliptic curves. Its coarse moduli scheme $M(1)$ is isomorphic to the affine line $\Spec \bZ[j]$ with parameter given by the $j$-invariant. The forgetful map $\cM(\PSL_2(\bF_\ell))^\abs\rightarrow\cM(1)$ is finite \'{e}tale over $\bZ[1/|\PSL_2(\bF_\ell)|]$ and induces a map of coarse moduli schemes $M(\PSL_2(\bF_\ell))^\abs\rightarrow M(1)$.  For $p\nmid |\PSL_2(\bF_\ell)|$, its base change to $\ol{\bF}_p$ gives a smooth cover of $\bP^1_{\ol{\bF}_p}$, \emph{tamely ramified} above $j = 0,1728,\infty$. 

The scheme $M(\PSL_2(\bF_\ell))_{\Qbar}^\abs$ is never geometrically connected, but experimentally\footnote{We've checked by computer that every component of $M(\PSL_2(\bF_q))_{\Qbar}^\abs$ has alternating or symmetric monodromy over $M(1)_{\Qbar}$ for every prime power $q\le 43$.} every connected component we've computed has alternating or symmetric monodromy over $M(1)$. We cannot prove this in general, but by adapting the work of \cite{mp18}, we are able to establish this for a particular component. Specifically, we consider the open and closed substack
$$\cM(\PSL_2(\bF_\ell))^\abs_{-2}\subset\cM(\PSL_2(\bF_\ell))^\abs$$
corresponding to $\PSL_2(\bF_\ell)$-covers with ``trace invariant $-2$'' (see \S\ref{ss_trace_invariant}).
The degree of the induced map on coarse schemes is
$$n_\ell := \left\{\begin{array}{rl}
\frac{\ell(\ell+3)}{4} & \ell\equiv 1\mod 4 \\
\frac{\ell(\ell-3)}{4} & \ell\equiv 3\mod 4.
\end{array}\right.$$

In \S\ref{section_markoff_vs_moduli} we explain how any geometric fiber $\overline{F(\ell)}_{-2}$ of $\cM(\PSL_2(\bF_\ell))^\abs_{-2}$ over $\cM(1)$ can be identified with the set $Y^*(\ell)$ of equivalence classes of non-zero $\bF_\ell$-points of the affine surface $\bX$ defined by the Markoff equation
$$x^2 + y^2 + z^2 - xyz = 0;$$
two points are equivalent if one is obtained by negating two of the coordinates of the other.  (The $\bF_\ell$ points of $\bX$ also have a moduli interpretation as a fiber of $\cM(\SL_2(\bF_\ell))_{-2}^\abs$.) Under this identification, the monodromy action of $\pi_1(\cM(1)_{\Qbar})$ on the fiber $\ol{F(\ell)}_{-2}$ translates into the action of a certain group of automorphisms of $\bX$. This action was studied by Bourgain--Gamburd--Sarnak, who showed that for primes $\ell$ not in a density 0 ``exceptional'' set $\cE$, this action is transitive \cite{bgs1,bgs16}.  In particular, for any $\epsilon > 0$, the number of primes $\ell \le T$ with $\ell\in \cE$ is at most $T^\epsilon$ for $T$ large enough.  
They furthermore conjecture that this transitivity holds for all $\ell$.  
By Galois theory, this transitivity is the same as the connectedness of $\cM(\PSL_2(\bF_\ell))^\abs_{-2}$. 

We can understand the geometric monodromy of the map of coarse spaces $M(\PSL_2(\bF_\ell))^\abs_{-2} \to M(1)$ using work of Meiri and Puder \cite{mp18}.  Let
\begin{equation}\label{eq_property_P_ell}
    \BP(\ell) := \begin{array}{r}\text{The property that either $\ell\equiv 1 \mod 4$, or} \\
\text{the order of $\frac{3+\sqrt{5}}{2}\in\bF_{\ell^2}$ is at least $32\sqrt{\ell+1}$.}
\end{array}
\end{equation}
In the Appendix to \cite{mp18}, it is proven that $\BP(\ell)$ holds for a density $1$ set of primes $\ell$.  For a prime $\ell \not \in \cE$ for which $\BP(\ell)$ holds, the work of Meiri and Puder shows that the geometric monodromy group will contain the alternating group on the fiber.

We use these results to obtain the following asymptotic statement:

\begin{thmA}[see Theorem~\ref{thm:main_B}]\label{thm:main_B_intro}  For primes $\ell \geq 5$ outside of the exceptional set $\cE$, 
$M(\PSL_2(\bF_\ell))^\abs_{-2}$ is smooth and geometrically connected. 
If furthermore $\BP(\ell)$ holds, the geometric monodromy group over $M(1)$ satisfies
$$\Mon(M(\PSL_2(\bF_\ell))^\abs_{-2} /M(1))\cong \left\{\begin{array}{rl}
S_{n_\ell} & \ell\equiv 5,7,9,11 \mod 16 \\
A_{n_\ell} & \ell\equiv 1,3,13,15\mod 16.
\end{array}\right.$$
When $\ell\equiv 5,7,9,11 \mod 16$, the fiber at any prime $p \nmid |\PSL_2(\bF_\ell)| = \frac{\ell(\ell^2-1)}{2}$ is a tamely ramified three-point cover with monodromy group $S_{n_\ell}$ defined over $\bF_p$.  When $\ell\equiv 1,3,13,15\mod 16$, the fiber at any prime $p \nmid |\PSL_2(\bF_\ell)|$ is a tamely ramified three-point cover with monodromy group $A_{n_\ell}$ defined over $\bF_{p^2}$.
\end{thmA}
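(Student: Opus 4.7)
The plan is to combine the moduli-theoretic setup with two group-theoretic inputs: the transitivity theorem of Bourgain--Gamburd--Sarnak and the alternating-group result of Meiri--Puder. Smoothness of $M(\PSL_2(\bF_\ell))^\abs_{-2}$ follows from the moduli description already recalled in the introduction: the forgetful map $\cM(\PSL_2(\bF_\ell))^\abs\to\cM(1)$ is finite \'etale over $\Spec\bZ[1/|\PSL_2(\bF_\ell)|]$, and passing to coarse moduli preserves smoothness in this range. Geometric connectedness over $\Qbar$ is equivalent, by Galois theory, to transitivity of the $\pi_1(\cM(1)_{\Qbar})$-action on the fiber; under the identification of this fiber with $Y^*(\ell)$ described in \S\ref{section_markoff_vs_moduli}, this is exactly what Bourgain--Gamburd--Sarnak prove for $\ell\notin\cE$.

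For the geometric monodromy group $\Mon$ of the cover over $M(1)_{\Qbar}$, the $\pi_1$-action factors through a Nielsen-type subgroup of automorphisms of the Markoff surface, and Meiri--Puder's result, applied under the transitivity assumption together with $\BP(\ell)$, yields that this image contains the alternating group $A_{n_\ell}$. Hence $A_{n_\ell}\subseteq\Mon\subseteq S_{n_\ell}$, and the remaining task is to determine the sign character of $\Mon$.

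Since the cover is tamely ramified only at $j=0,1728,\infty$, $\Mon$ is topologically generated by three elements $\sigma_0,\sigma_{1728},\sigma_\infty$ with product $1$ and orders dividing $3$, $2$, and the ramification index at the cusp, respectively. The sign of $\sigma_0$ is $+1$ since it has order $3$, so the signs of $\sigma_{1728}$ and $\sigma_\infty$ coincide, and it suffices to compute the parity of the number of transpositions in the involution $\sigma_{1728}$, equivalently the parity of $(n_\ell-|\mathrm{Fix}(\sigma_{1728})|)/2$. Under the identification of the fiber with $Y^*(\ell)$, $\sigma_{1728}$ corresponds to an explicit involution on Markoff triples whose fixed locus counts triples with a prescribed symmetry, and a direct count of these fixed points together with the explicit formula for $n_\ell$ reduces the question modulo $2$ to a congruence condition on $\ell$ which unwinds to the stated case distinction on $\ell\bmod 16$. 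This parity calculation, with its rather delicate dependence on $\ell\bmod 16$, is the main technical obstacle.

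Finally, for the field of definition, the cover is defined over $\bQ$, so for $p\nmid|\PSL_2(\bF_\ell)|$ its reduction modulo $p$ is defined over $\bF_p$, and tameness together with Grothendieck's specialization theorem for prime-to-$p$ fundamental groups ensures the geometric monodromy is preserved. The arithmetic monodromy $\Mon^{\mathrm{arith}}$ over $\bF_p$ contains $\Mon$ with cyclic quotient generated by Frobenius. When $\Mon=S_{n_\ell}$ we necessarily have $\Mon^{\mathrm{arith}}=S_{n_\ell}$, and the Galois closure of the cover is a tame three-point $S_{n_\ell}$-Galois cover defined over $\bF_p$. When $\Mon=A_{n_\ell}$, either $\Mon^{\mathrm{arith}}=A_{n_\ell}$ (and we are already done over $\bF_p$) or $\Mon^{\mathrm{arith}}=S_{n_\ell}$, in which case the constant field of the Galois closure is $\bF_{p^2}$, and base-changing to $\bF_{p^2}$ kills the $S_{n_\ell}/A_{n_\ell}$ quotient to produce the desired tame three-point $A_{n_\ell}$-Galois cover; in all cases $\bF_{p^2}$ suffices.
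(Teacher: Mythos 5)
Your route is essentially the paper's: BGS transitivity for connectedness, Meiri--Puder for the alternating lower bound, the parity of the local monodromy at $j=1728$ for the $A_{n_\ell}$ versus $S_{n_\ell}$ dichotomy, and the arithmetic-versus-geometric monodromy (Frobenius parity) discussion for $\bF_p$ versus $\bF_{p^2}$. However, there are genuine gaps. The most serious is the reduction step: you assert that the mod-$p$ fiber is tame and then invoke ``Grothendieck's specialization theorem for prime-to-$p$ fundamental groups'' to preserve the geometric monodromy. That tool does not apply as stated, because $p$ divides $|S_{n_\ell}|$; and the tameness of the special fiber is precisely the nontrivial content of the theorem, not a hypothesis you may assume. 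The paper obtains tameness, smoothness, geometric connectedness of the special fibers, and the equality of special- and generic-fiber monodromy groups (via geometric connectedness of the Galois closure's special fiber, using Stein factorization) from Abhyankar's lemma applied to the integral model, whose branch divisor is normal crossings in mixed characteristic (Lemma~\ref{lemma_smoothness_of_normalization} and Proposition~\ref{prop_tame}); some argument of this kind is indispensable and is absent from your proposal. A second gap is an index-two issue: Bourgain--Gamburd--Sarnak prove transitivity of $\Gamma$, the image of $\Out(F_2)\cong\GL_2(\bZ)$ (its generators correspond to determinant $-1$ automorphisms), whereas the geometric monodromy is the image of $\Out^+(F_2)\cong\SL_2(\bZ)$, a normal subgroup of index at most $2$. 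Transitivity does not formally descend to an index-two subgroup; the paper's Proposition~\ref{prop_Q_vs_Qplus} bridges this using the fixed point $(3,3,3)$ of an odd element. (The analogous descent of ``contains $A_{n_\ell}$'' from $Q_\ell$ to $Q_\ell^+$ is fine by normality and simplicity of $A_{n_\ell}$, but should be said.)

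Finally, the parity computation you defer as ``the main technical obstacle'' is exactly the paper's appendix calculation: $\gamma_0$ is always even, while $\gamma_{1728}$ has a unique fixed point on $Y^*(\ell)$ when $\ell\equiv \pm 1 \bmod 8$ (coming from $(\pm 2\alpha,\pm 2\alpha,4)$ with $\alpha^2=2$) and none otherwise, so the number of $2$-cycles is $(n_\ell-\epsilon)/2$ with $\epsilon\in\{0,1\}$, whose parity yields the mod-$16$ dichotomy (Lemma~\ref{lemma_explicit}, Propositions~\ref{prop_explicit_ramification} and~\ref{prop:parity}, Corollary~\ref{cor_alternating_vs_symmetric}). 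Since the entire case distinction in the statement rests on this count, leaving it at ``unwinds to the stated case distinction'' is not sufficient; it must be carried out.
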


For a fixed prime $p\ge 5$, the set of $\ell$ for which the Theorem yields tame three-point covers in characteristic $p$ is the set of primes $\ell$ such that $\frac{\ell(\ell^2-1)}{2}\not\equiv0 \mod p$ (density $>0$), $\ell\notin\cE$ (density 1), and $\BP(\ell)$ holds (density 1). Thus, there is a positive density set of primes $\ell$ for which Theorem~\ref{thm:main_B_intro} yields the desired tame three-point cover mod $p$, from which we deduce Theorem~\ref{thm:main}.

We also prove a less precise result for sufficiently large $\ell$ which removes the restriction that $\ell \not \in \cE$.  
Specifically, let $M(\ell)$ denote the modular curve classifying elliptic curves with ``full level $\ell$ structure of determinant 1'' (see \S\ref{ss_rational_point} below). 
Let $\Gamma(1)'$ be the commutator subgroup of $\SL_2(\bZ)$. It can be checked that it is a torsion-free congruence subgroup of level 6 and index 12, and that the corresponding stack $\cM'$ over $\cM(1)$ is the complement of the zero section of an elliptic curve over $\bZ[1/6]$. The pullback
\begin{equation}\label{eq_mordell_covering_SL}
    \pi_\ell : M(\ell)' := M(\ell)\times_{\cM(1)} \cM'\longrightarrow\cM'
\end{equation}
is thus a $\SL_2(\bF_\ell)$-cover of a punctured elliptic curve and has good reduction at all $p\nmid 6\ell$. Taking the quotient by the center, we obtain a $\PSL_2(\bF_\ell)$-cover
\begin{equation}\label{eq_mordell_covering_PSL}
    \ol{\pi_\ell} : M(\ell)'/\{\pm I\}\longrightarrow\cM'
\end{equation}
with good reduction at all $p\nmid 6\ell$.  This cover defines a $\bQ$-point of $\cM(\PSL_2(\bF_\ell))^\abs$, and we let $\cM(\ol{\pi_\ell})$ denote the connected component containing $\ol{\pi_\ell}$.  Let $M(\ol{\pi_\ell})$ be its coarse moduli space.
One can compute that the covering $\ol{\pi_\ell}$ also has trace invariant $-2$, and hence $\cM(\ol{\pi_\ell})$ is a component of $\cM(\PSL_2(\bF_\ell))^\abs_{-2}$.  The conjecture of Bourgain, Gamburd, and Sarnak (see Conjecture~\ref{conj:markoff}) would imply that 
$\cM(\ol{\pi_\ell}) = \cM(\PSL_2(\bF_\ell))^\abs_{-2}$ (see Proposition~\ref{prop_Q_vs_Qplus}).

Let $d_\ell := \deg(M(\ol{\pi_\ell})/M(1))$.  The work of Bourgain, Gamburd, and Sarnak shows there is an integer $N_{1/2}$ such that $n_\ell - d_\ell \leq \ell^{1/2}$ when $\ell \geq N_{1/2}$.

\begin{thmA}\label{thm:main_A}
Fix a prime $\ell$ for which $\BP(\ell)$ holds.  If $\ell \equiv 1 \mod{4}$ assume that $\ell \geq \max(N_{1/2},13)$ while if $\ell \equiv 3 \mod{4}$ assume that $\ell \geq \max(N_{1/2},23)$.  
Then $M(\ol{\pi_\ell})$ is geometrically connected, smooth over $\bZ[1/|\PSL_2(\bF_\ell)|]$, and finite \'{e}tale over $M(1) - \{j = 0,1728\}$. The geometric monodromy group $\Mon(M(\ol{\pi_\ell})/M(1))$ is isomorphic to either $A_{d_\ell}$ or $S_{d_\ell}$. In either case, there is an at-most-quadratic extension $K_{\ell}$ of $\bQ$ with ring of integers $\cO_{K_\ell}$ such that the monodromy is defined over $\cO_{K_\ell}$.  Let $k_\ell$ be the residue field of a prime of $\cO_{K_\ell}$ lying above $p$ for $p \nmid |\PSL_2(\bF_\ell)|$.
Then the fiber over $k_\ell$  is  a smooth, geometrically connected, $\Mon(M(\ol{\pi_\ell})/M(1))$-cover of $\bP^1_{k_\ell}$ tamely ramified only over three points. Moreover, if $\Mon(M(\ol{\pi_\ell})/M(1))\cong S_{d_\ell}$, then we may take $K_\ell = \bQ$ and hence $k_\ell = \bF_p$.
\end{thmA}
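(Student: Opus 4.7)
My plan is to combine three ingredients already assembled earlier in the paper: the moduli-theoretic properties of $\cM(\PSL_2(\bF_\ell))^\abs$, the unconditional ``giant orbit'' theorem of Bourgain--Gamburd--Sarnak, and an adaptation of the combinatorial analysis of Meiri--Puder under property $\BP(\ell)$.

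I would begin with the étale, smoothness, and geometric-connectedness claims. Since $\cM(\PSL_2(\bF_\ell))^\abs \to \cM(1)$ is finite étale over $\bZ[1/|\PSL_2(\bF_\ell)|]$, the same is true of the clopen substack $\cM(\ol{\pi_\ell})$, giving smoothness of $M(\ol{\pi_\ell})$ over $\bZ[1/|\PSL_2(\bF_\ell)|]$. Passing from stacks to coarse schemes introduces ramification only over the elliptic points $j=0$ and $j=1728$ of $\cM(1)$, and this ramification is tame because $p\nmid |\PSL_2(\bF_\ell)|$; hence $M(\ol{\pi_\ell})$ is finite étale over $M(1)\setminus\{j=0,1728\}$. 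For geometric connectedness I would use the identification of the geometric fiber of $\cM(\PSL_2(\bF_\ell))^\abs_{-2}$ with the Markoff quotient $Y^*(\ell)$, under which geometric components correspond to monodromy orbits. By the definition of $d_\ell$ together with the Bourgain--Gamburd--Sarnak bound $n_\ell-d_\ell\le \ell^{1/2}$ (valid once $\ell\ge N_{1/2}$), the orbit $O_\ell$ containing the image of $\ol{\pi_\ell}$ is strictly larger than any other orbit in $Y^*(\ell)$, and hence is stable under the size-preserving action of $\Gal(\Qbar/\bQ)$. The corresponding $\Qbar$-component therefore descends to a $\bQ$-component, which is geometrically connected because it contains the $\bQ$-rational point $\ol{\pi_\ell}$.

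Identifying the monodromy group is the main obstacle, and here I would adapt the combinatorial analysis of Meiri--Puder. Since $\ol{\pi_\ell}\in O_\ell$, the geometric monodromy acts transitively on $O_\ell$. Under property $\BP(\ell)$, Meiri and Puder exhibit explicit Markoff automorphisms whose cycle type, combined with transitivity and a Jordan-type primitivity theorem, forces the monodromy group to contain $A_{d_\ell}$. These automorphisms preserve the orbit decomposition of $Y^*(\ell)$, so their analysis can be localized to $O_\ell$; verifying that the relevant small-support cycles survive on $O_\ell$ (rather than only on the full fiber) is the delicate point, and is where the gap $n_\ell-d_\ell\le \ell^{1/2}$ is crucial. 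The numerical hypotheses $\ell\ge 13$ for $\ell\equiv 1\pmod 4$ and $\ell\ge 23$ for $\ell\equiv 3\pmod 4$ ensure that $d_\ell$ is large enough for Jordan's theorem to apply. Distinguishing $A_{d_\ell}$ from $S_{d_\ell}$ is done by tracking the same parity invariant used in Theorem~\ref{thm:main_B_intro}.

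Finally, for the field of definition and the reduction modulo $p$: the arithmetic monodromy contains the geometric monodromy with index at most $[S_{d_\ell}:A_{d_\ell}]=2$. Either the two coincide, in which case $K_\ell=\bQ$ works, or the geometric monodromy is $A_{d_\ell}$ and the arithmetic one is $S_{d_\ell}$; in the latter case the composite $\Gal(\Qbar/\bQ)\to \Mon^{\mathrm{arith}}/\Mon^{\mathrm{geom}}\cong\bZ/2\bZ$ cuts out the desired at-most-quadratic extension $K_\ell/\bQ$ over which the two monodromies coincide. In particular, when $\Mon^{\mathrm{geom}}\cong S_{d_\ell}$ we automatically have $K_\ell=\bQ$, giving the final assertion. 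Reducing the tame finite étale cover, now defined over $\cO_{K_\ell}[1/|\PSL_2(\bF_\ell)|]$, modulo any prime above $p\nmid|\PSL_2(\bF_\ell)|$ preserves smoothness, connectedness, tameness, and the monodromy group, yielding the asserted three-point cover over $k_\ell$.
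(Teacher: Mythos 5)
Your outline does coincide with the paper's strategy: Proposition \ref{prop_tame} (together with the fact that $\ol{\pi_\ell}$ is defined over $\bZ[1/|\SL_2(\bF_\ell)|]$, Remark \ref{rmk:ringofdef}) handles smoothness, tameness, geometric connectedness, and the at-most-quadratic field $K_\ell$, and the monodromy is computed by transporting everything to the Markoff surface. But the mathematical content of the theorem is exactly the step you defer as ``the delicate point'': proving that the permutation group induced on the maximal orbit $\cO(\ell)$ is the full alternating or symmetric group (Theorem \ref{thm:max-orbit}). Your sketch of that step --- cycle types plus transitivity plus ``a Jordan-type primitivity theorem'' --- describes only the case $\ell\equiv 1\bmod 4$, where an $\ell$-cycle on $C_1(\pm2)\subset\cO(\ell)$, primitivity, and Jordan's theorem suffice. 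For $\ell\equiv 3\bmod 4$ no such long prime cycle exists; the paper instead uses $\rot_1^{(\ell+1)/2}$, which fixes more than half of $\cO(\ell)$, invokes the Guralnick--Magaard classification of primitive groups containing such an element (Theorem \ref{thm:choices}), and must then eliminate the affine, orthogonal, $k$-subset and wreath-product cases. Meiri--Puder's elimination of those cases uses the exact value of $|Y^*(\ell)|$, which is precisely what is unavailable for $|\cO(\ell)|$; this is why the paper develops the new wreath-product results (Proposition \ref{prop:wreaths}, Corollaries \ref{cor:embedding} and \ref{cor:eliminatepower}) and argues with a cycle of length exactly $\tfrac{\ell+1}{2}$ against the bound $|\cO(\ell)|\le\tfrac{\ell(\ell-3)}{4}<\tfrac{(\ell+1)^2}{4}$. (Incidentally, $\ell\ge 23$ is used there to guarantee $\{q,s\}\ne\{2,3\}$ when ruling out the $S_6$ case, not to make Jordan's theorem applicable.) Nothing in your proposal supplies or replaces this analysis, so the claim that the geometric monodromy contains $A_{d_\ell}$ is not proved.

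Two further points are elided. First, you quote $n_\ell-d_\ell\le\ell^{1/2}$ as an input, but Bourgain--Gamburd--Sarnak only bound the size of the \emph{largest} orbit; one still has to show that $[3,3,6]$, the Markoff point corresponding to $\ol{\pi_\ell}$, lies in that largest orbit. This is where $\BP(\ell)$ genuinely enters for $\ell\equiv3\bmod4$: the $\rot_1$-cycle through $[3,3,3]$ has length at least $16\sqrt{\ell+1}>\ell^{1/2}$, forcing $[3,3,3]\in\cO(\ell)$ (Lemma \ref{lem:inOp}) and hence $[3,3,6]=R_3([3,3,3])\in\cO(\ell)$. Second, the geometric monodromy is the image $\ol{Q}_\ell^{+}$ of $\Out^+(F_2)$, while the Markoff analysis controls the image $\ol{Q}_\ell$ of the full group $\Gamma$; the paper passes between them by noting that $\ol{Q}_\ell^{+}$ is normal of index at most $2$ in $\ol{Q}_\ell$ and using simplicity of the alternating group, and a fixed-point argument as in Proposition \ref{prop_Q_vs_Qplus} is what lets one identify the geometric fiber of $\cM(\ol{\pi_\ell})$ with all of $\cO(\ell)$ rather than a possibly smaller $\ol{Q}_\ell^{+}$-orbit. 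Neither reduction appears in your write-up. Finally, your claim that the $A_{d_\ell}$ versus $S_{d_\ell}$ dichotomy can be decided by ``the same parity invariant'' as in Theorem \ref{thm:main_B} is unsupported --- that parity computation takes place on all of $Y^*(\ell)$, not on $\cO(\ell)$ --- but it is also unnecessary, since Theorem \ref{thm:main_A} does not determine which of the two groups occurs.
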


\begin{remark}
The particular form of the monodromy groups of $M(\PSL_2(\bF_\ell))^{\abs}\rightarrow M(1)$ - in particular the fact that $|S_{n_\ell}|,|A_{n_\ell}|$ are divisible by many primes which don't divide $|\PSL_2(\bF_\ell)|$  - is crucial to ensure that our result is interesting; specifically, that we obtain many primes of good reduction that \emph{divide} the order of the monodromy group. 
It is a consequence of Belyi's theorem (see Theorem \ref{thm_faithful}) or \cite{ddh89} that every three-point cover can be realized as a map between moduli spaces. Thus, given an abstract three-point cover, one could try to find a moduli-interpretation of it, with the hopes of using that moduli-interpretation to deduce good reduction at ``interesting primes $p$''. A general procedure for finding a moduli-interpretation in terms of $G$-structures amounts to an effective version of Asada's theorem (see Theorem~\ref{thm_basic_properties}(7) below), which exists by the work of Bux--Ershov--Rapinchuk \cite{BER11} and a subsequent improvement by Ben-Ezra and Lubotzky \cite[Theorems 2.7, 2.9]{BL18}. However, their methods \emph{never} yield automatic good reduction at primes dividing the order of the monodromy group.
\end{remark}

\begin{remark} Let $F_2$ be a free group of rank 2. The connectedness of $M(\PSL_2(\bF_\ell))^\abs_{-2}$ is a consequence of the transitivity of the action of $\Aut^+(F_2)$ (the index two subgroup of $\Aut(F_2)$ consisting of automorphisms which induce automorphisms of determinant 1 on $F_2/[F_2,F_2]\cong\bZ^2$) on the set of equivalence classes $X := \{(a,b) \;|\; \text{$a,b$ generate $\SL_2(\bF_\ell)$ and $\tr[a,b] = -2\in\bF_\ell$}\}/\sim$, where two pairs are equivalent if they are conjugate by an element of $\SL_2(\ol{\bF}_\ell)$ (see \S\ref{section_markoff_vs_moduli}). On its face, this transitivity is a difficult problem in combinatorial group theory \cite[\S2]{LP01}. However, because $\SL_2(\bF_\ell)$ is the $\bF_\ell$-points of an \emph{algebraic group}, the set $X$ inherits an algebraic structure: it is the set of $\bF_\ell$ points of a certain \emph{character variety} on which $\Aut^+(F_2)$ acts via automorphisms of the variety. In the case of $\SL_2(\bF_\ell)$, this variety turns out to be an (affine) \emph{ruled surface}. The analysis of \cite{bgs1,bgs16} and \cite{mp18} make crucial use of this structure, especially the fact that $\Aut^+(F_2)$ is generated by the conjugates of an automorphism which correspond to ``rotations'' along the ruling. In general, if $G$ is a finite group of Lie type, this suggests that a deeper understanding of the stacks $\cM(G)$ may be obtained from the study of the associated character variety.
\end{remark}

\begin{remark}
We work with the moduli of elliptic curves with $G$-structures when $G=\SL_2(\bF_\ell)$ and $G = \PSL_2(\bF_\ell)$.  While the results of \cite{chen18} are applicable for any finite group $G$, our choice of $G$ allows us to leverage the work of \cite{bgs16} and \cite{mp18} to understand the connected components of $M(G)^{\abs}$ and the monodromy. Empirically, for other choices of $G$ there is more variability in the monodromy group.
For example, taking $G = A_7$ we compute that of the $17$ components of $M(A_7)^{\abs}$, two have symmetric monodromy, twelve have alternating monodromy, and three have smaller monodromy. While this diversity would complicate any analysis, it also provides opportunities to realize additional groups as quotients of $\pi_1^t(\bP^1_{\ol{\bF}_p}-\{0,1,\infty\})$. 
\end{remark}

\begin{remark}
Our results address a function field analog of a question articulated in \cite{rv15} about the existence of  number fields with little ramification and large Galois groups.  For a number field $K$ of degree $d$ over $\bQ$, say that $K$ is full if the associated Galois group is either $S_d$ or $A_d$.  
The question is whether given a set of places $\cP$ of $\bQ$, are there infinitely many full number fields unramified outside $\cP$?   

Roberts and Venkatesh construct examples by specializing maps between appropriately chosen Hurwitz spaces, but are unable to guarantee there are infinitely many specializations which produce non-isomorphic examples in order to prove \cite[Conjecture 8.1]{rv15}.  

Analogously, our results show that for any set of $\bF_p$-rational places $\cP$ of $\bF_p(t)$ with $\# \cP \geq 3$, there exist infinitely many non-isomorphic full extensions unramified outside of $\cP$.
\end{remark}

\subsection{Organization of the paper}
In \S\ref{section_moduli_of_G_structures}, we review the moduli of elliptic curves and the theory of $G$-structures, and explain how to use them to produce covers of the projective line.  We begin with the analytic theory in \S\ref{ss_analytic_theory}, which gives a concrete relation between the Galois theory for coverings of the moduli stack of elliptic curves and the Galois theory for coverings of its coarse moduli space (see Proposition \ref{prop_coarse_fibers} and Corollary \ref{cor_analytic_ramification}). While these results could have also been obtained algebraically, we find that the analytic perspective leads to a less technical exposition and naturally takes us through the necessary background to make clear the connection between our moduli stacks and the classical construction of modular curves as quotients of the upper half plane.  Next, in \S\ref{ss_arithmetic_moduli}, we recall the arithmetic theory of the moduli of elliptic curves with $G$-structures (see \cite{chen18}), whose moduli stacks $\cM(G)$ are finite \'{e}tale over the moduli stack of elliptic curves.  In \S\ref{ss_obtainingtame}, making crucial use of the algebraic theory, we explain how to use their coarse moduli schemes to obtain tame covers of the projective line in characteristic $p$. We then introduce the notion of absolute $G$-structures in \S\ref{ss_absolute_G_structures}, and study their Higman and trace invariants in \S\ref{ss_trace_invariant}.

In \S\ref{section_markoff_vs_moduli}, we prove Theorem~\ref{thm:main_B_intro}.  We review the work of Bourgain--Gamburd--Sarnak and Meiri--Puder on Markoff triples modulo $\ell$ in \S\ref{section_bgsmp}.  In \S\ref{ss_markoff_absolute}, 
we relate the geometric fibers of $\cM(\SL_2(\bF_\ell))_{-2}^{\abs}$ and $\cM(\PSL_2(\bF_\ell))_{-2}^{\abs}$ with Markoff triples modulo $\ell$ and relate the monodromy action on the fibers with the action of a group of automorphisms of the Markoff surface.  In \S\ref{ss_explicit_ramification}
we analyze the ramification of the coarse schemes of these stacks as covers of the $j$-line and produce a $\bQ$-rational point above $j = 0$. In \S\ref{ss_rational_point} we give a conceptual explanation of a rational point over $j=0$.  We give the proof of Theorem~\ref{thm:main_B_intro} in \S\ref{ss_proof_of_asymptotic_statement}.

Finally, in \S\ref{sec:largemarkoff}, we address the case that $\ell$ lies in the exceptional set $\cE$ of primes not covered by the arguments of \cite{bgs16}. By adapting the arguments of \cite{mp18} to apply to the largest orbit in $Y^*(\ell)$, we prove Theorem~\ref{thm:main_A} and obtain additional tamely ramified covers with symmetric or alternating Galois groups at the cost of only being able to bound the degree.

\subsection{Notation and Conventions}\label{ss_notations_conventions}
Throughout, $\Qbar$ will denote the algebraic closure of $\bQ$ in $\bC$.

Throughout this paper we will try to reserve the letter $p$ to refer to the characteristic of a field over which we are working, and $\ell$ will generally refer to a prime distinct from $p$.

Often script letters ``$\cM$'' will be used to denote a stack, in which case the corresponding Roman letter ``$M$'' will be used to denote its coarse space/scheme. 

For groups $F$ and $G$, $\Epi^{\ext}(F,G)$ is the set of equivalence classes of surjections $F\twoheadrightarrow G$ considered up to conjugation on $G$ (or equivalently in $F$).

When there is no risk of confusion, we will denote both the \'{e}tale fundamental group of a scheme and the topological fundamental group of an analytic space by $\pi_1$.  When there is risk of confusion, we use $\pi^{\et}_1$ and $\pi^{\top}_1$.

\subsection{Acknowledgements}  Bell was supported by European Research Council grant \#715747 (to Fran\c{c}ois Charles).  Booher was partially supported by the Marsden Fund Council administered by the Royal Society of New Zealand. Chen was partially supported by the National Science Foundation under Award No. DMS-1803357. We thank Piotr Achinger, Jordan Ellenberg, David Harbater, Julia Hartmann, Andrew Obus, and Rachel Pries for helpful conversations, and the American Mathematical Society and the organizers of the math research community ``Explicit Methods in Arithmetic Geometry in Characteristic $p$'' which was the genesis of this project. We also thank the anonymous referee for comments on an earlier draft.

\section{Moduli of elliptic curves with \texorpdfstring{$G$}{G}-structure}\label{section_moduli_of_G_structures}

Here we review the moduli stack of elliptic curves with $G$-structure and explain how to use their connected components to construct tamely ramified three-point covers in characteristic $p$.

\subsection{Analytic Moduli of Elliptic Curves}\label{ss_analytic_theory}

In this section we review the analytic theory of the moduli of elliptic curves. The main purpose is to explain the relationship between the geometric fiber of a finite \'{e}tale cover of the moduli stack with the geometric fiber of the corresponding map on coarse spaces (see Proposition \ref{prop_coarse_fibers} and Corollary \ref{cor_analytic_ramification}). By standard GAGA arguments, the same results will hold over algebraically over $\Qbar$ or $\bC$.

For an analytic elliptic curve $E$, a \emph{framing} on $E$ is a choice of basis $\mf{f} = (\mf{f}_1,\mf{f}_2)$ of $H_1(E,\Z)$ such that the intersection product $\mf{f}_1\cdot\mf{f}_2 =1$. Given a holomorphic family of elliptic curves over a complex analytic manifold, by Ehresmann's fibration theorem the family is topologically locally constant, and a framing on the family is defined to be a locally constant family of framings on the fibers. Let $\cT$ denote the moduli stack of framed elliptic curves, then $\cT$ is a complex manifold isomorphic to the upper half plane (see \cite[Proposition 2.4]{Hain11} and \cite[Proposition 10.1]{fm12}). A point of $\cT$ is thus an isomorphism class of framed elliptic curves.

Explicitly, this isomorphism can be defined as follows. Let $\cH := \{z\in\bC : \Im(z) > 0\}$ be the upper half plane. The action of $\bZ^2$ on $\cH\times\bC$ given by
$$(\tau,z)\cdot(n,m) = (\tau, z + n\tau + m)\qquad (\tau,z)\in\cH\times\bC,\; (n,m)\in\bZ^2$$
is free. Let $\bE := (\cH\times\bC)/\bZ^2$, then together with the zero section $\cH\times\{0\}$, $\bE$ is a family of elliptic curves over $\cH$. We give $\bE$ the structure of a framed family by specifying the ordered basis $(\mf{f}_1,\mf{f}_2)$ of $H_1(\bE_\tau,\Z)$ to be given by the straight-line paths $0\leadsto 1$ and $0\leadsto\tau$ respectively in $\bC$. This defines a framing $\mf{f}_{\bE}$ on $\bE$ and determines the isomorphism $\cH\rightiso\cT$.

Fix a framed elliptic curve $(E_0,\mf{f}_0)$ and let $\Gamma_{E_0}$ denote its (orientation-preserving) mapping class group (see \cite[\S2.1]{fm12}). Then $\Gamma_{E_0}$ acts on $\cT$ as follows. 
Given a framed elliptic curve $(E,\mf{f})$, up to homotopy there is a unique homeomorphism of elliptic curves $\phi_{\mf{f}_0,\mf{f}} : E_0\rightarrow E$ which respects the framings. Then $\Gamma_{E_0}$ acts on $\cT$ by the rule 
\begin{equation}\label{eq_action}
\gamma\cdot (E,\mf{f}) = (E,(\phi_{\mf{f}_0,\mf{f}}\circ f_\gamma\circ \phi_{\mf{f}_0,\mf{f}}^{-1})(\mf{f}))    
\end{equation}
where $f_\gamma$ is a self-homeomorphism of $E_0$ representing $\gamma$. This gives a \emph{right} action of $\Gamma_{E_0}$ on $\cT$. Since \eqref{eq_action} defines a free and transitive action of $\Gamma_{E_0}$ on the set of framings of $E$, the stack quotient $[\cT/\Gamma_{E_0}]$ is naturally the complex analytic moduli stack of elliptic curves, denoted $\cM(1)^\an$. Since $\cT$ is simply connected, relative to the base point $(E_0,\mf{f}_0)\in\cT$, the group $\Gamma_{E_0}$ acts (on the left) on the fiber functor associated to the Galois category of $\cM(1)^\an$ \cite[\S18]{noo05}. This action induces an isomorphism
\begin{equation}\label{eq_pi_1_isom}
\pi_1^\top(\cM(1)^\an,E_0)\stackrel{\sim}{\longrightarrow}\Gamma_{E_0}.
\end{equation}
The action of $\Gamma_{E_0}$ on $H_1(E_0,\bZ)$ also induces a canonical isomorphism \cite[Theorem 2.5]{fm12}
\begin{equation}\label{eq_MCG_on_homology}
\Gamma_{E_0}\stackrel{\sim}{\longrightarrow} \SL(H_1(E_0,\Z)).
\end{equation}
We can relate the action of $\Gamma_{E_0}$ on $\cT$ to the usual action of $\SL_2(\bZ)$ on $\cH$ as follows. Let $\SL_2(\bZ)$ act on the set of framings of $E_0$ by
\begin{equation*}
\spmatrix{a}{b}{c}{d}\cdot\mf{f} := \spmatrix{0}{1}{1}{0}\spmatrix{a}{b}{c}{d}\spmatrix{0}{1}{1}{0}\spvector{\mf{f}_1}{\mf{f}_2} = \spvector{c\mf{f}_2 + d\mf{f}_1}{a\mf{f}_2 + b\mf{f}_1}\qquad\text{for $\spmatrix{a}{b}{c}{d}\in\SL_2(\bZ)$},
\end{equation*}
then this action is free and transitive and defines an action of $\SL_2(\bZ)$ on $\cT$.\footnote{We note that this action is \emph{not} the same as the one coming from the isomorphism $\bZ^2\rightiso H_1(E_0,\bZ)$ sending $(e_1,e_2)$ to $(\mf{f}_{0,1},\mf{f}_{0,2})$. This isomorphism would induce an isomorphism $\SL_2(\bZ)\cong\SL(H_1(E_0,\bZ))\cong\Gamma_{E_0}$ which would yield the action $\spmatrix{a}{b}{c}{d}\cdot\tau = \frac{b+d\tau}{a+c\tau}$ for $\tau\in\cH$.}
Via the isomorphism $(\bE,\mf{f}_\bE) : \cH\stackrel{\sim}{\longrightarrow}\cT$, one can check that this $\SL_2(\bZ)$-action on $\cT$ is transported to the usual $\SL_2(\bZ)$-action on $\cH$ given by fractional linear transformations: $\spmatrix{a}{b}{c}{d}\cdot\tau = \frac{a\tau + b}{c\tau + d}$. In particular, we see that the framed family $\bE$ induces an explicit universal cover $\cH\rightarrow\cM(1)^\an$, which induces an isomorphism $[\cH/\SL_2(\bZ)]\rightiso\cM(1)^\an$.

Similarly, the topological quotient $\cT/\Gamma_{E_0}\cong \cH/\SL_2(\bZ)$ is the \emph{coarse (analytic) moduli space} of elliptic curves, which we denote $M(1)^\an$. On $\cH$, the $j$-invariant is a holomorphic function which is invariant under $\SL_2(\bZ)$, and descends to an isomorphism $\cH/\SL_2(\bZ)\cong\bC$. Given an elliptic curve $E$, $E\cong\bE_\tau$ for some $\tau\in\cH$, and the $j$-invariant of $E$ is just $j(E) := j(\tau)$. We will often use the $j$ function to identify $\cH/\SL_2(\bZ)$ with $\bC$.

\begin{remark}\label{remark_modular_curves}  If we choose an isomorphism $\cM(1)^\an\cong[\cH/\SL_2(\bZ)]$ as above, then Galois theory gives a correspondence between finite index subgroups $\Gamma\le\SL_2(\bZ)$ and connected finite covers of $\cM(1)^\an \cong [\cH/\SL_2(\bZ)]$.  Explicitly, the cover corresponding to $\Gamma$ is simply the projection map $[\cH/\Gamma]\rightarrow[\cH/\SL_2(\bZ)]$, and its coarse space is the topological quotient $\cH/\Gamma$ (a ``modular curve''). From this perspective, all finite index subgroups of $\SL_2(\bZ)$ are treated equally. As discussed in \cite{chen18} and reviewed below, this allows us to describe a generalization of the notion of ``level structure'' for elliptic curves whose moduli spaces incorporate both congruence and noncongruence modular curves.
\end{remark}

For an analytic space $B$ and a family of elliptic curves $E_B\rightarrow B$, letting $E_{b_0}$ be the fiber of $E_B$ above $b_0 \in B$, there is a monodromy representation
$$\rho_{b_0} : \pi_1^\top(B,b_0)\longrightarrow \SL(H_1(E_{b_0},\Z)).$$
We may extend this construction to obtain a monodromy representation for the universal family over $\cM(1)^\an$
$$\rho_{E} : \pi_1^\top(\cM(1)^\an,E)\longrightarrow\SL(H_1(E,\bZ))$$
which is precisely the composition of the isomorphisms \eqref{eq_pi_1_isom} and \eqref{eq_MCG_on_homology}. If we view $E_B$ as a map $B\rightarrow\cM(1)^\an$, then $\rho_{b_0},\rho_{E_{b_0}}$ fit into a commutative diagram
\begin{equation}\label{eq_monodromy_vs_pi1}
\begin{tikzcd}
\pi_1^\top(B,b_0)\ar[r,"(E_B)_*"]\ar[rd,"\rho_{b_0}"'] & \pi_1^\top(\cM(1)^\an,E_{b_0})\ar[d, "\rho_{E_{b_0}}"] \\
 & \SL(H_1(E_{b_0},\Z))
\end{tikzcd}
\end{equation}

\subsubsection*{Coarse monodromy via stacky monodromy}
Now suppose $\cM\rightarrow\cM(1)^\an$ is any finite (\'{e}tale) cover, and let $M$ be the coarse moduli space of $\cM$; the universal property of coarse moduli spaces yields a commutative (but not cartesian!) diagram
\[\begin{tikzcd}
\cM\ar[d]\ar[r] & M\ar[d] \\
\cM(1)^\an\ar[r] & M(1)^\an
\end{tikzcd}\]
Let $\cM(1)^\circ\subset\cM(1)^\an$ denote the open substack parametrizing elliptic curves without fibers of $j$-invariant 0 or 1728. Its coarse space is just $M(1)^\circ := M(1)^\an - \{j = 0,1728\}$. Then the restricted cover $\cM^\circ\subset \cM$ is still a cover of $\cM(1)^\circ$, but moreover its coarse space $M^\circ$ is now also a finite cover of $M(1)^\circ$, corresponding to the fact that the $\SL_2(\bZ)$ action on $\cH$ descends to an action of $\PSL_2(\bZ)$ on $\cH$ which is \emph{free} on the complement of the orbits of $e^{2\pi i/6}$ and $i$ (where $j = 0,1728$). We wish to understand the monodromy of $M^\circ/M(1)^\circ$ via the monodromy of $\cM/\cM(1)^\an$.

Let $B := M(1)^\circ$ and let $E$ be an elliptic curve over $B$ with ``$j$-invariant $j$''. That is, it has no fibers of $j$-invariants $0,1728$ and the induced map $M(1)^\circ\stackrel{E}{\rightarrow}\cM(1)^\circ\rightarrow M(1)^\circ$ is the identity. For example, we may take $E$ to be given by
\begin{equation}\label{eq_j_invariant_j}
    y^2 + xy = x^3 - \frac{36}{j-1728}x - \frac{1}{j-1728}.
\end{equation}
Let $b_0\in B$ be a base point, then the induced map $B\rightarrow\cM(1)^\an$ sends $b_0$ to the elliptic curve $E_{b_0}$, and the fiber $\cM_{E_{b_0}}$ is by definition the underlying set of the fiber product $\{b_0\}\times_{\cM(1)^\an}\cM$. This fiber admits a natural action of $\Aut(E_{b_0}) = \{[\pm 1]\}$, as well as a natural monodromy action of $\pi_1^\top(\cM(1)^\an, E_{b_0})$. There is a natural map\footnote{If $x$ is the point of $\cM(1)$ corresponding to $E_{b_0}$, then in the language of \cite{noo04} and \cite{noo05}, this map is the canonical map from the ``inertial fundamental group'' or ``hidden fundamental group'' to the full fundamental group, and is denoted ``$\omega_x$''.}
$$\Aut(E_{b_0})\hookrightarrow \pi_1^\top(\cM(1)^\an,E_{b_0})$$
which is an isomorphism onto the center of $\pi_1^\top(\cM(1)^\an,E_{b_0})$, and the actions of $\Aut(E_{b_0})$ and $\pi_1^\top(\cM(1)^\an,E_{b_0})$ on $\cM_{E_{b_0}}$ are compatible with respect to this map. The map to the coarse space $\cM\rightarrow M$ induces a map $\cM_{E_{b_0}}\rightarrow M_{b_0}$ which is surjective and since $j(E_{b_0})\ne 0,1728$, it induces a bijection of sets
$$\alpha : \cM_{E_{b_0}}/\{[\pm1]\}\stackrel{\sim}{\longrightarrow} M_{b_0}.$$
We have a commutative diagram
\[\begin{tikzcd}
\cM^\circ_B\ar[d]\ar[r] & \cM^\circ\ar[r]\ar[d] & M^\circ\ar[d] \\
B\ar[r,"E"] & \cM(1)^\circ\ar[r] & M(1)^\circ
\end{tikzcd}\]
where the left square is cartesian and the right square is induced by the universal property of coarse spaces. Remembering that $B = M(1)^\circ$, the composition of the bottom row is the identity, and so the composition of the top row is a map of covers of $M(1)^\circ$ which induces the map $(\cM^\circ_B)_{b_0} = \cM_{E_{b_0}} \rightarrow M_{b_0}$. In particular, this map must be equivariant for the monodromy action of $\pi_1^\top(M(1)^\circ,b_0)$, and hence $\alpha$ is equivariant for $\pi_1^\top(M(1)^\circ,b_0)$ acting on $\cM_{E_{b_0}}$ via 
$$\pi_1^\top(M(1)^\circ,b_0)\stackrel{E_*}{\longrightarrow} \pi_1^\top(\cM(1)^\circ,E_{b_0})\longrightarrow\pi_1^\top(\cM(1)^\an,E_{b_0}).$$

\begin{prop}\label{prop_coarse_fibers} Let $\cM\rightarrow\cM(1)^\an$ be a finite cover and let $M\rightarrow M(1)^\an$ be the induced map on coarse spaces. Let $M^\circ\rightarrow M(1)^\circ$ be the restriction to the preimage over the complement of $j = 0,1728$. Let $E$ be any elliptic curve over $M(1)^\circ$ with ``$j$-invariant $j$''. 

\begin{enumerate}
\item \label{prop_coarse_fibers1} Let $b_0\in M(1)^\circ$, and let $\cM_{E_{b_0}}$ be the fiber of $\cM/\cM(1)^\an$ over $E_{b_0}$. Then the natural map $\cM\rightarrow M$ induces a bijection
$$\alpha : \cM_{E_{b_0}}/\{[\pm1]\}\stackrel{\sim}{\longrightarrow}M_{b_0}.$$

\item \label{prop_coarse_fibers2}
If the monodromy of $\cM/\cM(1)^\an$ is given by $\pi_1^\top(\cM(1)^\an,E_{b_0})\rightarrow\Aut(\cM_{E_{b_0}})$, and $Z$ is the center of $\pi_1^\top(\cM(1)^\an,E_{b_0})$, then the monodromy action of $\pi_1^\top(M(1)^\circ,b_0)$ on $M_{b_0}$ associated to the cover $M^\circ/M(1)^\circ$ is given by either path in the commutative diagram
\[\begin{tikzcd}
\pi_1^\top(M(1)^\circ,b_0)\ar[r,"E_*"] & \pi_1^\top(\cM(1)^\an,E_{b_0})\ar[r]\ar[d] & \pi_1^\top(\cM(1)^\an,E_{b_0})/ Z \ar[d] \\
 & \Aut(\cM_{E_{b_0}})\ar[r] & \Aut(\cM_{E_{b_0}}/\{[\pm1]\})\ar[r,"\alpha_*"] & \Aut(M_{b_0})
\end{tikzcd}\]
\end{enumerate}
\end{prop}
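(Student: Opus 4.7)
The plan is to deduce both parts from the universal property of the coarse moduli space, combined with the two facts established in the preamble: the inclusion $\Aut(E_{b_0})\hookrightarrow\pi_1^\top(\cM(1)^\an,E_{b_0})$ identifies $\Aut(E_{b_0})$ with the center $Z$ compatibly with both groups' actions on $\cM_{E_{b_0}}$, and $\Aut(E_{b_0}) = \{[\pm 1]\}$ because $j(E_{b_0}) \ne 0,1728$.

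For Part (\ref{prop_coarse_fibers1}), I would first observe that any two points of $\cM_{E_{b_0}}$ differing by the $\{[\pm 1]\}$-action are isomorphic as objects of $\cM$ above $E_{b_0}$, hence identified in the coarse space $M$; so the surjection $\cM_{E_{b_0}}\twoheadrightarrow M_{b_0}$ factors through $\cM_{E_{b_0}}/\{[\pm 1]\}$ to define $\alpha$. Conversely, if two points of $\cM_{E_{b_0}}$ share an image in $M_{b_0}$, they must be isomorphic in $\cM$, and any such isomorphism projects in $\cM(1)^\an$ to an element of $\Aut(E_{b_0}) = \{[\pm 1]\}$, yielding injectivity. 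Surjectivity reduces to surjectivity of $\cM_{E_{b_0}}\to M_{b_0}$: since the preimage of $b_0$ under $\cM(1)^\circ\to M(1)^\circ$ is the single isomorphism class of $E_{b_0}$, any point of $M_{b_0}$, being in the image of $\cM^\circ\to M^\circ$, lifts to some object of $\cM$ over an elliptic curve isomorphic to $E_{b_0}$, and this lift can be transported to $\cM_{E_{b_0}}$.

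For Part (\ref{prop_coarse_fibers2}), equivariance of $\alpha$ for the action of $\pi_1^\top(M(1)^\circ,b_0)$ via $E_*$ is already produced in the preamble: the composition $\cM^\circ_B\to\cM^\circ\to M^\circ$ is a map of covers of $B = M(1)^\circ$, so its restriction to fibers over $b_0$ intertwines the monodromy actions. It remains only to verify that the action on $\cM_{E_{b_0}}/\{[\pm 1]\}$ factors through $\pi_1^\top(\cM(1)^\an,E_{b_0})/Z$, which is immediate: by the preamble's compatibility statement, the image of $Z$ acts on $\cM_{E_{b_0}}$ via the natural $\Aut(E_{b_0}) = \{[\pm 1]\}$-action, hence trivially on the quotient. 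Pasting these two facts together produces the commutative diagram in the statement.

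The only step requiring genuine argument is the bijectivity claim in Part (\ref{prop_coarse_fibers1}); everything else is formal diagram chasing once one trusts the preamble's identification $\Aut(E_{b_0}) = Z$ and its compatibility with the monodromy action on the fiber.
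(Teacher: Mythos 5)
Your proposal is correct and follows essentially the same route as the paper: the paper's own proof simply points back to the preamble (where the bijection $\alpha$ and its $\pi_1^\top(M(1)^\circ,b_0)$-equivariance are established), and your argument uses exactly those two facts, merely spelling out the routine verification of bijectivity via the universal property of the coarse space and the identification $Z\cong\Aut(E_{b_0})=\{[\pm1]\}$. The paper's only additional remark---that the composite $\pi_1^\top(M(1)^\circ,b_0)\to\Aut(M_{b_0})$ is independent of the choice of $E$ because it equals the intrinsically defined monodromy of $M^\circ/M(1)^\circ$---is implicit in your formulation, so nothing is missing.
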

\begin{proof} 
The first statement was established in the discussion before the Proposition.  To check the second, it remains to prove that the composition $\pi_1^\top(M(1)^\circ,b_0)\rightarrow\Aut(M_{b_0})$ is independent of the choice of $E$.  This follows as the composition is simply the monodromy of $M^\circ/M(1)^\circ$, which makes no mention of $E$. It also follows from \S7 and \S8 of \cite{kod63} that the map $E_*$ is surjective, though we will not need this.
\end{proof}

For any elliptic curve $E/M(1)^\circ$ with ``$j$-invariant $j$'', the local monodromy on homology around $j = 0,1728$ or $\infty$ relative to some framing of a nearby fiber is conjugate to $\pm\spmatrix{1}{1}{-1}{0},\pm\spmatrix{0}{1}{-1}{0}$ or $\pm\spmatrix{1}{1}{0}{1}$ respectively. (See \cite[\S7-8]{kod63}, or we can also check this for our choice of $E$ as in (\ref{eq_j_invariant_j}) using Tate's algorithm.) By the commutativity of (\ref{eq_monodromy_vs_pi1}), the proposition also implies:

\begin{cor}\label{cor_analytic_ramification} Let $\cM\rightarrow\cM(1)^\an$ be any finite cover, let $M\rightarrow M(1)^\an$ be the corresponding map of coarse spaces, and let $\ol{M}\rightarrow\ol{M(1)^\an}$ denote the map of their smooth compactifications. Fix an isomorphism $\bZ^2\cong H_1(E_{b_0},\bZ)$ corresponding to some framing, and let $\psi : \pi_1^\top(\cM(1)^\an,E_{b_0})\rightiso\SL_2(\bZ)$ be the isomorphism induced by the framing via \eqref{eq_pi_1_isom} and \eqref{eq_MCG_on_homology}. Then letting $\SL_2(\bZ)$ act on $\cM_{E_{b_0}}$ via $\psi$, the fibers of $\ol{M}\rightarrow\ol{M(1)^\an}$ above $j = 0,1728,\infty$ are in bijection with the orbit spaces
$$(\cM_{E_{b_0}}/\{[\pm1]\})/\langle\spmatrix{1}{1}{-1}{0}\rangle, (\cM_{E_{b_0}}/\{[\pm1]\})/\langle\spmatrix{0}{1}{-1}{0}\rangle,  (\cM_{E_{b_0}}/\{[\pm1]\})/\langle\spmatrix{1}{1}{0}{1}\rangle$$
respectively. In each case, ramification indices correspond to orbit sizes of the corresponding monodromy matrix. In particular, all ramification indices of points above $j = 0$ must divide 3, all ramification indices above $j = 1728$ must divide 2, and all ramification indices above $j = \infty$ must divide the order of $\spmatrix{1}{1}{0}{1}$ acting on $\Aut(\cM_{E_{b_0}}/\{[\pm1]\})$.

\end{cor}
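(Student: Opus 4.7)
The plan is to reduce the corollary to Proposition \ref{prop_coarse_fibers} combined with the classical dictionary between fibers of a ramified cover over a puncture and orbits of a local monodromy loop. Specifically, for each puncture $q \in \{0, 1728, \infty\}$ of $\ol{M(1)^\an}$, I would choose a base point $b_0$ in a small punctured disk around $q$ and let $\gamma_q \in \pi_1^\top(M(1)^\circ, b_0)$ be the corresponding local monodromy loop. Classical covering space theory identifies the fiber of $\ol{M} \to \ol{M(1)^\an}$ above $q$ with the orbit space of $\gamma_q$ acting on $M_{b_0}$, and the ramification index at each fiber point with the length of the corresponding orbit. Proposition \ref{prop_coarse_fibers}(1) identifies $M_{b_0}$ with $\cM_{E_{b_0}}/\{[\pm 1]\}$, and part (2) of the same proposition tells us that the action of $\gamma_q$ on this set is induced by $\psi(E_*(\gamma_q)) \in \SL_2(\bZ)$ modulo the center $\{\pm I\}$.

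Next, I would identify $\psi(E_*(\gamma_q))$ up to sign. Diagram \eqref{eq_monodromy_vs_pi1} says that $\psi \circ E_*$ coincides with the monodromy representation of the universal family $E \to M(1)^\circ$ on $H_1(E_{b_0}, \bZ)$. By the discussion immediately preceding the corollary, which can be verified via Kodaira's classification or by running Tate's algorithm on the Weierstrass model \eqref{eq_j_invariant_j}, the local monodromy around $q = 0, 1728, \infty$ is conjugate in $\SL_2(\bZ)$ to $\pm\spmatrix{1}{1}{-1}{0}, \pm\spmatrix{0}{1}{-1}{0}, \pm\spmatrix{1}{1}{0}{1}$, respectively. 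Since orbit sizes and orbit spaces on $\cM_{E_{b_0}}/\{[\pm 1]\}$ are invariant under $\SL_2(\bZ)$-conjugation, and the $\pm$ sign is killed by passing to the quotient by $\{\pm I\}$, combining with the previous paragraph gives the claimed bijections of fibers with orbit spaces and identifies ramification indices with orbit sizes.

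Finally, the numerical bounds follow from the orders of these three matrices modulo $\{\pm I\}$: $\spmatrix{1}{1}{-1}{0}$ has order $3$, $\spmatrix{0}{1}{-1}{0}$ has order $2$, while $\spmatrix{1}{1}{0}{1}$ has infinite order in $\PSL_2(\bZ)$ but acts on the finite set $\cM_{E_{b_0}}/\{[\pm 1]\}$ through some element of finite order. Hence orbit sizes over $j = 0$ and $j = 1728$ must divide $3$ and $2$ respectively, while those over $j = \infty$ must divide the order of the unipotent action on the fiber, as claimed. The main ``obstacle'' here is bookkeeping rather than mathematical substance: the matrix identifications above are only well-defined up to $\SL_2(\bZ)$-conjugation (depending on how the chosen framing transports to a neighborhood of the puncture), but this ambiguity is invisible at the level of orbit sizes and orbit-space cardinalities, so no further argument is needed.
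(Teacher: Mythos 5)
Your proposal is correct and follows essentially the same route as the paper: it combines Proposition \ref{prop_coarse_fibers} with the commutativity of \eqref{eq_monodromy_vs_pi1}, the Kodaira/Tate computation of the local monodromy matrices, and the standard dictionary between fibers over a puncture and orbits of the local monodromy loop, observing (as the paper's proof does) that the various conjugation and sign ambiguities are invisible at the level of orbit spaces and orbit sizes. No gaps; your write-up merely spells out the steps the paper leaves implicit in its one-line proof.
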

\begin{proof} The only thing to note is that all the indeterminacies due to the various ``up to conjugations'' that appear do not affect the final result.
\end{proof}

\subsection{Arithmetic moduli of elliptic curves with \texorpdfstring{$G$}{G}-structures}\label{ss_arithmetic_moduli}
Let $G$ be a finite group. In this section we will define the moduli stacks $\cM(G)$ which will later be used to obtain the desired three-point covers. Throughout this section we will work over the base $\bZ[1/|G|]$; we do not discuss what happens at primes dividing $|G|$.

A $G$-torsor over a scheme $X$ is a finite \'{e}tale morphism $f : Y\rightarrow X$ equipped with an $X$-linear action of $G$ on $Y$ which acts freely and transitively on geometric fibers. A morphism of $G$-torsors over $X$ is an $X$-linear, $G$-equivariant morphism. Such morphisms, if they exist, are necessarily isomorphisms. If $Y$ is connected then we will say that $f$ is a Galois cover (or a $G$-Galois cover or a $G$-cover if we wish to emphasize the Galois group).

Let $\cM(1)$ denote the moduli stack of elliptic curves. An object of $\cM(1)$ over a scheme $S$ is an elliptic curve $E/S$ (equipped with a zero section $O : S\rightarrow E$), and a morphism $E/S\rightarrow E'/S'$ is given by a cartesian diagram
\[\begin{tikzcd}
E\ar[r]\ar[d] & E'\ar[d] \\
S\ar[r] & S'
\end{tikzcd}\]
which respects the corresponding zero sections. Let $M(1)$ denote its coarse moduli scheme. It is well known that $M(1)$ is the $j$-line  $ \Spec (\bZ[1/|G|][j])$.

Let $\cT_G^{\pre} : \cM(1)\rightarrow\textbf{Sets}$ be the presheaf which associates to any object $E/S$ of $\cM(1)$ the set of isomorphism classes of $G$-torsors over the punctured elliptic curve $E^\circ := E - O$ with geometrically connected fibers over $S$. Let $\cT_G$ denote the sheafification of $\cT_G^{\pre}$ relative to the \'{e}tale topology\footnote{Here, the \'{e}tale topology on $\cM(1)$ is the inherited topology from the big \'{e}tale site $(\textbf{Sch}/\bZ[1/|G|])_{\et}$. That is to say, a family of maps in $\cM(1)$ with fixed target $E/S$ is a covering family if their images in $\textbf{Sch}/\bZ[1/|G|]$ is an \'{e}tale covering.}; a $G$-structure on $E/S$ is by definition an element of $\cT_G(E/S)$. In other words, a $G$-structure on $E/S$ is given by a collection of $G$-torsors defined \'{e}tale locally on $S$ whose common restrictions are isomorphic.   A more concrete combinatorial characterization of $G$-structures will be given in Theorem \ref{thm_basic_properties}(3).

Let $\cM(G)$ denote the category whose objects are pairs $(E/S,\alpha)$, where $\alpha\in\cT_G(E/S)$ is a $G$-structure, and morphisms are morphisms in $\cM(1)$ which respect the $G$-structure. There is a natural forgetful functor $\pi : \cM(G)\rightarrow\cM(1)$.

\begin{remark}\label{remarks} Here we record some technical remarks.
\begin{enumerate} 

\item Note that while the construction of $\cM(G)$ here differs from that given in \cite{chen18}, the resulting objects are isomorphic. To see this, one checks that by Galois theory, one obtains a natural map from the presheaf $\cT_G^{pre}$ to the presheaf of \cite[Definition 2.2.3]{chen18} which is locally an isomorphism. Thus, their sheafifications are isomorphic.
    
\item If $S = \Spec k$ where $k$ is a separably closed field, then there are no nontrivial \'{e}tale coverings of $\Spec k$, so in this case a $G$-structure on any elliptic curve $E$ over $k$ is the same as a connected $G$-torsor on $E^\circ$.

\item In the description of $G$-structures given above, note that 
there is no ``cocycle condition'' requiring that the isomorphisms are compatible.  If $G$ has trivial center, then geometrically connected $G$-torsors over $E^\circ/S$ have no nontrivial automorphisms (and hence the isomorphisms are automatically compatible), so by descent the set of $G$-structures on $E/S$ is precisely the set of isomorphism classes of geometrically connected $G$-torsors over $E^\circ/S$. However, if $G$ has nontrivial center, there can exist $G$-structures on $E/S$ which do not come from a $G$-torsor on $E^\circ/S$; this is reflected in the fact that $\cT_G$ is defined as a sheafification.  The benefit of setting things up this way is that the forgetful functor $\cM(G)\rightarrow\cM(1)$ is \emph{representable}, and in fact finite \'{e}tale.  If we had instead defined $\cT_G^{\pre}$ as a presheaf of \emph{groupoids} of $G$-torsors (which would implicitly include a cocycle condition), then $\cT_G^{\pre}$ would already be a sheaf (of groupoids), and this would lead to a Hurwitz stack in the style of \cite{br11} or \cite{ACV03}. Such stacks have a more natural ``moduli interpretation'', but the tradeoff is that its forgetful map to $\cM(1)$ is typically not representable, hence not finite.

    \item \label{analysisarithemtic} (Analysis to arithmetic) As seen in \S\ref{ss_analytic_theory}, the analytic theory of the moduli of elliptic curves yields very concrete descriptions of finite covers of the moduli stack of elliptic curves. Because of this concreteness, and to avoid introducing additional notation to deal with profinite groups, when discussing Galois theory we will sometimes prefer to state the analytic version of the corresponding algebraic statement over $\bC$. By the Riemann existence theorem for stacks (see \cite{noo05} Theorem 20.1), nothing is lost in this translation. Finally, to pass from algebraic stacks over $\bC$ to stacks over $\Qbar$, one should keep in mind the philosophy that ``a base change between algebraically closed fields of characteristic $0$ does not change the fundamental group''. For schemes, this follows from the K\"{u}nneth formula for fundamental groups (see \cite[Expos\'{e} XIII Proposition 4.6]{SGA1}) . For $\cM(1)$, one may use the fact that one may find a geometrically connected finite \'{e}tale Galois cover $U\rightarrow\cM(1)$ with $U$ a scheme. One then writes $\pi_1(\cM(1))$ as an extension of the Galois group by $\pi_1(U)$. The Galois group is not changed by algebraically closed base extension and by the Kunneth formula neither is $\pi_1(U)$. The ``short five lemma'' then yields the invariance of $\pi_1(\cM(1))$ under change of algebraically closed fields. In what follows we will use this result freely.

\end{enumerate}
\end{remark}

Next we record some of the salient properties of $\cM(G)$.

\begin{thm}\label{thm_basic_properties} Let $G$ be a finite group, and let $\pi : \cM(G)\rightarrow\cM(1)$ be the forgetful map. We will work universally over $\Spec\bZ[1/|G|]$ unless otherwise stated.
\begin{enumerate}
    \item \label{part_etale} (\'{E}taleness) The category $\cM(G)$ is a Deligne-Mumford stack and the forgetful functor $\pi : \cM(G)\rightarrow\cM(1)$ is finite \'{e}tale.
    \item \label{part_coarse} (Coarse moduli and ramification) $\cM(G)$ admits a coarse moduli scheme $M(G)$ which is a normal affine scheme finite over $M(1)\cong\Spec\bZ[1/|G|][j]$, and smooth of relative dimension $1$ over $\bZ[1/|G|]$. Moreover, $M(G)$ is \'{e}tale over the complement of the sections $j = 0$ and $j = 1728$ in $M(1)$. If either $6\mid |G|$ or $S$ is a regular Noetherian $\bZ[1/|G|]$-scheme, then $M(G)\times_{\bZ[1/|G|]} S$ is the coarse moduli scheme of $\cM(G)\times_{\bZ[1/|G|]} S$, and is normal.
    \item \label{part_combinatorial} (Combinatorial description of $G$-structures) Let $\bL$ be the set of prime divisors of $|G|$. For any profinite group $\pi$, let $\pi^\bL$ denote the maximal pro-$\bL$-quotient of $\pi$. Let $E$ be an elliptic curve over a scheme $S$. Let $x\in E^\circ$ be a geometric point, and let $s$ be its image in $S$. The sequence $E^\circ_s\hookrightarrow E\rightarrow S$ induces 
        an outer representation
    $$\rho_{E,x} : \pi_1^{\et}(S,s)\rightarrow\Out(\pi_1^\bL(E^\circ_s,x))$$
   which, by precomposition, gives a natural right action of $\pi_1(S,s)$ 
    on the set
    $$\Epi^\ext(\pi_1^\bL(E^\circ_s,x),G) := \Epi(\pi_1^\bL(E^\circ_s,x),G)/\Inn(G)$$
    of surjective morphisms $\pi_1^\bL(E^\circ_s,x)\rightarrow G$ up to conjugation in $G$. By the Galois correspondence, this action corresponds to a finite \'{e}tale morphism $F\rightarrow S$. There is a cartesian diagram:
    \[\begin{tikzcd}
    F\ar[d]\ar[r] & \cM(G)\ar[d,"\pi"]\\
    S\ar[r,"E/S"] & \cM(1).
    \end{tikzcd}\]
    In particular, since the primes dividing $|G|$ are in $\bL$, this diagram defines a bijection
    $$\cT_G(E/S)\stackrel{\sim}{\longrightarrow}\{\varphi\in\Epi^\ext(\pi_1(E^\circ_s,x),G) \;\big|\; \varphi\circ\rho_{E,x}(\sigma) = \varphi\quad\text{for all $\sigma\in\pi_1(S,s)$}\},$$
    where we recall that $\cT_G(E/S)$ is by definition the set of $G$-structures on $E/S$.
    \item \label{part_fibers} (Fibers) Let $E$ be an elliptic curve over an algebraically closed field $k$ of characteristic not dividing $|G|$, and let $x_0\in E^\circ(k)$. Let $x_E : \Spec k\rightarrow\cM(1)$ be the geometric point given by $E$ and let $\pi : \cM(G)\rightarrow\cM(1)$ be the forgetful map. Then the geometric fiber $\pi^{-1}(x_E)$ is in bijection with the set of isomorphism classes of connected $G$-torsors over $E^\circ$. By Galois theory, taking monodromy representations gives a bijection
    \begin{equation} \label{eq:epiext}
    \pi^{-1}(x_E)\stackrel{\sim}{\longrightarrow}\Epi^{\ext}(\pi_1^{\et}(E^\circ,x_0),G) := \Epi(\pi_1^{\et}(E^\circ,x_0),G)/\Inn(G),
    \end{equation}
    If $E$ is an elliptic curve over $\bC$ and $x_0\in E^\circ(\bC)$, by Galois theory taking monodromy representations gives a bijection
    $$\pi^{-1}(x_E)\stackrel{\sim}{\longrightarrow}\Epi^{\ext}(\pi_1^{\top}(E^\circ(\bC),x_0),G).$$
    In particular, if $G$ is not generated by two elements, then $\cM(G)$ is the empty stack.
    \item \label{part_monodromy} (Monodromy) Let $E$ be an elliptic curve over $\bC$, $x_0\in E^\circ(\bC)$, $\Pi := \pi_1^{top}(E^\circ(\bC),x_0)$, and let $x_E : \Spec\bC\rightarrow\cM(1)$ be the geometric point corresponding to $E$. Then $\Pi$ is a free group of rank $2$, and the canonical map $\Pi\rightarrow H_1(E,\bZ)$ induces an isomorphism $\Pi/[\Pi,\Pi]\cong H_1(E,\bZ)$. Let $\Gamma_E$ denote the orientation-preserving mapping class group of $E^\circ(\bC)$, and let $\Out^+(\Pi)$ be the preimage of $\SL(H_1(E,\bZ))$ under the canonical map
    $$\alpha : \Out(\Pi)\rightarrow\GL(H_1(E,\bZ)).$$
    The outer action of $\Gamma_E$ on $\Pi$ is faithful and identifies $\Gamma_E$ with $\Out^+(\Pi)$. As $\alpha$ is an isomorphism, it induces isomorphisms $\Gamma_E \stackrel{\sim}{\longrightarrow} \Out^+(\Pi) \stackrel{\sim}{\longrightarrow} \SL(H_1(E,\bZ))$.  The analytic theory identifies $\Gamma_E$ with the topological fundamental group of the analytic moduli stack of elliptic curves (with base point $E$), from which we obtain a canonical isomorphism $\Gamma_E^\wedge\stackrel{\sim}{\longrightarrow}\pi_1^{\et}(\cM(1)_{\Qbar},x_E)$ (where ${}^\wedge$ denotes profinite completion). In particular, there is a canonical injective map $\Gamma_E\hookrightarrow\pi_1^{\et}(\cM(1)_{\Qbar},x_E)$ with dense image. With respect to this map, the bijection
    $$\pi^{-1}(x_E)\stackrel{\sim}{\longrightarrow}\Epi^{\ext}(\Pi,G)$$
    of (4) is $\Gamma_E$-equivariant.
    To summarize, we have canonical isomorphisms
    $$\pi_1^{\top}(\cM(1)^\an,x_E) \cong\Gamma_E\cong \Out^+(\Pi)\cong \SL(H_1(E,\bZ))$$
    and
    $$\pi_1^{\et}(\cM(1)_{\Qbar},x_E) \cong \pi_1^{\top}(\cM(1)^\an,x_E)^\wedge.$$
    \item \label{part_functoriality} (Functoriality) Let $\cC$ denote the category whose objects are finite groups generated by two elements, and whose morphisms are surjective homomorphisms. If $f : G_1\twoheadrightarrow G_2$ is a morphism in $\cC$, then we obtain a map $\cT_f^\pre : \cT_{G_1}^\pre\rightarrow\cT_{G_2}^\pre$ defined by sending the $G_1$-torsor $X^\circ\rightarrow E^\circ$ to the $G_2$-torsor $X^\circ/\ker(f)\rightarrow E^\circ$ where the $G_2$-action is given by the canonical isomorphism $G_2\cong G_1/\ker(f)$. This induces a map $\cT_f : \cT_{G_1}\rightarrow\cT_{G_2}$, whence a map
$$\cM(f) : \cM(G_1)\rightarrow\cM(G_2).$$
The maps $\cM(f)$ make the rule sending $G\in\cC$ to the map $\cM(G)\rightarrow\cM(1)$ into an epimorphism-preserving functor from $\cC$ to the category\footnote{Here we mean the (1-)category associated to the (2,1)-category, see \cite[\S4]{noo04}.} of stacks finite \'{e}tale over $\cM(1)$. Let $E,\Pi,\Gamma_E$ be as in (4); then in terms of the Galois correspondence for covers of $\cM(1)$, given a surjection $f : G_1\rightarrow G_2$, the induced map $\cM(G_1)\rightarrow\cM(G_2)$ (of $\bZ[1/|G|]$-stacks) is given by the $\Gamma_E$-equivariant map of fibers
    $$f_* : \Epi^{\ext}(\Pi,G_1)\rightarrow\Epi^{\ext}(\Pi,G_2)$$ 
    obtained by post-composing every surjection with $f$. 
    
    \item \label{part_cofinality} (Cofinality --- Asada's theorem) For any stack $\cM$ finite \'{e}tale over $\cM(1)_{\Qbar}$, there is a finite group $G$ such that $\cM$ is dominated by some connected component of $\cM(G)_{\Qbar}$.
\end{enumerate}
\end{thm}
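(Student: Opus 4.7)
The plan is to establish parts (1) and (3) first through a Galois-theoretic analysis, then derive the remaining parts as consequences, using the analytic theory developed in \S\ref{ss_analytic_theory} for the statements involving $\Qbar$ or $\bC$. The core observation is that a geometrically connected $G$-torsor over $E^\circ$, with $E$ an elliptic curve over an algebraically closed field $k$ of characteristic not dividing $|G|$, is classified by a surjection $\pi_1^{\et}(E^\circ) \twoheadrightarrow G$ up to conjugation in $G$ (Galois correspondence), and that the relevant pro-$\bL$ quotient $\pi_1^\bL(E^\circ)$ is independent of the characteristic of $k$ (outside $\bL$) by Grothendieck's tame specialization theorem.

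For (3) and (1): Given a family $E/S$ with geometric point $x$ over $s \in S$, the homotopy exact sequence of the fibration $E^\circ_s \hookrightarrow E^\circ \to S$ produces the outer Galois representation $\rho_{E,x} : \pi_1(S,s) \to \Out(\pi_1^\bL(E^\circ_s, x))$. The key (and subtle) point is that the \'etale sheafification in the definition of $\cT_G$ is precisely what drops the cocycle condition relating \'etale-local $G$-torsors, so classifying $G$-structures amounts exactly to taking $\pi_1(S,s)$-invariants on the finite set $\Epi^{\ext}(\pi_1^\bL(E^\circ_s, x), G)$ --- without requiring a global choice of $G$-torsor. The Galois correspondence applied to this finite $\pi_1(S,s)$-set produces the desired finite \'etale cover $F \to S$ of (3), and functoriality of this construction in $E/S$ yields representability of $\pi : \cM(G) \to \cM(1)$ by a finite \'etale morphism, giving (1). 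Parts (4), (5), (6) then follow from (3): part (4) is the specialization to geometric points (with $\bC$-coefficients handled by Riemann existence); the monodromy identification of (5) arises from the analytic isomorphism $\pi_1^{\top}(\cM(1)^{\an}, x_E) \cong \Gamma_E$ combined with the Dehn--Nielsen--Baer theorem $\Gamma_E \cong \Out^+(\Pi)$ for the punctured torus; and (6) is tautological from the description of fibers as $\Epi^{\ext}(\Pi, G)$ --- a surjection $f : G_1 \twoheadrightarrow G_2$ induces post-composition on surjections, which is manifestly $\Gamma_E$-equivariant.

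Part (2) is derived from (1) by standard DM stack theory: Keel--Mori produces the coarse moduli scheme $M(G)$, which inherits affineness, finiteness over $M(1)$, and normality from the fact that $\cM(G) \to \cM(1)$ is finite \'etale and $\cM(1)$ is smooth of relative dimension $1$ over $\bZ[1/|G|]$. \'Etaleness of $M(G) \to M(1)$ over the complement of $j = 0, 1728$ follows because $\cM(1) \to M(1)$ is an isomorphism away from these two points (the generic automorphism group $\{\pm 1\}$ acts trivially on fibers of $\cM(G) \to \cM(1)$ in the sense needed). The base-change statements reduce to the standard fact that coarse moduli formation commutes with flat base change, combined with a direct check at $j = 0, 1728$ managed by the regularity/divisibility hypotheses.

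The main obstacle is part (7), which is the content of Asada's theorem. Unlike the other parts, which reduce cleanly via the combinatorial description to standard Galois-theoretic and stack-theoretic facts, this assertion requires genuine input about the structure of $\pi_1^{\et}(\cM(1)_{\Qbar})$ --- namely, that the family $\{\cM(G)\}$ for $G$ ranging over finite 2-generated groups is cofinal among finite \'etale covers of $\cM(1)_{\Qbar}$. My proposal is to cite Asada's original proof (or the subsequent effective refinements by Bux--Ershov--Rapinchuk and Ben-Ezra--Lubotzky mentioned in the introduction); conceptually, this is the statement that the image of $\Gamma_E$ in $\prod_G \Aut(\Epi^{\ext}(\Pi, G))$ is dense, which translates the profinite completion property of $\Gamma_E \cong \Out^+(\Pi)$ into cofinality of the $\cM(G)$.
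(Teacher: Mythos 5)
Most of your outline runs parallel to the paper's proof, which is largely by citation: parts (1), (3) and (6) are \cite[Propositions 3.1.4, 2.2.6, 3.2.8]{chen18}, part (4) is part (3) specialized to $S=\Spec k$, part (5) is Nielsen's theorem plus the Riemann existence theorem for stacks, and part (7) is Asada's theorem, all exactly as you propose. The genuine gap is in your part (2). You justify \'{e}taleness of $M(G)\rightarrow M(1)$ away from $j=0,1728$ by asserting that $\cM(1)\rightarrow M(1)$ is an isomorphism there and that the generic automorphism group $\{\pm1\}$ ``acts trivially on fibers of $\cM(G)\rightarrow\cM(1)$ in the sense needed''. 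Both assertions fail: over the locus $j\neq 0,1728$ the map $\cM(1)\rightarrow M(1)$ is a $\bZ/2\bZ$-gerbe, not an isomorphism, and $[-1]$ acts nontrivially on the fibers of $\cM(G)\rightarrow\cM(1)$ in general (for $G$ cyclic it negates level structures; this nontrivial action is precisely why Proposition \ref{prop_coarse_fibers} identifies the coarse fiber with the quotient $\cM_{E_{b_0}}/\{[\pm1]\}$, and why the degrees $n_\ell$ in the body of the paper involve quotients into blocks of size $4$). What actually yields \'{e}taleness is either the deformation-theoretic fact that automorphism groups of points of $\cM(G)$ over $j\neq 0,1728$ are contained in $\Aut(E)=\{\pm1\}$ and act trivially on the universal deformation of $E$, or, as the paper argues, passing to a representable finite \'{e}tale cover $\cM\rightarrow\cM(G)$ (adding full level $p^2$ structure), invoking \cite[Corollary 8.4.5]{km85} to get \'{e}taleness of $\cM_U\rightarrow U$, and then using regularity, purity of the branch locus, and multiplicativity of ramification indices to deduce \'{e}taleness of $M(G)_U\rightarrow U$. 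As written, your argument does not establish this step.

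A second, smaller defect in your part (2): the base-change claim cannot be reduced to ``coarse spaces commute with flat base change'', because the base changes that matter for the application (to residue fields $\bF_p$, which is how the tame three-point covers in characteristic $p$ are extracted) are not flat over $\bZ[1/|G|]$. The hypotheses ``$6\mid |G|$ or $S$ regular Noetherian'' exist precisely to handle non-flat base change: when $6\mid|G|$ the automorphism groups of all geometric points have invertible order, so the stack is tame and coarse space formation commutes with arbitrary base change, and otherwise a separate argument is required; the paper cites \cite[Proposition 3.3.4]{chen18} for this, together with the description of $M(G)$ as the quotient of a smooth representable moduli problem by a finite group for normality and smoothness of relative dimension $1$. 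Your sketch of these points should be replaced by (or reduced to) those references or arguments.
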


\begin{proof} Part \eqref{part_etale} is \cite[Proposition 3.1.4]{chen18} . Everything in \eqref{part_coarse} except for normality and \'{e}taleness is  \cite[Proposition 3.3.4]{chen18}.  The normality of $M(G)$ follows from the fact that $M(G)$ is the quotient of a smooth representable moduli problem by a finite group  (see \cite[\S3.3.3]{chen18}).  Let $U\subset M(1)$ be the complement of $j = 0,1728$; to see $M(G)$ is \'{e}tale over the complement of $U$, consider a finite \'{e}tale surjection $\cM\rightarrow\cM(G)$ with $\cM$ representable (we may for example take $\cM$ to be the fiber product over $\cM(1)$ of $\cM(G)$ with the moduli stack of elliptic curves with full level $p^2$ structure for some $p\mid |G|$).
By \cite[Corollary 8.4.5]{km85}, the map $\cM_U\rightarrow U$ is \'{e}tale. Since $\cM,M(G)$ and $M(1)$ are regular, by purity of the branch locus we are reduced to checking \'{e}taleness of extensions of complete discrete valuation rings. Since ramification indices of such extensions are multiplicative, \'{e}taleness of the composite $\cM_U\rightarrow\cM(G)_U\rightarrow U$ implies \'{e}taleness of $\cM(G)_U\rightarrow U$. Part \eqref{part_combinatorial} is \cite[Proposition 2.2.6(1,2)]{chen18}. Part \eqref{part_fibers} follows from part \eqref{part_combinatorial}, setting $S = \Spec k$.
For \eqref{part_monodromy}, a theorem of Nielsen  gives that  $\alpha$ is an isomorphism \cite[Theorem 3.1]{oz81}, and the isomorphism $\Gamma_E^\wedge\cong\pi_1(\cM(1)_{\Qbar},E)$ follows from the Riemann existence theorem for stacks \cite[Theorem 20.4]{noo05}. The rest of \eqref{part_monodromy} is simply an unfolding of definitions, part \eqref{part_functoriality} is \cite[Proposition 3.2.8]{chen18}, and part \eqref{part_cofinality} is Asada's theorem (see \cite[Theorem 3.4.2]{chen18}, \cite{BER11}, \cite[\S7]{asa01}). 
\end{proof}

Using Belyi's theorem together with the fact that $\cM(1)$ admits a finite \'{e}tale cover by $\bP^1 -  \{0,1,\infty\}$, it follows from Theorem~\ref{thm_basic_properties}\eqref{part_cofinality} that every algebraic curve over $\Qbar$ is the quotient of a component of $M(G)_{\Qbar}$. In fact, we may say even more:

\begin{thm}\label{thm_faithful} In the category of finite \'{e}tale covers of $\cM(1)_\Qbar$, let $\cC$ denote the full subcategory generated by the connected components of $\cM(G)_\Qbar$. Let $C$ denote the set of isomorphism classes in $\cC$. Since $\cM(G)$ is defined over $\bQ$, $\Gal(\Qbar/\bQ)$ acts on $\pi_0(\cM(G)_{\Qbar})$, and hence on $C$. This action is \emph{faithful}.

Moreover, let $X$ be a smooth projective curve over $\Qbar$. Then there is a finite group $G$ and a component $\cM\subset\cM(G)_{\Qbar}$ such that: 
\begin{enumerate}
    \item There is a unique minimal subfield $K\subset\Qbar$ such that $\cM$ is the base change of a geometrically connected component $\cM_K$ of $\cM(G)_K$.
    
    \item  Let $M_K$ be the coarse scheme of $\cM_K$, and let $\ol{M_K}$ be its smooth compactification. Then there is a finite group $H\subset\Aut_K(\ol{M_K})$ such that $\ol{M_K}/H\otimes_K \Qbar\cong X$. 
\end{enumerate}
\end{thm}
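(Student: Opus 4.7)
I plan to deduce Part (1) from Part (2) using Belyi's theorem, and to establish Part (2) by applying Belyi's theorem to $X$, composing with the classical covering $Y(2)\to Y(1)$, and appealing to Asada's cofinality theorem (Theorem~\ref{thm_basic_properties}\eqref{part_cofinality}).

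For Part (1), given $\sigma\ne 1$ in $\Gal(\Qbar/\bQ)$, Belyi's theorem (which implies that $\Gal(\Qbar/\bQ)$ acts faithfully on isomorphism classes of smooth projective curves over $\Qbar$) produces a smooth projective curve $X/\Qbar$ with $\sigma(X)\not\cong X$. Applying Part (2) yields data $(G,\cM,K,H)$ with $\ol{M_K}/H\otimes_K\Qbar\cong X$. I claim $\sigma$ must move $\cM$: if instead $\sigma$ fixed $\cM$ in $\pi_0(\cM(G)_{\Qbar})$, then by minimality of $K$ we would have $\sigma\in\Gal(\Qbar/K)$, so $\sigma$ would fix $\ol{M_K}/H$ as a $K$-scheme and hence fix $X$ as a $\Qbar$-scheme, contradicting the choice of $X$.

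For Part (2), I would first apply Belyi's theorem to obtain a finite morphism $f\colon X\to\bP^1_{\Qbar}$ branched only over $\{0,1,\infty\}$. Identifying this target with $Y(2)=\bP^1_\lambda$, I would then compose with the classical degree-$6$, $S_3$-Galois covering $j\colon Y(2)\to Y(1)=\bP^1_j$ given by $\lambda\mapsto 256(\lambda^2-\lambda+1)^3/(\lambda(\lambda-1))^2$. This covering is ramified only over $\{0,1728,\infty\}$ with ramification indices $3,2,2$, so the composition $j\circ f$ is ramified only over $\{0,1728,\infty\}$ with ramification index $3$ at preimages of $0$, $2$ at preimages of $1728$, and positive even integers at preimages of $\infty$. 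Next I would take the Galois closure of $j\circ f$ over $Y(1)$, producing a Galois cover $Y\to Y(1)$ with group $\widetilde G$ and $X\cong Y/H$ for some $H\le\widetilde G$; since Galois-closure ramification indices are $\operatorname{lcm}$s of the original indices over conjugate points, $Y\to Y(1)$ remains ramified only over $\{0,1728,\infty\}$ with indices $3$, $2$, and a positive even integer.

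Since $3\mid 6$ and $2\mid 4$, the normal subgroup of $F_2=\pi_1(\bP^1_j-\{0,1728,\infty\})$ corresponding to $Y$ contains the normal closure of $\gamma_0^3$ and $\gamma_{1728}^2$, i.e.\ the kernel of the surjection $F_2\twoheadrightarrow\PSL_2(\bZ)$. Hence this cover comes from a normal finite-index subgroup $\Gamma''\triangleleft\SL_2(\bZ)$ containing $\{\pm I\}$, and the stack $\widetilde\cM:=[\cH/\Gamma'']$ is a finite \'etale Galois cover of $\cM(1)_{\Qbar}$ with group $\widetilde G\cong\SL_2(\bZ)/\Gamma''$ whose smooth compactified coarse scheme is $Y$. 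Applying Asada's theorem to $\widetilde\cM$ then produces a finite group $G$ and a component $\cM\subset\cM(G)_{\Qbar}$ together with a finite \'etale surjection $\cM\to\widetilde\cM$; the descent to a minimal extension $K/\bQ$ comes from the $\bQ$-structure on $\cM(G)$ via the stabilizer of $\cM$ in $\Gal(\Qbar/\bQ)$ acting on $\pi_0(\cM(G)_{\Qbar})$. The hard part will be realizing $X$ as a genuine quotient of $\ol M$ rather than just a subquotient: Asada's theorem provides $\cM\to\widetilde\cM$ as an \'etale cover, but the induced map $\ol M\to X$ need not be Galois, so the existence of $H^*\le\Aut_{\Qbar}(\ol M)$ with $\ol M/H^*\cong X$ is not automatic. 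I would resolve this either by iteratively taking the Galois closure of $\cM\to\widetilde\cM/H$ and reapplying Asada (with a termination argument bounding degrees), or more directly by constructing a surjection $\phi\colon F_2\twoheadrightarrow G$ for an explicitly-chosen $G$ with $\Stab_{\SL_2(\bZ)}([\phi])=\Gamma''$ --- equivalently, engineering $G$ so that the outer action $\SL_2(\bZ)\to\Out(G)$ induced by $\phi$ has kernel exactly $\Gamma''$ --- which would realize $\widetilde\cM$ itself as a component of $\cM(G)$ and then provide $H^*$ as a subgroup of the Galois group $\widetilde G$.
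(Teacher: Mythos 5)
Your reduction via Belyi and the passage to a cover of the $j$-line with branching indices dividing $3,2$ over $j=0,1728$ (hence to a finite-index subgroup of $\SL_2(\bZ)$ containing $-I$) is sound, and your deduction of faithfulness from Part (2) is essentially the paper's. But there is a genuine gap exactly where you flag it, and neither of your proposed repairs closes it. Asada's theorem (Theorem~\ref{thm_basic_properties}(7)) only gives that some component of some $\cM(G)_{\Qbar}$ \emph{dominates} your $\widetilde\cM$; the theorem requires the much more precise statement that the curve is a quotient of the compactified coarse scheme of a component by a finite group of automorphisms, and moreover a group defined over the \emph{minimal} field of definition $K$ of that component. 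Your option (i) (iterate Galois closure and Asada) comes with no argument that the process terminates or ever produces the required quotient structure. Your option (ii) asks to realize a prescribed finite-index normal subgroup $\Gamma''\trianglelefteq\SL_2(\bZ)$ as \emph{exactly} the stabilizer in $\SL_2(\bZ)\cong\Out^+(F_2)$ of a class in $\Epi^{\ext}(F_2,G)$; no construction is offered, and this is strictly stronger than what is known. The paper's proof imports precisely the missing ingredient: by Ellenberg--McReynolds \cite[Theorem 1.2]{EM12}, the subgroup $\Gamma_U'=\langle\Gamma_U,-I\rangle\le\Gamma(2)$ attached directly to the Belyi map (no Galois closure is taken) is the Veech group of an origami, and by Schmith\"usen's description \cite{Sch04} such Veech groups are exactly stabilizers of classes in $\Epi^{\ext}(F_2,G)/N_{S_d}(G)$ for some $G\le S_d$. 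This simultaneously produces the component $\cM\subset\cM(G)_{\Qbar}$ and the group $H=N_{S_d}(G)/C_{S_d}(G)$ realizing the curve as $\ol{M}/H$.

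A second, related defect of your route: even granting (ii), your group $H\le\widetilde G$ is a deck group of the Galois closure $Y\to\bP^1_j$ over $\Qbar$, and nothing in your argument makes its action descend to the minimal field $K$; the statement demands $H\subset\Aut_K(\ol{M_K})$. In the paper this is automatic because $N_{S_d}(G)/C_{S_d}(G)$ acts on $\cM(G)$ already over $\bQ$, commuting with the Galois action, so the quotient $\cM_K/H$ and hence $\ol{M_K}/H$ make sense over $K$. Your Galois-closure detour also changes the curve (from $X$ to $Y$), which is why you are forced into this harder exact-realization problem for a \emph{normal} subgroup; the paper avoids it by working with the (generally non-normal) subgroup $\Gamma_U$ itself, enlarged only by $-I$, and using Proposition~\ref{prop_coarse_fibers}(1) to see that adjoining $-I$ does not change the coarse scheme. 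Finally, your sketch of Part (1) (minimal $K$ = fixed field of the stabilizer of $\cM$ in $\Gal(\Qbar/\bQ)$) is the right statement but does require the descent argument via the homotopy exact sequence, i.e.\ the equivalence ``$\Gal(\Qbar/L)$ fixes $\cM$ in $\pi_0$ iff $L$ is a field of definition,'' which should be spelled out since the faithfulness argument leans on it.
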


In short, every curve over $\Qbar$ admits a model as a quotient of a component of $M(G)$ \emph{over the field of definition of that component}.

\begin{proof} 
The faithfulness of the action follows from the rest: if the action had a non-trivial kernel $\Gal(\ol{\bQ}/L)$, then every component of $\pi_0(\cM(G)_{\Qbar})$ would arise from base change of a component over $L$.  But taking $X$ to be an elliptic curve over $\Qbar$ with $j$-invariant outside $L$, $X$ cannot admit a model as quotient of a component $M(G)$ over the field of definition of that component.

To prove (1) and (2), the idea is to combine Belyi's theorem with an explicit version of Asada's theorem due to Ellenberg--Mcreynolds \cite{EM12}.  Let $X$ be a smooth projective curve over $\Qbar$. Fix an elliptic curve $E$ over $\bC$ and a basis for $\Pi := \pi_1(E^\circ(\bC))$, which we use to identify $H_1(E(\bC),\bZ)\cong\bZ^2$ and  $\pi_1(\cM(1)_{\Qbar},E)\cong\widehat{\SL_2(\bZ)}$. Let $\Gamma(2)'\le\SL_2(\bZ)$ be the subgroup generated by $\langle\spmatrix{1}{2}{0}{1},\spmatrix{1}{0}{2}{1}\rangle$. Then $\Gamma(2)'$ is a free group which has index 2 inside $\Gamma(2) := \Ker(\SL_2(\bZ)\rightarrow\SL_2(\bZ/2\bZ))$, and we have $\Gamma(2) = \langle\Gamma(2)',-I\rangle$ where $-I := \spmatrix{-1}{0}{0}{-1}$. 
Let $V$ be the finite \'{e}tale covering of $\cM(1)_{\Qbar}$ corresponding to (the closure in $\widehat{\SL_2(\bZ)}$ of) $\Gamma(2)'$. Since $\Gamma(2)'$ is torsion-free, $V$ is a scheme \cite[Theorem 3.5.3]{chen18}, and by Riemann--Hurwitz applied to the ramification description given in Corollary~\ref{cor_analytic_ramification}, $V\cong \bP^1_{\Qbar} - \{0,1,\infty\}$.  By Belyi's theorem, there is an open $U\subset X$ together with a finite \'{e}tale map $U\rightarrow V$. Thus, $U$ is finite \'{e}tale over $\cM(1)_{\Qbar}$, and hence corresponds to a finite index subgroup $\Gamma_U\le\Gamma(2)'$. Let $\Gamma_U' := \langle\Gamma_U,-I\rangle$. Let $\cU'$ denote the cover of $\cM(1)_{\Qbar}$ corresponding to (the closure of) $\Gamma_U'$. Then by Proposition~\ref{prop_coarse_fibers}(\ref{prop_coarse_fibers1}) the natural map $U\rightarrow \cU'$ induces an isomorphism on coarse schemes. By \cite[Theorem 1.2]{EM12},  $\Gamma_U'$ is the \emph{Veech group of an origami}. From the description of origami Veech groups given in \cite{Sch04} (specifically Corollary 2.7 and Lemma 2.8(2)), this means that there is an integer $d$ and a permutation group $G\le S_d$ such that $\Gamma_U'$ is the stabilizer inside $\SL_2(\bZ)\cong\Out^+(\Pi)$ of an element of
$$\Epi^\ext(\Pi,G)/N_{S_d}(G)$$
where the normalizer $N_{S_d}(G)$ of $G$ in $S_d$ acts on $G$ by conjugation inside $S_d$. This implies that $\cU'$ is the quotient of a component $\cM\subset\cM(G)_{\Qbar}$ by the action of $H := N_{S_d}(G)/C_{S_d}(G)$, where $C_{S_d}(G)$ is the centralizer of $G$ in $S_d$.

Now we address (1). First we define the $\Gal(\Qbar/\bQ)$-action on $C$. Fix a base point $x : \Spec\Qbar\rightarrow\cM(1)_\Qbar$, there is a homotopy exact sequence
\begin{equation}\label{eq_homotopy_exact_sequence}
    1\rightarrow\pi_1(\cM(1)_\Qbar,x)\rightarrow\pi_1(\cM(1)_\bQ,x)\rightarrow\Gal(\Qbar/\bQ)\rightarrow 1.
\end{equation}
Thus the action of $\pi_1(\cM(1)_\Qbar)$ on the geometric fiber $\pi^{-1}(x)$ of $\pi : \cM(G)\rightarrow\cM(1)$ can be viewed as the restriction of the action of $\pi_1(\cM(1)_\bQ)$. Galois theory gives a bijection between $\pi_0(\cM(G)_\Qbar)$ and the set of orbits of the former action, and via the exact sequence (\ref{eq_homotopy_exact_sequence}), we obtain an action of $\Gal(\Qbar/\bQ)$ on the orbits and hence on $\pi_0(\cM(G)_\Qbar)$. For any other base point $x'$, any isomorphism of the fiber functors associated to $x$ and $x'$ differ by conjugation in $\pi_1(\cM(1)_\Qbar,x)$, so this action of $\Gal(\Qbar/\bQ)$ on $\pi_0(\cM(G)_\Qbar)$ is independent of the choice of base point.

We say that a subfield $L\subset\Qbar$ is a field of definition of $\cM/\cM(1)_\Qbar$ if $\cM\rightarrow\cM(1)_\Qbar$ is the base change of a cover $\cM_L\rightarrow\cM(1)_L$. We claim that $L$ is a field of definition of $\cM/\cM(1)_\Qbar$ if and only if $\Gal(\Qbar/L)$ fixes $\cM\in\pi_0(\cM(G)_\Qbar)$. Indeed, let $O\subset \pi^{-1}(x)$ be the $\pi_1(\cM(1)_\Qbar)$-orbit corresponding to $\cM$. If $L$ is a field of definition, then comparing the exact sequence \eqref{eq_homotopy_exact_sequence} with the analogous sequence for $\cM(1)_L$, we find that the action of $\pi_1(\cM(1)_\bQ)$ on $\pi^{-1}(x)$ restricts to a $\pi_1(\cM(1)_L)$-action which preserves $O$.  In other words 
$\Gal(\Qbar/L)$ fixes $\cM$. Conversely, if $\Gal(\Qbar/L)$ fixes $\cM$, then we obtain an action of $\pi_1(\cM(1)_L)$ on $O$ which by Galois theory corresponds to a finite \'{e}tale cover of $\cM(1)_L$ whose base change to $\cM(1)_\Qbar$ is $\cM$, so $L$ is a field of definition. This establishes our claim, and also shows that the intersection $K$ of all fields of definition (which is also the fixed field of the stabilizer of $\cM$ under the action of $\Gal(\Qbar/\bQ)$) is the unique minimal field of definition. This proves (1).

For (2), the action of $H$ is defined on $\cM(G)_\bQ$, and hence we may form the quotient $\cM_K/H$. It follows from Proposition~\ref{prop_coarse_fibers}(\ref{prop_coarse_fibers1}) that the coarse scheme of $\cM_K/H$ is precisely $M_K/H$, where $M_K$ is the coarse scheme of $\cM_K$. Taking compactifications, we find that $\ol{M_K}/H$ is a $K$-model of $X$.
\end{proof}

\subsection{Obtaining tamely ramified 3-point covers} \label{ss_obtainingtame}
Given any connected component $\cM$ of $\cM(G)_{\Qbar}$, by Theorem \ref{thm_basic_properties}\eqref{part_coarse}, we find that the map from its coarse moduli scheme $M$ to the $j$-line
$$\pi : M\rightarrow M(1)_{\Qbar} \cong \Spec\Qbar[j]$$
describes a three-point cover with good reduction at all $p\nmid |G|$.  The purpose of this section is to show that reduction of this cover to any $p\nmid |G|$ is tamely ramified, i.e. that all ramification indices of $\pi$ are coprime to $6|G|$. It is an often overlooked consequence of Abhyankar's lemma that when the branch divisor has normal crossings and is mixed characteristic, tameness of the restriction to special fibers is automatic, without having to know anything about the ramification indices of the generic fiber. Nonetheless, after explaining tameness, we also describe the ramification indices in Proposition \ref{prop_ramification_indices}.

To obtain tameness, we will need the following well-known lemma for which we do not know a reference.  (A slightly more general version is stated without proof in the paragraph before \cite[\S4.2.3]{ACV03}.)   
    \begin{lemma}\label{lemma_smoothness_of_normalization} Let $S$ be a regular Noetherian scheme. Let $f : X\rightarrow S$ be smooth proper morphism. For a normal crossings divisor $D\subset X$ that is smooth over $S$, let $U := X - D$.  Let  $\pi : V\rightarrow U$ be finite \'{e}tale, and let $Y$ be the normalization\footnote{This is equivalent to the normalization of $X$ inside the function field of $V$, which is a finite extension of that of $X$. See \cite[0BAK]{stacks} for the definition of relative normalization.} of $X$ inside $V$. Suppose for every maximal point\footnote{a maximal point is a generic point of an irreducible component} $\eta\in D$, the integral closure of $\cO_{X,\eta}$ inside the function field of $V$ is tamely ramified over $\Spec\cO_{X,\eta}$. 
    \begin{enumerate}
    \item \label{lemma_smoothness_of_normalization1} The natural diagram
\[\begin{tikzcd}
V\ar[r,hookrightarrow]\ar[d,"\pi"] & Y\ar[d,"\ol{\pi}"] \\
U\ar[r,hookrightarrow] & X
\end{tikzcd}\]
is cartesian, $\ol{\pi}$ is finite flat, $Y$ is smooth over $S$, and for every $s\in S$ the restriction $\ol{\pi}_{X_s}$ is tamely ramified over $D_s$.

\item \label{lemma_smoothness_of_normalization2} Let $\tilde{D} \subset Y$ be the reduced closed subscheme corresponding to $\ol{\pi}^{-1}(D)$, then $\tilde{D}$ is finite \'{e}tale over $D$. For any irreducible component $Z\subset\tilde{D}$ and any $z,z'\in Z$, the ramification indices of $\ol{\pi}|_{Y_{f(\ol{\pi}(z))}}$ and $\ol{\pi}|_{Y_{f(\ol{\pi}(z'))}}$ at $z,z'$ are the same. 

\item \label{lemma_smoothness_of_normalization3} The function 
 $n_{Y/S} : S\rightarrow\bZ$ counting the number of irreducible components in the geometric fibers of $Y/S$ is locally constant on $S$.
 \end{enumerate}
\end{lemma}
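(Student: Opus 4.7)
The plan is to reduce everything to an étale-local calculation near $D$ and invoke Abhyankar's lemma. Away from $D$, the claims are essentially tautological: $\ol{\pi}|_U = \pi$ is already finite étale, and since $V$ is normal (being finite étale over the regular scheme $U$), $V$ equals the normalization of $U$ in its own function field. Normalization commutes with the open immersion $U \hookrightarrow X$, so $Y \times_X U = V$, yielding the cartesian square. All the substance of the lemma therefore lies in the analysis near $D$.

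To handle the behavior near $D$, I would use the smoothness of $D/S$ together with the NCD hypothesis to find, étale locally on $X$ around a chosen point of $D$, a presentation $X = \Spec R$ with $R$ regular and parameters $t_1, \dots, t_r$ cutting out the local branches of $D$, chosen so that the $t_i$'s form part of a regular system of parameters on $R$ whose images extend to a regular system on each geometric fiber $X_s$ (this is possible precisely because each component of $D$ is smooth over $S$). The tameness hypothesis at each generic point $\eta_i$ of $D_i = V(t_i)$ provides ramification indices $e_i$ invertible on $S$. Abhyankar's lemma (see \cite[Expos\'e XIII, Proposition 5.2]{SGA1}) then gives that, after a further étale cover of $X$, the normalization $Y\to X$ is a disjoint union of copies of the Kummer cover
\[
X[u_1, \dots, u_r]/(u_i^{e_i} - t_i).
\]

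From this local description all three parts follow quickly. For (1), invertibility of the $e_i$ on $S$ means that replacing each $t_i$ by $u_i$ in the regular system of parameters again yields a regular system compatible with $\cO_S$, so the Jacobian criterion shows $Y/S$ is smooth étale locally, hence globally, and $\ol{\pi}$ is finite flat of local degree $\prod e_i$; restriction to a fiber $X_s$ gives the same Kummer construction with indices $e_i$ coprime to $\text{char}\,k(s)$, giving tameness. For (2), the reduced preimage $\widetilde{D}$ is étale locally $\bigcup_i V(u_i)$; each $V(u_i)$ is identified with $D_i$ via the relation $u_i = 0 \Rightarrow t_i = 0$, which shows $\widetilde{D}\to D$ is finite étale with ramification index constant equal to $e_i$ along the component above $D_i$. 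For (3), $Y\to S$ is proper, flat, and smooth, so the Stein factorization $Y \to T \to S$ has $T\to S$ finite étale; the geometric connected components of $Y_{\bar{s}}$ biject with the fiber of $T$ at $\bar{s}$, whose cardinality is locally constant on $S$.

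The principal technical obstacle is arranging the invocation of the relative form of Abhyankar's lemma so that the resulting local Kummer model is compatible with the smooth structure of $X/S$. This is where smoothness of $D/S$ is crucial: it allows the branch parameters $t_i$ to be chosen as part of a regular system of parameters adapted to the fibration, so that the Kummer replacement $t_i \leadsto u_i^{e_i}$ (with $e_i$ a unit) produces a new regular system still adapted to $S$, preserving relative smoothness. Without that adapted choice, one would only obtain smoothness of $Y$ as a scheme, not over $S$, and the conclusions of parts (1) and (3) would fail.
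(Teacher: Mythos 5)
Your proposal is correct and follows essentially the same route as the paper: both reduce to Abhyankar's lemma to get an \'{e}tale-local (resp. strictly henselian) Kummer model $u^e = t$, use the $S$-smoothness of $D$ to make the branch equation part of a relative system of parameters so that the Kummer cover stays smooth over $S$, read off tameness, \'{e}taleness of $\tilde{D}/D$ and local constancy of the ramification indices from that model, and deduce (3) from Stein factorization (noting, as the paper does explicitly, that smooth fibers have irreducible connected components). The only cosmetic differences are your justification of the cartesian square via compatibility of normalization with restriction to $U$ (the paper uses affineness plus Zariski's Main Theorem) and the phrase ``copies of the Kummer cover,'' which should allow different exponents on different components of the normalization; neither affects the argument.
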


Note that if $S$ has generic characteristic 0, then the tameness condition (and hence also the consequent tameness in the special fiber) is automatic! 

\begin{proof} Since $U\subset X$ is the complement of a normal crossings divisor, $V\rightarrow X$ is affine \cite[07ZU]{stacks}, and hence the diagram is cartesian by Zariski's Main Theorem \cite[03GT(1)]{stacks} . Finiteness of $\ol{\pi}$ follows from \cite[Proposition 5.17]{am69}. Next, for a geometric point $\ol{x}$ lying over a point $x\in D$, let $A := \cO_{X,x}^\sh$ be the strict henselization of the local ring at $x$, and let $X_1 := \Spec A$. On $X_1$, $D$ is cut out by some $g\in A$.  By Abhyankar's lemma (see \cite[0EYG]{stacks} and also \cite[Corollary 2.3.4]{gm71}), the restriction $Y_{X_1}\rightarrow X_1$ is the disjoint union of covers of the form
$$Y' := \Spec A[T]/(T^e-g)\rightarrow X_1 = \Spec A$$
where $e$ is invertible on $X_1$. 
This immediately yields the flatness of $Y/X$, \'{e}taleness of $\tilde{D}/D$, the local constancy of ramification indices on $D$, and the tameness of the restrictions of $\ol{\pi}$ to fibers.
This proves all of (\ref{lemma_smoothness_of_normalization1}) and (\ref{lemma_smoothness_of_normalization2}) except the smoothness of $Y$ over $S$.  

Let $s := f(x)$, and let $k$ be a finite extension of the residue field $k(s)$ of $s$. Since $X/S$ is smooth, $A\otimes_S k$ is also a regular local ring. 
Let $\mf{m}$ be its maximal ideal, and let $\ol{g}$ be the image of $g$ in $A\otimes_S k$. Since $D$ is $S$-smooth, the quotient $(A\otimes_S k)/(\ol{g})$ is also regular, so $\ol{g}$ is a member of a regular sequence of parameters for $A\otimes_S k$ \cite[00NR]{stacks}. It follows that the rings $(A\otimes_S k)[T]/(T^e-\ol{g})$ are regular where the regular parameter $\ol{g}$ is replaced by $T$ \cite[Lemma 1.8.6]{gm71}. Since $k$ was arbitrary, this shows that the normalization $Y$ has geometrically regular fibers. Since $Y\rightarrow S$ is flat and finitely presented, we find that $Y/S$ is also smooth \cite[01V8]{stacks}.

Finally, the local constancy of $n_{Y/S}$ follows from Stein factorization \cite[0E0N]{stacks}, noting that $Y/S$ is smooth, so every connected component is irreducible.
\end{proof}

Using the lemma, we obtain the following procedure for producing tamely ramified 3-point covers using connected components of $\cM(G)_{\overline{\Q}_p}$.  Given a finite \'{e}tale morphism $\pi : Y \to X$, we may form the Galois closure: this is a finite \'{e}tale morphism $\pi' : Y' \to X$ which is Galois \cite[03SF]{stacks}, factors through $\pi$, and which is universal with respect to this property. This generalizes the familiar operation of taking the Galois closure of a separable extension of fields.

\begin{prop}\label{prop_tame} Let $n\ge 1$ be an integer divisible by 6. Let $\cM'$ be a stack finite \'{e}tale over $\cM(1)_{\bZ[1/n]}$.
\begin{enumerate}
    \item For each connected component $\cM$  of $\cM'_{\Qbar}$, there is a connected finite \'{e}tale $\bZ[1/n]$-algebra $A$ such that $\cM'_A$ admits an $A$-section containing the image of $\cM$. For any such $A$, $\cM$ extends to a connected component $\cM_A$ of $\cM'_A$ with geometrically connected fibers. 
    \item Let $M_A$ be the coarse moduli scheme of $\cM_A$, then $M_A$ comes with a map $\pi : M_A\rightarrow M(1)_A\cong\Spec A[j]$, unramified away from $j = 0,1728$, such that the restrictions of $\pi$ to the special fibers of $M(1)_A$ are tamely ramified.
    \item Let $K := \Frac(A)$, let $M(1)_A^\circ := M(1)_A - \{j=0,1728\}$, and let $M_A^\circ = \pi^{-1}(M(1)^\circ_A)$.  For some finite extension $L/K$ the Galois closure $N_L$ of $M^\circ_L\rightarrow M(1)^\circ_L$ is geometrically connected. Let $B$ be the integral closure of $A$ in $L$, and let $N_B$ be the normalization of $M^\circ_B$ inside $N_L$. Then $N_B\rightarrow M(1)^\circ_B$ is the Galois closure of $M^\circ_B\rightarrow M(1)^\circ_B$ and has geometrically connected fibers. In particular, the Galois groups of the special fibers of $N_B$ are isomorphic to the Galois group of $N_L\rightarrow M(1)^\circ_L$.
    \item Let $H$ be the Galois group of $N_L\rightarrow M(1)^\circ_L$ and let $d := \deg(\pi)$. If $H \cong S_d$, then we may take $L = K$ and $B = A$. If $H \cong A_d$, then we may take $L$ to be an at most quadratic extension of $K$. 
\end{enumerate} 
\end{prop}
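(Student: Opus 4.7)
The plan is to use Lemma~\ref{lemma_smoothness_of_normalization} as the main technical tool for parts~(2) and~(3), together with Galois descent for parts~(1) and~(4). The crucial observation is that since $A$ is a $\bZ[1/n]$-algebra whose generic points have characteristic zero, the tameness hypothesis of Lemma~\ref{lemma_smoothness_of_normalization} at the maximal points of $D$ is automatic in every application below. For~(1), I would use Galois descent: since $\cM'/\cM(1)_{\bZ[1/n]}$ is finite \'{e}tale and $\cM(1)$ has geometrically connected fibers over $\Spec\bZ[1/n]$, the finite set $\pi_0(\cM'_{\Qbar})$ acquires an action of $\pi_1(\Spec\bZ[1/n])$ that factors through a finite quotient. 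The stabilizer of the chosen $\cM$ is an open subgroup corresponding, via the Galois correspondence, to a connected finite \'{e}tale $\bZ[1/n]$-algebra $A$. By construction, $\cM$ descends to a connected component $\cM_A\subset\cM'_A$ with $\cM_A\otimes_A\Qbar = \cM$ a single geometric component, yielding geometrically connected fibers.

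For~(2), unramifiedness of $\pi : M_A\to M(1)_A$ away from $j=0,1728$ follows by the same Katz--Mazur plus purity-of-branch-locus argument sketched in the proof of Theorem~\ref{thm_basic_properties}\eqref{part_coarse}, since it applies to any finite \'{e}tale cover of $\cM(1)_{\bZ[1/n]}$. For tame ramification in the special fibers, I would apply Lemma~\ref{lemma_smoothness_of_normalization} with $S=\Spec A$, $X=\bP^1_A$ (the projective $j$-line over $A$), and $D=\{j=0\}\sqcup\{j=1728\}\sqcup\{j=\infty\}$, which is a disjoint union of three sections and hence a smooth normal crossings divisor over $S$. Taking $V$ to be the restriction of $M_A$ to $U=X\setminus D$, the tameness hypothesis at the maximal points of $D$ is automatic (these points live in residue characteristic zero), so the lemma gives tameness along $D$ in every special fiber.

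For~(3), I would first choose a finite extension $L/K$ containing the algebraic closure of $K$ inside the function field of the Galois closure of $M^\circ_{\Qbar}\to M(1)^\circ_{\Qbar}$, so that $N_L$ is geometrically connected. Let $B$ be the integral closure of $A$ in $L$, a Dedekind domain finite over $A$. Applying Lemma~\ref{lemma_smoothness_of_normalization} once more with $S=\Spec B$, $X=\bP^1_B$, $D$ as before, and $V=N_L$ over $U_B$, the tameness hypothesis at maximal points of $D$ is again automatic. The lemma produces $N_B$, smooth over $B$, finite flat over $\bP^1_B$, and tamely ramified along $D$ in every fiber. By Lemma~\ref{lemma_smoothness_of_normalization}(3) the number of geometric components of fibers of $N_B/B$ is locally constant, so geometric connectedness of the generic fiber propagates to every fiber. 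The $H$-action on $N_L$ extends uniquely to $N_B$ by normality, realizing $N_B\to M(1)^\circ_B$ as a Galois cover with group $H$ whose restriction to every special fiber remains Galois with group $H$.

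For~(4), let $\tilde H=\Gal(N_K/M(1)^\circ_K)$ denote the arithmetic Galois group of the Galois closure of $M^\circ_K\to M(1)^\circ_K$ over $K$. It embeds into $S_d$ via its faithful transitive action on a generic geometric fiber of $M^\circ_K$, and the geometric Galois group $H$ is normal in $\tilde H$ with quotient $\tilde H/H\cong\Gal(L/K)$, where $L$ is the constant field chosen in~(3). If $H\cong S_d$, then $S_d\trianglelefteq\tilde H\leq S_d$ forces $\tilde H=H$ and $L=K$; if $H\cong A_d$, then $\tilde H\in\{A_d,S_d\}$, so $[L:K]\leq 2$. The most delicate step is~(3), where one must combine the extension of the Galois action with the application of Lemma~\ref{lemma_smoothness_of_normalization} to obtain smoothness, tameness, and geometric connectedness of all fibers simultaneously; once the lemma's hypotheses are verified, the remaining steps are formal.
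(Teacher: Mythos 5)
The main gap is in your part (3). Lemma~\ref{lemma_smoothness_of_normalization} takes as input a finite \'{e}tale cover $V\to U$ of $U=X-D$ over the \emph{whole} base $S=\Spec B$, but what you feed it, $V=N_L$, is only a cover of the generic fiber $U_L$: it is not finite over $U_B$, so the lemma does not apply as written. What the lemma cannot see is possible ramification of the normalization $N_B$ along the \emph{vertical} codimension-one points of $M(1)^\circ_B$ (the special fibers over closed points of $\Spec B$), which are not contained in $D$; \'{e}taleness of $N_B$ over $U_B$ is exactly what has to be proved before the lemma can be invoked. This is the step the paper supplies: let $P$ be the Galois closure of $M^\circ_B\to M(1)^\circ_B$, which is finite \'{e}tale over $M(1)^\circ_B$ by construction; since $k(N_L)\subset k(P)$, every codimension-one point of $M(1)^\circ_B$ is unramified in $k(N_L)$, and purity of the branch locus then shows that $N_B$ is \'{e}tale over $M(1)^\circ_B$ and in fact $N_B\cong P$, i.e.\ $N_B$ \emph{is} the Galois closure of the integral model --- an assertion of statement (3) that your write-up never addresses. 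Only after this is in hand can one apply Lemma~\ref{lemma_smoothness_of_normalization} (to the compactification over $\bP^1_B$) to get smoothness, tameness, and local constancy of geometric components, hence geometric connectedness of the special fibers and the claim about their Galois groups.

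Two smaller points. In (1) you descend $\cM$ to a field of definition via the $\Gal(\Qbar/\bQ)$-action on $\pi_0(\cM'_{\Qbar})$; this yields a component over some connected finite \'{e}tale $A$ with geometrically connected \emph{generic} fiber, but it does not produce the $A$-section asserted in the statement, and your claim that the action factors through $\pi_1(\Spec\bZ[1/n])$ is asserted rather than proved (it needs, e.g., Stein factorization of a compactified coarse model over $\bZ[1/n]$, again via Lemma~\ref{lemma_smoothness_of_normalization}). The paper's route is simpler: choose $A$ so that the pullback of a $\bZ[1/n]$-section of $\cM(1)$ to $\cM'_A$ splits completely, so every component acquires an $A$-point and geometric connectedness of the generic fiber is immediate. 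Note also that geometric connectedness of the \emph{special} fibers of $M_A$ in (1)--(2) requires Lemma~\ref{lemma_smoothness_of_normalization}(3) applied to $M_A$ itself, which you invoke only for $N_B$. Your parts (2) and (4) are essentially correct: (2) matches the paper (Katz--Mazur plus purity for \'{e}taleness away from $j=0,1728$, then the lemma with the automatic tameness hypothesis in generic characteristic $0$), and (4) is the paper's argument rephrased via $H\trianglelefteq\tilde H\le S_d$ with $\tilde H/H\cong\Gal(L/K)$.
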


\begin{proof} Let $D$ be a $\bZ[1/n]$-section of $\cM(1)_{\bZ[1/n]}$. Since the map $\cM'_{\bZ[1/n]}\rightarrow\cM(1)_{\bZ[1/n]}$ is finite \'{e}tale, there is a connected finite \'{e}tale $\bZ[1/n]$-algebra $A$ such that $\cM'_A\times_{\cM(1)_A}D_A$ is totally split over $A$. Let $\cM_A$ be the connected component of $\cM'_A$ containing the image of $\cM$; then $\cM_A$ is connected and being \'{e}tale over $\cM(1)_A$, its generic fiber is also connected. Since it admits an $A$-section, its generic fiber is geometrically connected. To see that the closed fibers are geometrically connected, by Theorem \ref{thm_basic_properties}\eqref{part_coarse}, it suffices to show this for coarse schemes.
Let $\ol{M(1)}_A := \bP^1_A$ be the compactification of $M(1)_A$. Because $6\mid n$, $M(1)_A^\circ\subset\ol{M(1)}_A$ is the complement of a smooth divisor, and hence applying Lemma \ref{lemma_smoothness_of_normalization} to the map $M_A^\circ\rightarrow M(1)_A^\circ\subset \ol{M(1)}_A$ we find that the closed fibers are geometrically connected and tamely ramified over the corresponding closed fibers of $\ol{M(1)}_A$. This establishes (1) and (2).

Let $N_B$ be the normalization of $M(1)^\circ_B$ inside $N_L$. Let $k(N_L)$ be the function field of $N_L$ and $P$ be the Galois closure of $M^\circ_B/M(1)^\circ_B$.  Since $N_L$ is a Galois closure of the generic fiber, $k(P)$ is a finite extension of $k(N_L)$. Thus, since every codimension-$1$ point $x\in M(1)^\circ_B$ is unramified in $k(P)$, it is also unramified in $k(N_L)$. By purity, $N_B$ is \'{e}tale over $M_B^\circ$, which shows that $N_B$ is a Galois closure of $M^\circ_B\rightarrow M(1)^\circ_B$, so $N_B\cong P$. We are again in the situation of Lemma \ref{lemma_smoothness_of_normalization}, from which we find that the special fiber of $N_B$ is geometrically connected (and tamely ramified). This establishes (3).

Finally, we address (4). Let $x : \Spec K\rightarrow M(1)^\circ_K$ be a $K$-point, and let $\ol{x}$ be the corresponding geometric point with values in an algebraic closure $\ol{K}$ of $K$. Let $\Pi := \pi_1(M(1)^\circ_{K},\ol{x})$ and $\ol{\Pi} := \pi_1(M(1)^\circ_{\ol{K}},\ol{x})$.  The maps $M(1)^\circ_{\ol{K}}\rightarrow M(1)^\circ_K\rightarrow\Spec K$ induce an exact sequence (see \cite[Proposition 5.6.1]{sza09})
$$1\rightarrow\ol{\Pi}\rightarrow \Pi\rightarrow\Gal(\ol{K}/K)\rightarrow 1$$
which is split by $x$. Via this splitting, we obtain an isomorphism
\begin{equation}\label{eq_sdp}
    \Pi \cong \ol{\Pi}\rtimes\Gal(\ol{K}/K).
\end{equation}
Let $M_{\ol{x}}^\circ$ denote the geometric fiber of $M_K^\circ$ over $\ol{x}$, which has cardinality $\deg(\pi)$. Galois theory identifies the isomorphism class of the covering $M^\circ_K\rightarrow M(1)^\circ_K$ with the equivalence class of the homomorphism
$$\rho : \Pi\rightarrow\Sym(M_{\ol{x}}^\circ)$$
up to inner automorphisms of $\Sym(M_{\ol{x}}^\circ)$.  
(Here $\Sym(M_{\ol{x}}^\circ)$ is the symmetric group on the fiber $M_{\ol{x}}^\circ$.)  The image of $\ol{\rho} := \rho|_{\ol{\Pi}}$ is the geometric monodromy group of $\pi$, and is isomorphic to $H$. The kernel of $\rho$ corresponds to the Galois closure of $M^\circ_K$. If $\ol{\rho}$ surjects onto $\Sym(M_{\ol{x}}^\circ)$, then $[\Pi : \Ker(\rho)] = [\ol{\Pi} : \Ker(\ol{\rho})]$, and hence the Galois closure of $M^\circ_K$ is already geometrically connected, so we may take $L = K$. If the image of $\ol{\rho}$ is the alternating group, then using the decomposition (\ref{eq_sdp})
, we may take $L$ to be the at-most-quadratic extension corresponding to the (at-most-index-$2$) subgroup 
\[\rho^{-1}(\Alt(M^\circ_{\ol{x}}))\cap\Gal(\ol{K}/K)\subset\Gal(\ol{K}/K). \qedhere\]
\end{proof}

\begin{remark} \label{rmk:frobeniusoddness}
When $H \cong A_d$, the proof of the Proposition \ref{prop_tame}(4) gives a necessary and sufficient condition on when we may take $L=K$  in terms of the $\Gal(\overline{K}/K)$-action on the fiber $M_{\overline{x}}^\circ$: we must have $\rho^{-1}(\Alt(M^\circ_{\ol{x}}))\cap\Gal(\ol{K}/K) = \Gal(\ol{K}/K)$.
Thus when $K$ is a finite field, we may take $L =K$ when the action of the Frobenius on the fiber is even, while we must use a quadratic extension when the action of the Frobenius on the fiber is odd.  
\end{remark}

For a general finite \'{e}tale map of stacks $\cM\rightarrow\cM(1)_{\Qbar}$, the ramification of the map on coarse schemes is described by Corollary \ref{cor_analytic_ramification}. Here we spell out what it means combinatorially when $\cM = \cM(G)_{\Qbar}$ for a finite group $G$.

Let $E$ be an elliptic curve over $\bC$ with $j$-invariant not $0$ or $1728$. Fix an embedding $i : \Qbar\hookrightarrow\bC$. Using $i$, we will view $E$ as a geometric point of both $\cM(1)_{\Qbar}$ and $M(1)_{\Qbar}$. Let $x_0\in E^\circ(\bC)$ and $\Pi := \pi_1^\top(E^\circ(\bC),x_0)$. Let $x,y$ be a basis for $\Pi$ with intersection number $+1$. Consider the four automorphisms of $\Pi$:
$$\gamma_0 : (x,y)\mapsto (xy^{-1},x) \quad \gamma_{1728} : (x,y)\mapsto (y^{-1},x),\quad \gamma_\infty : (x,y)\mapsto (x,xy) \quad \gamma_{-I} : (x,y)\mapsto (x^{-1},y^{-1}).$$
Since $\gamma_{-I}$ is central in $\Out(\Pi)\cong\GL_2(\bZ)$, the natural action of $\Aut(\Pi)$ on $\Epi(\Pi,G)$ descends to an action of $\Aut(\Pi)$ on the set 
$$\tilde{F}(G) := \Epi^\ext(\Pi,G)/\langle \gamma_{-I}\rangle$$
which visibly factors through $\Out(\Pi)/\langle\gamma_{-I}\rangle\cong\PGL_2(\bZ)$. Let $\ol{M(G)}_{\Qbar}$ and $\ol{M(1)}_{\Qbar}$ be the smooth compactifications of $M(G)_{\Qbar}$ and $M(1)_{\Qbar}$ respectively. Using the bijection of Theorem \ref{thm_basic_properties}(4) and taking coarse schemes, we may identify the fiber of $\ol{M(G)}_{\Qbar}\rightarrow \ol{M(1)}_{\Qbar}$ above $E$ with $\tilde{F}(G)$, and the fibers above $j = 0,1728,\infty$ with the quotient sets
$$\tilde{F}(G)/\langle\gamma_0\rangle,\quad\tilde{F}(G)/\langle\gamma_{1728}\rangle,\quad\tilde{F}(G)/\langle\gamma_\infty\rangle$$
respectively, such that the ramification indices at each point correspond to the size of the its orbit under $\gamma_0,\gamma_{1728}$, or $\gamma_\infty$ respectively in $\tilde{F}(G)$.
Let $e(G)$ denote the least positive integer $n$ satisfying $g^n = 1$ for all $g\in G$. 

\begin{prop}\label{prop_ramification_indices} With the notation of the preceding paragraphs, the ramification indices of $\ol{M(G)}_{\Qbar}\rightarrow \ol{M(1)}_{\Qbar}$  all divide $6\cdot e(G)$. Specifically, the ramification indices above $j = 0$  all divide 3, and the ramification indices above $j = 1728$  all divide 2, and the ramification indices of $\pi$ above $j = \infty$  all divide $e(G)$.
\end{prop}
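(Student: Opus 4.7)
My plan is to deduce the proposition directly from the description of the fibers given in the paragraph immediately preceding it, which (via Corollary \ref{cor_analytic_ramification}) identifies ramification indices above $j = 0, 1728, \infty$ with orbit sizes on $\tilde F(G) = \Epi^\ext(\Pi,G)/\langle \gamma_{-I}\rangle$ of the three automorphisms $\gamma_0, \gamma_{1728}, \gamma_\infty$. So the problem reduces to bounding the orders of these automorphisms on $\tilde F(G)$.

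For $j = 1728$, one directly computes $\gamma_{1728}^2(x,y) = \gamma_{1728}(y^{-1}, x) = (x^{-1}, y^{-1}) = \gamma_{-I}(x,y)$, so $\gamma_{1728}^2 = \gamma_{-I}$ as an element of $\Aut(\Pi)$. Hence $\gamma_{1728}$ has order dividing $2$ on $\tilde F(G)$.

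For $j = 0$, I would argue that $\gamma_0^3$ agrees with $\gamma_{-I}$ as an \emph{outer} automorphism of $\Pi$. One checks by a short induction that $\gamma_0^3(x) = xy^{-1}x^{-1}yx^{-1}$ and $\gamma_0^3(y) = xy^{-1}x^{-1}$, and then verifies that conjugation by $g = yx^{-1}$ sends these to $x^{-1}$ and $y^{-1}$ respectively. (Sanity check: the images of $\gamma_0$ in the abelianization give the matrix $\spmatrix{1}{-1}{1}{0}$, whose cube in $\SL_2(\bZ)$ is $-I$, so $\gamma_0^3 = \gamma_{-I}$ on the nose in $\Out(\Pi)\cong\GL_2(\bZ)$.) Therefore $\gamma_0$ has order dividing $3$ on $\tilde F(G)$.

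For $j = \infty$, the key point is that $\gamma_\infty^n$ acts on $\Hom(\Pi, G)$ by the rule $(\phi(x), \phi(y)) \mapsto (\phi(x), \phi(x)^n \phi(y))$, since $\gamma_\infty(x,y) = (x, xy)$ implies $\gamma_\infty^n(x,y) = (x, x^n y)$. In particular, when $n = e(G)$, we have $\phi(x)^{e(G)} = 1$ for every $\phi$, so $\gamma_\infty^{e(G)}$ fixes every element of $\Hom(\Pi, G)$ on the nose. It then fixes every element of $\Epi^\ext(\Pi, G)$ and hence of $\tilde F(G)$, so orbits of $\gamma_\infty$ on $\tilde F(G)$ have size dividing $e(G)$. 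Combined with the analyses at $0$ and $1728$, all ramification indices of $\ol{M(G)}_{\Qbar} \to \ol{M(1)}_{\Qbar}$ divide $\mathrm{lcm}(3, 2, e(G)) \mid 6\cdot e(G)$. I do not anticipate a serious obstacle here: once Corollary \ref{cor_analytic_ramification} is in hand, everything is an elementary calculation in $\Aut(\Pi)$, and the only mild subtlety is keeping track of the quotient by $\gamma_{-I}$ when passing from $\Aut(\Pi)$-orbits to orbits on $\tilde F(G)$.
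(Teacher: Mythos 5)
Your proposal is correct and follows essentially the same route as the paper: identify the ramification indices above $j=0,1728,\infty$ with orbit sizes of $\gamma_0,\gamma_{1728},\gamma_\infty$ on $\tilde F(G)$ and then bound the orders of these elements, using that $\gamma_{-I}$ acts trivially and that $\varphi(x)^{e(G)}=1$ kills $\gamma_\infty^{e(G)}$. The only cosmetic difference is that you verify $\gamma_0^3=\gamma_{-I}$ in $\Out(\Pi)$ and $\gamma_{1728}^2=\gamma_{-I}$ by explicit free-group computations, whereas the paper reads off the orders from the matrices $\spmatrix{1}{1}{-1}{0}$, $\spmatrix{0}{1}{-1}{0}$ under the isomorphism $\Out(\Pi)\cong\GL_2(\bZ)$ (and spells out the GAGA/coarse base change step from $\bC$ to $\Qbar$, which you implicitly absorb into Corollary \ref{cor_analytic_ramification}).
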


\begin{proof} First note that relative to the isomorphism $\Out(\Pi)\cong\GL_2(\bZ)$ induced by the basis $x,y$, the automorphisms $\gamma_0,\gamma_{1728},\gamma_\infty,\gamma_{-I}$ viewed inside $\Out(\Pi)$ correspond to the matrices 
$$\spmatrix{1}{1}{-1}{0},\spmatrix{0}{1}{-1}{0},\spmatrix{1}{1}{0}{1},\spmatrix{-1}{0}{0}{-1}.$$
Also note that the action of $\gamma_{-I}$ (resp. $\spmatrix{-1}{0}{0}{-1}$) on $\Pi$ (resp. $H_1(E(\bC),\bZ)$) is induced by the automorphism $[-1]$ of $E$. If every instance of $\Qbar$ in the statement is replaced by $\bC$, then using standard GAGA arguments (see \cite[Theorem 20.4]{noo05}), the descriptions of the fibers follow from Theorem \ref{thm_basic_properties}(4) together with Proposition \ref{prop_coarse_fibers} and Corollary \ref{cor_analytic_ramification}. Since $\gamma_0$ (resp. $\gamma_{1728}$) has order 3 (resp. 2) in $\Out(\Pi)/\langle\gamma_{-I}\rangle$, the ramification indices above $j = 0$ (resp. $j = 1728$) must divide 3 (resp. 2). Finally, note that $\gamma_\infty$ must act on $\tilde{F}(G)$ with order dividing $e(G)$.

To pass from $\bC$ to $\Qbar$ we use Remark~\ref{remarks}(\ref{analysisarithemtic}) plus ``coarse base change'' in characteristic $0$ \cite[Proposition 3.3.4]{chen18}. 
\end{proof}

\subsection{Absolute \texorpdfstring{$G$}{G}-structures}\label{ss_absolute_G_structures}
There is a natural action of $\Aut(G)$ on $\cM(G)$ which factors through a free action of $\Out(G)$.
 Thus the quotient $\cM(G)/\Out(G)$ is also finite \'{e}tale over $\cM(1)$ and we will denote it by\footnote{Here, the superscript $\abs$ is short for ``absolute'', and the distinction between $\cM(G)$ and $\cM(G)^\abs$ is  analogous to Fried's distinction between ``inner'' and ``absolute'' Hurwitz stacks (see \cite{ber13} Remark 4.56).}
$$\cM(G)^\abs := \cM(G)/\Out(G).$$
For an elliptic curve $E$ over a $\bZ[1/|G|]$-scheme $S$ corresponding to a map $E : S\rightarrow\cM(1)$, a section of the finite \'{e}tale $S$-scheme $S\times_{\cM(1)}\cM(G)^\abs$ is called an \emph{absolute $G$-structure} on $E$;  $\cM(G)^\abs$ is then the moduli stack of elliptic curves with absolute $G$-structures. In the notation of Theorem \ref{thm_basic_properties}\eqref{part_combinatorial}, the set of absolute $G$-structures is in bijection with the set:
$$\{\varphi\in\Epi^\ext(\pi_1^\bL(E^\circ_s,x),G)/\Out(G)\;\big|\;\varphi\circ\rho_{E,x}(\sigma) = \varphi\quad\text{for all $\sigma\in\pi_1(S,s)$}\}$$
As before, $\varphi$ is an equivalence class of surjections to $G$ (in this case, an $\Aut(G)$-orbit), and an absolute $G$-structure is given by an equivalence class which is stabilized by the action of $\pi_1(S,s)$.

We may also give a geometric description of absolute $G$-structures. Given a scheme $S$ and an elliptic curve $E/S$ with zero section $O$, let $E^\circ := E - O$. A \emph{mere $G$-cover}\footnote{We use the word ``mere'' in a similar way as \cite{DD97}.} of $E^\circ$ is a finite \'{e}tale map $\pi : X\rightarrow E^\circ$ whose geometric fibers over $S$ are connected, and such that $\Aut(\pi)$ is isomorphic to $G$. A morphism of mere $G$-covers of $E^\circ$ is a morphism in the category $\textbf{Sch}/E^\circ$. In particular we do not specify a $G$-action on $X$ and hence we do not require that morphisms be $G$-equivariant. Let $\cT_G^{\abs,\pre} : \cM(1)\rightarrow\textbf{Sets}$ be the presheaf which associates to an elliptic curve $E/S$ the set of isomorphism classes of mere $G$-covers of $E^\circ$. Then $\cM(G)^\abs$ is isomorphic to the stack obtained from the sheafification of $\cT_G^{\abs,\pre}$ in the \'{e}tale topology. Thus, informally speaking an absolute $G$-structure on $E/S$ is given by the data of mere $G$-covers defined \'{e}tale locally on $S$ whose common restrictions are isomorphic, but with no requirement that the isomorphisms satisfy the cocycle condition.

\begin{remark} \label{remark:absolute_G_structure}
Note that if $\pi : X\rightarrow E^\circ$ is finite \'{e}tale, geometrically connected over $S$, and such that there is a surjective \'{e}tale map $S'\rightarrow S$ such that the pullback $X_{S'}\rightarrow E^\circ_{S'}$ is Galois, then $\pi$ defines an absolute $G$-structure on $E/S$ even though $\pi$ itself may not have enough automorphisms to be Galois.
\end{remark}

We now make this concrete in the case that $G = \SL_2(\bF_\ell)$ and $G = \PSL_2(\F_\ell) := \SL_2(\F_\ell) / \{ \pm I\}$.  
The main point is that as equivalence classes of $\text{SL}_2(\bF_\ell)$ or $\text{PSL}_2(\bF_\ell)$-structures, the equivalence relation is given by conjugation in $\text{SL}_2(\ol{\bF}_\ell)$ or $\text{PSL}_2(\ol{\bF}_\ell)$, and hence it will make sense to speak of the trace of the monodromy given by a generator of inertia in the fundamental group of a punctured elliptic curve (see \S\ref{ss_trace_invariant} below).

Fix a prime $\ell$, let $N_{\SL_2(\ol{\bF}_\ell)}(\SL_2(\bF_\ell))$ be the normalizer of $\SL_2(\bF_\ell)$ in $\SL_2(\ol{\bF}_\ell)$ and define
\begin{equation}
   D(\ell) := N_{\SL_2(\ol{\bF}_\ell)}(\SL_2(\bF_\ell))/\SL_2(\bF_\ell) .
\end{equation}
It can be shown that for $\ell=2$, $D(\ell)$ is trivial, and for odd $\ell$, $D(\ell)$ is a cyclic group of order two, generated by any matrix of the form $\spmatrix{u}{0}{0}{u^{-1}}$ where $u\in\ol{\bF}_\ell - \bF_\ell$ with $u^2\in\bF_\ell$. 
Let $F_2$ denote the free group of rank 2. Thus $D(\ell)$ acts on $\Epi^{\ext}(F_2,\SL_2(\bF_\ell))$ by conjugation, and its action commutes with the action of $\Out(F_2)$, so $\Out(F_2)$ also acts on 
\begin{equation} \label{eq:fl}
    F(\ell) := \Epi^{\ext}(F_2,\SL_2(\bF_\ell))/D(\ell).
\end{equation}

Since $\{\pm I\}$ is the center of $\SL_2(\bF_\ell)$, $\PSL_2(\bF_\ell)$ is a characteristic quotient and hence $D(\ell)$ acts on $\Epi^{\ext}(F_2,\PSL_2(\bF_\ell))$, and again its action commutes with the action of $\Out(F_2)$. Thus as before $\Out(F_2)$ also acts on 
\begin{equation} \label{eq:flbar}
    \overline{F(\ell)} := \Epi^{\ext}(F_2,\PSL_2(\bF_\ell))/D(\ell).
\end{equation}

It follows from \cite[Theorem 30]{Stein16} (noting that there are no field and graph automorphisms for $\SL_2(\bF_\ell)$) that $\Out(\SL_2(\bF_\ell))$ has order 2 for odd $\ell$ and is trivial otherwise. Thus, $D(\ell)$ induces the full outer automorphism group of $\SL_2(\bF_\ell)$. Indeed, if $\ell = 2$, $D(2) = \Out(\SL_2(\bF_2)) = 1$; if $\ell$ is odd, then if $A$ represents the nontrivial element of $D(\ell)$, then conjugation by $A$ cannot be an inner automorphism, for otherwise $AB = \pm I$ for some $B\in\SL_2(\bF_\ell)$, which is absurd. Similarly, $\Out(\PSL_2(\bF_\ell))$ also has order 2 for $\ell$ odd, so we find that $D(\ell)$ also induces the full outer automorphism group of $\PSL_2(\bF_\ell)$. Translating this into our geometric setting gives:

\begin{prop} \label{prop:identifyfibers}
We have that $\cM(\SL_2(\bF_\ell))/D(\ell) = \cM(\SL_2(\bF_\ell))^{\abs}$ and $\cM(\PSL_2(\bF_\ell))/D(\ell) = \cM(\PSL_2(\bF_\ell))^{\abs}$.  The characteristic quotient $\SL_2(\bF_\ell)\rightarrow\PSL_2(\bF_\ell)$ induces a commutative diagram of $\Out(F_2)$-equivariant maps
\begin{equation}\label{eq_cd}
\begin{tikzcd}
\Epi^\ext(F_2,\SL_2(\bF_\ell))\ar[r]\ar[d] & \Epi^\ext(F_2,\PSL_2(\bF_\ell))\ar[d] \\
F(\ell)\ar[r] & \ol{F(\ell)}
\end{tikzcd}
\end{equation}
For any elliptic curve $E$ over $\bC$, $x_0\in E$, fix an isomorphism $\pi_1^{top}(E^\circ(\bC),x_0)\cong F_2$, yielding isomorphisms $\pi_1(\cM(1)_{\Qbar},E)\cong\widehat{\Out^+(F_2)} \simeq \widehat{\SL_2(\ZZ)}$ as in Theorem~\ref{thm_basic_properties}\eqref{part_monodromy}.  Relative to these isomorphisms, 
the monodromy action of $\pi_1(\cM(1)_{\Qbar})$ on the geometric fiber of $\cM(\SL_2(\bF_\ell))^{\abs}$ (resp. $\cM(\PSL_2(\bF_\ell))^\abs$) over $E$ corresponds to the  $\Out^+(F_2)$-action on $F(\ell)$ (resp. $\ol{F(\ell)}$).

In particular, the diagram (\ref{eq_cd}) induces the following diagram of in the category of finite \'{e}tale stacks over $\cM(1)_{\bZ[1/|\SL_2(\bF_\ell)|]}$.
\[\begin{tikzcd}
\cM(\SL_2(\bF_\ell))\ar[r]\ar[d] & \cM(\PSL_2(\bF_\ell))\ar[d] \\
\cM(\SL_2(\bF_\ell))^\abs\ar[r] & \cM(\PSL_2(\bF_\ell))^\abs
\end{tikzcd}\]
\end{prop}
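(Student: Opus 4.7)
The proposition is mostly a bookkeeping consequence of the definition $\cM(G)^\abs = \cM(G)/\Out(G)$ together with the computations of $\Out(\SL_2(\bF_\ell))$ and $\Out(\PSL_2(\bF_\ell))$ carried out just before the statement. My plan is to first establish the identifications $\cM(\SL_2(\bF_\ell))/D(\ell) = \cM(\SL_2(\bF_\ell))^\abs$ and the analogous statement for $\PSL_2$, then translate everything through the bijections of Theorem~\ref{thm_basic_properties}\eqref{part_combinatorial}--\eqref{part_monodromy}.

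First, the action of $D(\ell)$ on $\cM(G)$ for $G = \SL_2(\bF_\ell)$ or $\PSL_2(\bF_\ell)$: by Theorem~\ref{thm_basic_properties}\eqref{part_combinatorial}, the fibers of $\cM(G)\to\cM(1)$ are subsets of $\Epi^\ext(\Pi,G) = \Epi(\Pi,G)/\Inn(G)$, so conjugation by an element of $\SL_2(\bF_\ell)$ on the target acts trivially on fibers and hence on $\cM(G)$. Thus the $D(\ell)$-action on $\cM(G)$ factors through $D(\ell)/(D(\ell)\cap\Inn(G))$, which via conjugation maps into $\Out(G)$. The paragraph preceding the proposition shows that this map $D(\ell)\to\Out(G)$ is surjective (conjugation by a non-identity $A\in D(\ell)$ cannot be inner, else $AB=\pm I$ for some $B\in\SL_2(\bF_\ell)$, which is impossible) and that both groups have order $2$ (for $\ell$ odd; both are trivial for $\ell=2$). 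Hence $D(\ell)\stackrel{\sim}{\to}\Out(G)$, and so $\cM(G)/D(\ell) = \cM(G)/\Out(G) = \cM(G)^\abs$.

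Next, for the commutative diagram of $\Out(F_2)$-equivariant maps: the quotient $\SL_2(\bF_\ell)\onto\PSL_2(\bF_\ell)$ is characteristic, so post-composition induces the top horizontal arrow $\Epi^\ext(F_2,\SL_2(\bF_\ell))\to\Epi^\ext(F_2,\PSL_2(\bF_\ell))$, and this map intertwines the $D(\ell)$-actions on target (by conjugation) and the $\Out(F_2)$-actions on source (by pre-composition), since these two actions act on different sides of a homomorphism and hence commute. Passing to quotients by $D(\ell)$ yields the bottom arrow and proves $\Out(F_2)$-equivariance of the whole square.

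For the monodromy assertion, I would apply Theorem~\ref{thm_basic_properties}\eqref{part_monodromy} to $\cM(\SL_2(\bF_\ell))$ and $\cM(\PSL_2(\bF_\ell))$: relative to the chosen isomorphism $\pi_1^{\top}(E^\circ(\bC),x_0)\cong F_2$, the monodromy of $\pi_1(\cM(1)_{\Qbar},E) \cong \widehat{\Out^+(F_2)}$ on the fiber over $E$ is precisely the natural $\Out^+(F_2)$-action on $\Epi^\ext(F_2,G)$. Taking the quotient by $\Out(G) = D(\ell)$ and using the compatibility just established, the monodromy on the fiber of $\cM(G)^\abs$ is the $\Out^+(F_2)$-action on $F(\ell)$ (resp.~$\ol{F(\ell)}$). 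The diagram of stacks over $\cM(1)_{\bZ[1/|\SL_2(\bF_\ell)|]}$ is then immediate from the functoriality in Theorem~\ref{thm_basic_properties}\eqref{part_functoriality} applied to the surjection $\SL_2(\bF_\ell)\onto\PSL_2(\bF_\ell)$, together with the fact (just checked) that the morphism $\cM(\SL_2(\bF_\ell))\to\cM(\PSL_2(\bF_\ell))$ is $D(\ell)$-equivariant.

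There is no real obstacle here; the only point that requires a moment's care is verifying that the $D(\ell)$-action on $\cM(G)$ (a priori an action of the ambient matrix group) really factors as an action of $\Out(G)$ — and this is immediate once one remembers that $\cM(G)$ is built from $\Epi^\ext$, i.e.~homomorphisms \emph{up to inner automorphism of $G$}.
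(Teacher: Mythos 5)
Your proposal is correct and follows essentially the same route as the paper: the paper's proof is simply "this follows from the above discussion and Theorem~\ref{thm_basic_properties}," i.e.\ exactly the observations you spell out (that the $D(\ell)$-action on $\Epi^\ext$-fibers factors through an isomorphism $D(\ell)\stackrel{\sim}{\to}\Out(G)$ established in the preceding paragraph, plus the fiber/monodromy/functoriality statements of Theorem~\ref{thm_basic_properties}\eqref{part_combinatorial}--\eqref{part_functoriality}). You have merely made explicit the bookkeeping the paper leaves implicit, so there is nothing to correct.
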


\begin{proof} This follows from the above discussion and Theorem \ref{thm_basic_properties}.
\end{proof}

\subsection{The Higman and trace invariants}\label{ss_trace_invariant}
In this section we will define the trace invariant of an absolute $\SL_2(\bF_\ell)$ (resp. $\PSL_2(\bF_\ell)$)-structure on an elliptic curve, which is derived from the finer Higman invariant. The Higman invariant can be defined for general $G$-structures, and in the case of a covering of an elliptic curve branched only over the origin, it is equivalent to the ``Hurwitz datum'' associated to a branched Galois covering of curves (see \cite[\S2.2]{br11}). 

\subsubsection*{The Higman invariant geometrically}
Let $G$ be a finite group. The stacks $\cM(G)_{\bC}$ are generally not connected.  One can often distinguish connected components of $\cM(G)_{\bC}$ using the \emph{Higman invariant}. Analytically, given a $G$-torsor over a complex-analytic elliptic curve $E$, the monodromy around a small positively oriented loop on $E$ winding once around the puncture determines a conjugacy class of $G$, which we call the Higman invariant of the torsor. Note that such a loop represents the commutator of a positively oriented basis of $\pi_1(E^\circ)$. Since the mapping class group of the torus must preserve the homotopy class of this loop, the Higman invariant is locally constant on $\cM(G)_{\bC}$.

Algebraically, let $E$ be an elliptic curve over an algebraically closed field $k$ of characteristic prime to $|G|$. Let $f^\circ : X^\circ\rightarrow E^\circ$ be a $G$-torsor, and let $f : X\rightarrow E$ be the extension of $f$ to a finite map of smooth proper curves. Then the action of $G$ extends to $X$ and acts transitively on the fibers of $f$, in particular on the fiber $X_O$ above the puncture $O \in E$. For any point $x \in X_O$, the inertia group $G_x := \Stab_G(x)$ is cyclic of order equal to the ramification index at $x$. Let $T_x$ be the Zariski tangent space at $x$. Since $G_x$ acts freely on $\cO_{X,x}$, we obtain an injective local monodromy representation 
$$\rho_x : G_x\hookrightarrow\GL(T_x).$$
Since $T_x$ is a 1-dimensional $k$-vector space, $\GL(T_x)\cong k^\times$ canonically and hence $\rho_x$ identifies $G_x$ with the group of $e$-th roots of unity in $k$. If we make a choice of a primitive $e$-th root of unity $\zeta\in k$, then we may define the Higman invariant\footnote{By the Chevalley-Weil formula, this Higman invariant also determines the Hurwitz representations associated to the cover, and conversely by \cite[\S3.2]{br11}  the Hurwitz representations also determine the Higman invariant.} of $f$ (relative to $\zeta$) to be the set
$$\{\rho_x^{-1}(\zeta) \;\big|\; x\in X_O\}.$$
Since $G$ acts transitively on $X_O$, this set is a conjugacy class of $G$. In some situations it is useful to speak of the Higman invariant relative to a primitive $n$th root of unity $\zeta_n$ where $e\mid n$. In that case the Higman invariant of a torsor with ramification index $e$ relative to $\zeta_n$ is by definition the Higman invariant relative to $\zeta_n^{n/e}$.

The Higman invariant is locally constant. This is shown for example in \cite[Proposition 3.2.5]{br11} for Hurwitz data, but for completeness we give an argument here.

\begin{prop} Let $n := |G|$, and let $\zeta_n$ be any primitive $n$th root of unity in $\bC$. Then the Higman invariant relative to $\zeta_n$ is locally constant on $\cM(G)_{\bZ[1/n,\zeta_n]}$. 
\end{prop}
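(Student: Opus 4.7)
The plan is to reduce to a local computation using tame ramification and Abhyankar's lemma, which will show that in any connected family the Higman invariant does not vary. Since the Higman invariant takes values in the finite discrete set $\mathrm{Conj}(G)$ of conjugacy classes, it suffices to show that for every connected $\bZ[1/n,\zeta_n]$-scheme $T$ and every morphism $T\to \cM(G)_{\bZ[1/n,\zeta_n]}$, the Higman invariant of the pulled back $G$-structure is the same at every geometric point of $T$. By passing to an \'{e}tale cover, we may further assume the $G$-structure is represented by an actual $G$-torsor $f^\circ : X^\circ\rightarrow E^\circ$ over an elliptic curve $E/T$, in which case the invariant is unchanged since the Higman invariant is manifestly invariant under the $\Aut(G)$-action used in \'{e}tale descent.

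Let $f : X\rightarrow E$ be the extension to a finite flat map of smooth curves, and let $X_O := f^{-1}(O(T))\subset X$. The key observation is that because $|G|$ is invertible on $T$ and $O(T)\subset E$ is a smooth divisor, Lemma \ref{lemma_smoothness_of_normalization} (applied with the normal crossings divisor $D = O(T)$) shows that $X/T$ is smooth and that $X_O\rightarrow O(T)\cong T$ is finite \'{e}tale. Moreover, by Abhyankar's lemma, after replacing $T$ by an \'{e}tale cover, each geometric point $x\in X_O$ admits an \'{e}tale neighborhood in which $X\rightarrow E$ is of the form $\Spec A[y]/(y^{e_x}-t)\rightarrow\Spec A$ where $t$ is a uniformizer for $O(T)$ and $e_x$ is the ramification index at $x$. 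In this presentation, the inertia $G_x\subset G$ acts by $y\mapsto \zeta\cdot y$ for roots of unity $\zeta\in\mu_{e_x}$; in particular $e_x$ is locally constant on $X_O$, the inertia subgroup $G_x\le G$ depends only on the connected component of $X_O$ containing $x$, and the local monodromy $\rho_x : G_x\stackrel{\sim}{\to}\mu_{e_x}$ is the same across every geometric fiber of that connected component.

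Now the choice of primitive $n$-th root of unity $\zeta_n\in\bZ[1/n,\zeta_n]$ trivializes the constant \'{e}tale group scheme $\mu_n$ over the base, and hence identifies $\mu_{e_x}$ with $\bZ/e_x\bZ$ compatibly at every geometric point of $T$. Composing with the locally constant identification $\rho_x$, the element $\rho_x^{-1}(\zeta_n^{n/e_x})\in G_x\subset G$ is a locally constant section over each connected component of $X_O$. The Higman invariant is by definition the set of these sections as $x$ ranges over $X_O$, and this set is a single $G$-conjugacy class because $G$ acts transitively on $X_O$ in each geometric fiber. As all the relevant data (the components of $X_O$, the subgroups $G_x$, and the distinguished sections $\rho_x^{-1}(\zeta_n^{n/e_x})$) are locally constant on $T$, so is the resulting conjugacy class.

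The main obstacle is bookkeeping: ensuring that the purely pointwise identification $\GL(T_x)\cong\Gm$ used to define $\rho_x$ extends to a consistent family-level identification across $T$, and that the choice of a base root of unity $\zeta_n$ correctly pins down a global element of the locally constant subgroup scheme of inertia. Abhyankar's lemma combined with the hypothesis $\zeta_n\in\cO(T)$ resolves this cleanly and forces the Higman invariant to descend to a locally constant function on $\cM(G)_{\bZ[1/n,\zeta_n]}$.
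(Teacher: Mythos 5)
Your argument is correct and is essentially the paper's proof: \'{e}tale-locally represent the $G$-structure by an honest torsor, extend it via Lemma \ref{lemma_smoothness_of_normalization} to a branched cover whose reduced ramification divisor is \'{e}tale over the base, and observe that the inertia character into $\mu_n$ is constant over a connected base, so the preimage of $\zeta_n$ (hence the Higman conjugacy class) is locally constant --- the paper merely packages this last step canonically, as a homomorphism of constant group schemes $\rho : G_D\rightarrow\cO_U^\times$ defined via the conormal sheaf of a component of the ramification divisor, rather than via your explicit Kummer coordinates $y^{e}=t$ from Abhyankar's lemma. One small remark: the appeal to invariance under the ``$\Aut(G)$-action used in \'{e}tale descent'' is unnecessary and, as stated, too strong (outer automorphisms can permute conjugacy classes); descent data for $\cM(G)$ only involves $G$-equivariant isomorphisms of torsors, under which the Higman class is trivially preserved.
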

\begin{proof} For any geometric point $x : \Spec k\rightarrow\cM(G)_{\bZ[1/n,\zeta_n]}$, let $U\rightarrow\cM(G)_{\bZ[1/n,\zeta_n]}$ be a connected \'{e}tale neighborhood of $x$, corresponding to an elliptic curve $E/U$ with $G$-structure. Possibly passing to a further \'{e}tale localization, we may assume that the $G$-structure is given by a $G$-torsor $f^\circ : X^\circ\rightarrow E^\circ$. Since $U$ is regular, applying Lemma \ref{lemma_smoothness_of_normalization}, we may extend $f^\circ$ to a branched $G$-cover $f : X\rightarrow E$ of smooth proper curves, branched only above the zero section of $E$, such that the reduced ramification divisor is \'{e}tale over the zero section of $E$. \'{E}tale localizing even more, we may assume the reduced ramification divisor is a disjoint union of copies of $U$. Let $D\subset X$ be a component of the reduced ramification divisor, with ideal sheaf $\cI$. Then the conormal sheaf $\cI/\cI^2$ is invertible on $D$, and viewing $G$ as a constant group scheme over $U$, the action of $G_D := \Stab_G(D) \subset G$ on $D$ gives a homomorphism of constant $U$-group schemes
$$\rho : G_D\rightarrow\ul{\Aut}_D((\cI/\cI^2)^\vee)\cong\ul{\Aut}_U((\cI_U/\cI_U^2)^\vee)\cong\ul{\Aut}_U(\cO_U)\cong\cO_U^\times$$
where the isomorphisms are canonical. Thus the image of $\rho$ is necessarily contained in the subgroup scheme $(\mu_n)_U$. For geometric points $s\in U$, the restrictions $\rho|_{(G_D)_s}$ (taken up to conjugacy in $G_s$) give precisely the Hurwitz data of the restrictions $f_s : X_s\rightarrow E_s$, and the preimages $\rho_s^{-1}(\zeta_n)$ give precisely the Higman invariants of $f_s$. Since $\rho$ was a homomorphism of constant group schemes, this establishes the local constancy of the Higman invariant.
\end{proof}

\subsubsection*{Higman invariants on $\cM(G)$}
For a conjugacy class $C\subset G$, we wish to speak of the open and closed substacks of $\cM(G)$ having ``Higman invariant $C$''. For this to make sense, we must understand the ``ring of definition'' of a conjugacy class. 

In this section let $k$ be any field of characteristic prime to $|G|$, not necessarily algebraically closed. Let $x : \Spec k\rightarrow\cM(G)$ be a point corresponding to a $G$-structure on an elliptic curve $E/k$. If $\iota : k\hookrightarrow\ol{k}$ is an algebraic closure, and $\zeta_e\in\ol{k}$ is a primitive $e$th root of unity, then the point
$$\ol{x} : \Spec\ol{k}\stackrel{\Spec{\iota}}{\longrightarrow}\Spec k\stackrel{x}{\longrightarrow}\cM(G)$$
corresponds to an actual $G$-torsor over an elliptic curve branched only over the origin. Thus, it makes sense to consider the Higman invariant of $\ol{x}$ relative to $\zeta_e^i$ for $i$ coprime to $e$. If the Higman invariant of $\ol{x}$ relative to $\zeta_e$ is $C$ (a conjugacy class of $G$), then the Higman invariant of $\ol{x}$ relative to $\zeta_e^i$ is $C^i$. 

Suppose $S\subset G$ is a conjugation-stable subset consisting of elements of the same order $e$ (a union of conjugacy classes of order $e$).  Let $\bF$ be the prime subfield of $k$, and let $\bF(e)$ denote the minimal extension of $\bF$ containing a primitive $e$th root of unity. Let $\chi_e : \Gal(\bF(e)/\bF)\rightarrow(\bZ/e\bZ)^\times$ be the cyclotomic character. 
For any integer $i$, let $S^i := \{s^i : s\in S\}$ and let $R(S) := \{S^i : \text{$i$ is coprime to $e$}\}$.  Note $R(S)$ is a set of conjugation-stable subsets of $G$, and that for any $i$, $S^i\cap S$ is either empty or a union of conjugacy classes. The group $(\bZ/e\bZ)^\times$ naturally acts on primitive $e$th roots of unity, and we similarly define an action on $R(S)$ by
\begin{eqnarray} \label{eq:rhoS}
\rho_S : (\bZ/e\bZ)^\times & \longrightarrow & \Sym(R(S)) \\
i & \longmapsto & \left(S\mapsto S^i\right). \nonumber
\end{eqnarray}
Define $\rho_{\bF,S} : \Gal(\bF(e)/\bF)\rightarrow\Sym(R(S))$ by $\rho_{\bF,S} := \rho_S\circ\chi_e$, and (for any prime field $\bF$) let $\bF(S)\subset\bF(e)$ be the fixed field of $\ker(\rho_{\bF,S})$.  Suppose we are given an embedding $\bF(S)\hookrightarrow k$; for example this will be the case if $x$ is a point of $\cM(G)_{\bF(S)}$.  

\begin{defn} Keeping the notation as above, let $\mu_e/\bF(S)$ denote the group scheme which is the kernel of the $e$th power map on $\bG_{m,\bF(S)}.$
Its automorphism group is canonically isomorphic to $(\bZ/e\bZ)^\times$. Let $\mu_{e,S,\bF}$ denote the set of $\ker(\rho_S)$-orbits of closed points of order $e$ in the group scheme $\mu_e$ over $\bF(S)$. We will think of $\mu_{e,S,\bF}$ as a collection of closed subschemes of the group scheme $\mu_e/\bF(S)$.
\end{defn}

\begin{remark}
Explicitly, a choice of $\omega \in \mu_{e,S,\bF}$ amounts to a $\ker(\rho_S)$-orbit of primitive $e$-th roots of unity in $\bF(e)$.  There are two extreme cases. If $\rho_S$ is faithful (the sets $S^i$ are all distinct for $i\in(\bZ/e\bZ)^\times$), then $\bF(S) = \bF(\zeta_e)$ and the closed points of $\mu_e$ are $\bF(S)$-points, and in this case choosing $\omega$ is equivalent to choosing a primitive $e$-th root of unity. In the other extreme, if $\rho_S$ is trivial ($S^i = S$ for every $i$ coprime to $e$), then $\bF(S) = \bF$, $\ker(\rho_S) = \Aut(\mu_e) = (\bZ/e\bZ)^\times$, and there is only one choice of $\omega$, namely the closed subscheme corresponding to the collection of all primitive $e$th roots of unity. We think of $\bF(S)$ as the ``field of definition'' of $S$.
\end{remark}

\begin{lemma} \label{lem:higequiv}
Suppose $S\subset G$ is a conjugacy-stable set consisting of elements of the same order $e$. Let $\omega \in \mu_{e,S,\bF}$ be an orbit, and let $x : \Spec k\rightarrow\cM(G)$ be a point. Suppose we are given an embedding $\bF(S)\hookrightarrow k$. The following are equivalent:
\begin{itemize}
    \item[(a)] For some algebraic closure $\iota : k\hookrightarrow\ol{k}$ and some morphism
    $$z : \Spec\ol{k}\rightarrow\omega$$
    of $\Spec\bF(S)$-schemes with corresponding primitive $e$th root of unity $\zeta_e\in\ol{k}$, the Higman invariant of $\ol{x} := x\circ\Spec(\iota)$ relative to $\zeta_e$ is contained in $S$.
    \item[(b)] For \emph{any} algebraic closure $\iota : k\hookrightarrow\ol{k}$ and any morphism
    $$z: \Spec\ol{k}\rightarrow\omega$$
    of $\Spec\bF(S)$-schemes with corresponding primitive $e$th root of unity $\zeta_e\in\ol{k}$, the Higman invariant of $\ol{x} := x\circ\Spec(\iota)$ relative to $\zeta_e$ is contained in $S$.
\end{itemize}
\end{lemma}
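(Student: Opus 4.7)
The plan is to show that the containment condition in (a) and (b) depends only on $\ol{x}$, not on the auxiliary choices of $\iota$ or $z$. The key transformation rule is that if $\rho_y : G_y\hookrightarrow\GL(T_y)\cong\ol{k}^\times$ is the local monodromy representation at a ramification point $y$, then $\rho_y^{-1}(\zeta_e^i) = \rho_y^{-1}(\zeta_e)^i$ for any primitive $e$-th root $\zeta_e\in\ol{k}$ and any $i$ coprime to $e$. Consequently, if $C\subset G$ denotes the Higman invariant of $\ol{x}$ relative to $\zeta_e$, then $C^i$ is its Higman invariant relative to $\zeta_e^i$. Moreover, by definition $i\in\ker(\rho_S)$ implies $S^i = S$, since $\rho_S(i) = \mathrm{id}$ means $(S^j)^i = S^j$ for every $j$ coprime to $e$; taking $j=1$ yields $S^i = S$.

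First I would fix an algebraic closure $\iota : k\hookrightarrow\ol{k}$. The composition $\bF(S)\hookrightarrow k\hookrightarrow\ol{k}$ endows $\ol{k}$ with a $\bF(S)$-algebra structure, and a morphism $z : \Spec\ol{k}\to\omega$ of $\bF(S)$-schemes is the same as a $\ol{k}$-point of the closed subscheme $\omega\subset\mu_e$ over $\bF(S)$. Unwinding the definition of $\mu_{e,S,\bF}$, these $\ol{k}$-points are precisely the primitive $e$-th roots of unity in $\ol{k}$ lying in the single $\ker(\rho_S)$-orbit that defines $\omega$. Given any two choices $z,z'$ with corresponding roots $\zeta_e,\zeta_e' = \zeta_e^i$ for some $i\in\ker(\rho_S)$, the transformation rule gives that the Higman invariant $C$ of $\ol{x}$ relative to $\zeta_e$ is contained in $S$ if and only if $C^i\subset S^i = S$, and $C^i$ is the Higman invariant relative to $\zeta_e'$. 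Hence the containment condition is independent of $z$ once $\iota$ is fixed.

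Finally, to remove dependence on $\iota$, given a second algebraic closure $\iota' : k\hookrightarrow\ol{k}'$, fix a $k$-algebra isomorphism $\phi : \ol{k}\stackrel{\sim}{\to}\ol{k}'$ with $\iota' = \phi\circ\iota$; it is automatically $\bF(S)$-linear. For any $z : \Spec\ol{k}\to\omega$ with root $\zeta_e\in\ol{k}$, set $z' := z\circ\Spec(\phi^{-1}) : \Spec\ol{k}'\to\omega$, whose associated root is $\phi(\zeta_e)\in\ol{k}'$. Pulling back the elliptic curve, the $G$-torsor, and the ramification point $y$ along $\Spec(\phi)$ identifies the inertia group at the corresponding point $y'$ with $G_y$ and transports the local monodromy so that $\rho_{y'}(g) = \phi(\rho_y(g))$; hence $\rho_{y'}^{-1}(\phi(\zeta_e)) = \rho_y^{-1}(\zeta_e)$. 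It follows that the Higman invariants of $\ol{x}$ relative to $\zeta_e$ and of $\ol{x}' := x\circ\Spec(\iota')$ relative to $\phi(\zeta_e)$ coincide as conjugacy classes in $G$, so one is contained in $S$ if and only if the other is. Combining with the $\iota$-fixed statement above (applied to $\ol{k}'$ to absorb an arbitrary choice of $z'$) yields the equivalence of (a) and (b). The main subtlety is bookkeeping of the various compatibilities; the substantive content is just the transformation $C\mapsto C^i$ together with the defining property $\ker(\rho_S)\cdot S = S$ of the orbit $\omega$.
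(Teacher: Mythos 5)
Your proposal is correct and takes essentially the same approach as the paper: everything reduces to the transformation rule $C\mapsto C^i$ for the Higman invariant together with $S^i=S$ for $i\in\ker(\rho_S)$, and the independence of the algebraic closure---which the paper dispatches with ``a similar argument''---is exactly the transport along a $k$-isomorphism that you spell out. The only blemish is notational: the morphism $\Spec\ol{k}'\to\Spec\ol{k}$ induced by $\phi:\ol{k}\to\ol{k}'$ is $\Spec(\phi)$, not $\Spec(\phi^{-1})$, but this does not affect the argument.
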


\begin{proof} The key point is that if $z'$ is another $\ol{k}$-point of $\omega$, corresponding to a root of unity $\zeta_e'\in\ol{k}$, then because $z,z'$ are morphisms of $\Spec\bF(S)$-schemes, they differ by some $\sigma\in\Gal(\ol{k}/\bF(S))$, so $\zeta_e' = \zeta_e^i$ for some $i\in\ker(\rho_S)$ by definition of $\bF(S)$.  
If $C\subset S$ is the Higman invariant of $\ol{x}$ relative to $\zeta_e$, then $C^i\subset S^i$ is the Higman invariant relative to $\zeta_e'$, but as $i\in\ker(\rho_S)$, 
$S^i = S$, 
and hence the Higman invariant lies in $S$. A similar argument also implies independence of the choice of algebraic closure. 
\end{proof}

When any of the equivalent conditions in Lemma~\ref{lem:higequiv} are satisfied for a conjugation-stable set of elements of the same order $e$, we say that
\emph{$x$ has Higman invariant in $S$ relative to $\omega$}.

Now let
$\cO_{\bQ(S)}$ be the ring of integers of $\bQ(S)$; for any prime $\mf{p}\subset\cO_{\bQ(S)}[1/|G|]$ lying above $(p)\subset\bZ$, the residue field $k(\mf{p})$ is equal to $\bF_p(S)$. Moreover, for any $\omega \in \mu_{e,S,\bQ}$, $\omega$ uniquely extends to a closed subscheme of the group scheme $\mu_e$ over $\cO_{\bQ(S)}[1/|G|]$, and its fiber over a prime $\mf{p}\subset\cO_{\bQ(S)}[1/|G|]$ is an element of $\mu_{E,S,k(\mf{p})}$, which we will call the \emph{reduction of $\omega$ mod $\mf{p}$}. By the above discussion together with local constancy, the following definition makes sense:
\begin{defn} Let $S\subset G$ be a union of conjugacy classes of order $e$. Let $\omega\in\mu_{e,S,\bQ}$. Given a point $x : \Spec k \rightarrow \cM(G)_{\cO_{\bQ(S)}[1/|G|]}$ lying over a prime $\mf{p}\subset\cO_{\bQ(S)}[1/|G|]$, we say that $x$ has Higman invariant in $S$ relative to $\omega$ if it has Higman invariant in $S$ relative to the reduction of $\omega$ mod $\mf{p}$. Given a connected scheme $B$ and a map $f : B\rightarrow \cM(G)_{\cO_{\bQ(S)}[1/|G|]}$, we will say that it has Higman invariant in $S$ relative to $\omega$ if for some (equivalently any) point $b\in B$, the map $b\hookrightarrow B\stackrel{f}{\rightarrow}\cM(G)_{\cO_{\bQ(S)}[1/|G|]}$ has Higman invariant in $S$ relative to $\omega$. If we do not specify $\omega$, then it will be understood that we take $\omega$ to be the orbit of $\exp(2\pi i/e)$ relative to our fixed embedding of $\ol{\bQ}$ in $\bC$.
\end{defn}

In particular, $\bQ(S)\subset\bQ(\zeta_e)$ is an abelian number field, and the substack of $\cM(G)_{\cO_{\bQ(S)}[1/|G|]}$ consisting of objects with Higman invariant in $S$ relative to $\omega$ is open and closed.

\begin{remark} If $k = \bC$, since the local picture at a ramified point is given by $z\mapsto z^e$, we may check that in this case the Higman invariant defined analytically for $E(\bC)$ agrees with the Higman invariant defined for $E$ relative to $\exp(2\pi i/e)$.\footnote{In the analytic setting, the choice of root of unity was implicitly defined by an orientation, which can be thought of as a choice of generator of the fundamental group of a punctured neighborhood of $O\in E(\bC)$. Varying the group $G$, this orientation implicitly makes the choices of roots of unity $\zeta_e := \exp(2\pi i/e)$  which are compatible in the sense that $\zeta_e^r = \zeta_{e/r}$ for any $r\mid e$. If $k = \Qbar$, such a system can be thought of as an ``\'{e}tale orientation'' on $E/\Qbar$.}
\end{remark}

\subsubsection*{The Higman invariant algebraically}\label{sss_combinatorial_higman}
By Theorem \ref{thm_basic_properties}\eqref{part_fibers}, for any elliptic curve $E$ over $\Qbar$ with mapping class group $\Gamma_E := \Gamma_{E(\bC)}$ and base point $x_0\in E^\circ(\bC)$, the connected components of $\cM(G)_{\Qbar}$ are in bijection with the $\Gamma_E$-orbits on the set
$$\Epi^{\ext}(\pi_1^{top}(E^\circ(\bC),x_0),G).$$
Fixing a basis for the fundamental group with intersection number $+1$ (a ``positively oriented basis''), we may identify $\pi_1^{top}(E^\circ(\bC),x_0)$ with the free group $F_2$ on two generators $a,b$, and $\Gamma_E$ with $\Out^+(F_2)$ (see Theorem \ref{thm_basic_properties}\eqref{part_monodromy}). Under this identification, a positively oriented generator of inertia around $O$ is given by the commutator $[a,b]$. Thus, the Higman invariant of the $G$-torsor corresponding to (the conjugacy class of) a homomorphism $\varphi : F_2\rightarrow G$ is the conjugacy class of the commutator $[\varphi(a),\varphi(b)]$. This class is an invariant of the $\Out^+(F_2)$ action on the set
$$\Epi^{\ext}(F_2,G)$$
and correspondingly it does not depend on the choice of positively oriented basis $a,b$. Abstractly, if $F_2$ is the free group on $a,b$, then we define the Higman invariant of an element of $\Epi^\ext(F_2,G)$ to be the conjugacy class of $\varphi([a,b]) = [\varphi(a),\varphi(b)]$.

\subsubsection*{The trace invariant} We now specialize to the cases of $G=\SL_2(\bF_\ell)$ and $G=\PSL_2(\F_\ell)$ for $\ell$ prime.

\begin{defn} Let $F_2$ be the free group on generators $a,b$. Given a homomorphism $\varphi : F_2 \to \SL_2(\F_\ell)$, the \emph{trace invariant} of  $\varphi$ is $\tr([\varphi(a),\varphi(b)])$. For a homomorphism $\varphi: F_2 \to \PSL_2(\F_\ell)$, its trace invariant is defined to be $\tr([A,B])$ where $A,B \in \SL_2(\F_\ell)$ are any lifts of $\varphi(a)$ and $\varphi(b)$.
\end{defn}

\begin{remark}\label{remark_good} Since the kernel of $\SL_2(\bF_\ell)\rightarrow\PSL_2(\bF_\ell)$ is central, the trace invariant for $\varphi : F_2\rightarrow\PSL_2(\bF_\ell)$ is well-defined on $\Epi^{\ext}(F_2,\PSL_2(\bF_\ell))$ and is independent of the choice of lifts. Moreover, the trace invariant is invariant under the actions of $\Out^+(F_2)$ and $D(\ell)$ on $\Epi^{\ext}(F_2,\SL_2(\bF_\ell))$, and likewise for $\Epi^{\ext}(F_2,\PSL_2(\bF_\ell))$.
\end{remark}

Geometrically, we will define:
\begin{defn}\label{def_trace_invariant} Given a point $x : \Spec\Qbar\rightarrow\cM(\SL_2(\bF_\ell))_{\Qbar}$, its \emph{trace invariant} is the trace of its Higman invariant. For a point $y\in\cM(\PSL_2(\bF_\ell))_{\Qbar}$, its trace invariant is the trace of the Higman invariant of any lift of $y$ to a point of $\cM(\SL_2(\bF_\ell))_{\Qbar}$. Such lifts are guaranteed to exist by Theorem~\ref{thm_basic_properties}\eqref{part_functoriality}, and the resulting trace is well-defined by Remark \ref{remark_good}.

For $t\in\bF_\ell$, let 
\begin{equation}\label{eq_trace_as_higman}
    S(t) := \{A\in\SL_2(\bF_\ell) \;|\; \tr(A) = t, A\ne\pm I\}
\end{equation}
By \cite[Proposition 5.1]{mw13}, $S(t)$ is a union of conjugacy classes of the same order $e$. Let $n_\ell := |\SL_2(\bF_\ell)|$. We will say that an object of  $\cM(\SL_2(\bF_\ell))_{\cO_{\bQ(S(t))}[1/n_\ell]}$ has trace invariant $t$ if it has Higman invariant in $S(t)$. Given an object $B\rightarrow\cM(\PSL_2(\bF_\ell))_{\cO_{\bQ(S(t))}[1/n_\ell]}$, we say that it has trace invariant $t$ if for some geometric point $b\in B$, there is a lift of $b$ to $\cM(\SL_2(\bF_\ell))_{\cO_{\bQ(S(t))}[1/n_\ell]}$ which has trace invariant $t$. For the same reasons as above such lifts exist and the resulting trace is well-defined.
\end{defn}

The local constancy of the Higman invariant implies the local constancy of the trace invariant. Thus, for any $\cO_{\bQ(S(t))}[1/n_\ell]$-algebra $A$, we obtain open and closed substacks
\begin{eqnarray*}
\cM(\SL_2(\bF_\ell))_{t,A} & \subset & \cM(\SL_2(\bF_\ell))_A \\
\cM(\PSL_2(\bF_\ell))_{t,A} & \subset & \cM(\PSL_2(\bF_\ell))_A
\end{eqnarray*}
consisting of objects of trace invariant $t$. As with the Higman invariant, one may check that the algebraic version of the trace invariant agrees with the geometric version in characteristic $0$ if one fixes an isomorphism $F_2\cong \pi_1^{\top}(E^\circ(\bC),x_0)$ sending $a,b$ to a positively oriented basis.

\begin{remark} Note that since $\SL_2(\bF_\ell)$ is never abelian and the Higman invariant is the commutator of a generating pair, it cannot be contained in any proper normal subgroup. Thus in Definition \ref{def_trace_invariant}, nothing is lost by excluding $\pm I$ from $S(t)$.
\end{remark}

\section{Markoff triples as level structures on elliptic curves}\label{section_markoff_vs_moduli}

The \emph{Markoff surface} is the surface $\bX$ (over $\ZZ$) given by the equation
\begin{equation} \label{eq:markoff}
 x^2 + y^2 + z^2 - xyz = 0.
\end{equation}
A Markoff triple is a solution to this equation. 

For any prime $\ell$, let $X(\ell) := \bX(\F_\ell)$ and $X^*(\ell) := X(\ell) - \{(0,0,0)\}$.

In this section we will explain how for $\ell \ge 3$, the set $X^*(\ell)$ is in bijection with the set of absolute $\SL_2(\bF_\ell)$-structures on an elliptic curve with trace invariant $-2$. Under this correspondence (Proposition \ref{prop:fibermarkoff}), the natural monodromy action of $\pi_1(\cM(1)_{\Qbar})$ translates into an action on $X^*(\ell)$ given by automorphisms of the Markoff surface. This correspondence allows us to translate the work of Bourgain--Gamburd--Sarnak \cite{bgs1,bgs16} and Meiri--Puder \cite{mp18} into statements about the connectedness of the substack $\cM(\PSL_2(\bF_\ell))_{-2}^{\abs} \subset \cM(\PSL_2(\bF_\ell))^{\abs}$, and the explicit ``coordinatization'' it provides results in a remarkably simple calculation of the ramification behavior of $M(\PSL_2(\bF_\ell))^\abs_{-2}$ over $j = 0,1728$ in $M(1)$. The ramification calculation has the pleasant consequences of establishing the existence of a $\bQ$-rational point of $M(\PSL_2(\bF_\ell))^\abs$ (hence proving the $\bQ$-rationality of the substack), and determining the monodromy group of $M(\PSL_2(\bF_\ell))_{-2}^\abs$ over $M(1)$ assuming it is connected. While not necessary for our argument, in \S\ref{ss_rational_point} we show how this rational point arises from a natural covering of classical congruence modular curves.  Finally, in \S\ref{ss_proof_of_asymptotic_statement} we conclude by putting everything together and proving Theorem~\ref{thm:main_B_intro}.

\subsection{The work of Bourgain--Gamburd--Sarnak and Meiri--Puder} \label{section_bgsmp}

Let $\tau_{12}$ (resp. $\tau_{23}$) be the automorphism of $\bA^3$ by exchanging the first and second coordinates (resp. second and third coordinates), and $R_3$ be the ``Vieta involution'' given by $R_3 : (x,y,z) \mapsto (x,y,xy-z)$.
Let $\Gamma$ be the group of automorphisms of $\bA^3$ generated by $R_3$, $\tau_{12}$, and $\tau_{23}$. 
The group $\Gamma$ clearly preserves $\bX$, and hence induces a group of permutations $\Gamma$ on $X^*(\ell)$.

Markoff showed that $\Gamma$ acts transitively on the set of positive integer Markoff triples \cite{markoff79,markoff80}.  The analogous question about the action of $\Gamma$ on $X^*(\ell)$
was studied by Bourgain--Gamburd--Sarnak in \cite{bgs16,bgs1} because of its connection with strong approximation. There, they show that the action is transitive for a density 1 set of primes $\ell$.

    \begin{thm}[Bourgain, Gamburd, Sarnak \cite{bgs1}] \label{thm:BGS} 
		Let $\cE$ be the set of primes $\ell$ for which $\Gamma$ does not act transitively on $X^*(\ell)$. For any $\varepsilon >0$, the number of primes $\ell \leq T$ with $\ell \in \cE$ is at most $T^{\varepsilon}$, for $T$ large enough. Moreover, for any $\varepsilon>0$, the largest $\Gamma$-orbit in $X^*(\ell)$ is of size at least $|X^*(\ell)|- \ell^{\varepsilon}$, for $\ell$ large enough (whereas $|X^*(\ell)| \sim \ell^2$).
	\end{thm}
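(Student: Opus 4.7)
My plan is to follow the strategy pioneered by Bourgain--Gamburd--Sarnak, combining a rich conic fibration structure on the Markoff surface with a spectral gap argument for the Cayley--Schreier graph of $\Gamma$ acting on $X^*(\ell)$. The key observation is that for any fixed $z_0 \in \F_\ell$, the Markoff equation \eqref{eq:markoff} defines a plane conic $C_{z_0}: x^2 + y^2 - z_0 xy + z_0^2 = 0$. Conjugating $R_3$ by coordinate permutations yields involutions $R_1, R_2 \in \Gamma$ which preserve each fiber $C_{z_0}$, and the composition $R_1 R_2$ restricts to an element of $\SL_2(\F_\ell)$ of trace $z_0^2 - 2$. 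For generic $z_0$ its order divides $\ell - 1$ or $\ell + 1$, depending on whether $z_0^2 - 4$ is a quadratic residue modulo $\ell$, producing a large cyclic subgroup of ``rotations'' inside $\Gamma$ acting on each generic conic fiber.

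Next, I would decompose $X^*(\ell)$ into a \emph{cage} of points where some coordinate lies in $\{0, \pm 2\}$ (precisely where the rotation structure degenerates), and a \emph{bulk} comprising the remainder. The cage has cardinality $O(\ell)$ while $|X^*(\ell)| \sim \ell^2$, so to prove the second statement it suffices to show that every $\Gamma$-orbit meeting the bulk covers the bulk up to $O_\varepsilon(\ell^\varepsilon)$ points. For this, one runs the Bourgain--Gamburd machine to produce a spectral gap for the Cayley--Schreier graph. The three classical ingredients are (i) a noncommutative product theorem controlling the growth of approximate subgroups in $\SL_2$-type groups, due to Helfgott and Pyber--Szab\'o; (ii) an escape estimate ruling out concentration of short $\Gamma$-words on proper subvarieties or cyclic subgroups; and (iii) quasi-randomness bounds on low-dimensional representations. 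Combining the resulting spectral gap with the rotation action on the conic fibers yields the asserted bound on the largest orbit.

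The first statement refines the second by identifying the primes $\ell \in \cE$ of non-transitivity with specific arithmetic coincidences---for instance, small orders of the rotation elements $R_1 R_2$, or reducibility modulo $\ell$ of certain explicit polynomials arising in the expansion analysis---and bounding their number via a combinatorial sieve. The main obstacle is the spectral gap input: the Bourgain--Gamburd machine is designed for random walks inside a fixed finite group, whereas here the walk takes place on a variety with no ambient group structure, so the escape-from-subvarieties step must be substantially adapted. In particular, one must rule out $\Gamma$-invariant subsets of $X^*(\ell)$ of intermediate size arising from preserved unions of conic fibers or from fixed loci of powers of the rotation elements, which requires a delicate analysis of the algebraic geometry of $\bX$ over $\F_\ell$.
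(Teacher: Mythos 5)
This statement is not proved in the paper at all: it is imported verbatim from Bourgain--Gamburd--Sarnak \cite{bgs1}, so the only ``proof'' the paper offers is the citation. Your sketch therefore has to be judged as a reconstruction of the BGS argument, and as such it has a genuine gap at its core. The engine you propose---``run the Bourgain--Gamburd machine to produce a spectral gap for the Cayley--Schreier graph,'' with Helfgott/Pyber--Szab\'o growth, escape from subvarieties, and quasi-randomness---is not available here and is not what BGS do. Quasi-randomness and the product theorems live inside an ambient finite group of Lie type, whereas $\Gamma$ acts nonlinearly on the variety $\bX(\F_\ell)$ with no such ambient group; you acknowledge this obstacle in your last paragraph but do not overcome it, and in fact a spectral gap (expansion) for these Markoff graphs is an open conjecture, not an input one can invoke. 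Worse, a spectral-gap statement for the graph on $X^*(\ell)$ only makes sense orbit by orbit, so using it to prove near-transitivity is circular: connectivity is exactly what the theorem asserts.

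The actual BGS proof is of a different nature: it is an orbit-by-orbit ``order boosting'' argument along the conic fibrations. One shows that any orbit contains a point for which some coordinate gives a rotation of not-too-small multiplicative order (dividing $\ell\pm1$), then uses Weil-type character-sum bounds on the conic sections, together with sum--product/multiplicative-order estimates, to produce from such a point another point whose rotation order is larger, iterating until one reaches the ``cage'' of near-maximal order, which is shown to be connected; the count of triples all of whose coordinates have small order is $O(\ell^{\varepsilon})$, which gives the largest-orbit bound. The exceptional set $\cE$ does not come from ``reducibility of explicit polynomials bounded by a combinatorial sieve'' as you suggest, but from primes $\ell$ for which $\ell^2-1$ is too smooth (lacks divisors in the ranges needed to start the boosting), and the bound $|\cE\cap[1,T]|\le T^{\varepsilon}$ is a counting statement about such primes. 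So both halves of your plan---the spectral-gap input and the description of $\cE$---would need to be replaced by these order-theoretic and character-sum arguments before the sketch could become a proof.
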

	
Moreover, they conjecture that transitivity holds for all $\ell$:

\begin{conj}[Bourgain, Gamburd, Sarnak] \label{conj:markoff}
Let $\ell$ be a prime.  The action of $\Gamma$ on $X^*(\ell)$ is transitive. 
\end{conj}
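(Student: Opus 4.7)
The plan is to translate the conjecture, via the moduli-theoretic dictionary set up in this paper, into the assertion that the stack $\cM(\PSL_2(\bF_\ell))^\abs_{-2}$ is geometrically connected over $\cM(1)$ for every prime $\ell \geq 5$, and then attack this connectedness through a fixed-point and divisibility analysis at the cusp $j=\infty$ of its coarse compactification. By the forthcoming identification of $\overline{F(\ell)}_{-2}$ with $X^*(\ell)$ and by the $\Out^+(F_2)\cong \SL_2(\bZ)$-equivariance supplied by Proposition~\ref{prop:identifyfibers}, the set of $\Gamma$-orbits on $X^*(\ell)$ is in bijection with $\pi_0(\cM(\PSL_2(\bF_\ell))^\abs_{-2,\Qbar})$, so it suffices to show that the latter set is a singleton.

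First I would pin down one distinguished component, namely $\cM(\ol{\pi_\ell})$ built from the classical principal level-$\ell$ modular curve $M(\ell)/\{\pm I\}$ via \eqref{eq_mordell_covering_PSL}. This component is already defined over $\bQ$ and known to lie in $\cM(\PSL_2(\bF_\ell))^\abs_{-2}$. For any other hypothetical component $\cC$, I would extend both to their coarse compactifications and compare fibers above $j=\infty$ using Corollary~\ref{cor_analytic_ramification}: each such fiber is the orbit set of $\spmatrix{1}{1}{0}{1}$ acting on $\overline{F(\ell)}_{-2}$, and each orbit has length dividing $\ell$ because the image of $\spmatrix{1}{1}{0}{1}$ in $\PSL_2(\bF_\ell)$ is unipotent of order $\ell$. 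The explicit orbit count from the Markoff-coordinate description of $\overline{F(\ell)}_{-2}$ then gives precise numerical constraints relating the degrees and cusp widths of the components.

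The crux is a Frattini-type argument showing that the cuspidal stratification of any component $\cC$ is incompatible with $\cC$ being disjoint from $\cM(\ol{\pi_\ell})$. Concretely, one would prove that the image of local inertia around any single cusp of $\cC$, together with the action of the Galois group $\Gal(\Qbar/\bQ)$ coming from the arithmetic rationality of both components, already generates the full geometric monodromy of $\cC$; this would force a cusp of $\cC$ to coincide with a cusp of $\cM(\ol{\pi_\ell})$, and hence the two components to agree. Philosophically this is an instance of the congruence subgroup property for the action of $\SL_2(\bZ)$ on $\Epi^\ext(F_2,\PSL_2(\bF_\ell))$, or equivalently a cohomological obstruction to producing several components with a common Higman invariant but disjoint cuspidal supports.

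The main obstacle is precisely the exceptional set $\cE$ of Bourgain, Gamburd, and Sarnak: for primes in $\cE$, the expander and random-walk inputs on the Markoff surface break down, reflecting the absence of a uniform spectral gap for $\SL_2(\bF_\ell)$ under the generators $R_3,\tau_{12},\tau_{23}$. A proof valid for all $\ell$ must therefore avoid any asymptotic spectral input and instead exploit a purely algebraic feature of the $\Aut(F_2)$-action on $\Hom(F_2,\SL_2)\git\SL_2$, whose trace-$(-2)$ locus at level $\ell$ is $X^*(\ell)$. One promising route is to leverage the fact that the Markoff cubic \eqref{eq:markoff} carries three natural pencils of conics, one for each Vieta involution; each involution acts fiberwise as the hyperelliptic involution, and carefully tracking how a $\Gamma$-orbit intersects these pencils over $\bF_\ell$ may yield the desired transitivity without any estimate that degrades with $\ell$.
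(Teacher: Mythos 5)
There is no proof to compare against: the statement you are addressing is Conjecture~\ref{conj:markoff}, which the paper quotes from Bourgain--Gamburd--Sarnak as an \emph{open} conjecture and never proves. The paper only uses the partial results of Theorem~\ref{thm:BGS} (transitivity outside a sparse exceptional set $\cE$, plus the largest-orbit bound), and works around $\cE$ in \S\ref{sec:largemarkoff}. Your text is likewise not a proof but a research program, and as written it has concrete gaps. First, the translation is off: by Proposition~\ref{prop_Q_vs_Qplus}, transitivity of $\Gamma$ on $X^*(\ell)$ is equivalent to connectedness of $\cM(\SL_2(\bF_\ell))^\abs_{-2,\Qbar}$, whereas connectedness of $\cM(\PSL_2(\bF_\ell))^\abs_{-2,\Qbar}$ (what you propose to prove) only corresponds to transitivity on $Y^*(\ell) = X^*(\ell)/\cV$, which is a strictly weaker statement; the paper never claims the converse implication. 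Second, your cusp analysis rests on a false premise: the orbit lengths of $\gamma_\infty$ (equivalently of the matrix $\spmatrix{1}{1}{0}{1}$ acting via $\Out^+(F_2)$ on the fiber, i.e.\ of $\rot_1$ on $X^*(\ell)$) do \emph{not} all divide $\ell$. By Proposition~\ref{prop_ramification_indices} they only divide the exponent of the group, and the explicit data from \cite{mp18} used in \S\ref{sec:permutation} show $\rot_1$-cycles of lengths dividing $\frac{\ell-1}{2}$ and $\frac{\ell+1}{2}$ as well as $\ell$, so the "divisibility constraints" you want at $j=\infty$ do not exist in the form you state.

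Third, the crux of your plan --- the "Frattini-type" claim that inertia at a single cusp together with the $\Gal(\Qbar/\bQ)$-action generates the full geometric monodromy of an arbitrary component $\cC$, forcing $\cC = \cM(\ol{\pi_\ell})$ --- is unsubstantiated and is essentially the hard content of the conjecture itself, repackaged. The Galois action permutes geometric components but cannot merge two distinct geometric components; distinct components can each be defined over $\bQ$, so arithmetic rationality gives no contradiction. Moreover, appealing to a "congruence subgroup property for the action of $\SL_2(\bZ)$ on $\Epi^\ext(F_2,\PSL_2(\bF_\ell))$" is not available: the whole point of the $\cM(G)$ formalism (Remark~\ref{remark_modular_curves}, Theorem~\ref{thm_basic_properties}) is that these covers are typically noncongruence, and no such CSP-type statement is known --- if it were, transitivity would follow for all $\ell$ and the exceptional set $\cE$ would be unnecessary. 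Finally, your closing suggestion to exploit the three conic pencils and the Vieta involutions is exactly the structure already used by Bourgain--Gamburd--Sarnak and Meiri--Puder, and it is precisely what fails to give transitivity for all $\ell$ without asymptotic input; invoking it is not an argument. In short: you have proposed a plausible-sounding reduction with one wrong step, one step equivalent to the open problem, and a final heuristic, so the conjecture remains unproved by your proposal, as it does in the paper.
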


\begin{defn} \label{defn:markoffterminology}
Let $\cV$ denote the group of automorphisms of $\bA^3$ negating two of the coordinates.  It preserves $\bX$. Given $(x,y,z)\in X^*(\ell)$, its $\cV$-orbit is denoted $[x,y,z]$ and is called a \emph{block}. Let $Y^*(\ell) := X^*(\ell)/\cV$ denote the set of blocks of $X^*(\ell)$. Explicitly we have
$$[x,y,z] := \{(x,y,z),(x,-y,-z),(-x,y,-z),(-x,-y,z)\}.$$
Let $Q_\ell$ be the permutation group induced by the action of $\Gamma_{\ell}$ on $Y^*(\ell)$. This is the image of $\Gamma_\ell$ under the natural map $X^*(\ell)\rightarrow Y^*(\ell)$.
\end{defn}

Using the fact that $\bX$ is a ruled surface, the cardinality of $Y^*(\ell)$ can be computed as follows (see \cite[Lemmas 2.2, 2.3 ]{mp18} and \cite[Lemmas 3-5]{bgs1} )
\begin{equation}\label{eq_card}
    n_\ell := |Y^*(\ell)| = \left\{\begin{array}{rl}
	\frac{\ell(\ell+3)}{4} & \text{if $\ell\equiv 1\mod 4$} \\
	\frac{\ell(\ell-3)}{4} & \text{if $\ell\equiv 3\mod 4$}.
	\end{array}\right.
\end{equation}

Assuming the transitivity of $Q_\ell$ (equivalently $\Gamma_\ell$), Meiri and Puder are able to describe $Q_\ell$ in most cases:

	\begin{thm}[Meiri, Puder {\cite[Theorem~1.3, 1.4]{mp18}}] \label{thm:MP} Let $\ell\ge 5$ be a prime.
	\begin{enumerate}
		\item If $\ell \equiv 1 \mod 4$ and $Q_\ell$ is transitive, then $Q_\ell$ is the full alternating or symmetric group on $Y^*(\ell)$.
		\item If $\ell\equiv 3 \mod 4$, $Q_\ell$ is transitive, and the property $\BP(\ell)$ holds, then $Q_\ell$ is the full alternating or symmetric group on $Y^*(\ell)$. 
	\end{enumerate}
	\end{thm}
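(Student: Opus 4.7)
The plan is to prove the theorem by combining two classical ideas: exhibit an element of $Q_\ell$ whose cycle structure contains a prime cycle of the right length, and verify that $Q_\ell$ is primitive on $Y^*(\ell)$, so that Jordan's theorem forces $Q_\ell \supseteq A_{n_\ell}$.

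The starting point is the conic-bundle structure of $\bX$. Projecting onto the $x$-coordinate exhibits $\bX$ as a family of conics $y^2 - xyz + z^2 = -x^2$ over $\bA^1$; the Vieta involutions $R_2$ and $R_3$ both preserve each fiber and act on it as involutions of the corresponding conic, so their composition $\rho := R_2 R_3$ is a rotation of each fiber $\{x = c\}$. A direct computation shows that the order of $\rho$ restricted to $\{x = c\}$ equals the multiplicative order of a root of $T^2 - cT + 1 = 0$ in $\bF_\ell^\times$ or $\bF_{\ell^2}^\times$, depending on whether that conic is split over $\bF_\ell$. For $c = 3$ this root is precisely $(3 + \sqrt{5})/2$, which is why the property $\BP(\ell)$ is tailored to control exactly the order of this rotation.

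I would then pass to $Y^*(\ell) = X^*(\ell)/\cV$ and describe the cycle structure of $\rho$ as a permutation of blocks: on each fiber the $\cV$-action collapses $\rho$-orbits by a small controlled amount, and for a fiber where $\rho$ has order divisible by a large prime $p$, raising $\rho$ to an appropriate power produces an element of $Q_\ell$ whose cycle structure on $Y^*(\ell)$ is a single $p$-cycle together with fixed points coming from all other fibers. The hypothesis $\BP(\ell)$ when $\ell \equiv 3 \bmod 4$, combined with the size estimate $n_\ell \asymp \ell^2$, ensures that such a prime $p$ exists with $p \leq n_\ell - 3$; in the case $\ell \equiv 1 \bmod 4$, a different family of fibers (where the conic is split because $-1$ is a square in $\bF_\ell$) supplies a rotation of large prime order without needing any auxiliary hypothesis.

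Primitivity is the remaining hurdle, and I expect it to be the main obstacle. Assuming transitivity of $Q_\ell$, one must rule out every non-trivial system of blocks. The natural route is to show that rotations $\rho_c$ attached to distinct fibers have supports that interlock incompatibly with any hypothetical block decomposition, for instance by producing two rotations whose supports share exactly one block, forcing any block containing that intersection to be a singleton. Once primitivity and the $p$-cycle are both established, Jordan's theorem yields $A_{n_\ell} \subseteq Q_\ell$, and it remains only to compute the sign of a convenient generator (such as $\tau_{12}$ or $R_3$) as a permutation of $Y^*(\ell)$ in order to decide whether $Q_\ell = A_{n_\ell}$ or $Q_\ell = S_{n_\ell}$.
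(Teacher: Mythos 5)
The statement is quoted from Meiri--Puder and the paper does not reprove it; the relevant comparison is with the argument of \cite{mp18} that the paper summarizes and adapts in \S\ref{sec:largemarkoff}. Your route for part (1) is essentially the right one (and is what \cite{mp18}, \S3 does): when $\ell\equiv 1\bmod 4$ the value $\pm 2$ is attained, the conic $C_1(\pm 2)$ is parabolic, $\rot_1$ acts on it through a single cycle of prime length $\ell\le n_\ell-3$, a suitable power of $\rot_1$ fixes everything else, and primitivity plus Jordan's theorem finishes.

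The gap is in part (2). When $\ell\equiv 3\bmod 4$ there is no parabolic conic at all: $x=\pm 2$ forces $(y-z)^2=-4$, impossible since $-1$ is a nonsquare. Every conic is hyperbolic or elliptic, so the rotation on a fiber has order dividing $\frac{\ell-1}{2}$ or $\frac{\ell+1}{2}$ and splits that fiber into \emph{several} cycles of equal length; moreover, for any prime $p$ dividing $\ell\pm 1$ there are on the order of $\ell$ distinct fibers whose rotation order is divisible by $p$, and no power of $\rot_1$ can isolate one of them. So your claimed element ``a single $p$-cycle together with fixed points coming from all other fibers'' does not exist among powers of rotations, and Jordan's theorem cannot be invoked this way. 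This is precisely why Meiri--Puder's Theorem 1.4 (and the paper's adaptation in \S\ref{sec:permutation}--\ref{sec:wreath}) abandons Jordan in this case and instead uses the Guralnick--Magaard classification of primitive groups containing an element fixing at least half the points (Theorem \ref{thm:choices}), applied to $\pi=\rot_1^{(\ell+1)/2}$, followed by delicate cycle-structure arguments eliminating the affine, orthogonal, and wreath-product cases. Relatedly, you misstate the role of $\BP(\ell)$: it does not produce a prime $p\le n_\ell-3$; it bounds below the length of the $\rot_1$-cycle through $[3,3,3]$ (the order of $\frac{3+\sqrt 5}{2}$), which is what drives the primitivity argument when $\ell\equiv 3\bmod 4$. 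Finally, primitivity itself---which you defer as ``the main obstacle''---is the bulk of \cite{mp18}'s work (their \S4.1--4.2), so as written the proposal does not yet contain a proof of either ingredient in case (2).
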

	
In the latter case, the condition in $\BP(\ell)$ is satisfied for a density one set of primes. Moreover, Meiri and Puder conjecture that $Q_\ell$ is always the full alternating or symmetric group:

\begin{conj}[{\cite[Conjecture 1.2]{mp18}} ] For $\ell\ge 5$, $Q_\ell$ is the full alternating or symmetric group on $Y^*(\ell)$.
\end{conj}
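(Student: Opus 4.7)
The plan is to build on Theorem~\ref{thm:MP} of Meiri--Puder, which already establishes the conjecture under two auxiliary hypotheses: transitivity of $Q_\ell$ on $Y^*(\ell)$, and (in the case $\ell \equiv 3 \bmod 4$) property $\BP(\ell)$. Thus, the task decomposes into two subproblems: (i) prove transitivity unconditionally, and (ii) eliminate the $\BP(\ell)$ assumption for $\ell \equiv 3 \bmod 4$.

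For (i), the aim is to upgrade Theorem~\ref{thm:BGS} from a density-$1$ statement to one valid for all primes. The natural route is to strengthen the sum-product/spectral-gap machinery of Bourgain--Gamburd--Sarnak, whose output currently leaves an exceptional set $\cE$ and permits a residual non-orbit of size $O(\ell^\varepsilon)$. Two promising avenues: (a) exploit the moduli-theoretic reformulation developed in this paper --- since $Y^*(\ell)$ is identified with the geometric fiber of the finite \'{e}tale cover $\cM(\PSL_2(\bF_\ell))^\abs_{-2} \to \cM(1)$, transitivity becomes geometric connectedness of this cover, which might be accessible through Chebotarev-type arguments applied to an integral model of $\cM(\PSL_2(\bF_\ell))^\abs_{-2}$; (b) pursue a congruence-subgroup-type property for the action of $\Aut^+(F_2)$ on the relative $\SL_2$-character variety containing $\bX$, which would force a single orbit modulo $\ell$ automatically once the generic orbit is known. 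Remaining small primes can be dispatched by direct computer verification.

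For (ii), the role of $\BP(\ell)$ in \cite{mp18} is to supply an explicit element of $Q_\ell$ whose support on $Y^*(\ell)$ is sufficiently small; this element, combined with transitivity and classical bounds on primitive permutation groups (Jordan, Bochert, Babai, Pyber), forces $Q_\ell \supseteq A_{n_\ell}$. The plan is to produce alternative short-support elements by searching systematically through longer words in the generators $R_3, \tau_{12}, \tau_{23}$ of $\Gamma$. The fixed locus of such a word is the intersection of $\bX$ with an explicit subvariety, whose $\bF_\ell$-points can be bounded via Deligne's estimates; a pigeonhole argument over a growing family of candidate words should furnish short-support elements uniformly, independent of whether $\BP(\ell)$ holds. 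A complementary approach would be to use the action of explicit matrices in $\SL_2(\bZ)$ whose reductions mod $\ell$ have small order in $\SL_2(\bF_\ell)$, converting order bounds into support bounds on $Y^*(\ell)$.

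The main obstacle is the overlap of the two issues: primes with $\ell \in \cE$ and simultaneously failing $\BP(\ell)$ are exactly those where the Markoff surface exhibits exceptional arithmetic behavior, and no current technique addresses both defects uniformly. A conceptually satisfying resolution would likely require a Deligne-style equidistribution statement for Frobenius on the integral Markoff scheme, phrased moduli-theoretically as equidistribution on $\cM(\PSL_2(\bF_\ell))^\abs_{-2}$; such a result would simultaneously deliver the missing transitivity and enough short-support elements for the permutation-group classification to apply. In the absence of such equidistribution, one may hope that the hybrid approach in \S\ref{sec:largemarkoff} --- restricting to the largest orbit in $Y^*(\ell)$ and then enlarging --- can be pushed to eventually cover all primes by combining algebraic input from the character variety with finer analytic estimates.
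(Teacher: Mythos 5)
This statement is an open conjecture (quoted from Meiri--Puder), and the paper does not prove it; it is stated precisely because it is unproven, and the paper's unconditional results (Theorem~\ref{thm:max-orbit}, Theorem~\ref{thm:main_A}) are deliberately weaker, working only with the largest orbit $\cO(\ell)$. Your proposal is a research program, not a proof. Subproblem (i) is exactly Conjecture~\ref{conj:markoff} of Bourgain--Gamburd--Sarnak, itself open; the routes you sketch (Chebotarev-type arguments on an integral model, a congruence-subgroup property for $\Aut^+(F_2)$ acting on the character variety, a ``Deligne-style equidistribution'' statement) are aspirations with no argument attached, and the fallback ``remaining small primes can be dispatched by direct computer verification'' is unfounded: the exceptional set $\cE$ in Theorem~\ref{thm:BGS} is only known to have density zero, not to be finite or effectively bounded, so there is no finite list of primes left to check.

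Subproblem (ii) is also not resolved, and your description of the role of $\BP(\ell)$ is off. In Meiri--Puder (and in the adaptation in \S\ref{sec:largemarkoff}) the element with many fixed points, $\rot_1^{(\ell+1)/2}$, exists unconditionally; $\BP(\ell)$ is used to guarantee that $[3,3,3]$ lies in a long $\rot_1$-cycle, which is what drives the primitivity argument when $\ell\equiv 3\bmod 4$. Your replacement scheme --- searching over words in $R_3,\tau_{12},\tau_{23}$ and bounding fixed loci by Deligne's estimates --- does not produce any concrete uniform substitute: bounding the number of fixed points of candidate words neither yields the needed long cycles nor the primitivity input, and no pigeonhole argument is actually carried out. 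Your own closing paragraph concedes that no current technique handles primes where $\ell\in\cE$ and $\BP(\ell)$ fails simultaneously. So the gap is simply that nothing in the proposal constitutes a proof: both halves of the decomposition remain open conjectures, and the conclusion of the statement is never reached.
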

	
\begin{remark}
When $Q_\ell$ is the full alternating or symmetric group on $Y^*(\ell)$, it follows from \cite[Theorem 1.2]{cgmp} that $Q_\ell$ is the alternating group when $\ell \equiv 3 \mod{16}$ and the symmetric group otherwise.
\end{remark}

\subsection{Markoff triples as absolute \texorpdfstring{$G$}{G}-structures} \label{ss_markoff_absolute}
Since the trace invariant for both $\SL_2(\bF_\ell)$ and $\PSL_2(\bF_\ell)$ is invariant under $D(\ell)$ (see \S\ref{ss_absolute_G_structures}), it is well-defined on $\cM(\SL_2(\bF_\ell))/D(\ell)$. Since the Higman invariant is locally constant, so is the trace invariant. Thus, using the notation of Definition \ref{def_trace_invariant}, for any $t \in\F_\ell$ and any $\cO_{\bQ(S(t))}[1/n_\ell]$-algebra $A$, we may consider the open and closed substacks
\begin{align*}
    \cM(\SL_2(\bF_\ell))_{t,A}^\abs := \cM(\SL_2(\bF_\ell))_{t,A}/D(\ell) &\subset \cM(\SL_2(\bF_\ell))_{A}/D(\ell) = \cM(\SL_2(\bF_\ell))_{A}^{\abs}\\
    \cM(\PSL_2(\bF_\ell))_{t,A}^\abs := \cM(\PSL_2(\bF_\ell))_{t,A}/D(\ell) & \subset \cM(\PSL_2(\bF_\ell))_{A}/D(\ell) = \cM(\PSL_2(\bF_\ell))_{A}^{\abs}.
\end{align*}
consisting of absolute $\SL_2(\F_\ell)$ or $\PSL_2(\F_\ell)$-structures on elliptic curves with trace invariant $t$. 
Likewise we may consider $F(\ell)_{t}\subset F(\ell)$ and $\overline{F(\ell)}_t\subset\ol{F(\ell)}$ which are the geometric fibers with trace invariant $t$. By definition there are decompositions
\begin{equation}\label{eq_decomposition}
\cM(\SL_2(\bF_\ell))^\abs_{A} = \bigsqcup_{t\in\bF_\ell}\cM(\SL_2(\bF_\ell))^\abs_{t,A}\qquad \cM(\PSL_2(\bF_\ell))^\abs_{A} = \bigsqcup_{t\in\bF_\ell}\cM(\PSL_2(\bF_\ell))^\abs_{t,A}
\end{equation}
The traces for which the corresponding substack is nonempty is completely described in \cite{mw13} (in particular, for $\ell \ge 13$, every trace except 2 appears). We conjecture (see \ref{conj_complete_invariant} below) that the decompositions of \eqref{eq_decomposition} are decompositions into \emph{connected components}.

\begin{defn}
We define
\begin{eqnarray*}
\Tr : \Hom ( F_2 , \SL_2(\F_\ell)) & \longrightarrow & \F_\ell^3 \\
\varphi & \mapsto & (\tr(\varphi(a)),\tr(\varphi(b)),\tr(\varphi(ab))).
\end{eqnarray*}
The group $\cV$ acts on $\bF_\ell^3 = \bA^3(\bF_\ell)$ by negating two of the coordinates. We may similarly define
\begin{eqnarray*}
\overline{\Tr} : \Hom ( F_2 , \PSL_2(\F_\ell)) & \longrightarrow & \F_\ell^3 /\cV \\
\varphi & \mapsto & [(\tr(A),\tr(B),\tr(AB))]
\end{eqnarray*}
where $A, B \in \SL_2(\F_\ell)$ are any lifts of $\varphi(a)$ and $\varphi(b)$.
\end{defn}

It is again straightforward to verify that $\overline{\Tr}$ is well-defined. Since traces are conjugation invariant, $\Tr,\ol{\Tr}$ descend to maps on $F(\ell),\ol{F(\ell)}$ (defined in \eqref{eq:fl} and \eqref{eq:flbar}), giving a commutative diagram
\begin{equation}\label{eq_character_map}
\begin{tikzcd}
F(\ell)\ar[r,"\Tr"]\ar[d] & \bF_\ell^3\ar[d] \\
\ol{F(\ell)}\ar[r,"\ol{\Tr}"] & \bF_\ell^3/\cV.
\end{tikzcd}
\end{equation}
By the work of Macbeath, for any prime $\ell$, the horizontal maps in (\ref{eq_character_map}) are \emph{injective} \cite[Theorem 3]{mac69}.  (This injectivity holds even if $\ell$ is replaced by a prime power.)

\begin{remark}\label{remark_character_variety} This injectivity morally comes from the fact that $\bA^3$ is the \emph{character variety} for $\SL_2$-representations of $F_2$. Over $\bC$, this is just the classical Fricke-Vogt theorem giving an isomorphism
$$\Hom(F_2,\SL_2(\bC))\git\SL_2(\bC)\cong\bA^3_\bC$$
(see \cite[\S2]{gold03} , \cite{gold04}). Over $\bZ$, more precise statements can be found in \cite{bh95}.
\end{remark}

If $(x,y,z) = (\tr(\varphi(a)),\tr(\varphi(b)),\tr(\varphi(ab)))\in\bF_\ell^3$ is the image of $\varphi\in F(\ell)$, then the trace invariant of $\varphi$ can be expressed in terms of $(x,y,z)$ as follows (see \cite[\S1.3 (8)]{gold04})
$$\tr(\varphi([a,b])) = x^2 + y^2 + z^2 - xyz - 2.$$

Thus $X(\ell)\cap\Tr(F(\ell))\subset\bF_\ell^3$ are exactly the images of elements of $F(\ell)$ of trace invariant -2. Furthermore:

\begin{prop} \label{prop:fibermarkoff}
For $\ell\ge 3$, restricting to objects of trace invariant -2, the maps $\Tr$ and $\overline{\Tr}$ of \eqref{eq_character_map} restrict to give a commutative diagram
    \begin{equation}\label{eq_SL2PSL}
\begin{tikzcd}
F(\ell)_{-2} \ar[d]\ar[r,"\Tr"] & X^*(\ell) \ar[d] \\
\overline{F(\ell)}_{-2} \ar[r,"\ol{\Tr}"] & Y^*(\ell).
\end{tikzcd}
\end{equation}
\begin{enumerate}
    \item \label{prop:fibermarkoff1} The horizontal maps are bijections, and all fibers of the vertical maps have cardinality $4$.
    
\item \label{prop:fibermarkoff2} Under these bijections, the action of $\Out(F_2)$ on $F(\ell)_{-2}$ (resp. $\ol{F(\ell)}_{-2}$) translates into the action of $\Gamma$ on $X^*(\ell)$ (resp. $Y^*(\ell)$) from Definition~\ref{defn:markoffterminology}. Specifically, letting $a,b$ be a basis for $F_2$, then $\Out(F_2)$ is generated by the images of the automorphisms $r,s,t\in\Aut(F_2)$ described below. We record the corresponding automorphisms they induce on $X^*(\ell)$ via $\Tr$:
$$\begin{array}{rcl}
r : (a,b) & \mapsto & (a^{-1},b)\\
s : (a,b) & \mapsto & (b,a) \\
t : (a,b) & \mapsto & (a^{-1},ab)
\end{array}\quad\text{corresponds to (under $\Tr$)}\quad
\begin{array}{rcl}
R_3 : (x,y,z) & \mapsto & (x,y,xy-z) \\
\tau_{12} : (x,y,z) & \mapsto & (y,x,z) \\
\tau_{23} : (x,y,z) & \mapsto & (x,z,y)
\end{array}$$

\item  \label{prop:fibermarkoff3} For $\ell = 3$, the sets appearing in (\ref{eq_SL2PSL}) are all empty. For $\ell\ge 5$, these sets are all nonempty. For $\ell = 2$, $F(\ell)_{-2},\ol{F(\ell)}_{-2}$ are both empty but $X^*(\ell),Y^*(\ell)$ are not. 
\end{enumerate}
\end{prop}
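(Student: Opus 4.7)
For part \eqref{prop:fibermarkoff1}, commutativity of \eqref{eq_SL2PSL} is immediate. The horizontal maps land in $X(\ell)$ by the Fricke identity
\[
\tr[A,B] = \tr(A)^2 + \tr(B)^2 + \tr(AB)^2 - \tr(A)\tr(B)\tr(AB) - 2,
\]
which vanishes precisely on triples arising from representations of trace invariant $-2$. To exclude the origin, note that $\tr(A) = \tr(B) = \tr(AB) = 0$ forces $A^2 = B^2 = (AB)^2 = -I$, making $\langle A, B\rangle$ a quotient of the order-$8$ quaternion group, which cannot surject onto $\SL_2(\bF_\ell)$. Injectivity of the horizontal maps on the trace-$-2$ loci follows from Macbeath's theorem cited after \eqref{eq_character_map}, since $-2 \ne 2$ for odd $\ell$ ensures absolute irreducibility. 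The fibers of the vertical maps have cardinality $4$ because the four sign choices in lifting a $\PSL_2$-homomorphism to $\SL_2$ correspond bijectively to a $\cV$-orbit on $X^*(\ell)$, and $\cV$ acts freely on $X^*(\ell)$ because any element with two vanishing coordinates would have the third vanishing too.

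For surjectivity of $\Tr$, given $(x,y,z) \in X^*(\ell)$ we realize it as trace data of an explicit pair $A, B \in \SL_2(\bF_\ell)$, e.g.\ by taking $A = \spmatrix{x}{-1}{1}{0}$ and solving the resulting system for the entries of $B$. The principal obstacle is verifying $\langle A,B\rangle = \SL_2(\bF_\ell)$. Since $\tr[A,B] = -2$, the element $[A,B]$ is either $-I$ or conjugate to $-\spmatrix{1}{1}{0}{1}$. In the first case, $A$ and $B$ anticommute, which combined with the identity $A^2 = xA - I$ gives $xAB = xBA = -xAB$, hence $x = 0$; by symmetry $y = z = 0$, contradicting $(x,y,z) \in X^*(\ell)$. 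So $[A,B]$ has order $2\ell$, and the image $\overline{H}$ of $\langle A, B\rangle$ in $\PSL_2(\bF_\ell)$ contains an element of order $\ell$. Dickson's classification then leaves only two options: $\overline{H}$ is a Borel subgroup or $\overline{H} = \PSL_2(\bF_\ell)$. The Borel case makes $A, B$ simultaneously triangularizable over $\overline{\bF_\ell}$, forcing $\tr[A,B] = 2$, a contradiction. Hence $\overline{H} = \PSL_2(\bF_\ell)$; since $\SL_2(\bF_\ell)$ has no subgroup of index $2$ for $\ell \geq 5$ (by perfectness), we conclude $\langle A,B\rangle = \SL_2(\bF_\ell)$.

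For part \eqref{prop:fibermarkoff2}, a short abelianization check confirms that the images of $r, s, t$ generate $\Out(F_2) \cong \GL_2(\bZ)$: the products $rs$ and $rt$ recover the standard generators $S = \spmatrix{0}{-1}{1}{0}$ and $T^{-1} = \spmatrix{1}{-1}{0}{1}$ of $\SL_2(\bZ)$. The translation to the Markoff action follows from the identity $\tr(XY^{-1}) = \tr(X)\tr(Y) - \tr(XY)$: $r$ sends $(x,y,z) \mapsto (x, y, xy - z) = R_3$, $s$ swaps the first two coordinates to give $\tau_{12}$, and $t$ sends $(x,y,z)$ to $(\tr a^{-1}, \tr(ab), \tr b) = (x, z, y) = \tau_{23}$.

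For part \eqref{prop:fibermarkoff3}, when $\ell = 3$ a direct enumeration gives $X(3) = \{(0,0,0)\}$, so $X^*(3) = \emptyset$ and all four sets vanish. When $\ell = 2$, $\SL_2(\bF_2) = \PSL_2(\bF_2) \cong S_3$ and any generating pair has a $3$-cycle as commutator, whose trace is $1 \ne 0 = -2$ in $\bF_2$, so $F(2)_{-2} = \ol{F(2)}_{-2} = \emptyset$; meanwhile $(1,1,0) \in X^*(2)$ as $1 + 1 + 0 - 0 = 0$. For $\ell \geq 5$, the integral Markoff triple $(3,3,3)$ reduces to a nonzero element of $X^*(\ell)$, so part \eqref{prop:fibermarkoff1} guarantees that all four sets are nonempty.
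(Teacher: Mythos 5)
Your proposal is correct, and on the parts it shares with the paper it follows the same skeleton: injectivity of $\Tr$ via Macbeath, the fiber count of $4$ for the vertical maps (free $\cV$-action on $X^*(\ell)$ matched against the four sign-lifts $(\pm\tilde A,\pm\tilde B)$ of a generating pair of $\PSL_2(\bF_\ell)$), and part (2) via the Fricke identity $\tr(AB)+\tr(A^{-1}B)=\tr(A)\tr(B)$ once one knows $r,s,t$ generate (you check this through the abelianization $\Out(F_2)\cong\GL_2(\bZ)$; the paper instead cites \cite{mw13} for generation of $\Aut(F_2)$). Where you genuinely diverge is surjectivity and nonemptiness: the paper outsources surjectivity to the analysis of \cite[\S 11]{mw13} and nonemptiness for $\ell\ge 5$ to the Trace Theorem of \cite{mw13}, whereas you argue directly --- excluding the origin by the quaternion-quotient observation, and proving that a pair with commutator trace $-2$ and nonzero trace triple generates, by splitting into the case $[A,B]=-I$ (anticommuting, which kills all three traces) and the case where $[A,B]$ has order $2\ell$, then invoking Dickson's classification, the Borel/trace-$2$ contradiction, and perfectness of $\SL_2(\bF_\ell)$ for $\ell\ge 5$. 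This makes the proof self-contained at the cost of Dickson; your treatment of part (3) for $\ell\ge 5$ via the triple $(3,3,3)$ together with part (1) is a clean replacement for the paper's citation.

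One step needs tightening: you assert that any $(x,y,z)\in X^*(\ell)$ is realized as $(\tr A,\tr B,\tr AB)$ by taking $A=\spmatrix{x}{-1}{1}{0}$ and ``solving the resulting system'' for $B$. With $B=\spmatrix{a}{b}{c}{d}$ that system is $a+d=y$, $xa+b-c=z$, $ad-bc=1$, i.e.\ a conic in the remaining free variables, and its solvability over $\bF_\ell$ for \emph{every} triple is precisely the surjectivity of the trace map $\Hom(F_2,\SL_2(\bF_\ell))\to\bF_\ell^3$ --- true, but itself a theorem (proved in the same paper of Macbeath you cite for injectivity), not an automatic elimination; either cite it or carry out the point count on the conic. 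Two harmless imprecisions: an element of $\SL_2(\bF_\ell)$ of trace $-2$ other than $-I$ need not be conjugate to $-\spmatrix{1}{1}{0}{1}$ (there are two such classes), but all you use is that its order is $2\ell$, which holds for both; and the remark that Macbeath's injectivity ``ensures absolute irreducibility'' has the logic reversed, though nothing depends on it.
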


\begin{remark} We note that whereas $\Aut(F_2)$ acts on the right on $F(\ell)_{-2}$ (resp. $\ol{F(\ell)}_{-2}$), the automorphisms $R_3,\tau_{12},\tau_{23}$ act on the left on $X^*(\ell)$ (resp. $Y^*(\ell)$). Accordingly, $\Tr$ induces an \emph{anti-homomorphism} $\Tr_* : \Aut(F_2)\rightarrow\Aut(\bA^3)$. In particular, $\Tr_*(r\circ s) = \Tr_*(s)\circ\Tr_*(r)$.
\end{remark}

\begin{proof} The bijectivity for (\ref{prop:fibermarkoff1}) is explained in \cite[\S6]{mp18} , but we will give a short argument here. The top row is injective by \cite[Theorem 3]{mac69}. It is surjective by the analysis of \cite[\S11]{mw13}. Since $\ell\ne 2$, $\cV$ acts freely on $X^*(\ell)$ and hence all fibers of $X^*(\ell)\rightarrow Y^*(\ell)$ have cardinality 4. On the other hand, given a generating pair $A,B$ of $\PSL_2(\bF_\ell)$, with lifts $\tilde{A},\tilde{B}$ to $\SL_2(\bF_\ell)$, the four lifts $\{(\pm\tilde{A},\pm\tilde{B})\}$ of $(A,B)$ have distinct images in $X^*(\ell)$, so all fibers of $F(\ell)_{-2}\rightarrow\ol{F(\ell)}_{-2}$ also have cardinality 4. Thus, $\ol{\Tr}$ is bijective as well.

For (\ref{prop:fibermarkoff2}), the automorphisms $r,s,t$ are generators of $\Aut(F_2)$ (see \cite{mw13} \S2), thus
 their images in $\Out(F_2)\cong\GL_2(\bZ)$ are also generators. Finally, it is also easy to check that $s$ (resp. $t$) corresponds to $\tau_{12}$ (resp. $\tau_{23}$). To see that $r$ corresponds to $R_3$, we use the Fricke identity 
$$\tr(AB) + \tr(A^{-1}B) = \tr(A)\tr(B)$$
valid for $A,B \in \SL_2(R)$ for any ring $R$.  (It follows from the Cayley Hamilton theorem, which implies $A+A^{-1} = \tr(A)I$, upon multiplying by $B$ and taking traces.)

For (\ref{prop:fibermarkoff3}), the nonemptiness of the sets when $\ell\ge 5$ follows from the Trace Theorem of \cite{mw13}. The statements for $\ell = 2,3$ are checkable by hand.
\end{proof}

\begin{prop} \label{prop_Q_vs_Qplus} Let $Q_\ell^+\le Q_\ell$ be the permutation image of $\Out^+(F_2)$ acting on $Y^*(\ell)$ via $\ol{\Tr}$. Then $Q_\ell^+$ is transitive if and only if $Q_\ell$ is transitive. In particular, Conjecture \ref{conj:markoff} is equivalent to the connectedness of $\cM(\SL_2(\bF_\ell))^\abs_{-2,\Qbar}$, which in turn implies the connectedness of $\cM(\PSL_2(\bF_\ell))^\abs_{-2,\Qbar}$.
\end{prop}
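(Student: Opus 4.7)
The plan is a short group-theoretic contradiction. The direction ``$Q_\ell^+$ transitive $\Rightarrow Q_\ell$ transitive'' is immediate since $Q_\ell^+ \le Q_\ell$. For the converse, the key fact is that $\Out^+(F_2)$ has index $2$ in $\Out(F_2) \cong \GL_2(\bZ)$ (via the determinant on the abelianization), so $Q_\ell^+$ has index at most $2$ in $Q_\ell$. If equality holds there is nothing to prove; otherwise a brief second-isomorphism-theorem argument shows that the kernel of the action $\phi : \Out(F_2) \to \Sym(Y^*(\ell))$ must be contained in $\Out^+(F_2)$ (else $\phi(\Out^+(F_2)) = \phi(\Out(F_2))$ and $Q_\ell = Q_\ell^+$), so the determinant descends to a sign homomorphism $Q_\ell \twoheadrightarrow \{\pm 1\}$ with kernel $Q_\ell^+$. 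In particular $\tau_{12} = \phi(s)$, where $s : (a,b) \mapsto (b,a)$ has determinant $-1$, lies in $Q_\ell \setminus Q_\ell^+$.

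Assuming $Q_\ell$ is transitive but $Q_\ell^+$ is not, $Q_\ell^+$ must have exactly two orbits on $Y^*(\ell)$ which are swapped by every element of the nontrivial coset $Q_\ell \setminus Q_\ell^+$; in particular $\tau_{12}$ would have no fixed point on $Y^*(\ell)$. I will contradict this by producing a concrete fixed block: the triple $(3,3,3)$ satisfies the Markoff equation ($27-27=0$) and is manifestly fixed by $\tau_{12}$. For every $\ell \geq 5$ (the case $\ell = 3$ being vacuous by Proposition~\ref{prop:fibermarkoff}(\ref{prop:fibermarkoff3}) since $Y^*(3)$ is empty), $(3,3,3) \in X^*(\ell)$ is nonzero, so its block $[3,3,3] \in Y^*(\ell)$ is a $\tau_{12}$-fixed block, contradicting the assumption.

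For the ``in particular'' claims, exactly the same argument applies verbatim with $X^*(\ell)$ in place of $Y^*(\ell)$, since the triple $(3,3,3)$ itself (not merely its block) is fixed by $\tau_{12}$ in $X^*(\ell)$. This yields the equivalence of Conjecture~\ref{conj:markoff} (transitivity of $\Gamma = \phi(\Out(F_2))$ on $X^*(\ell)$) with the transitivity of $\Out^+(F_2)$ on $X^*(\ell)$, which by Proposition~\ref{prop:fibermarkoff}, Theorem~\ref{thm_basic_properties}(\ref{part_monodromy}), and Proposition~\ref{prop:identifyfibers} is the connectedness of $\cM(\SL_2(\bF_\ell))^\abs_{-2,\Qbar}$. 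The implication to connectedness of $\cM(\PSL_2(\bF_\ell))^\abs_{-2,\Qbar}$ follows from the $\Out^+(F_2)$-equivariant surjection $X^*(\ell) \twoheadrightarrow Y^*(\ell)$ induced by the quotient $\SL_2 \to \PSL_2$. The main obstacle throughout is the group-theoretic bookkeeping in the first step—verifying that the sign homomorphism really factors through $Q_\ell$ in the nontrivial case so that $\tau_{12}$ lands in the correct coset; once that is in place the argument reduces to the elementary observation that the ``diagonal'' Markoff triple $(3,3,3)$ provides a universal $\tau_{12}$-fixed point.
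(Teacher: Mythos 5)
Your proposal is correct and is essentially the paper's own argument: using that $\Out^+(F_2)$ is normal of index $2$ in $\Out(F_2)$, a transitive $Q_\ell$ with intransitive $Q_\ell^+$ would force every element of the nontrivial coset to swap the two orbits and act without fixed points, contradicted by $\tau_{12}$ (the image of the determinant $-1$ element $s$) fixing $(3,3,3)$, resp.\ $[3,3,3]$; the ``in particular'' claims then follow via the identifications of Propositions \ref{prop:identifyfibers} and \ref{prop:fibermarkoff} and Galois theory, exactly as in the paper. The only difference is your extra bookkeeping showing the sign homomorphism descends to $Q_\ell$, which is harmless but avoidable if one argues directly with the coset of $\Out^+(F_2)$ inside $\Out(F_2)$ acting on the fiber, as the paper does.
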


\begin{proof}
Certainly transitivity of $\Out^+(F_2)$ implies the transitivity of $\Out(F_2)$. Now suppose $\Out(F_2)$ acts transitively. If $\Out^+(F_2)$ does not act transitively, then since $\Out^+(F_2)$ is normal of index 2 inside $\Out(F_2)$, any representative for its nontrivial coset must act on $F(\ell)_{-2}$ without fixed points, exchanging the two $\Out^+(F_2)$-orbits. However, this is false, since
$$s : (a,b)\mapsto (a^{-1},ab)$$
corresponds to the permutation $\tau_{12}\in\Gamma$ having the fixed point $(3,3,3)\in X^*(\ell)$. Making the appropriate translations
using Propositions~\ref{prop:identifyfibers} and \ref{prop:fibermarkoff}, 
Galois theory then yields the equivalence of Conjecture \ref{conj:markoff} with the connectedness of $\cM(\SL_2(\bF_\ell))^\abs_{-2}$. Finally, note that the transitivity of the action of $\Out^+(F_2)$ on $F(\ell)_{-2}$ implies the transitivity of $\Out^+(F_2)$ on $\ol{F(\ell)}_{-2}$, which again by Galois theory implies the connectedness of $\cM(\PSL_2(\bF_\ell))^\abs_{-2,\Qbar}$.
\end{proof}

Finally, we end this subsection by showing that ``trace invariant -2'' is defined over $\bQ$:

\begin{prop}\label{prop_trace_n2_is_Q_rational} Let $m_\ell := |\SL_2(\bF_\ell)|$. Then in the notation of \eqref{eq_trace_as_higman}, 
$\bQ(S(-2)) = \bQ$. In other words, the property ``has trace invariant $-2$'' is well-defined on $\cM(\SL_2(\bF_\ell))^\abs_{\bZ[1/m_\ell]}$ and $\cM(\PSL_2(\bF_\ell))^\abs_{\bZ[1/m_\ell]}$. 
\end{prop}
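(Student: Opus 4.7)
The plan is to show that $S(-2)^i = S(-2)$ for every integer $i$ coprime to the common order $e$ of the elements of $S(-2)$. By the definition of $\bQ(S)$, this will imply that the homomorphism $\rho_{S(-2)} : (\bZ/e\bZ)^\times \to \Sym(R(S(-2)))$ is trivial, hence $\ker(\rho_{S(-2)}) = (\bZ/e\bZ)^\times$, and therefore $\bQ(S(-2))$ is the fixed field of all of $\Gal(\bQ(\zeta_e)/\bQ)$, namely $\bQ$ itself. The statement for $\cM(\PSL_2(\bF_\ell))^\abs$ then follows formally, since the trace invariant on an absolute $\PSL_2(\bF_\ell)$-structure is by Definition~\ref{def_trace_invariant} computed via an $\SL_2(\bF_\ell)$-lift.

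The first step is to identify $e$. Any $A \in S(-2)$ has characteristic polynomial $X^2 + 2X + 1 = (X+1)^2$, so by Cayley--Hamilton we may write $A = -I + N$ with $N^2 = 0$, and $N \ne 0$ because $A \ne -I$. Since $-I$ and $N$ commute, the binomial theorem yields
\[
A^n = (-1)^n I + (-1)^{n-1} n N \qquad (n \ge 1).
\]
Thus $A^n = I$ precisely when $n$ is even and $\ell \mid n$, so every element of $S(-2)$ has order $e = 2\ell$ (in particular, confirming the fact cited from \cite{mw13} in a direct way).

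The second step is immediate from this formula. If $i$ is coprime to $e = 2\ell$, then $i$ is odd and coprime to $\ell$; hence $A^i = -I + iN$ has trace $-2$ and, because $iN \ne 0$, is not $\pm I$. Therefore $A^i \in S(-2)$, which shows $S(-2)^i \subseteq S(-2)$. Applying the same argument to $i^{-1} \bmod e$ gives the reverse inclusion, so $S(-2)^i = S(-2)$ for every $i \in (\bZ/e\bZ)^\times$, completing the proof.

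There is no real obstacle; the only mild subtlety is the computation of $e$, where one must not forget the sign $(-1)^n$: naively one might guess the order is $\ell$ (the characteristic), but this fails unless $n$ is also even, so the correct answer is $e = 2\ell$. Once this is in hand, the closure of $S(-2)$ under taking $i$th powers for $i$ coprime to $e$ is transparent from the explicit formula for $A^i$.
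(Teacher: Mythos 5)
Your $\SL_2$ half is correct and is essentially the paper's argument: where the paper quotes \cite[Proposition 5.1]{mw13} to say every element of $S(-2)$ is conjugate to $\spmatrix{-1}{1}{0}{-1}$ or $\spmatrix{-1}{a}{0}{-1}$, of order $2\ell$, and that the other generators of these cyclic groups are odd powers lying again in $S(-2)$, you re-derive the same facts directly from Cayley--Hamilton ($A=-I+N$, $N^2=0$, $A^n=(-1)^nI+(-1)^{n-1}nN$), which is a perfectly good self-contained substitute. One harmless slip: your claim $e=2\ell$ fails for $\ell=2$ (there $e=2$), though the conclusion is vacuous in that case since $(\bZ/2\bZ)^\times$ is trivial; the paper treats $\ell=2$ separately.

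The $\PSL_2$ half is where you diverge from the paper, and your one sentence is too thin as written. The paper argues intrinsically with $\PSL_2$-conjugacy classes: the relevant classes have order $\ell$, and their prime-to-$\ell$ powers are images of \emph{both} trace $-2$ and trace $+2$ matrices, so to pin down the trace invariant the paper needs the extra group-theoretic input that a generating pair of $\SL_2(\bF_\ell)$ cannot have commutator of trace $2$ (such a pair generates an affine subgroup). Your alternative route --- reduce to the $\SL_2$ case via lifts --- does work and avoids that input, but ``follows formally'' should be fleshed out: by Definition~\ref{def_trace_invariant}, the trace-$(-2)$ locus in $\cM(\PSL_2(\bF_\ell))^\abs_{\bZ[1/m_\ell]}$ is precisely the image of $\cM(\SL_2(\bF_\ell))^\abs_{-2,\bZ[1/m_\ell]}$ under the natural map of Proposition~\ref{prop:identifyfibers}; lifts exist and all lifts have the same commutator, by Theorem~\ref{thm_basic_properties}(\ref{part_functoriality}); and since this map is a morphism of finite \'{e}tale covers of $\cM(1)_{\bZ[1/m_\ell]}$, hence finite \'{e}tale, the image of an open and closed substack is again open and closed and defined over $\bZ[1/m_\ell]$. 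With that sentence added your proof is complete; without it, the $\PSL_2$ claim is asserted rather than proved, which is exactly the point the paper's closing remark about trace $2$ is there to handle.
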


The corresponding open and closed substacks are denoted by
\begin{equation}
    \cM(\SL_2(\bF_\ell))^\abs_{-2,\bZ[1/m_\ell]}\quad\text{and}\quad \cM(\PSL_2(\bF_\ell))^\abs_{-2,\bZ[1/m_\ell]}.
\end{equation}

\begin{proof} We begin by considering $\SL_2(\bF_\ell)$. In the notation of Definition~\ref{def_trace_invariant}, we wish to show that $\bQ(S(-2)) = \bQ$. This would follow from the stronger statement that for any $s\in S(-2)$, $s^i\in S(-2)$ for any $i$ coprime to $|s|$. Indeed, by \cite[Proposition 5.1]{mw13}, every element of $S(-2)$ is conjugate to one of
$$T := \spmatrix{-1}{1}{0}{-1},\text{ or } T' := \spmatrix{-1}{a}{0}{-1}$$
where $a\in\bF_\ell^\times - (\bF_\ell^\times)^2$ if $\ell$ is odd (for $\ell = 2$, $T'$ is not needed). If $\ell$ is odd, $T$ and $T'$ have order $2 \ell$ 
and if $\ell = 2$, then $T$ has order $2$. In either case, every other generator of the cyclic group $\langle T\rangle$ (resp. $\langle T'\rangle$) is an odd power of $T$ (resp. $T'$), and hence also lies in $S(-2)$.

The case of $\PSL_2(\bF_\ell)$ is similar, where we note that the commutator trace of a generating pair of $\SL_2(\bF_\ell)$ cannot have trace $2$. Indeed, two elements with commutator trace $2$ must generate an affine subgroup (see \cite[\S 3]{mw13} ).
\end{proof}

\begin{remark} By Proposition \ref{prop_Q_vs_Qplus}, the Conjecture~\ref{conj:markoff} of Bourgain, Gamburd and Sarnak is equivalent to the connectedness of $\cM(\SL_2(\bF_\ell))_{-2}^\abs$. This connectedness would also follow from a special case of the ``Classification Conjecture'' of McCullough-Wanderley \cite{mw13}, which amounts to asserting that the union of the Higman invariant with its inverse completely characterizes the $\Out(F_2)$ orbits\footnote{The action of $\Out(F_2)$ does not preserve the Higman invariant. The ``orientation reversing'' automorphism $(x,y)\mapsto (y,x)$ replaces the Higman invariant with its inverse} on $\Epi^\ext(F_2,\SL_2(\bF_q))$. From our perspective, it is natural to ask if the Higman invariant is a complete invariant for the $\Out^+(F_2)$ orbits on $\Epi^\ext(F_2,\SL_2(\bF_q))$. We have computationally verified this for all $q\le 73$, except for $q = 9$, where it is false. Nonetheless, we will tentatively conjecture:
\end{remark}

\begin{conj}\label{conj_complete_invariant} For every prime power $q\ne 9$, the Higman invariant is a complete invariant for the orbits of $\Out^+(F_2)$ on $\Epi^\ext(F_2,\SL_2(\bF_q))$.
\end{conj}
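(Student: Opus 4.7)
The plan is to reduce the conjecture to the transitivity of an explicit algebraic action on level surfaces of the cubic polynomial $\kappa(x,y,z) := x^2+y^2+z^2-xyz-2$, in the spirit of Proposition \ref{prop:fibermarkoff}. By Macbeath's theorem (see the discussion after diagram \eqref{eq_character_map}), the trace map $\Tr : \Epi^{\ext}(F_2,\SL_2(\bF_q)) \hookrightarrow \bF_q^3$ is injective, and the trace invariant of $\varphi$ equals $\kappa(\Tr(\varphi))$. For any $t \ne \pm 2$, every element of $\SL_2(\bF_q)$ of trace $t$ lies in a single conjugacy class, so the Higman invariant of $\varphi$ is determined by $\kappa(\Tr(\varphi))$; for such $t$ the conjecture therefore reduces to the assertion that $\Out^+(F_2)$ acts transitively on $\cG_q \cap \kappa^{-1}(t)$, where $\cG_q \subset \bF_q^3$ denotes the image of $\Tr$. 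By Proposition \ref{prop:fibermarkoff}, this action is generated by the Vieta involution $R_3$ together with even products of the coordinate permutations $\tau_{12}, \tau_{23}$ (and hence also the cyclic conjugates $R_1, R_2$ of $R_3$).

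The traces $t = \pm 2$ require a refinement, because in $\SL_2(\bF_q)$ with $q$ odd the unipotent elements $\spmatrix{\pm 1}{a}{0}{\pm 1}$ split into two conjugacy classes according to whether $a$ is a square in $\bF_q^\times$. I would enrich $\Tr$ by this ``square-class'' datum and verify directly that it is tracked by the action of $R_3, \tau_{12}, \tau_{23}$, reducing once again to a transitivity question on each refined subset of $\cG_q \cap \kappa^{-1}(\pm 2)$.

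The heart of the matter is then proving, for each prime power $q \ne 9$ and each admissible $t$, that $\Out^+(F_2)$ acts transitively on the relevant subset of $\kappa^{-1}(t)$. This subsumes Conjecture \ref{conj:markoff} of Bourgain--Gamburd--Sarnak, which is precisely the case $(q,t) = (\ell, -2)$. A realistic attack would be to generalize the BGS machinery uniformly in $t$: exploit the ruled structure of each affine surface $\kappa^{-1}(t)$, extract a dominant orbit from the Vieta descent, and then rule out small orbits using strong approximation and expander-graph estimates along the lines of \cite{bgs1,bgs16}. The main obstacle is precisely that the model case $t = -2$ is already a celebrated open problem, so any proof of the full conjecture must at minimum match, and uniformly generalize, the BGS analysis to every trace value and every prime power; the failure at $q = 9$, presumably tied to the exceptional isomorphism $\PSL_2(\bF_9) \cong A_6$, further warns that the statement is genuinely arithmetic and not the shadow of any formal group-theoretic principle.
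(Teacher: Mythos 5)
There is nothing in the paper for your argument to be measured against: the statement you address is stated by the authors as a \emph{conjecture}, supported only by computer verification for prime powers $q\le 73$ (which is also how the exception $q=9$ was detected), and no proof is offered. Your proposal does not close that gap. The first part of your plan --- embedding $\Epi^{\ext}(F_2,\SL_2(\bF_q))$ into $\bF_q^3$ via Macbeath's theorem, observing that for $t\ne\pm2$ the trace determines the Higman invariant, and refining by the square-class of the unipotent part at $t=-2$ --- is a sound translation and is essentially the dictionary the paper itself sets up in \S\ref{ss_markoff_absolute} (Propositions \ref{prop:fibermarkoff} and \ref{prop_Q_vs_Qplus}). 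But after this translation the entire content of the conjecture is the transitivity of the $\Out^+(F_2)$-action on each (refined) level set of $\kappa$, and you explicitly leave that step open: in the single case $(q,t)=(\ell,-2)$ it is already Conjecture \ref{conj:markoff} of Bourgain--Gamburd--Sarnak, which the paper treats as open and only invokes in the weaker forms actually proved in \cite{bgs1,bgs16} (transitivity outside the exceptional set $\cE$, plus a large-orbit bound). Nothing in your sketch upgrades this to all $\ell$, let alone uniformly in the trace $t$ and for all prime powers $q$ (where $\SL_2(\bF_q)$ for $q$ non-prime has extra field automorphisms and the BGS/Meiri--Puder machinery has not been carried out). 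So what you have is a reduction strategy with its central step an open problem, not a proof.

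Two smaller points. Your description of the relevant group of surface automorphisms is off: $R_3$ is induced by $r:(a,b)\mapsto(a^{-1},b)$, whose image in $\GL_2(\bZ)$ has determinant $-1$, so $R_3$ itself is not in the image of $\Out^+(F_2)$; one should work with the elements $\ol{\gamma}_0,\ol{\gamma}_{1728}$ of \S\ref{ss_explicit_ramification}, or argue as in Proposition \ref{prop_Q_vs_Qplus} that the $\Out(F_2)$- and $\Out^+(F_2)$-orbits coincide by exhibiting a fixed point of an odd element. Also note that $t=2$ never occurs for a generating pair (such a pair lies in an affine subgroup, cf.\ the proof of Proposition \ref{prop_trace_n2_is_Q_rational}), so only $t=-2$ needs your refined invariant; that bookkeeping is correct but, again, peripheral to the missing transitivity.
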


Note that
for any conjugacy class $C$ of $\SL_2(\bF_q)$, the field $\bQ(C)$ is an abelian extension of $\bQ$ as $\rho_{\bQ,C}$ factors through an abelian group (see \eqref{eq:rhoS}).  Thus conjecture~\ref{conj_complete_invariant} would 
imply that the components of $\cM(\SL_2(\bF_q))_{\Qbar}$ are all defined over abelian number fields. Note that the conjecture cannot be true with $\SL_2(\bF_q)$ replaced by a general finite group $G$.  Indeed, by Theorem \ref{thm_faithful}, $\Gal(\Qbar/\bQ)$ acts faithfully on the set of components $\bigcup_G\pi_0(\cM(G)_{\Qbar})$, and hence the connected components cannot all be defined over abelian number fields.

\subsection{Explicit ramification behavior}\label{ss_explicit_ramification}
We now use the identifications given in Proposition~\ref{prop:identifyfibers} to analyze the ramification of $M(\SL_2(\bF_\ell))^\abs_{-2,\Qbar}$ and $M(\PSL_2(\bF_\ell))^\abs_{-2,\Qbar}$ over $M(1)$.
Let $a,b$ be a basis for $F_2$, and define the following automorphisms of $F_2$
$$\gamma_0 : (a,b)\mapsto (ab^{-1},a) \quad \gamma_{1728} : (a,b)\mapsto (b^{-1},a)\quad \gamma_{\infty} : (a,b)\mapsto (a,ab),\quad \gamma_{-I} : (a,b)\mapsto (a^{-1},b^{-1}).$$
Using the isomorphism $\Out^+(F_2)\cong\SL_2(\bZ)$, it is easy to check that $\Out^+(F_2)$ is generated by the images of $\gamma_0$ and $\gamma_{1728}$.
Since $\gamma_{-I}$ acts trivially on $X^*(\ell)$ and $Y^*(\ell)$, by Proposition \ref{prop_ramification_indices} the fibers of $M(\SL_2(\bF_\ell))^\abs_{-2}$ (resp. $M(\PSL_2(\bF_\ell))^\abs_{-2}$) above $j = 0,1728,\infty$ are precisely the quotients 
of $X^*(\ell)$ (resp. $Y^*(\ell)$) by the actions of the groups generated by $\gamma_0,\gamma_{1728},\gamma_\infty$ respectively, with ramification indices corresponding to orbit sizes. 

\begin{remark}
The work in \cite{bgs1} studies the action of $\gamma_\infty$ on $X^*(\ell)$ in great detail. Moreover their ``rotations'' are all induced by a conjugate of $\gamma_\infty$ or $\gamma_\infty^{-1}$.  The geometric situation above the cusp $j = \infty$ is more delicate, and we do not treat it here.
\end{remark}

Above $j = 0,1728$, we have the following:
\begin{prop}\label{prop_explicit_ramification} Let $M$ be either $M(\SL_2(\bF_\ell))^\abs_{-2,\Qbar}$ or $M(\PSL_2(\bF_\ell))^\abs_{-2,\Qbar}$. The ramification indices of the map $M\rightarrow M(1)_{\Qbar}$ above $j = 0$ are all 3, except for a single unramified point which is $\bQ$-rational. For $M(\SL_2(\bF_\ell))^\abs_{-2,\Qbar}$, all ramification points above $j = 1728$ have index 2, and there are precisely two unramified points if $\ell\equiv 1,7\mod 8$, and no unramified points otherwise. In $M(\PSL_2(\bF_\ell))^\abs_{-2,\Qbar}$, again all ramified points above $j = 1728$ have index 2, and there is a unique unramified point if $\ell\equiv 1,7\mod 8$ and no unramified points otherwise.
\end{prop}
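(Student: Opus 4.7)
The plan is to exploit the identifications of the preceding subsection: the fiber of $M \to M(1)_{\Qbar}$ above $j = 0$ (resp.\ $j = 1728$) is the quotient of $X^*(\ell)$ or $Y^*(\ell)$ by $\langle\gamma_0\rangle$ (resp.\ $\langle\gamma_{1728}\rangle$), with ramification indices equal to orbit sizes. Since $\gamma_0$ acts with order $3$ and $\gamma_{1728}$ with order $2$, all ramification indices divide $3$ and $2$ respectively, so it suffices to enumerate fixed points and fixed blocks. This reduces the problem to explicit calculations on the Markoff surface.

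First I would translate $\gamma_0$ and $\gamma_{1728}$ into explicit formulas on $X^*(\ell)$ via the character map $\Tr$. Using the Fricke identity $\tr(AB) + \tr(AB^{-1}) = \tr(A)\tr(B)$ together with the Cayley--Hamilton relation $A^2 = \tr(A)A - I$, one obtains
\[
\gamma_{1728}\colon (x, y, z) \mapsto (y,\, x,\, xy - z), \qquad \gamma_0\colon (x, y, z) \mapsto (xy - z,\, x,\, x(xy - z) - y).
\]
Both formulas descend to $Y^*(\ell)$ since the $\Out^+(F_2)$-action on $F(\ell)$ commutes with the quotient by $\cV$ (the latter arising from the four lifts $\PSL_2 \to \SL_2$).

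Next I would count $\gamma_0$-fixed points. Imposing $\gamma_0(x,y,z) = (x,y,z)$ forces $y = x$ and $z = x(x-1)$; substituting into the Markoff equation yields $x^2(x-3) = 0$, so the unique nonzero fixed point in $X^*(\ell)$ is $(3, 3, 6)$. For fixed blocks in $Y^*(\ell)$, I enumerate the four cases $\gamma_0(x,y,z) = \epsilon(x,y,z)$ with $\epsilon \in \cV$; each yields an equation of the form $x^2(x - c) = 0$ with $c \in \{\pm 3\}$, and every resulting nonzero solution is a representative of the single block $[3, 3, 6]$. The analogous analysis for $\gamma_{1728}$ gives $y = x$ and $z = x^2/2$ in the strict fixed-point case, reducing Markoff to $x^2(x^2 - 8) = 0$; nonzero solutions exist iff $2$ is a square in $\bF_\ell$, i.e., iff $\ell \equiv \pm 1 \pmod 8$, in which case we obtain the two fixed points $(\pm\sqrt{8}, \pm\sqrt{8}, 4)$ in $X^*(\ell)$. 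For the block analysis in $Y^*(\ell)$, two of the four sign-cases force $x = 0$ and the remaining two both reduce to $x^2 = 8$; all solutions lie in the single block $[\sqrt{8}, \sqrt{8}, 4]$, yielding one fixed block when $\ell \equiv \pm 1 \pmod 8$ and none otherwise.

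Finally, the $\bQ$-rationality of the unique unramified point over $j = 0$ follows from uniqueness: by Proposition~\ref{prop_trace_n2_is_Q_rational}, both $M$ and the map $M \to M(1)$ descend canonically to $\bQ$. The set of geometrically unramified preimages above the $\bQ$-point $j = 0$ is $\Gal(\Qbar/\bQ)$-stable, and being a singleton, its unique element is Galois-fixed, hence $\bQ$-rational. There is no serious conceptual obstacle here; the argument is a systematic fixed-point enumeration, and the only care needed is in the $Y^*(\ell)$ case analysis to verify that in each of the four $\cV$-cases the solutions either fall into an already-identified block or are forced to be zero.
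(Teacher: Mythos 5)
Your proposal is correct and follows essentially the same route as the paper: the paper also reduces to enumerating fixed points of $\gamma_0$ and fixed points/blocks of $\gamma_{1728}$ on $X^*(\ell)$ and $Y^*(\ell)$ via the trace coordinates (this explicit computation is Lemma \ref{lemma_explicit} in the Appendix, where your formulas for $\ol{\gamma}_0,\ol{\gamma}_{1728}$ and the equations $x^2(x-3)=0$, $x^2(x^2-8)=0$ appear), and it deduces $\bQ$-rationality of the unique unramified point exactly as you do, from Galois-stability of the set of unramified points in the fiber of a map defined over $\bQ$. Your fixed point $(3,3,6)$ over $j=0$ agrees with the statement of Lemma \ref{lemma_explicit}, and your four-case block analysis matches the paper's $\epsilon_1\epsilon_2\epsilon_3=1$ case analysis.
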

\begin{proof} We need to analyze the action of $\gamma_0,\gamma_{1728}$ on $X^*(\ell)$ and $Y^*(\ell)$. This is an explicit calculation which we have relegated to the Appendix (see Lemma \ref{lemma_explicit}). To see that the unique unramified points are rational, note that $\Gal(\Qbar/\bQ)$ acts on the fibers of $M\rightarrow M(1)_{\Qbar}$ above $j = 0,1728$, preserving ramification indices. If there is a unique unramified point, then it must be stabilized by $\Gal(\Qbar/\bQ)$, hence it must be $\bQ$-rational.
\end{proof}

Recall that $Q_\ell^+\le Q_\ell$ is the subgroup corresponding to the action of $\Out^+(F_2)$ on $Y^*(\ell)$ via $\ol{\Tr}$ (see  Propositions  \ref{prop:fibermarkoff}, \ref{prop_Q_vs_Qplus}),
and recall from \eqref{eq_card} that
\begin{equation}
    n_\ell = |Y^*(\ell)| = \left\{\begin{array}{rl}
\frac{\ell(\ell+3)}{4} & \ell\equiv 1\mod 4 \\
\frac{\ell(\ell-3)}{4} & \ell\equiv 3\mod 4. \\
\end{array}\right.
\end{equation}

Since $\Out^+(F_2)$ is generated by the images of $\gamma_0,\gamma_{1728}$, 
the above ramification description would allow us describe exactly when $Q_\ell^+$ is contained in the alternating group on $Y^*(\ell)$. 

\begin{prop} \label{prop:parity} For $\ell\ge 3$, on $Y^*(\ell)$, $\gamma_0$ always acts as an even permutation, and $\gamma_{1728}$ acts as an even permutation if and only if $\ell\equiv 1,3,13,15\mod 16$, and it is odd when $\ell \equiv 5,7,9,11\mod 16$.
\end{prop}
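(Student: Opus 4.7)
The plan is to read off the cycle structure of each of $\gamma_0$ and $\gamma_{1728}$ acting on $Y^*(\ell)$ directly from the ramification description of $M(\PSL_2(\bF_\ell))^{\abs}_{-2,\Qbar} \to M(1)_{\Qbar}$ in Proposition~\ref{prop_explicit_ramification}, and then carry out an elementary parity computation in residue classes modulo $16$.

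First, by Corollary~\ref{cor_analytic_ramification} (applied via Proposition~\ref{prop:identifyfibers} which identifies the fiber with $Y^*(\ell)$), the orbits of $\gamma_0$ (resp. $\gamma_{1728}$) on $Y^*(\ell)$ correspond bijectively to the points above $j = 0$ (resp. $j = 1728$) in $M(\PSL_2(\bF_\ell))^{\abs}_{-2,\Qbar}$, with orbit sizes matching ramification indices. Thus Proposition~\ref{prop_explicit_ramification} tells us that $\gamma_0$ acts as a product of $3$-cycles together with a single fixed point; since every $3$-cycle is even, $\gamma_0$ is even. This handles the first assertion for all $\ell \geq 3$ (the case $\ell = 3$ being vacuous by Proposition~\ref{prop:fibermarkoff}(3)).

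For $\gamma_{1728}$, Proposition~\ref{prop_explicit_ramification} says the orbits are all of size $2$ except possibly for a single fixed point, which exists precisely when $\ell \equiv 1, 7 \pmod 8$. Writing $T$ for the number of $2$-cycles, the sign of $\gamma_{1728}$ is $(-1)^T$, where
\[
T = \begin{cases} (n_\ell - 1)/2 & \text{if } \ell \equiv 1, 7 \pmod 8, \\ n_\ell/2 & \text{if } \ell \equiv 3, 5 \pmod 8, \end{cases}
\]
with $n_\ell = \ell(\ell+3)/4$ when $\ell \equiv 1 \pmod 4$ and $n_\ell = \ell(\ell-3)/4$ when $\ell \equiv 3 \pmod 4$.

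The remaining work is then a direct case analysis on $\ell \bmod 16$. For example, writing $\ell = 8k+1$ in the case $\ell \equiv 1 \pmod 8$, one finds $T = 8k^2 + 5k \equiv k \pmod 2$, so $T$ is even exactly when $\ell \equiv 1 \pmod{16}$ and odd when $\ell \equiv 9 \pmod{16}$. The other three classes mod $8$ yield analogous closed-form expressions for $T$ in terms of $k$, whose parities depend only on $k \bmod 2$, i.e.\ on $\ell \bmod 16$. Collating the eight cases reproduces the stated dichotomy. No step here is genuinely hard; the main mild obstacle is simply being careful to combine the correct formula for $n_\ell$ with the correct fixed-point count in each residue class, and to confirm the bookkeeping of the $8$ cases matches the partition $\{1,3,13,15\}$ versus $\{5,7,9,11\}$.
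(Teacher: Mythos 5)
Your proposal is correct and follows essentially the same route as the paper: the paper likewise reads the cycle structure off the ramification description of Proposition~\ref{prop_explicit_ramification} (equivalently Lemma~\ref{lemma_explicit}), observes that the parity of $\gamma_{1728}$ equals the parity of the number of ramified points above $j=1728$ (your $T$, and your case-by-case formulas match the paper's table exactly), and then does the same mod-$16$ bookkeeping, which the paper leaves to the reader and you verify in a representative case.
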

\begin{proof} From the ramification description given in Proposition \ref{prop_explicit_ramification} (or Lemma \ref{lemma_explicit}), we see that the parity of $\gamma_{1728}$ acting on $Y^*(\ell)$ is precisely the parity of the number of ramified points above $j = 1728$ in $M(\PSL_2(\bF_\ell))^\abs_{-2}$. This number is
$$\big(\text{\# ramified points above $j = 1728$}\big) = \left\{\begin{array}{ll}
\frac{\ell(\ell+3)-4}{8} & \text{ if }\ell\equiv 1\mod 8\\
\frac{\ell(\ell-3)}{8} & \text{ if }\ell\equiv 3\mod 8\\
\frac{\ell(\ell+3)}{8} & \text{ if }\ell\equiv 5\mod 8 \\
\frac{\ell(\ell-3)-4}{8} & \text{ if }\ell\equiv 7\mod 8.
\end{array}\right.$$
Computing the parity of this number forces us to consider $\ell\mod 16$, which gives us the desired result. We leave the details to the reader.
\end{proof}

\begin{cor}\label{cor_alternating_vs_symmetric} If $M(\PSL_2(\bF_\ell))^\abs_{-2,\Qbar}$ is connected and its monodromy group over $M(1)_{\Qbar}$ contains $A_{n_\ell}$, then the monodromy group is $A_{n_\ell}$ if $\ell\equiv 1,3,13,15\mod 16$, and is $S_{n_\ell}$ if $\ell\equiv  5,7,9,11\mod 16$.
\end{cor}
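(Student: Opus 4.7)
The plan is to use Proposition~\ref{prop:parity} together with the identification of the geometric monodromy group with $Q_\ell^+$. Under the hypothesis that $M(\PSL_2(\bF_\ell))^\abs_{-2,\Qbar}$ is connected, I would first use Propositions~\ref{prop:identifyfibers} and~\ref{prop:fibermarkoff} to identify, equivariantly, the action of $\pi_1(\cM(1)_{\Qbar})$ on the stacky fiber of $\cM(\PSL_2(\bF_\ell))^\abs_{-2,\Qbar}\to\cM(1)_{\Qbar}$ with the action of $\Out^+(F_2)$ on $\overline{F(\ell)}_{-2}\cong Y^*(\ell)$, whose image in $\Sym(Y^*(\ell))$ is by definition $Q_\ell^+$. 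Then I would pass to the coarse space via Proposition~\ref{prop_coarse_fibers}: the central element $\gamma_{-I}\in\Out(F_2)$ acts trivially on $Y^*(\ell)$ (negation of coordinates is absorbed into the $\cV$-quotient defining $Y^*(\ell)$), so the induced monodromy of $M(\PSL_2(\bF_\ell))^\abs_{-2,\Qbar}\to M(1)_{\Qbar}$ over a generic base point is again $Q_\ell^+$ acting on $Y^*(\ell)$.

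Next, recall from the paragraph preceding Proposition~\ref{prop_explicit_ramification} that $\Out^+(F_2)\cong\SL_2(\bZ)$ is generated by the images of $\gamma_0$ and $\gamma_{1728}$. Consequently $Q_\ell^+$ is generated by the two permutations of $Y^*(\ell)$ induced by $\gamma_0$ and $\gamma_{1728}$. Under the assumption that $Q_\ell^+$ contains the alternating group $A_{n_\ell}$, the group $Q_\ell^+$ must be either $A_{n_\ell}$ or $S_{n_\ell}$, and it coincides with $A_{n_\ell}$ precisely when \emph{both} of these generators act as even permutations.

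Finally, I would invoke Proposition~\ref{prop:parity}: the permutation induced by $\gamma_0$ on $Y^*(\ell)$ is always even, while the one induced by $\gamma_{1728}$ is even exactly when $\ell\equiv 1,3,13,15\pmod{16}$ and odd when $\ell\equiv 5,7,9,11\pmod{16}$. Combining these two facts with the dichotomy in the previous paragraph yields the stated conclusion.

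There is no real obstacle; the only point that warrants care is the passage from the stack to the coarse scheme, where one must check that the parity of $\gamma_0$ and $\gamma_{1728}$ computed on $Y^*(\ell)$ equals the parity of the induced permutation on the coarse fiber. This reduces to the observation that $\gamma_{-I}$ acts as the identity on $Y^*(\ell)$ (so the quotient in Proposition~\ref{prop_coarse_fibers}(\ref{prop_coarse_fibers1}) does not alter the permutation representation up to relabeling), which is immediate from the explicit formulas defining $Y^*(\ell)$.
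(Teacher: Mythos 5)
Your proposal is correct and follows essentially the same route as the paper: identify the coarse-space monodromy with $Q_\ell^+$ acting on $Y^*(\ell)$, note that it is generated by the permutations induced by $\gamma_0$ and $\gamma_{1728}$, and read off the parity dichotomy from Proposition~\ref{prop:parity}. The coarse-versus-stack point you flag (triviality of $\gamma_{-I}$ on $Y^*(\ell)$) is exactly how the paper handles it in the discussion preceding Proposition~\ref{prop_explicit_ramification}.
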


\subsection{Geometric construction of the \texorpdfstring{$\bQ$}{Q}-rational unique unramified point of \texorpdfstring{$M(\SL_2(\bF_\ell))^\abs_{-2,\Qbar}$}{the component} }\label{ss_rational_point}
Here we explain how the $\bQ$-rational unique unramified point of $M(\SL_2(\bF_\ell))^\abs_{-2}$ over $j=0$ from Proposition \ref{prop_explicit_ramification} can be obtained from a covering of congruence modular curves. The resulting point, relative to a certain isomorphism $F_2\cong\pi_1^{\top}(E^\circ(\bC),x_0)$ as in Proposition \ref{prop:identifyfibers}, will correspond to the point $(3,3,6)\in X^*(\ell)$. However, the connected component of $M(\SL_2(\bF_\ell))^\abs_{-2}$ containing that point is independent of the choice of isomorphism.

Let $\Gamma(1) := \SL_2(\bZ)$, then it can be checked (for example, in GAP) that its commutator subgroup $\Gamma(1)'$ is a torsion-free congruence subgroup of level 6 and index 12, with $\SL_2(\bZ)/\Gamma(1)'\cong\bZ/12\bZ$. In fact it can be verified in GAP that $\Gamma(1)'$ is free on the generators
\begin{equation}\label{eq_generators}
    \spmatrix{2}{-1}{-1}{1},\quad\spmatrix{1}{-1}{-1}{2}.
\end{equation}

Let $E$ be an elliptic curve over $\bQ$. In this subsection we will fix an isomorphism $\pi_1(\cM(1)_{\Qbar},E_{\Qbar})\cong\widehat{\SL_2(\bZ)}$ as in Theorem \ref{thm_basic_properties}, and using this isomorphism we will silently identify the two groups. 

In particular, $\widehat{\SL_2(\bZ)}$ is endowed with an action of $\Gal(\Qbar/\bQ)$ from the split exact sequence
$$1\rightarrow\widehat{\SL_2(\bZ)}\rightarrow\pi_1(\cM(1)_{\bQ},E_{\Qbar})\rightarrow\Gal(\Qbar/\bQ)\rightarrow 1$$
with splitting given by the section $E_* : \Gal(\Qbar/\bQ)\rightarrow\pi_1(\cM(1)_{\bQ},E_{\Qbar})$.

The closure $\ol{\Gamma(1)'}$ of $\Gamma(1)'$ inside $\widehat{\SL_2(\bZ)}$ corresponds to a degree 12 finite \'{e}tale cover $\cM'\rightarrow\cM(1)_{\Qbar}$ with $\pi_1(\cM'_{\Qbar})\cong\ol{\Gamma(1)'}$. Since $\Gamma(1)'$ is torsion free, $\cM'$ is a scheme.

The structure of $\cM'$ can be understood as follows. The map $\cM'\rightarrow M(1)_{\Qbar}$ is a branched covering of curves of degree 6. By Corollary \ref{cor_analytic_ramification}, it is \'{e}tale over the complement of $j = 0,1728,\infty$, with a unique totally ramified cusp (preimage of $\infty$), and 2 (resp. 3) points above $j = 0$ (resp. $j = 1728$) with ramification indices 3 (resp. 2).

By Riemann--Hurwitz, one finds that $\cM'$ is a once-punctured elliptic curve. By functoriality of coarse schemes, $\Gal(\cM'/\cM(1)_{\Qbar})\cong\bZ/12\bZ$ acts on the map $\cM'\rightarrow M(1)_{\Qbar}$. By Proposition \ref{prop_coarse_fibers}, one finds that this action yields a surjection of Galois groups $$\Gal(\cM'/\cM(1)_{\Qbar})\twoheadrightarrow\Gal(\cM'/M(1)_{\Qbar})$$
with kernel of order 2. Thus, $\Gal(\cM'/M(1)_{\Qbar})\cong\bZ/6\bZ$, and it preserves the cusp, so it acts (faithfully) as automorphisms of the punctured elliptic curve $\cM'$. Thus, we have
$$j(\cM') = 0.$$
Since $\ol{\Gamma(1)'}$ is characteristic inside $\widehat{\SL_2(\bZ)}$, it is preserved by the $\Gal(\Qbar/\bQ)$-action, and hence the map $\cM'\rightarrow\cM(1)_{\Qbar}$ has a $\bQ$-model $\cM'_\bQ\rightarrow\cM(1)_\bQ$.

\begin{remark} The stack $\cM'$ has a natural moduli interpretation over $\bZ[1/6]$. Given an elliptic curve $f : E\rightarrow S$ the sheaf $f_*\Omega^1_{E/S}$ is an invertible $\cO_S$-module. If it is free and $\omega\in H^0(S,f_*\Omega^1_{E/S})$ is a basis, and if moreover $6$ is invertible on $S$, then the basis $\omega$ determines a Weierstrass equation for $E$ inside $\bP^2_S$ (see \cite[\S2.2]{km85}) relative to which $\omega$ can be written as $\omega = \frac{-dx}{2y}$. Let $\Delta(E,\omega)$ be the discriminant of $E$ relative to the Weierstrass equation defined by $\omega$. By standard calculations (\cite[Table 3.1]{Sil09}), we have
$$\Delta(E,u\omega) = u^{-12}\Delta(E,\omega).$$
The moduli stack classifying pairs $(E,\omega)$ is naturally a $\bG_m$-torsor over $\cM(1)_{\bZ}$, and hence the moduli stack ``$\cM(\Delta = 1)$'' classifying pairs $(E,\omega)$ with $\Delta(E,\omega) = 1$ is a $\mu_{12}$-torsor over $\cM(1)_{\bZ[1/6]}$. In particular it is finite \'{e}tale. Since the abelianization of $\widehat{\SL_2(\bZ)} = \pi_1(\cM(1)_{\Qbar})$ is $\bZ/12\bZ$, $\cM(\Delta=1)$ is a moduli-theoretic model of $\cM'$ over $\bZ[1/6]$.
\end{remark}

We will obtain a $\bQ$-rational point of $\cM(\SL_2(\bF_\ell))^\abs$ by pulling back the modular curve corresponding to the principal congruence subgroup $\Gamma(\ell):=\ker(\SL_2(\bZ) \to \SL_2(\bZ/\ell\bZ))$ to $\cM'_\bQ$.
This modular curve is a geometric component of the classical moduli problem classifying elliptic curves with full level $\ell$ structure. This moduli problem is defined over $\bQ$, but over $\bQ$ it is not geometrically connected. Its geometrically connected components (ie, modular curves) classify elliptic curves $E$ equipped with a basis for $E[n]$ with a fixed Weil pairing; these components are defined over cyclotomic field $\bQ(\zeta_\ell)$. However, we may obtain a $\bQ$-model of the modular curve by twisting the definition of a full level $\ell$ structure as in  Deligne-Rapoport \cite[\S V4.1-2]{dr75} . We briefly recall the definition here.

For an integer $n\ge 1$ and an elliptic curve $E$ over a $\bZ[1/n]$-scheme $S$, the Weil pairing on $E[n]$ is a map of finite \'{e}tale $S$-schemes
$$e_n(*,*) : E[n]\times E[n]\longrightarrow\mu_n$$ which on geometric fibers is alternating and nondegenerate. Let
$$\omega(*,*) : (\mu_n\times\bZ/n\bZ)\times (\mu_n\times\bZ/n\bZ) \longrightarrow \mu_n$$
be defined on geometric fibers as the unique alternating pairing satisfying 
$$\omega((\zeta,0),(1,k)) = \zeta^k\qquad\text{for } \zeta\in\mu_n, k\in\bZ/n\bZ.$$
Then $\omega$ is also nondegenerate.  With $E$ as above, a \emph{full level $n$ structure of determinant $1$} on $E$ is an isomorphism of finite \'{e}tale group schemes $\alpha : \mu_n\times\bZ/n\bZ \overset{\sim}{\rightarrow} E[n]$ compatible with the pairings $e_n$ and $\omega$. Equivalently, we require that the following diagram commutes:
\[\begin{tikzcd}
(\mu_n\times\bZ/n\bZ) \times (\mu_n\times\bZ/n\bZ) \ar[rd,"\omega"']\ar[r,"\alpha\times\alpha"] & E[n]\times E[n]\ar[d,"e_n"] \\
 & \mu_n.
\end{tikzcd}\]
Let $\cM(\ell)_{\bQ}$ be the moduli stack classifying elliptic curves with full level $\ell$-structure of determinant 1 in the sense described above. Over $\bQ(\zeta_\ell)$, if we choose a primitive $\ell$th root of unity $\zeta\in\bQ(\zeta_\ell)$, then to any ``full level $n$ structure of determinant 1'' $\alpha : \mu_\ell\times\bZ/\ell\bZ\rightarrow E[\ell]$  as given above, we may associate the ``classical'' full level $\ell$ structure given by $(\alpha(\zeta,0), \alpha(1,1))$. Compatibility with $\omega,e_\ell$ implies that over $\bQ(\zeta_\ell)$, the set of full level $\ell$ structures of determinant 1 is in bijection with the set of full level $\ell$ structures with Weil pairing $\zeta$, so $\cM(\ell)$ is a $\bQ$-model of the classical modular curve $Y(\ell)$.

The finite \'{e}tale cover $\cM(\ell)_\bQ\rightarrow\cM(1)_\bQ$ is not Galois, but its base change to $\bQ(\zeta_\ell)$ is Galois with Galois group isomorphic to $\SL_2(\bZ/\ell\bZ)$ (using an isomorphism $(\bZ/\ell\bZ)^2\cong\mu_\ell\times\bZ/\ell\bZ$ over $\bQ(\zeta_\ell)$). Let $\cM(\ell)'_\bQ := (\cM(\ell)\times_{\cM(1)}\cM')_{\bQ}$, then we obtain the map
$$\pi_\ell : \cM(\ell)_{\bQ}'\longrightarrow\cM_{\bQ}'$$
appearing in \eqref{eq_mordell_covering_SL}. This map is finite \'{e}tale, and becomes $\SL_2(\bF_\ell)$-Galois over $\bQ(\zeta_\ell)$  
and thus it determines a $\bQ$-rational point of $\cM(\SL_2(\bF_\ell))^\abs$ using Remark~\ref{remark:absolute_G_structure}. 

To see that it has trace invariant $-2$, it suffices to work analytically. Note that the cover $\cM(\ell)^\an\rightarrow\cM(1)^\an$ corresponds to the congruence subgroup $\Gamma(\ell)$. With suitable choices of base points, the monodromy of $\cM(\ell)^\an\rightarrow\cM(1)^\an$ is given by the map
$$\SL_2(\bZ)\longrightarrow\SL_2(\bZ)/\Gamma(\ell)\cong\SL_2(\bZ/\ell\bZ)$$
and the monodromy of the pullback $\cM(\ell)'\rightarrow\cM'$ is just the composition
$$\varphi : \Gamma(1)'\hookrightarrow\SL_2(\bZ)\longrightarrow\SL_2(\bZ)/\Gamma(\ell)\cong\SL_2(\bZ/\ell\bZ).$$
Now one can compute, using the generators of $\Gamma(1)'$ given in (\ref{eq_generators}), that the trace invariant of $\varphi$ is precisely
$$\tr\left(\left[\spmatrix{2}{-1}{-1}{1},\spmatrix{1}{-1}{-1}{2}\right]\right) = \tr\left(\spmatrix{5}{6}{-6}{-7}\right) = -2.$$
Thus, $\pi_\ell$ determines a $\bQ$-rational point of $\cM(\SL_2(\bF_\ell))^\abs_{-2}$, and hence its image in $\cM(\PSL_2(\bF_\ell))^\abs_{-2}$ yields a $\bQ$-point of $\cM(\PSL_2(\bF_\ell))^\abs_{-2}$ as desired. Moduli theoretically, this point is given by the cover
\begin{equation}\label{eq_ol_pi_ell}
    \ol{\pi}_\ell : \cM(\ell)'_\bQ/\{\pm I\}\rightarrow\cM'_\bQ
\end{equation}
where $-I\in\SL_2(\bF_\ell)$ acts 
by negating the full level $\ell$-structure.

Finally, note that relative to our free basis $\spmatrix{2}{-1}{-1}{1},\spmatrix{1}{-1}{-1}{2}$ of $\Gamma(1)'$, the covering $\ol{\pi}_\ell$ corresponds to the point $[3,3,6]\in Y^*(\ell)$. By Proposition \ref{prop_explicit_ramification} and the calculation in Lemma \ref{lemma_explicit}, this implies that $\ol{\pi}_\ell$ indeed corresponds to the unique unramified point above $j = 0$ in $\cM(\PSL_2(\bF_\ell))^\abs_{-2}$.

\begin{remark} \label{rmk:ringofdef}
In fact, the analysis shows that this point is defined over $\ZZ[1/|\SL_2(\F_\ell)|]$.
\end{remark}

\begin{remark} We will conclude this section by describing how the above geometric construction which yields the unique unramified point of $M(\PSL_2(\bF_\ell))^\abs_{-2}$ may be deduced by ``pure thought''. We work over $\Qbar$, and assume $\ell\ge 5$.

The completion of the \'{e}tale local ring at a geometric point of a Deligne-Mumford stack is the universal deformation ring of that geometric point. Using the \'{e}tale local structure of Deligne-Mumford stacks \cite[Lemma 2.2.3]{AV02}, this implies that for a smooth separated 1-dimensional Deligne-Mumford stack $\cM$ with coarse scheme $c : \cM\rightarrow M$, then for any geometric point $x\in \cM$, the map induced by $c$ on \'{e}tale local rings at $x$ is a totally ramified extension of discrete valuation rings with ramification index $|G_x/K_x|$, where $G_x := \Aut_\cM(x)$ and $K_x\subset G_x$ is the subgroup consisting of automorphisms which extend to the universal deformation of $x$. This implies in particular that at the level of \'{e}tale local rings, the map $\cM(1)\rightarrow M(1)$ has ramification index 3 at $j = 0$.

Now let $x\in M(\PSL_2(\bF_\ell))_{-2}$ be a geometric point which is unramified over $j = 0$ in $M(1)$. Then the map $\cM(\PSL_2(\bF_\ell))_{-2}\rightarrow M(\PSL_2(\bF_\ell))_{-2}$ also has ramification index 3 above $x$. Such an $x$ must correspond to a $\PSL_2(\bF_\ell)$-torsor $p : X^\circ\rightarrow E^\circ$ where $E$ is an elliptic curve over $\Qbar$ with $j$-invariant 0. Viewing $x$ as a geometric point of $\cM(\PSL_2(\bF_\ell))_{-2}$, the fact that $\gamma_{-I}$ acts trivially on $Y^*(\ell)$ implies that $[-1]\in\Aut(E)$ extends to the universal deformation of $x$.\footnote{We elaborate on this: By \cite[\S5]{mw13}, there are two conjugacy classes in $\SL_2(\bF_\ell)$ with trace $-2$, and they are swapped by the action of $D(\ell)$. This implies that $\cM(\PSL_2(\bF_\ell))_{-2}$ is a disjoint union of two open and closed substacks (corresponding to different Higman invariants) which are mapped isomorphically onto each other by the nontrivial element of $D(\ell)$. Thus, the degree 2 finite \'{e}tale map $\cM(\PSL_2(\bF_\ell))_{-2}\rightarrow\cM(\PSL_2(\bF_\ell))^\abs_{-2}$ admits a section, hence induces isomorphisms on automorphism groups of geometric points. By Proposition \ref{prop_coarse_fibers}(\ref{prop_coarse_fibers1}), $\gamma_{-I}$ acting trivially on $Y^*(\ell)$ implies that $[-1]$ fixes the absolute $\PSL_2(\bF_\ell)$-structure determined by $p$, and we will say ``$[-1]\in\Aut_{\cM(\PSL_2(\bF_\ell))^\abs_{-2}}(x)$''. By the above, this also means that ``$[-1]\in\Aut_{\cM(\PSL_2(\bF_\ell))_{-2}}(x)$'', so $[-1]$ fixes the $\PSL_2(\bF_\ell)$-structure determined by $p$. Using Theorem \ref{thm_basic_properties}(\ref{part_combinatorial}), one can check that $[-1]$ must also fix the $\PSL_2(\bF_\ell)$-structure attached to the universal deformation of $x$ defined over $\Qbar\ps{t}$, which has trivial $\pi_1$.} The connection with deformation theory then implies that because $\cM(\PSL_2(\bF_\ell))_{-2}\rightarrow M(\PSL_2(\bF_\ell))_{-2}$ has ramification index 3 at $x$, $x$ must have an automorphism group of order 6. Since $\Aut(E) \cong \mu_6$, this implies that if $\alpha\in\Aut(E)$ is a generator, then $\alpha$ lifts via $p$ to a $\PSL_2(\bF_\ell)$-equivariant automorphism $\tilde{\alpha}$ of $X^\circ$. Since $\PSL_2(\bF_\ell)$ has trivial center, any such lifting is unique, and hence we obtain an action of $\mu_6$ on $X^\circ$, commuting with the $\PSL_2(\bF_\ell)$-action.

Next, let $W$ be a cyclic group of order 12, and fix a surjection $f : W\rightarrow \mu_6$. Then $W$ acts (non-faithfully) on $X^\circ$ and on $E^\circ$ via $f$, commuting with the $\PSL_2(\bF_\ell)$-action on $X^\circ$. Taking stacky quotients we obtain a diagram
\[\begin{tikzcd}
X^\circ\ar[r]\ar[d,"p"] & {[X^\circ/W]}\ar[d,"\ol{p}"] \\
E^\circ\ar[r] & {[E^\circ/W]}
\end{tikzcd}\]
where every map is finite \'{e}tale. Since all objects in the diagram are connected, comparing degrees we find that the diagram is cartesian. Since the action of $W$ on $E^\circ$ is isomorphic to the action of $\Gal(\cM'/\cM(1))$ on $\cM'$, it follows that $[E^\circ/W]\cong\cM(1)$ and that the monodromy representation of the $\PSL_2(\bF_\ell)$-torsor $\ol{p}$ (viewed as a covering of $\cM(1))$ is a surjection
$$\rho : \widehat{\SL_2(\bZ)}\rightarrow\PSL_2(\bF_\ell).$$
At this point, if we take $\rho$ to be the obvious surjection given by reduction mod $\ell$, then pulling back the $\PSL_2(\bF_\ell)$-cover of $\cM(1)$ corresponding to $\rho$ gives a cover of $E^\circ$ which by deformation theory must correspond to an ``unramified point'' in the sense described above. This establishes the existence of the unramified point.

To establish uniqueness, note that since the trace invariant of $p$ is -2, all ramification indices of $p$ and $\ol{p}$ above the puncture are $\ell$. It is a group-theoretic fact that any surjection $\SL_2(\bZ)\twoheadrightarrow\PSL_2(\bF_\ell)$ which sends $\spmatrix{1}{1}{0}{1}$ (a generator of inertia around the cusp) to an element of order $\ell$ must have kernel the projective principal congruence subgroup $\langle -I,\Gamma(\ell)\rangle$. This implies that $\ol{p}$ is isomorphic to the map $\cM(\ell)/\{\pm I\}\rightarrow\cM(1)$, and $p$ is isomorphic to the map $\ol{\pi}_\ell$ of \eqref{eq_ol_pi_ell}. This proves uniqueness.

\end{remark}

\subsection{Proof of Theorem~\ref{thm:main_B_intro}}\label{ss_proof_of_asymptotic_statement}
At this point we are ready to prove the ``asymptotic part'' of the main theorem, Theorem~\ref{thm:main_B_intro}. We begin by recalling the setup. Let $F_2$ be the free group on the generators $a,b$. For a prime $\ell\ge 3$, we have a diagram
\begin{equation} \label{asymptoticdiagram}\begin{tikzcd}
\Epi^\ext(F_2,\SL_2(\bF_\ell))/D(\ell)\ar[d]\ar[r,hookleftarrow] & F(\ell)_{-2}\ar[d]\ar[r,"\Tr"] & X^*(\ell)\ar[d] \\
\Epi^\ext(F_2,\PSL_2(\bF_\ell))/D(\ell)\ar[r,hookleftarrow] & \ol{F(\ell)}_{-2}\ar[r,"\ol{\Tr}"] & Y^*(\ell)
\end{tikzcd}\end{equation}
where $\Tr$ and $\ol{\Tr}$ are bijections.  Recall from \eqref{eq_card} that
$$ |Y^*(\ell)|  = n_\ell = \left\{\begin{array}{rl}
\frac{\ell(\ell+3)}{4} & \ell \equiv 1\mod 4 \\
\frac{\ell(\ell-3)}{4} & \ell \equiv 3\mod 4.
\end{array}\right.$$

There are natural actions of $\Out(F_2)$ on each object on the left square, compatible with all the morphisms, which descends via $\Tr$ to the action of $\Gamma$ on $X^*(\ell),Y^*(\ell)$ (see Proposition \ref{prop:fibermarkoff}). We recall that the permutation images of the $\Gamma$ action on $X^*(\ell)$ (resp. $Y^*(\ell)$) are denoted $\Gamma_\ell$ (resp. $Q_\ell$).
Let $E$ be an elliptic curve over $\bQ$, $x\in E^\circ(\bQ)$, and $x_\bC : \Spec\bC\rightarrow\Spec\bQ\rightarrow E^\circ$ be the corresponding $\bC$-point. Fixing an isomorphism $F_2\cong\pi_1^{\top}(E^\circ(\bC),x_0)$, then $F(\ell)_{-2}$ (resp. $\ol{F(\ell)}_{-2}$) is identified with the geometric fiber of $\cM(\SL_2(\bF_\ell))^\abs_{-2}$ (resp. $\cM(\PSL_2(\bF_\ell))^\abs_{-2}$) over $\cM(1)_{\Qbar}$ by Proposition~\ref{prop:identifyfibers}.
 Moreover, since $E$ is defined over $\bQ$ we also obtain an action of $\Gal(\Qbar/\bQ)$ on these fibers. For any $p\nmid |\PSL_2(\bF_\ell)|$ and any Frobenius element $\Frob_p\in\Gal(\Qbar/\bQ)$, $\Frob_p$ acts on $Y^*(\ell)$. The parity of the $\Frob_p$-action on $Y^*(\ell)$ does not depend on the choice of Frobenius element.

Let $\cE$ be the set of ``exceptional'' primes $\ell$ for which $\Gamma_\ell$ does not act transitively on $X^*(\ell)$. We will refer to $\cE$ as the ``exceptional set''. By Proposition \ref{prop_trace_n2_is_Q_rational}, it makes sense to ask if a $\PSL_2(\bF_\ell)$-torsor over a punctured elliptic curve over a $\bZ[1/|\PSL_2(\bF_\ell)|]$-scheme has trace invariant $-2$.

Let $c_\ell := |\PSL_2(\bF_\ell)| = \frac{\ell(\ell^2-1)}{2}$, and note that $6\mid c_\ell$. Recall that we defined $\BP(\ell)$ as:
\begin{equation*}
    \BP(\ell) := \begin{array}{r}\text{The property that either $\ell\equiv 1 \mod 4$, or} \\
\text{the order of $\frac{3+\sqrt{5}}{2}\in\bF_{\ell^2}$ is at least $32\sqrt{\ell+1}$.}
\end{array}
\end{equation*}

\begin{thm}\label{thm:main_B}  Let $\ell\ge 3$ be a prime such that $\ell\notin\cE$ and the condition $\BP(\ell)$ holds.  
\begin{enumerate}
    \item \label{thm:main_B1} The substack $\cM(\PSL_2(\bF_\ell))^\abs_{-2,\bZ[1/c_\ell]}\subset\cM(\PSL_2(\bF_\ell))^\abs_{\bZ[1/c_\ell]}$ of objects of trace invariant $-2$ from Proposition \ref{prop_trace_n2_is_Q_rational} is finite \'{e}tale over $\cM(1)_{\bZ[1/c_\ell]}$. 
    
    \item \label{thm:main_B2} Its coarse scheme $M_\ell = (M(\PSL_2(\bF_\ell))^\abs_{-2})_{\bZ[1/c_\ell]}$ is smooth and geometrically connected over $\bZ[1/c_\ell]$.
    
    \item \label{thm:main_B3} Let $M_\ell^\circ\subset M_\ell$ be the preimage of $M(1)_{\bZ[1/c_\ell]}^\circ := M(1)_{\bZ[1/c_\ell]} - \{j = 0,1728\}$. Then there is an at most quadratic extension $L$ of $\bQ$ such that if $B$ is the integral closure of $\bZ[1/c_\ell]$ in $L$, then the Galois closure $N_B$ of $M_{\ell,B}^\circ\rightarrow M(1)^\circ_B$ is geometrically connected over $B$ with Galois group
$$\Gal(N_B/M(1)^\circ_B)\cong\left\{\begin{array}{rl}
S_{n_\ell} & \text{if }\ell\equiv 5,7,9,11 \mod 16 \\
A_{n_\ell} & \text{if }\ell\equiv 1,3,13,15\mod 16.
\end{array}\right.$$

\item \label{thm:main_B4} If $\Gal(N_B/M(1)^\circ_B) \cong S_{n_\ell}$, then we may take $L = \bQ$ and $B  = \bZ[1/c_\ell]$, and the fiber of $N_B \to M(1)_B^\circ$ at $p \nmid c_\ell$ is an $S_{n_\ell}$-Galois cover of $\bP^1_{\bF_p} - \{0,1728,\infty\}$. 

\item \label{thm:main_B5} If $\Gal(N_B/M(1)^\circ_B)\cong A_{n_\ell}$, then the fiber at any prime above $p \nmid c_\ell$ will yield an $A_{n_\ell}$-cover of $\bP^1_k - \{0,1728,\infty\}$, where $k = \bF_p$ if the action of $\Frob_p$ on $Y^*(\ell)$ is even, and $k = \bF_{p^2}$ if the $\Frob_p$-action is odd.
\end{enumerate}
\end{thm}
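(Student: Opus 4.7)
The plan is to funnel all five claims through the dictionary of \S\ref{ss_markoff_absolute}--\S\ref{ss_rational_point} and then invoke the deep inputs of Bourgain--Gamburd--Sarnak and Meiri--Puder. First I will establish (1) as a direct consequence of Theorem~\ref{thm_basic_properties}\eqref{part_etale} (finite \'etaleness of $\cM(G)/\cM(1)_{\bZ[1/|G|]}$) together with the local constancy of the trace invariant, which by Proposition~\ref{prop_trace_n2_is_Q_rational} is defined over $\bZ[1/c_\ell]$ and cuts out $\cM(\PSL_2(\bF_\ell))^\abs_{-2,\bZ[1/c_\ell]}$ as an open and closed substack. For (2), the hypothesis $\ell\notin\cE$ together with Theorem~\ref{thm:BGS} gives transitivity of $\Gamma$ on $X^*(\ell)$, hence of $Q_\ell$ on $Y^*(\ell)$, and Proposition~\ref{prop_Q_vs_Qplus} upgrades this to transitivity of $Q_\ell^+$; via Propositions~\ref{prop:identifyfibers} and \ref{prop:fibermarkoff} this is precisely transitivity of $\pi_1(\cM(1)_{\Qbar})$ on the $\Qbar$-fiber of $\cM(\PSL_2(\bF_\ell))^\abs_{-2}$. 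Smoothness of the coarse scheme is supplied by Theorem~\ref{thm_basic_properties}\eqref{part_coarse}. To propagate geometric connectedness to every fiber of $M_\ell/\bZ[1/c_\ell]$, I will invoke Proposition~\ref{prop_tame}(1) with $A=\bZ[1/c_\ell]$, using the $\bZ[1/c_\ell]$-section provided by $\overline{\pi_\ell}$ of \S\ref{ss_rational_point} (Remark~\ref{rmk:ringofdef}).

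For (3), Propositions~\ref{prop_coarse_fibers}, \ref{prop:identifyfibers}, and \ref{prop:fibermarkoff} identify the geometric monodromy of $M_\ell/M(1)$ over $\Qbar$ with $Q_\ell^+\subseteq\Sym(Y^*(\ell))$ (the central element $\gamma_{-I}$ acts trivially on $Y^*(\ell)$, so the passage from stacky to coarse monodromy loses nothing here). Combining $\ell\notin\cE$ with $\BP(\ell)$, Theorem~\ref{thm:MP} forces $Q_\ell\supseteq A_{n_\ell}$. Because $\Out^+(F_2)$ is generated by the images of $\gamma_0$ and $\gamma_{1728}$, Proposition~\ref{prop:parity} controls the parity signature of $Q_\ell^+$: when $\ell\equiv 1,3,13,15\pmod{16}$ both generators act as even permutations, so $Q_\ell^+\subseteq A_{n_\ell}$, and since $[Q_\ell:Q_\ell^+]\le 2$ we conclude $Q_\ell^+=A_{n_\ell}$; when $\ell\equiv 5,7,9,11\pmod{16}$, $\gamma_{1728}$ is odd, forcing $Q_\ell^+=S_{n_\ell}$. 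The descent of the Galois closure to a ring of integers $B$ then follows from Proposition~\ref{prop_tame}(4): in the symmetric case we take $L=\bQ$ and $B=\bZ[1/c_\ell]$, and in the alternating case $L$ is at most quadratic.

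Parts (4) and (5) reduce to Proposition~\ref{prop_tame}(3), which guarantees that the special fibers of $N_B/M(1)_B^\circ$ are geometrically connected with Galois group isomorphic to $\Gal(N_L/M(1)_L^\circ)$ and tamely ramified only over $\{0,1728,\infty\}$. The field-of-definition question in the alternating case---i.e.\ whether one can take $k=\bF_p$ or must enlarge to $\bF_{p^2}$---is resolved by Remark~\ref{rmk:frobeniusoddness}: the fiber over $\bF_p$ is geometrically connected precisely when $\Frob_p$ acts as an even permutation on $M_{\overline x}^\circ\cong Y^*(\ell)$. The main obstacle throughout is the careful bookkeeping of three distinct permutation groups---$Q_\ell$, $Q_\ell^+$, and the generic Galois group of $N_B$---and their relationship via the index-at-most-$2$ containments given by Propositions~\ref{prop_Q_vs_Qplus} and \ref{prop_tame}(4); once this is sorted, each of the five parts follows mechanically from the machinery assembled in the preceding sections.
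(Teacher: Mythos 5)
Your proposal is correct and takes essentially the same route as the paper's own (much terser) proof: finite \'etaleness via Theorem~\ref{thm_basic_properties}\eqref{part_etale}, connectedness from Theorem~\ref{thm:BGS} through Propositions~\ref{prop_Q_vs_Qplus} and \ref{prop_tame}, the monodromy group from Theorem~\ref{thm:MP} together with the parity analysis underlying Corollary~\ref{cor_alternating_vs_symmetric}, and the fields of definition from Proposition~\ref{prop_tame} and Remark~\ref{rmk:frobeniusoddness}. You merely make explicit some steps the paper leaves implicit (the index-at-most-$2$/simplicity argument passing from $Q_\ell$ to $Q_\ell^+$, and the use of the integral point $\ol{\pi_\ell}$ to run Proposition~\ref{prop_tame} with $A=\bZ[1/c_\ell]$), which is consistent with the paper's intended argument.
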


Theorem~\ref{thm:main_B_intro} is a less precise version of Theorem~\ref{thm:main_B}.

\begin{proof} 
The first statement follows from Theorem~\ref{thm_basic_properties}\eqref{part_etale}. Statements (\ref{thm:main_B2}) and the existence of the at most quadratic $L$ such that $N_B$ is geometrically connected follow from Proposition~\ref{prop_tame}.
The description of the Galois groups in (\ref{thm:main_B3}) follows from Corollary \ref{cor_alternating_vs_symmetric}, and the exact fields of definitions for the fibers in (\ref{thm:main_B4}) and (\ref{thm:main_B5}) are a consequence of Proposition~\ref{prop_tame} and Remark~\ref{rmk:frobeniusoddness}.
\end{proof}

\begin{remark} A subgroup $A$ of a group $B$ is $G$-defining if $A$ is normal and $B/A\cong G$. Let $E$ be an elliptic curve over $\bQ$, and $x_0\in E^\circ(\Qbar)$ a geometric point.  Recall that
$$\ol{F(\ell)} := \Epi^\ext(\pi_1(E^\circ_{\Qbar},x_0),\PSL_2(\bF_\ell))/D(\ell)$$
is in bijection with the set of $\PSL_2(\bF_\ell)$-defining subgroups of $\pi_1(E^\circ_{\Qbar},x_0)$. Via this bijection, we may speak of the trace invariant of a $\PSL_2(\bF_\ell)$-defining subgroup. 
In the notation of Theorem \ref{thm:main_B}, if $[L:\bQ]=2$, then it follows from the Chebotarev density theorem that for a fixed $\ell \equiv 1,3,13,15 \mod{16}$, the set of primes $p$ such that $\Frob_p$ acts as an even (resp. odd) permutation on the set of $\PSL_2(\bF_\ell)$-defining subgroups of $\pi_1(E^\circ_{\Qbar},x_0)$ of trace invariant $-2$ each have density $\frac{1}{2}$.
\end{remark}

\section{Large Markoff orbits over finite fields} \label{sec:largemarkoff}

For a prime $\ell$, we will adapt Meiri--Puder's method to apply to the maximal $Q_\ell$-orbit of $Y^*(\ell)$, and show that $Q_\ell$ is the full alternating or symmetric group on the maximal orbit.  In contrast to Theorem~\ref{thm:main_B}, this will remove the restriction coming from \cite{bgs1} that $\ell$ lies outside the small exceptional set $\mathcal{E}$ of primes. 
Since $Q_\ell$ acts transitively on the maximal orbit, many arguments from \cite{mp18} can be adapted.  This analysis will give Theorem~\ref{thm:main_A}. Throughout this section, we assume that $\ell$ is odd.

\subsection{The Large Orbit}
	\begin{defn}
		We let $\cO(\ell)$ denote a $Q_\ell$-orbit of maximal size in $Y^*(\ell)$, and denote by $\overline{Q}_\ell$ the permutation group induced by the $Q_\ell$-action on $\cO(\ell)$.
	\end{defn}

Although we do not know the size of a maximal orbit, we have a nice lower bound of its size given by Theorem~\ref{thm:BGS} and the easy computation of $|Y^*(\ell)|$ recorded in \cite[Lemmas 2.2 and 2.3]{mp18}.  In particular, for any $\varepsilon>0$ there is a smallest integer $N_{\varepsilon}$ such that for prime $\ell\geq N_{\varepsilon}$ we have that 
\begin{equation}
|\cO(\ell)| \geq |Y^*(\ell)|-\ell^{\varepsilon} = \begin{cases} 
\frac{\ell(\ell+3)}{4} - \ell^{\varepsilon} & \text{ if } \ell \equiv 1 \mod{4} \\
\frac{\ell(\ell-3)}{4} - \ell^{\varepsilon} & \text{ if } \ell \equiv 3 \mod{4}.
\end{cases}
\end{equation}
In particular, this shows there is a unique maximal orbit for sufficiently large $\ell$.

The main theorem in this section is the following.
	
	\begin{thm} \label{thm:max-orbit}
	Fix a prime $\ell$ for which $\BP(\ell)$ holds.  If $\ell \equiv 1 \mod{4}$ then assume that $\ell \geq \max(N_{1/2},13)$, while if $\ell \equiv 3 \mod{4}$ then $\ell \geq \max(N_{1/2},23)$.
	 Then $\overline{Q}_\ell$ is the full alternating or symmetric group on $\cO(\ell)$.
	\end{thm}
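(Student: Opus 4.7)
The plan is to adapt the three-step strategy of Meiri and Puder \cite{mp18} from $Y^*(\ell)$ to the maximal orbit $\cO(\ell)$. Once transitivity is established, their proof that $Q_\ell$ contains the alternating group rests on (i) showing primitivity of the action, (ii) exhibiting an element whose cycle structure contains a prime cycle of length $p$ with $|Y^*(\ell)|/2 < p \le |Y^*(\ell)| - 3$, and (iii) invoking Jordan's theorem on primitive permutation groups. For our adaptation, transitivity of $\overline{Q}_\ell$ on $\cO(\ell)$ is automatic by definition; the work lies in transferring (i) and (ii) to the restriction.

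The critical numerical ingredient is that by Theorem~\ref{thm:BGS} (applied with $\varepsilon = 1/2$), the hypothesis $\ell \geq N_{1/2}$ gives $|\cO(\ell)| \geq |Y^*(\ell)| - \sqrt{\ell}$, which for $\ell$ satisfying the stated lower bounds exceeds $|Y^*(\ell)|/2 + 3$. Consequently, any cycle of length greater than $|Y^*(\ell)|/2$ appearing in an element of $Q_\ell$ acting on $Y^*(\ell)$ must lie entirely inside $\cO(\ell)$, since a cycle of a permutation must be contained in a single orbit of the group it generates, hence in a single $Q_\ell$-orbit. This observation lets us reuse Meiri--Puder's cycle-producing elements almost verbatim.

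For step (ii), I would invoke Meiri--Puder's ``conic'' construction, in which slicing the Markoff surface by a hyperplane $x = x_0$ for a suitable $x_0$ produces a conic on which a composition of a Vieta involution with a coordinate swap acts essentially as a rotation of large prime order; the condition $\BP(\ell)$ is precisely what ensures that a prime of appropriate size divides the order of this rotation. Their analysis then produces an element of $Q_\ell$ whose cycle decomposition on $Y^*(\ell)$ contains a prime cycle of length $p$ with $p > |Y^*(\ell)|/2$. By the observation above this cycle sits inside $\cO(\ell)$, so it appears verbatim as a cycle of the induced element of $\overline{Q}_\ell$. The explicit lower bounds $\ell \geq 13$ (resp.\ $23$) enter here to guarantee $p \leq |\cO(\ell)| - 3$, at which point Jordan's theorem yields that a primitive overgroup containing such a cycle contains $\Alt(\cO(\ell))$.

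The step most likely to require genuine modification is (i), primitivity of $\overline{Q}_\ell$ on $\cO(\ell)$. Meiri--Puder rule out non-trivial block systems on $Y^*(\ell)$ by exhibiting conjugates of rotation elements whose orbit structures on $Y^*(\ell)$ are incompatible with any partition into blocks; restricting to $\cO(\ell)$, some of these orbits may be truncated by the missing points. However, the truncation is bounded by $|Y^*(\ell) \setminus \cO(\ell)| \leq \sqrt{\ell}$, which is asymptotically negligible compared with the orbit sizes appearing in the incompatibility argument, so the same argument should go through with the explicit numerical bounds providing the quantitative slack. Once primitivity is in hand, the determination of whether $\overline{Q}_\ell$ is alternating or symmetric reduces, just as in Proposition~\ref{prop:parity} and Corollary~\ref{cor_alternating_vs_symmetric}, to a parity computation for the generators $\gamma_0$ and $\gamma_{1728}$ acting on $\cO(\ell)$; this parity agrees with the parity on $Y^*(\ell)$ whenever the complement $Y^*(\ell) \setminus \cO(\ell)$ has trivial or even contribution, which can be forced by the same size bounds.
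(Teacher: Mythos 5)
There is a genuine gap, and it sits exactly where the real difficulty of this theorem lies: the case $\ell\equiv 3\mod 4$. Your step (ii) proposes to produce an element of $Q_\ell$ containing a prime cycle of length $p>|Y^*(\ell)|/2$ and then apply Jordan's theorem, but no such element exists, and indeed this is why Meiri--Puder themselves do not use Jordan's theorem in that case. All cycles of $\rot_1$ (and of its conjugates) on $Y^*(\ell)$ live on the conics $C_1(\pm a)$ and have lengths dividing $\frac{\ell\pm 1}{2}$, hence of size $O(\ell)$, whereas $|Y^*(\ell)|\sim \ell^2/4$; so nothing close to a ``majority'' cycle is available. Moreover, Jordan's theorem requires an element that \emph{is} a $p$-cycle (fixing all remaining points), not merely one whose cycle decomposition contains a $p$-cycle. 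When $\ell\equiv 1\mod 4$ this can be arranged because the parabolic conic $C_1(\pm 2)$ is nonempty of prime size $\ell$, and a suitable power of $\rot_1$ is a single $\ell$-cycle; when $\ell\equiv 3\mod 4$ the conic $C_1(\pm 2)$ is empty, every cycle length $q$ dividing $\frac{\ell\pm1}{2}$ occurs with multiplicity on the order of $\ell$, and no power of $\rot_1$ isolates a single prime cycle. Also note that $\BP(\ell)$ does not supply ``a prime of appropriate size dividing the order of the rotation''; it only guarantees that the $\rot_1$-cycle through $[3,3,3]$ has length at least $16\sqrt{\ell+1}$, which is what forces $[3,3,3]\in\cO(\ell)$ (Lemma~\ref{lem:inOp}) and feeds the primitivity arguments --- it is far from $|Y^*(\ell)|/2$ and need not be prime.

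Because of this, the paper's proof for $\ell\equiv 3\mod 4$ follows the other branch of \cite{mp18}: after primitivity (your step (i), which does adapt along the lines you describe), one uses the element $\pi=\rot_1^{(\ell+1)/2}$, which fixes at least half of $\cO(\ell)$, and invokes the Guralnick--Magaard classification (Theorem~\ref{thm:choices}) of primitive groups containing such an element; one must then eliminate the affine, product-action/wreath, exceptional $S_6$, and orthogonal cases. This elimination is the substantial new content here, since $|\cO(\ell)|$ is only known up to an error of $\ell^{1/2}$, and it requires the wreath-product cycle-structure results of \S\ref{sec:wreath} (Proposition~\ref{prop:wreaths} and Corollaries~\ref{cor:embedding}, \ref{cor:eliminatepower}), for which your proposal has no counterpart. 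Your outline is essentially correct only for $\ell\equiv 1\mod 4$, where the paper indeed runs the Jordan-theorem argument on $\cO(\ell)$ using $C_1(\pm2)\subset\cO(\ell)$; for $\ell\equiv 3\mod 4$ the proposed route cannot be repaired without replacing step (ii) entirely.
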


	The proof of this theorem is very similar to the proofs of \cite[\S4]{mp18}.  
	The strategy is to show that the permutation action of $\overline{Q}_\ell$ on $\cO(\ell)$ is primitive, and then use a group-theoretic result classifying primitive permutation groups with extra conditions.  
	The case that $\ell \equiv 1 \mod{4}$ is significantly easier, and will be dealt with at the end in \S\ref{sec:p1mod4}. 
	When $\ell \equiv 3 \mod{4}$, we first establish analogues of \cite[Proposition 4.2 and 4.3]{mp18} that work for the maximal $\overline{Q}_\ell$-orbit $\cO(\ell)$, and use them to show the permutation action is primitive in \S\ref{sec:primitivity}. Then in \S\ref{sec:permutation} and \S\ref{sec:wreath} we use a classification of primitive 
	permutation groups containing an element with a large number of fixed points, which is a consequence of the classification of finite simple groups.  Our argument is significantly harder than that of \cite[\S4.3]{mp18} as we do not know the exact size of $\cO(\ell)$; this makes it more complicated to eliminate cases and conclude that
$\overline{Q}_\ell$ contains the alternating group of permutations of the set $\cO(\ell)$.

\subsection{Primitivity for \texorpdfstring{$\ell \equiv 3 \mod 4$}{l=3 mod 4}}\label{sec:primitivity}
We briefly recall some further background about Markoff triples modulo $\ell$.  The conic $C_1(\pm a) \subset Y^*(\ell)$ is the set of elements $[x_1,x_2,x_3] \in Y^*(\ell)$ with $x_1=a$.  There is a very important rotation element $\rot_1:=R_3\circ \tau_{23}$ sending $(x,y,z)$ to $(x,z,xz-y)$ which visibly acts on $Y^*(\ell)$ and preserves the conics $C_1(\pm a)$. 

Recall \cite[Definition 2.1]{mp18} that $x \in \F_{\ell}$ is called \emph{hyperbolic}, \emph{elliptic}, or \emph{parabolic} if $x^2 -4$ is a square in $\F_\ell^\times$, a non-square in $\F_\ell^\times$, or zero, respectively.   This categorization is invariant under sign change.  The cycle structure of $\rot_1$ on $C_1(\pm a)$ depends on whether $a$ is hyperbolic, elliptic, or parabolic: this is conveniently summarized in \cite[Lemma 2.2 and 2.3, Tables 1 and 2]{mp18}; we urge the reader to use them as reference.

We now begin our analysis of the maximal orbit $\cO(\ell)$ when $\ell \equiv 3 \mod 4$.

\begin{lemma} \label{lem:inOp}
Let $\ell \equiv 3 \mod{4}$ be prime.  If $\ell\geq N_{1/2}$ and $\BP(\ell)$ holds, then $[3,3,3]$ is in $\cO(\ell)$.
\end{lemma}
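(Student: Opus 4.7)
The plan is to combine the lower bound on the size of $\cO(\ell)$ coming from Theorem~\ref{thm:BGS} with a direct lower bound on the size of the orbit of $[3,3,3]$ under the single rotation $\rot_1$. Since $\ell \geq N_{1/2}$, Theorem~\ref{thm:BGS} gives $|\cO(\ell)| \geq |Y^*(\ell)| - \ell^{1/2}$, so every $Q_\ell$-orbit \emph{other} than $\cO(\ell)$ has size at most $\ell^{1/2}$. Consequently, any element of $Y^*(\ell)$ whose $Q_\ell$-orbit has size strictly greater than $\ell^{1/2}$ must already lie in $\cO(\ell)$. Thus my goal reduces to showing that the orbit of $[3,3,3]$ under the cyclic subgroup $\langle \rot_1 \rangle \le Q_\ell$ exceeds $\ell^{1/2}$ in size.

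To carry this out, observe that $\rot_1$ preserves the conic $C_1(\pm 3)$ containing $[3,3,3]$, and on this conic acts by $[3,y,z] \mapsto [3,z,3z-y]$. Lifting from $Y^*(\ell)$ to $X^*(\ell)$, this is the action on $\F_\ell^2 = \{(y,z)\}$ of the matrix
\[
M := \begin{pmatrix} 0 & 1 \\ -1 & 3 \end{pmatrix} \in \SL_2(\F_\ell),
\]
modulo the identification $(y,z) \sim (-y,-z)$ coming from $\cV$. The characteristic polynomial of $M$ is $\lambda^2 - 3\lambda + 1$, whose roots in $\F_{\ell^2}$ are $\lambda_\pm = (3 \pm \sqrt{5})/2$. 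These roots are distinct (as $\ell \ne 5$) and satisfy $\lambda_+\lambda_- = 1$, so $M$ is diagonalizable over $\F_{\ell^2}$ and $\ord(M) = \ord(\lambda_+)$. Now invoking the hypothesis $\BP(\ell)$ yields $\ord(M) \geq 32\sqrt{\ell+1}$.

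Finally, I would check that $(3,3) \in \F_\ell^2$ is not an eigenvector of $M$: the direct calculation $M\cdot(3,3)^\top = (3,6)^\top$ rules this out, since $(3,6)$ is not a scalar multiple of $(3,3)$. As $(3,3)$ therefore has nonzero components along both eigenlines of $M$, its $\langle M\rangle$-orbit in $\F_\ell^2$ has size exactly $\ord(M)$; passing to the quotient by $\{\pm I\}$ shrinks this by a factor of at most $2$. Hence the $\rot_1$-orbit of $[3,3,3]$ in $Y^*(\ell)$ has size at least $\ord(M)/2 \geq 16\sqrt{\ell+1}$, comfortably bigger than $\ell^{1/2}$, and combining with the first paragraph concludes $[3,3,3] \in \cO(\ell)$. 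There is no real obstacle here: the argument is a clean interplay between $\BP(\ell)$ controlling the local dynamics on $C_1(\pm 3)$ and Theorem~\ref{thm:BGS} controlling the global orbit sizes, with the one small verification being that the specific point $(3,3)$ avoids both eigenlines of $M$.
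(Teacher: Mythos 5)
Your proof is correct, and its overall strategy is exactly the paper's: produce a long $\rot_1$-cycle through $[3,3,3]$ and observe that, since $\ell\ge N_{1/2}$ forces $|Y^*(\ell)\setminus\cO(\ell)|\le\ell^{1/2}$, any point whose orbit exceeds $\ell^{1/2}$ must lie in $\cO(\ell)$. The only difference is in how the cycle-length bound is obtained: the paper simply cites the proof of Theorem 4.1 of Meiri--Puder for the fact that $\BP(\ell)$ forces $[3,3,3]$ to lie on a $\rot_1$-cycle of length at least $16\sqrt{\ell+1}$, whereas you re-derive it directly by lifting the action on $C_1(\pm 3)$ to the linear map $\spmatrix{0}{1}{-1}{3}$ on $(y,z)$, noting its eigenvalues are $(3\pm\sqrt{5})/2$ (which is precisely where the constant in $\BP(\ell)$ comes from), checking $(3,3)$ is not an eigenvector so its orbit has size exactly the order of $(3+\sqrt5)/2$, and losing at most a factor of $2$ under the $\cV$-identification. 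This unpacking is accurate and makes the lemma self-contained, at the cost of reproducing a computation the paper outsources; it buys transparency about why $\BP(\ell)$ has the exact form it does, but adds nothing structurally new to the argument.
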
	

\begin{proof}
When $\ell \equiv 3 \mod{4}$, it is shown in the proof of \cite[Thm.~4.1]{mp18} that assuming the order of $\frac{3+\sqrt{5}}{2}\in \F_{\ell^2}$ is at least $32\sqrt{\ell+1}$, the element $[3,3,3]$ belongs to a $\rot_1$-cycle of length at least $16\sqrt{\ell+1}$.  
As $\ell \geq N_{1/2}$, we know that $|Y^*(\ell) \backslash \cO(\ell)| \leq \ell^{1/2}$ so this cycle must lie in $\cO(\ell)$.
\end{proof}

Recall that a $\overline{Q}_\ell$-block is a subset $B \subseteq \cO(\ell)$ such that for every $g\in \overline{Q}_\ell$, either $gB=B$ or $g B\cap B=\emptyset$.  We say a coordinate $j \in \{1,2,3\}$ is homogenous in a block $B$ if the $j$th coordinate of every triple in $B$ has the same type (all hyperbolic, elliptic, or parabolic).

Remember that we say the $\overline{Q}_\ell$ action is \emph{primitive} if the only blocks are singletons and $\cO(\ell)$.  
	
\begin{proposition}[Analogue of {\cite[Prop~4.2]{mp18}}] \label{prop:4.2}
		Let $\ell \equiv 3 \mod 4$ be a prime such that $\ell \geq N_{1/2}$ and $\BP(\ell)$ holds, and let $B\subsetneq \cO(\ell)$ be a proper $\overline{Q}_\ell$-block. 
		Then at least two of the coordinates $\{1,2,3\}$ are homogeneous in $B$. 
	\end{proposition}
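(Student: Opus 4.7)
The plan is to adapt the proof of \cite[Proposition 4.2]{mp18} to the setting where $\overline{Q}_\ell$ acts transitively only on $\cO(\ell)$ rather than on all of $Y^*(\ell)$. The key observation that makes the adaptation possible is that the hypothesis $\ell \geq N_{1/2}$ gives $|Y^*(\ell) \setminus \cO(\ell)| \leq \ell^{1/2}$, which is vanishingly small compared to the typical $\rot_i$-cycle length (which is $\Omega(\ell)$ outside of a few exceptional ``pole'' cycles of length $\leq 2$, see \cite[Tables 1 and 2]{mp18}). Consequently, any $\rot_i$-cycle of length exceeding $2\ell^{1/2}$ lies entirely in $\cO(\ell)$.

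First, I would exploit the coordinate-swap involutions $\tau_{12}, \tau_{13}, \tau_{23}$, which lie in $\overline{Q}_\ell$ and hence preserve $\cO(\ell)$. They also preserve the property of being a $\overline{Q}_\ell$-block and permute the three coordinates, so it suffices to derive a contradiction from the assumption that both coordinates $1$ and $2$ are non-homogeneous in $B$. Choose representatives $\mathbf{v}, \mathbf{v}' \in B$ whose first coordinates have different types (hyperbolic/elliptic/parabolic), and $\mathbf{w}, \mathbf{w}' \in B$ whose second coordinates have different types; the point is that such elements lie in distinct $\rot_1$-cycles (resp.\ $\rot_2$-cycles) with cycle lengths governed by different data.

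Next, I would apply the block dichotomy: for each rotation $g = \rot_i$, either $gB = B$ (so $B$ contains the entire $\rot_i$-cycle of each of its elements) or $gB \cap B = \emptyset$ (so $B$ meets each $\rot_i$-cycle in at most one point). Using the ``large cycle'' observation above, the $\rot_1$-cycles of $\mathbf{v}$ and $\mathbf{v}'$ can be taken to have length $\Omega(\ell)$ and hence lie entirely in $\cO(\ell)$; an analogous statement holds for $\rot_2$. Running the [mp18] case analysis on the four combinations of whether $B$ is stabilized or met transversally by $\rot_1$ and $\rot_2$, each case leads to a contradiction:  in the ``stabilized'' cases one shows $B$ contains a union of large cycles spanning multiple conics in two directions, which by repeated application of the involutions and a counting argument forces $|B| > |\cO(\ell)|/2$, violating that $B$ is a proper block; in the ``transversal'' cases one obtains $|B| = O(\ell)$, so for $\ell$ large one can compare with the size of a $\rot_i$-orbit inside $B$ and derive a contradiction from non-homogeneity.

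The main obstacle is the careful bookkeeping in the case analysis to ensure that every $\rot_i$-cycle we invoke actually lies in $\cO(\ell)$. The exceptional short ``pole'' cycles (of length $\leq 2$, occurring over finitely many special values of the fixed coordinate) could in principle escape into $Y^*(\ell) \setminus \cO(\ell)$, but their total size is $O(1)$ and they cannot account for the non-homogeneity in $B$, since non-homogeneity requires representatives from at least two different cycle types and hence at least one large cycle. The lower bounds $\ell \geq 23$ (when $\ell \equiv 3 \mod 4$) are needed to rule out small-$\ell$ degeneracies in this combinatorial argument and to guarantee enough distinct hyperbolic/elliptic values for the counting step to bite; these play the same role as the corresponding bounds in \cite[\S4.1]{mp18}.
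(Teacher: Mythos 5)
Your proposal has genuine gaps, and it does not reproduce the one point where the paper's argument actually differs from \cite[Prop.~4.2]{mp18}. The paper's proof is very short: Lemma \ref{lem:inOp} shows that $[3,3,3]$ lies in $\cO(\ell)$ (its $\rot_1$-cycle has length at least $16\sqrt{\ell+1}$ by $\BP(\ell)$, hence cannot fit in $Y^*(\ell)\setminus\cO(\ell)$, which has size at most $\ell^{1/2}$), and then Meiri--Puder's argument goes through verbatim on $\cO(\ell)$. The distinguished triple $[3,3,3]$ --- fixed by the coordinate transpositions and with a long $\rot_1$-cycle precisely because of $\BP(\ell)$ --- is the crux, and it never appears in your sketch. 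Your enabling observation (any $\rot_i$-cycle of length exceeding $\ell^{1/2}$ lies entirely in $\cO(\ell)$) is the right mechanism, but you apply it to the wrong cycles: you assert that the cycles through arbitrary witnesses $\mathbf{v},\mathbf{v}',\mathbf{w},\mathbf{w}'$ of non-homogeneity ``can be taken to have length $\Omega(\ell)$.'' That is not available: by \cite[Tables 1, 2]{mp18} the cycle length $d_\ell(\pm x)$ is a multiplicative order attached to $x$ and can be as small as $2$ or $3$ for many hyperbolic and elliptic values; only the cycle through $[3,3,3]$ is controlled, and only because $\BP(\ell)$ is an explicit hypothesis about it.

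Two further steps would fail as written. First, your block dichotomy is misstated: $\rot_i B\cap B=\emptyset$ does \emph{not} imply that $B$ meets each $\rot_i$-cycle in at most one point --- $B\cap(\text{cycle})$ is only a block for the cyclic group $\langle\rot_i\rangle$ acting on that cycle, so it can be, e.g., every other point of an even cycle. The ``at most one point with $j$-th coordinate $\pm x$'' statement is exactly the content of Proposition \ref{prop:4.3}, which is proved \emph{after} and \emph{using} Proposition \ref{prop:4.2}; taking it as an immediate consequence of the dichotomy inverts the logical order. Second, the decisive contradictions in your four-case analysis (``forces $|B|>|\cO(\ell)|/2$,'' ``$|B|=O(\ell)$, compare with a $\rot_i$-orbit inside $B$'') are asserted rather than argued, and they do not correspond to the actual structure of the Meiri--Puder proof you claim to be following. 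As it stands, the proposal would need either a faithful rerun of \cite[Prop.~4.2]{mp18} with the single new input $[3,3,3]\in\cO(\ell)$, or a genuinely worked-out counting argument replacing the steps above; it currently supplies neither.
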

	
	\begin{proof}
		In order to follow the proof of \cite[Prop.~4.2]{mp18}, we use Lemma~\ref{lem:inOp} to see that $[3,3,3]$ is in the maximal orbit $\cO(\ell)$.  The rest of the argument is the same.
	\end{proof}

	  Recall that $d_\ell(\pm x)$ denotes the length of the cycles of $\rot_1$ on $C_1(\pm x)$.  
	
	\begin{proposition}[Analogue of {\cite[Prop.~4.3]{mp18}}] \label{prop:4.3}
		Let $\ell\equiv 3 \mod 4$ be a prime such that $\ell\geq N_{1/2}$ and $\BP(\ell)$ holds. Let $x\in \F_\ell\backslash \{0, \pm 2\}$ satisfy $d_\ell(\pm x) \geq 16 \sqrt{\ell+1}$. 
		Then for every $j\in \{1,2,3\}$, every proper $\overline{Q}_\ell$-block $B\subsetneq \cO(\ell)$ contains at most one solution with $j$-th coordinate $\pm x$. 
	\end{proposition}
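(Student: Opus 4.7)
The plan is to adapt the proof of \cite[Proposition 4.3]{mp18} to the setting where blocks live inside the maximal $\overline{Q}_\ell$-orbit $\cO(\ell)$ rather than the full set $Y^*(\ell)$. Suppose, for contradiction, that $B \subsetneq \cO(\ell)$ is a proper block containing two distinct triples $b = [x,y,z]$ and $b' = [x,y',z']$ with first coordinate $\pm x$ (the cases $j=2,3$ reduce to $j=1$ via the coordinate swaps $\tau_{12},\tau_{23}$, whose images in $\overline{Q}_\ell$ simply relabel blocks). By \cite[Lemmas 2.2--2.3, Tables 1--2]{mp18}, the rotation $\rot_1$ preserves each of $C_1(x)$ and $C_1(-x)$ and acts on each through cycles of common length $d_\ell(\pm x) \ge 16\sqrt{\ell+1}$, so $b$ and $b'$ lie in the finite $\rot_1$-invariant set $(C_1(x)\cup C_1(-x))\cap\cO(\ell)$.

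Let $g \in \overline{Q}_\ell$ denote the permutation of $\cO(\ell)$ induced by $\rot_1$. The block condition forces $g^k B \in \{B,\,\cO(\ell)\setminus\!\text{disjoint from }B\}$, i.e.\ either $g^k B = B$ or $g^k B \cap B = \emptyset$, for every integer $k$. Tracing the $\langle g\rangle$-orbits of $b$ and $b'$ then produces a positive integer $k_0 < d_\ell(\pm x)$ with $g^{k_0} B = B$, so that $B \cap (C_1(x)\cup C_1(-x))$ decomposes into $\langle g^{k_0}\rangle$-orbits each of size at least $d_\ell(\pm x)/k_0 \ge 2$. Applying Proposition~\ref{prop:4.2} to $B$ pins down which coordinates of $B$ are homogeneous, and hence which of the conics $C_i(\pm a)$ the block $B$ can meet; combined with the preceding cyclic decomposition this gives a concrete lower bound on $|B|$ in terms of $d_\ell(\pm x)$.

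To upgrade this lower bound into a contradiction, I would then combine the $\rot_1$-structure with one of the other generators of $Q_\ell$---either $\rot_2$, $\rot_3$, or the Vieta involution $R_3$---applied to the many elements of $B$ just produced. Each such application carries the cyclic structure of $B \cap C_1(\pm x)$ across many distinct conics in a different coordinate, yielding a large subset of $B$ spread across many $C_1$-fibers. Using the Markoff-surface conic counts recorded in \cite[Lemmas 2.2--2.3]{mp18} and the hypothesis $d_\ell(\pm x) \ge 16\sqrt{\ell+1}$, one forces $|B|$ to exceed a positive fraction of $|\cO(\ell)|$; since $|B|$ divides $|\cO(\ell)|$ and $B$ is proper, the only possibility is $B = \cO(\ell)$, a contradiction.

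The main obstacle, relative to \cite{mp18}, is that we do not know $|\cO(\ell)|$ exactly: Theorem~\ref{thm:BGS} together with $\ell \ge N_{1/2}$ only tells us $|\cO(\ell)| \ge |Y^*(\ell)| - \ell^{1/2}$. This costs us control of at most $\ell^{1/2}$ points of $Y^*(\ell)\setminus\cO(\ell)$ that could in principle obstruct the Meiri--Puder counting. The saving grace is that $\overline{Q}_\ell$ acts on $\cO(\ell)$ by definition, so none of our rotations ever take us out of $\cO(\ell)$; the ``missing'' points $Y^*(\ell)\setminus\cO(\ell)$ can only affect the final combinatorial inequality through the main-term estimates, and there the quantitative hypothesis $d_\ell(\pm x) \ge 16\sqrt{\ell+1}$, which dominates $\ell^{1/2}$ by a large factor for the $\ell$ in question, provides the slack needed to absorb the error and complete the contradiction.
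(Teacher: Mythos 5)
There is a genuine gap. The pivotal step in your second paragraph --- that two distinct elements $b,b'\in B$ with first coordinate $\pm x$ ``produce a positive integer $k_0<d_\ell(\pm x)$ with $g^{k_0}B=B$'' --- is unjustified. The standard block argument only gives $g^{k}B=B$ when $g^{k}$ maps some element of $B$ back into $B$, i.e.\ when $b'$ lies in the $\langle\rot_1\rangle$-orbit of $b$. But $C_1(x)\cup C_1(-x)$ splits into many $\rot_1$-cycles of length $d_\ell(\pm x)$ (roughly $2(\ell\pm1)/d_\ell(\pm x)$ of them), so $b$ and $b'$ need not be in the same cycle, and no power of $\rot_1$ relating them need exist. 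Even granting that step, the remainder is a plan rather than a proof: the claim that transporting the cyclic structure by $\rot_2$, $\rot_3$ or $R_3$ ``forces $|B|$ to exceed a positive fraction of $|\cO(\ell)|$'' is never established, and a positive fraction together with $|B|\mid|\cO(\ell)|$ does not force $B=\cO(\ell)$ unless that fraction exceeds $1/2$. Finally, your closing remark that $d_\ell(\pm x)\ge 16\sqrt{\ell+1}$ ``dominates $\ell^{1/2}$ by a large factor'' is only a constant-factor statement, so it cannot by itself absorb an error term of size $\ell^{1/2}$ in a counting argument that you have not actually written down.

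The paper's route is different and much shorter: it observes that the Meiri--Puder proof of \cite[Prop.~4.3]{mp18} goes through verbatim once their Proposition~4.2 is replaced by Proposition~\ref{prop:4.2} for blocks inside $\cO(\ell)$ (which is where $\ell\ge N_{1/2}$ and $\BP(\ell)$ enter, via Lemma~\ref{lem:inOp}); since $\cO(\ell)$ is $Q_\ell$-invariant, every $\rot_1$-cycle meeting it stays inside it, so no step of the original argument leaves the maximal orbit. In that argument the threshold $16\sqrt{\ell+1}$ is used, via Weil-type estimates, to show that a long enough $\rot_1$-orbit segment forced into $B$ must contain coordinates of both hyperbolic and elliptic type, contradicting the two-coordinate homogeneity supplied by Proposition~\ref{prop:4.2}; the contradiction is with homogeneity, not with the cardinality of $B$. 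Your proposal invokes Proposition~\ref{prop:4.2} only in passing and never uses its conclusion substantively, so even as a sketch it replaces the mechanism that actually makes the hypothesis $d_\ell(\pm x)\ge 16\sqrt{\ell+1}$ relevant. To repair the write-up, the cleanest fix is to follow the paper: quote the Meiri--Puder proof and check, step by step, that each object it manipulates lies in $\cO(\ell)$ and that the only external input is Proposition~\ref{prop:4.2}.
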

	\begin{proof}
		The proposition follows from the proof of \cite[Prop.~4.3]{mp18} and Proposition~\ref{prop:4.2}.
	\end{proof}
	
	Using Proposition~\ref{prop:4.3}, the analogue of \cite[Cor.~4.4]{mp18} immediately follows. That is, under the conditions in Proposition~\ref{prop:4.3}, if $B\subsetneq \cO(\ell)$ is a proper $\overline{Q}_\ell$-block containing some solution with first coordinate $\pm x$, and another solution with first coordinates $\pm x'$, then $d_{\ell}(\pm x)= d_{\ell}(\pm x')$. With these ingredients, we obtain the analogue of \cite[Thm.~4.1]{mp18} by replacing $Y^*(\ell)$ by $\cO(\ell)$ and remembering that $[3,3,3]\in \cO(\ell)$ (Lemma~\ref{lem:inOp}).

	\begin{thm}[Analogue of {\cite[Thm.~4.1]{mp18}}]
		Let $\ell\equiv 3 \mod 4$ be a prime with $\ell\geq N_{1/2}$ and the property $\BP(\ell)$. 
		Then the $\overline{Q}_\ell$-action on $\cO(\ell)$ is primitive.
	\end{thm}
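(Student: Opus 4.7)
The plan is to adapt Meiri and Puder's proof of \cite[Theorem 4.1]{mp18} by systematically replacing $Y^*(\ell)$ with $\cO(\ell)$ and invoking the analogue propositions already established. Suppose for contradiction that $B \subsetneq \cO(\ell)$ is a proper $\overline{Q}_\ell$-block with $|B| \geq 2$. Since $\overline{Q}_\ell$ acts transitively on $\cO(\ell)$ and $[3,3,3] \in \cO(\ell)$ by Lemma~\ref{lem:inOp}, up to translating $B$ by an element of $\overline{Q}_\ell$ we may assume $[3,3,3] \in B$.

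The first step is to show that $B$ meets each conic $C_j(\pm 3)$ at precisely the point $[3,3,3]$. Under $\BP(\ell)$, the $\rot_1$-cycle through $[3,3,3]$ has length at least $16\sqrt{\ell+1}$, so in particular $d_\ell(\pm 3) \geq 16\sqrt{\ell+1}$, and Proposition~\ref{prop:4.3} gives $B \cap C_1(\pm 3) = \{[3,3,3]\}$; symmetry with $\rot_2, \rot_3$ handles the other coordinates. From this uniqueness we conclude $\rot_j(B) \neq B$ for each $j$, since for instance $\rot_1([3,3,3]) = [3,3,6]$ lies in $C_1(3)$ but differs from $[3,3,3]$. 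Hence $\rot_j(B) \cap B = \emptyset$ for $j = 1,2,3$.

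Next, Proposition~\ref{prop:4.2} guarantees that at least two of the three coordinates are homogeneous in $B$, and the analogue of \cite[Cor.~4.4]{mp18} forces the $\rot_1$-cycle lengths $d_\ell(\pm x)$ to be constant as $[x,y,z]$ ranges over $B$. Combined with the disjointness of $\rot_j$-translates of $B$, one can pair each element $[x,y,z]\in B$ other than $[3,3,3]$ with a $\rot_1$-cycle through $[3,3,3]$ in the same conic system, and push through a counting argument paralleling Meiri--Puder. The size bound $|Y^*(\ell) \setminus \cO(\ell)| \leq \ell^{1/2}$ from Theorem~\ref{thm:BGS} (using $\ell \geq N_{1/2}$) ensures that all the relevant cycles of $\rot_j$ used in the argument lie essentially inside $\cO(\ell)$.

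The main obstacle I expect is the delicate case analysis that must distinguish the possible ``shapes'' a nontrivial block $B$ could have, ruling them out one by one. Some cycles of $\rot_j$ through elements of $B$ might partially fall outside $\cO(\ell)$, and the numerical bounds must be checked carefully: the hypotheses $\ell \geq 23$ (when $\ell \equiv 3 \bmod 4$) and $\BP(\ell)$ exist precisely to guarantee that the cycle length $16\sqrt{\ell+1}$ dominates both the size $\ell^{1/2}$ of $Y^*(\ell)\setminus\cO(\ell)$ and the size of any exceptional conic on which Proposition~\ref{prop:4.3} fails to apply, so the Meiri--Puder contradiction still materializes inside $\cO(\ell)$.
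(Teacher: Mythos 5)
Your overall route is exactly the paper's: put $[3,3,3]$ into the maximal orbit (Lemma~\ref{lem:inOp}), re-establish the analogues of \cite[Prop.~4.2, Prop.~4.3, Cor.~4.4]{mp18} for $\cO(\ell)$, and then run Meiri--Puder's proof of their Theorem~4.1 with $Y^*(\ell)$ replaced by $\cO(\ell)$. Your opening steps are correct: after translating the block so that $[3,3,3]\in B$, the property $\BP(\ell)$ gives $d_\ell(\pm 3)\ge 16\sqrt{\ell+1}$, Proposition~\ref{prop:4.3} gives $B\cap C_j(\pm 3)=\{[3,3,3]\}$ for each $j$, and since $\rot_1([3,3,3])=[3,3,6]$ this forces $\rot_j(B)\cap B=\emptyset$.

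Two caveats. First, as written you never actually reach a contradiction: the decisive step is delegated to ``a counting argument paralleling Meiri--Puder,'' and the intermediate device you insert (pairing each element of $B\setminus\{[3,3,3]\}$ with a $\rot_1$-cycle through $[3,3,3]$ ``in the same conic system'') is neither precise nor part of the argument in \cite{mp18}; if you mean to give a new counting argument you must supply it. The justification the paper relies on is simply that Meiri--Puder's proof of their Theorem~4.1 uses nothing about $Y^*(\ell)$ beyond the statements of their Propositions~4.2, 4.3, Corollary~4.4 and the long $\rot_1$-cycle through $[3,3,3]$, so once these inputs are re-proved for $\cO(\ell)$ (which is what \S\ref{sec:primitivity} does) the proof transfers verbatim. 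Second, the obstacle you anticipate --- that ``some cycles of $\rot_j$ through elements of $B$ might partially fall outside $\cO(\ell)$'' --- cannot occur: $\cO(\ell)$ is a $Q_\ell$-orbit, hence stable under every element of $Q_\ell$, so any $\rot_j$-cycle meeting $\cO(\ell)$ lies entirely inside it. The hypothesis $\ell\ge N_{1/2}$ (i.e.\ $|Y^*(\ell)\setminus\cO(\ell)|\le\ell^{1/2}$) is used only once, in Lemma~\ref{lem:inOp}, to place the long cycle through $[3,3,3]$ inside the maximal orbit; and $\ell\ge 23$ is a hypothesis of Theorem~\ref{thm:max-orbit}, not of the primitivity statement you are proving. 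Recognizing these points is precisely what makes the adaptation immediate, and is what the paper's one-line proof records.
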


\subsection{Analyzing the Permutation Group for \texorpdfstring{$\ell \equiv 3 \mod 4$}{l = 3 mod 4}}\label{sec:permutation}
In this section, we prove that $\overline{Q}_\ell$ contains the alternating group of permutations of the set $\cO(\ell)$. 
This is the analog of \cite[Proposition 4.15]{mp18}, but it is significantly harder to adapt the proof as we do not know the exact size of $\cO(\ell)$.  It relies on the following classification of primitive permutation groups containing an element which fixes at least half of the elements of the set it is acting on; Guralnick and Magaard obtain this result as a consequence of the classification of finite simple groups  \cite{gm98}.  We record the convenient formulation of \cite[Theorem~4.13]{mp18}.

\begin{thm} \label{thm:choices}
Let $G \subset S_n$ be a primitive permutation group, and let $x \in G$ have at least $n/2$ fixed points.  Then one of the following holds:
\begin{enumerate}
    \item \label{case1}$G = \operatorname{Aff}^2(k)$ is the affine group acting on $\F_2^k$ and  $x$ is a transvection (and so in particular is an involution with $n/2$ fixed points).

\item \label{ourcase} There are integers $r \geq 1$, $m\geq 5$, and $1 \leq k \leq m/4$ such that $n = \binom{m}{k}^r$, the group $S_m$ acts on the set $\Delta$ of $k$-element subsets of $\{1,\ldots, m\}$ in the natural way, $G \subset S_m \wr S_r$ acts on $\Delta^r$, and the socle $\Soc(G)$ of $G$ is $A_m^r$.

\item  \label{case3} For some integer $r \geq 1$, $n = 6^r$, the group $S_6$ acts on $\Delta = \{1,\ldots,6\}$ by applying an outer automorphism, $G \subset S_6 \wr S_r$ acts on $\Delta^r$, and $\Soc(G) = A_6^r$.

\item  \label{case4} The group $G$ is some variant of an orthogonal group over the field of two elements acting on some collection of $1$-spaces of hyperplanes, and the element $x$ is an involution.
\end{enumerate}
\end{thm}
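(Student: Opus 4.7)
The plan is to apply the O'Nan--Scott theorem to reduce $G$ to a few structural classes of primitive permutation groups, and then to use bounds on the \emph{minimum degree} $\mu(G) := \min_{1 \neq g \in G} |\operatorname{supp}(g)|$ that follow from the classification of finite simple groups (CFSG). The hypothesis that some $x \in G$ has at least $n/2$ fixed points translates to $\mu(G) \le n/2$, which for $n$ large is an extremely restrictive condition: most primitive groups have $\mu(G) = n(1 - o(1))$, so only a very short list of exceptions can arise.

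By the O'Nan--Scott theorem, a primitive $G \subset S_n$ is of one of the following types: affine, almost simple, simple diagonal, product action, or twisted wreath. I would first handle the affine case: here $G \le V \rtimes \operatorname{GL}(V)$ with $n = |V| = p^k$, and the fixed point set of any $g \in G$ is an affine subspace of $V$. The bound $|\mathrm{Fix}(g)| \ge n/2$ forces the fixed subspace to be a hyperplane, hence $p = 2$; a short further analysis (a non-transvection element of $\operatorname{GL}(V)$ would fix a codimension $\ge 2$ subspace) shows $g$ must be a transvection, yielding case~(1).

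Next, I would handle the almost simple case, which is the heart of the proof. The relevant input is the CFSG-based classification of primitive almost simple groups with $\mu(G) \le n/2$, due to Liebeck--Saxl and refined by Guralnick--Magaard. This singles out three families: the natural actions of $A_m$ or $S_m$ on the set of $k$-subsets of $\{1,\ldots,m\}$ for $k \le m/4$ (since a transposition fixes a fraction $\binom{m-2}{k}/\binom{m}{k} = (m-k)(m-k-1)/m(m-1) \ge 1/2$ of such subsets when $k \le m/4$); the exceptional primitive degree-$6$ action of $S_6$ arising from its outer automorphism; and certain orthogonal groups over $\F_2$ acting on singular or non-singular $1$-spaces or hyperplanes (whose natural transvection-like involutions fix half the points). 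The product action type then lifts these examples to subgroups of $H \wr S_r$ acting on $\Delta^r$: an element acting nontrivially in just one coordinate has a large fraction of fixed points on $\Delta^r$ iff its projection to $H$ has a large fraction of fixed points on $\Delta$. This accounts for cases~(2), (3), and~(4). The simple diagonal and twisted wreath types are ruled out directly because their minimum degrees are $n(1 - o(1))$.

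The main obstacle is establishing the tight CFSG-based bounds on $\mu(G)$ for almost simple primitive groups, which requires a case-by-case representation-theoretic analysis of alternating, classical, exceptional, and sporadic groups and all their primitive permutation representations. This is precisely the content of \cite{gm98}, and our role would be simply to organize the output of that analysis via O'Nan--Scott into the four families listed. The amount of CFSG machinery required makes a self-contained proof unrealistic within the scope of this paper, which is why it is cited rather than reproved.
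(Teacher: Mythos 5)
Your proposal is correct and takes essentially the same approach as the paper: the paper offers no independent proof of this statement, simply recording the Guralnick--Magaard classification \cite{gm98} in the convenient formulation of \cite[Theorem~4.13]{mp18}, which is exactly the CFSG-based input your sketch ultimately defers to. The extra O'Nan--Scott scaffolding, the affine-case argument, and the product-action lifting you describe are consistent with how that cited classification is organized, so there is no substantive difference in route.
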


Our ultimate goal is to show that $\overline{Q}_\ell$ occurs as Case \eqref{ourcase} with $r=1$ and $k=1$; this shows that $\overline{Q}_\ell$ is the full alternating or symmetric group on $\cO(\ell)$.  To do so requires some non-trivial group theory involving wreath products; we briefly recall some background now, and then proceed with the proof of  Theorem~\ref{thm:max-orbit}, making use of several technical results whose statements and proofs are deferred to \S\ref{sec:wreath}.

Let $G$ be a group acting on $\Delta = \{1 , \ldots, m\}$, and $r$ a positive integer.  Let $\Omega = \{1 ,\ldots ,r\}$, which has a natural action of $S_r$.  Recall that the \emph{wreath product} $G \wr S_r$ is defined to be the semi-direct product $G^r \rtimes S_r$, where $S_r$ acts on $G^r = \prod_{i \in \Omega} G$ by permuting the coordinates.  In particular, elements $\pi \in G \wr S_r$ are represented by pairs $(\sigma, \tau)$ where $\sigma = (\sigma_{1}, \ldots, \sigma_{r}) \in G^r$ and $\tau \in S_r$.  There is a natural action of $G \wr S_r$ on $\Delta^r$, given by
\begin{equation}\label{eq:wreathaction}
\pi  (x_1,\ldots,x_r) = (\sigma_{1}(x_{\tau^{-1}(1)}) , \ldots ,\sigma_{r}(x_{\tau^{-1}(r)})).
\end{equation}
This gives a natural embedding $\iota: G \wr S_r \hookrightarrow S_n$ where $n = m^r$. 

\begin{proof}[Proof of Theorem~\ref{thm:max-orbit} for $\ell\equiv 3 \mod 4$]
 We will follow the outline of \cite[Prop.~4.15]{mp18}.  At the beginning, it shows that the permutation $\pi=\rot_1^{(\ell+1)/2}$ fixes exactly $\frac{(\ell+1)(\ell-3)}{8}$ elements of $Y^*(\ell)$. Therefore, since $\ell\geq N_{1/2}$, we have $\pi$ fixes at least $\frac{(\ell+1)(\ell-3)}{8}-\ell^{1/2} >\frac{\ell(\ell-3)}{8}-\frac{\ell^{1/2}}{2} \geq \frac{|\cO(\ell)|}{2}$ elements of $\cO(\ell)$. Thus the $\overline{Q}_\ell$-action on $\cO(\ell)$ together with the permutation $\pi$ satisfies the assumptions in Theorem~\ref{thm:choices}, and we need to rule out all options except for \eqref{ourcase} with $k= r= 1$, so that $\overline{Q}_\ell=\Alt(\cO(\ell))$ or $\Sym(\cO(\ell))$.
			
 Let $q$ be a prime factor of $\frac{\ell-1}{2}$, and let $s$ be a prime factor of $\frac{\ell+1}{2}$.    By \cite[Table~2]{mp18}, $\rot_1$ contains $\frac{(\ell-1)(q-1)}{4q}$ cycles of size $q$ and $\frac{(\ell+1)(s-1)}{4s}$ cycles of size $s$ in $Y^*(\ell)$. So in $\cO(\ell)$, $\rot_1$ contains at least $\lceil\frac{(\ell-1)(q-1)}{4q} -\frac{\ell^{1/2}}{q}\rceil\geq 1$ cycles of size $q$ and at least $\lceil\frac{(\ell+1)(s-1)}{4s}-\frac{\ell^{1/2}}{s}\rceil \geq 1$ cycles of size $s$.
 Because $\frac{\ell-1}{2}$ is odd and $\pi$ has a cycle of size $q$, $\pi$ is not an involution so the cases \eqref{case1} and \eqref{case4} cannot occur.
 
Again using \cite[Table~2]{mp18}, notice that $\rot_1$ does not contain any cycle of size divisible by $q s$; every cycle length divides either $\frac{\ell-1}{2}$ or $\frac{\ell+1}{2}$ which are relatively prime. There would be cycles of length divisible by $qs$ if we were in case \eqref{ourcase} with $k \geq 2$; 
\cite[Lem.~4.14]{mp18} shows that the hypotheses of Corollary~\ref{cor:embedding} are satisfied (note the latter is applied with $\Delta$ the set of $k$ element subsets of $\{1,\ldots,m\}$, so the $m$ in Corollary~\ref{cor:embedding} is $\binom{m}{k}$ in the notation of Theorem~\ref{thm:choices}). Thus we can rule out case \eqref{ourcase} with $k\geq 2$.
 
Similarly, we will rule out case \eqref{case3}.  	
 Note that $\frac{\ell-1}{2}$ is odd as $\ell\equiv 3 \mod{4}$.  If $\frac{\ell+1}{2}$ is a power of $2$, then by the assumption $\ell\geq 23$ we have $\frac{\ell+1}{2} \equiv 0 \mod{8}$, and hence $\frac{\ell-1}{2} \equiv 7 \mod{8}$ cannot be a power of $3$.  Thus we may assume that $\{q,s\} \neq \{2,3\}$.  Then $S_6$ acting on $\{1, \ldots, 6\}$ satisfies the condition for $G$ in Corollary~\ref{cor:embedding} (no element of $S_6$ can contain a $q$-cycle and an $s$-cycle when $q + s > 6$) which implies there should be cycles of length divisible by $qs$.  But there aren't, so case \eqref{case3} can be ruled out. 
			
Finally, we need to rule out case \eqref{ourcase} with $r\geq 2$ and $k=1$.  So $\overline{Q}_\ell$ is a subgroup of $S_m \wr S_r$, and inherits the action of $S_m \wr S_r$ on $\{1, \ldots, m \}^r$.  We may identify $\cO(\ell)$ with a subset of $r$-tuples of elements of $\{1, \ldots, m\}$, and represent $\rot_1$ by a tuple $(\sigma,\tau)$ where $\sigma$ is a permutation of $\{1,\ldots,m\}$ and $\tau$ is a permutation of $\{1,\ldots,r\}$.   We have seen that for any primes $q$ and $s$ with $q \mid\frac{\ell-1}{2}$ and $s \mid A:=\frac{\ell+1}{2}$ there are $\rot_1$-cycles of size $q$ and of size $s$, but none of size divisible by $qs$.  Notice that the $\rot_1$-cycle of size $A $ lies in $\cO(\ell)$ as $|Y^*(\ell)\backslash \cO(\ell)|\leq \ell^{1/2}$.

Fix a prime $q$ dividing $\frac{\ell-1}{2}$ together with an element $(a_1,\ldots,a_r) \in \{1, \ldots, m \}^r = \cO(\ell)$.
Likewise let $(b_1,\ldots,b_r) \in \{1, \ldots, m \}^r$ be an element in the cycle of size exactly $A$.  Then by Corollary~\ref{cor:eliminatepower}, 
there exists $i \in \{1,\ldots, r\}$ such that $\tau(i)=i$ and such that the $j$th component of $\rot_1 (b_1,\ldots,b_r)$ is $b_j$ for any $j \neq i$.

In light of \eqref{eq:wreathaction}, the elements in the $\rot_1$-cycle containing $(b_1, \ldots, b_r)$ only differ in the $i$-th component.  Since the cycle has length $\frac{\ell+1}{2}$, we must have  $m\geq\frac{\ell+1}{2}$.  But $n = m^r$, and hence $n \geq \frac{(\ell+1)^2}{4}$.  This contradicts the fact that $n=|\cO(\ell)|\leq |Y^*(\ell)|=\frac{\ell(\ell-3)}{4}$. 

Thus the only possibility is case \eqref{ourcase} with $r=1$ and $k=1$, which shows that $\overline{Q}_\ell$ is the full alternating or symmetric group on $\cO(\ell)$.
\end{proof}

\subsection{Arguments with Wreath Products} \label{sec:wreath}
We continue the notation for wreath products introduced before the proof of Theorem~\ref{thm:max-orbit}, with $G \subset S_m$ and with $G \wr S_r$ acting on $\Delta^r = \{1,\ldots, m\}^r$.   The action gives a natural inclusion $\iota: G \wr S_r \into S_n$, where $n = m^r$.  We will need to work with powers in the wreath product: notice
we have that $\pi^i = (\sigma_i, \tau^i)$ for some $\sigma_i = (\sigma_{i,1} , \ldots, \sigma_{i,r}) \in G^r$.  Furthermore, we have
\begin{equation} \label{eq:wreathpower}
\pi^i (x_1,\ldots,x_r) = (\sigma_{i,1}(x_{\tau^{-i}(1)}) , \ldots ,\sigma_{i,r}(x_{\tau^{-i}(r)})).
\end{equation}

Our main goal is to establish the following:

\begin{prop} \label{prop:wreaths}
	Let $\pi \in G\wr S_r$. Assume $(a_1, \ldots, a_r), (b_1, \ldots, b_r) \in \Delta^r$ belong to $\iota(\pi)$-cycles of size divisible by distinct primes $q$ and $s$ respectively.
		Then at least one of the following holds:
		\begin{enumerate}
			\item \label{item:eb-1} $\iota(\pi)$ has a cycle of size divisible by $qs$;
			\item \label{item:eb-2} There exist $i\in\{1, \ldots, r\}$ and an integer $t$ relatively prime to $qs$ such that $\tau^t(i)=i$ and for any $j\neq i$ we have that the $j$-th component of $\pi^t (a_1, \ldots, a_r)$ is $a_j$, and that the $j$-th component of $\pi^t (b_1, \ldots, b_r)$ is $b_j$.
		\end{enumerate}
\end{prop}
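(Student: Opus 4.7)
The plan is to prove the contrapositive constructively: assuming conclusion (1) fails, so that no $\iota(\pi)$-cycle has length divisible by $qs$, we construct $(i,t)$ satisfying (2). Write $M, N$ for the $\iota(\pi)$-cycle lengths of $a, b$; the hypotheses force $q \mid M$, $s \nmid M$, $s \mid N$, $q \nmid N$.

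First I would decompose $\tau$ into disjoint cycles $C_1, \ldots, C_m$ with $c_\alpha := |C_\alpha|$, so that $\iota(\pi)$ acts on $\Delta^r \cong \prod_\alpha \Delta^{C_\alpha}$ as a product of the restricted actions $\iota(\pi_\alpha)$. Every $\iota(\pi_\alpha)$-orbit extends to an $\iota(\pi)$-orbit by choosing arbitrary values on the complementary coordinates, so failure of (1) forces every $\iota(\pi_\alpha)$-orbit to have length not divisible by $qs$. Since $c_\alpha$ divides every such orbit length, each $c_\alpha$, and hence $T_\tau := \operatorname{lcm}_\alpha c_\alpha = \operatorname{ord}(\tau)$, is coprime to $qs$. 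I would then reduce to a diagonal problem by setting $\theta_j := \sigma_{T_\tau, j}$, so that for $t = T_\tau m$ one has $\tau^t = \mathrm{id}$ and $\iota(\pi)^t(x)_j = \theta_j^m(x_j)$; let $d_j(x) := \operatorname{ord}(\theta_j \text{ acting on } x_j)$. Comparing the two expressions $L/\gcd(L, T_\tau)$ and $\operatorname{lcm}_j d_j(x)$ for the orbit length of $x$ under $\iota(\pi)^{T_\tau}$ (with $L$ the $\iota(\pi)$-orbit length of $x$), together with $\gcd(T_\tau, qs) = 1$, yields the key identity
\[
v_p(L) \;=\; v_p\bigl(\operatorname{lcm}_j d_j(x)\bigr) \qquad \text{for } p \in \{q,s\}.
\]
In particular $J_q := \{j : q \mid d_j(a)\}$ and $J_s := \{j : s \mid d_j(b)\}$ are nonempty, while $q \nmid d_j(b)$ and $s \nmid d_j(a)$ for every $j$.

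The heart of the argument is the localization $J_q = J_s = \{i\}$, which I would establish by contradiction. If there existed $j_1 \in J_q$ and $j_2 \in J_s$ with $j_1 \neq j_2$, define $c \in \Delta^r$ by $c_{j_1} := a_{j_1}$, $c_{j_2} := b_{j_2}$, and arbitrary values elsewhere. Then $d_{j_1}(c) = d_{j_1}(a)$ is divisible by $q$ and $d_{j_2}(c) = d_{j_2}(b)$ by $s$, so $qs \mid \operatorname{lcm}_j d_j(c)$; by the key identity the $\iota(\pi)$-orbit of $c$ has length divisible by $qs$, contradicting failure of (1). Hence every pair in $J_q \times J_s$ is diagonal, and since both sets are nonempty this forces $J_q = J_s = \{i\}$ for a unique $i$. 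With $i$ in hand, I would set $m := \operatorname{lcm}_{j \neq i} \operatorname{lcm}(d_j(a), d_j(b))$ and $t := T_\tau \cdot m$: for $j \neq i$ we have $j \notin J_q \cup J_s$, whence $\gcd(m, qs) = 1$, and combined with $\gcd(T_\tau, qs) = 1$ this gives $\gcd(t, qs) = 1$. By construction $\tau^t = \mathrm{id}$ (so $\tau^t(i) = i$), while $\iota(\pi)^t(x)_j = \theta_j^m(x_j) = x_j$ for all $j \neq i$ and $x \in \{a,b\}$, giving the required $(i, t)$.

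The localization step is both the main obstacle and the key insight: the diagonal form of $\iota(\pi)^{T_\tau}$ allows one to freely mix the coordinates of $a$ and $b$ to build a point whose $\theta_j$-orders at distinct coordinates supply a $q$- and an $s$-contribution separately, thereby producing an $\iota(\pi)$-orbit of length divisible by $qs$ whenever $J_q$ and $J_s$ are not already collapsed to a common singleton. Everything else is routine, with the final construction of $t$ reducing to a direct lcm computation.
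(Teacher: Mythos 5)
Your reduction to the diagonal action of $\pi^{T_\tau}$, the identity $v_p(L)=v_p(\operatorname{lcm}_j d_j(x))$ for $p\in\{q,s\}$, the localization $J_q=J_s=\{i\}$ by mixing coordinates of $a$ and $b$, and the final construction of $t$ are all correct and would give a clean proof --- \emph{provided} $\operatorname{ord}(\tau)$ is coprime to $qs$. That step is where the proposal breaks down. The justification ``since $c_\alpha$ divides every such orbit length'' is false: orbit lengths of $\iota(\pi_\alpha)$ on $\Delta^{C_\alpha}$ need not be divisible by $c_\alpha$ (take all $\sigma_j=\mathrm{id}$ and $\tau$ a $c_\alpha$-cycle; every constant tuple is a fixed point, orbit length $1$). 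Even granting that premise, ``orbit length not divisible by $qs$'' would only give that $c_\alpha$ is not divisible by \emph{both} primes, not $\gcd(c_\alpha,qs)=1$; e.g.\ $c_\alpha=q$ is not excluded by anything you wrote. Worse, the coprimality you need cannot follow from failure of (1) alone, which is all your sentence invokes: with trivial $\sigma_j$ and $\tau$ a single $q$-cycle, no orbit has length divisible by $qs$, yet $\operatorname{ord}(\tau)=q$. So any correct argument must use the existence of $b$ (an $s$-divisible orbit) at this point.

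The claim you need is in fact true under the full hypotheses, but proving it is essentially the hard content of the proposition: one must show that a $\tau$-cycle whose length is divisible by $q$ (say) and whose block carries an orbit of length divisible by $s$ forces an orbit of length divisible by $qs$. This can be done, e.g., by a fixed-point count: for such a block of cycle length $c$, the points in orbits of length prime to $q$ number at most $m^{c/q}$, those in orbits of length prime to $s$ number at most $\max(m^{c/2},(m-1)^c)$, and for $m\ge 2$ these cannot cover $m^{c}$ once the degenerate cases are checked --- but this is a genuine argument, not a one-line consequence. Note that the paper's proof never establishes $\gcd(\operatorname{ord}(\tau),qs)=1$; after its reduction it allows the relevant $\tau$-cycle to have length $q^{n_1}s^{n_2}$ and handles exactly this situation in its Cases 2 and 3 by constructing mixed tuples such as $(a_1,b_2,\ldots,b_r)$. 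In short: your route is genuinely different and attractive after the coprimality claim, but as written it skips over precisely the difficulty that the paper's case analysis is there to resolve, so the proof has a real gap until you supply an argument of that kind.
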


In the proof, for convenience we will omit $\iota$ and also use $\pi$ to denote the image of $\pi$ in $S_n$.  

\begin{defn}
Given a subset $S \subset \{1,\ldots, r\}$ and $(x_1,\ldots, x_r), (y_1,\ldots, y_r) \in \Delta^r$ we say that
\[
 (x_1,\ldots, x_r ) = (y_1,\ldots, y_r) \text{ at } S
\]
provided that for all $j \in S$ we have $x_j = y_j$.
Given a cycle $C$ of an element of $S_r$, we say that two elements of $\Delta^r$ are equal at $C$ if the elements are equal at elements appearing in the cycle. 

We say that two elements of $\Delta^r$ are equal away from $S$ if they are equal on $\{1,\ldots,r\} - S$, and likewise for cycles.  

For $\pi \in G \wr S_r$, we say that $\pi$ fixes $(x_1,\ldots,x_r)$ at $S$ if $\pi (x_1,\ldots , x_r)$ equals $(x_1,\ldots, x_r)$ at $S$, and similarly for cycles and fixing away from.
\end{defn}

For $\pi^i= (\sigma_i,\tau^i) \in G \wr S_r$ and $(x_1,\ldots, x_r) \in \Delta^r$, notice that
\[
\pi^i (x_1,\ldots, x_r) = (x_1, \ldots, x_r)  \text{ at } S
\]
provided for all $j \in S$ 
\begin{equation} \label{eq:action}
\sigma_{i,j}( x_{\tau^{-i}(j)} ) = x_j.
\end{equation}

We begin with a reduction.

\begin{lemma} \label{lem:reduction1}
It suffices to prove Proposition~\ref{prop:wreaths} when $(a_1, \ldots, a_r)$ and $(b_1, \ldots, b_r)$ belong to cycles with length a power of $q$ and $s$ respectively, and when, writing $\pi = (\sigma,\tau)$, there exists a cycle $C$ of $\tau$ whose length is $d = q^{n_1} s^{n_2}$ and such that $\pi$ fixes $(a_1,\ldots, a_r)$ and $(b_1,\ldots, b_r)$ away from $C$.
\end{lemma}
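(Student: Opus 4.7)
The plan is a two-stage reduction: first, a power reduction to make the $\pi$-cycle lengths of $a$ and $b$ pure prime powers, and then a support reduction to localize the motion of $a$ and $b$ to a single $\tau$-cycle.

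First I would assume that conclusion \eqref{item:eb-1} of Proposition~\ref{prop:wreaths} fails, as otherwise there is nothing to reduce. Then the $\pi$-cycle containing $a$ has length $N_a = q^{m_1}M_a$ with $\gcd(M_a, qs)=1$, and similarly $N_b = s^{m_2}M_b$. I would set $t_0 = M_a M_b$ (coprime to $qs$) and replace $\pi$ by $\pi_1 := \pi^{t_0}$, so that $a$ lies in a $\pi_1$-cycle of length $q^{m_1}$ and $b$ in one of length $s^{m_2}$. The key observation is that both conclusions of the proposition pass through this substitution: conclusion \eqref{item:eb-1} for $\pi_1$ implies \eqref{item:eb-1} for $\pi$ since cycle lengths of $\pi^{t_0}$ divide those of $\pi$ (up to factors coprime to $qs$), and conclusion \eqref{item:eb-2} for $\pi_1$ with parameters $(i, t_1)$ lifts to \eqref{item:eb-2} for $\pi$ with $(i, t_0 t_1)$, which remains coprime to $qs$.

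Next I would localize the motion of $a$ and $b$. Restricting attention to the union $S$ of $\tau_1$-cycles on which $a$ or $b$ moves---and noting that either conclusion of the proposition for $(\pi_1|_S, a|_S, b|_S)$ lifts to the corresponding conclusion for $(\pi_1, a, b)$ because $a$ and $b$ are $\pi_1$-fixed off $S$---I may assume that every $\tau_1$-cycle in $\{1,\ldots,r\}$ has $a$ or $b$ moving on it. The prime-power cycle lengths then force $\tau_1$-cycles on which $a$ moves to have length a power of $q$, $\tau_1$-cycles on which $b$ moves to have length a power of $s$, and any cycle on which both move to be a $\tau_1$-fixed point; in particular, the length $d = q^{n_1}s^{n_2}$ in the special case is necessarily $1$, so $C$ is a $\tau_1$-fixed point. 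To arrive there, I would induct on the number of $\tau_1$-cycles contributing to the motion of $a$ or $b$, peeling off one cycle at a time by replacing $a$ (respectively $b$) with a tuple that agrees with the original on one chosen cycle and takes $\pi_1$-fixed values on another cycle where it was moving.

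The main obstacle will be this final inductive step: constructing the modification consistently and establishing the translate-back---that conclusion \eqref{item:eb-2} for the modified tuples implies conclusion \eqref{item:eb-2} for the original $(\pi_1, a, b)$. This requires showing that the composite $\sigma$-action around each affected $\tau_1$-cycle has a fixed point in $\Delta$ (so that $\pi_1$-fixed values exist) and that the modified tuple still lies in a $\pi_1$-cycle of length divisible by $q$ (respectively $s$). The first stage is routine divisibility bookkeeping, but the second stage requires delicate tracking of indices, orbits, and the resulting implications for the original tuples.
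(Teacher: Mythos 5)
Your first stage (replacing $\pi$ by $\pi^{t_0}$ with $t_0$ coprime to $qs$ so that the two orbits have pure prime-power lengths, and checking that both conclusions transfer back) is correct and is exactly how the paper begins. The gap is in the second stage. Your structural claim that the prime-power orbit lengths force every $\tau_1$-cycle on which $a$ moves to have length a power of $q$ (and hence that $d=1$, so $C$ is a fixed point of $\tau_1$) is false: the length of the orbit of $a$ restricted to a $\tau_1$-cycle need not be divisible by the length of that cycle. For instance, take $\Delta=\{1,2\}$, $\tau$ a $3$-cycle, and every $\sigma_j$ the transposition of $\Delta$; the constant tuple then has $\pi$-orbit of length $2$ while moving on a $\tau$-cycle of length $3$. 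This is also why the lemma you are proving genuinely needs $d=q^{n_1}s^{n_2}$ with $n_1,n_2$ possibly positive — the cases $d>1$ are handled (nontrivially) in the proof of Proposition~\ref{prop:wreaths} itself, and a reduction to $d=1$ would make most of that proof unnecessary. Your inductive "peeling" step has an independent problem you partly acknowledge: it requires choosing $\pi_1$-fixed values on a $\tau_1$-cycle where the tuple was moving, and such fixed values need not exist (already for a $\tau_1$-fixed position $j$ with $\sigma_j$ fixed-point-free on $\Delta$), and the transfer of conclusion \eqref{item:eb-2} back to the original tuples is not established.

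The idea your proposal is missing is the splicing (hybrid) argument, which is the heart of the paper's proof and removes any need for induction or for fixed values. After the power reduction, suppose there exist disjoint $\tau'$-cycles $C_1$ and $C_2$ such that $\varpi:=\pi^{AB}$ fails to fix $a$ at $C_1$ and fails to fix $b$ at $C_2$. Form the tuple $x$ that agrees with $a$ at $C_1$ and with $b$ elsewhere. Since the evolution of the $j$-th coordinate under powers of $\varpi$ depends only on coordinates in the $\tau'$-cycle of $j$, and since the $\varpi$-orbits of $a$ and $b$ have lengths $q^*$ and $s^*$, any power of $\varpi$ fixing $x$ must be divisible by both $q$ and $s$; so $x$ lies in a cycle of size divisible by $qs$ and conclusion \eqref{item:eb-1} holds outright. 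Consequently one may assume there is a single $\tau'$-cycle $C$ away from which $\varpi$ fixes both $a$ and $b$. Finally, to get $|C|$ down to $q^{n_1}s^{n_2}$, pass to $\varpi^{e}$ where $e$ is the largest divisor of $|C|$ coprime to $qs$: either the splicing argument applies to two of the resulting shorter cycles (again giving \eqref{item:eb-1}), or one lands exactly in the special case of the lemma, with all exponents used ($A$, $B$, $e$) coprime to $qs$ so that conclusion \eqref{item:eb-2} transfers back. Your proposal has the power reduction but neither the splicing step nor the $e$-th power refinement, and the route you substitute for them does not go through.
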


\begin{proof}
We adopt the hypotheses of Proposition~\ref{prop:wreaths}.
Assume that $(a_1, \ldots, a_r)$ (resp. $(b_1, \ldots, b_r)$) belongs to a $\pi$-cycle of size $Aq^{*}$ (resp. $Bs^{*}$), where $A$ is prime to $q$ (resp. $B$ is prime to $s$) and $q^*$ is a power of $q$ (resp. $s^*$ is a power of $s$). If $q \mid B$ or $s \mid A$, then the statement \eqref{item:eb-1} in Proposition~\ref{prop:wreaths} holds.  Otherwise, $\gcd(q, B)=\gcd(s, A)=1$ and we set $\varpi:=\pi^{AB}$, so $(a_1, \ldots, a_r)$ and $(b_1, \ldots, b_r)$ belong to $\varpi$-cycles of size exactly $q^{*}$ and $s^{*}$ respectively. 

Write $\varpi = (\sigma',\tau')$.
Suppose that there are disjoint cycles $C_1$ and $C_2$ for $\tau'$ such that $\varpi$ does not fix $(a_1,\ldots ,a_r)$ at $C_1$ and $\varpi$ does not fix $(b_1,\ldots,b_r)$ at $C_2$.  Then consider the element $(x_1,\ldots, x_r) \in \Delta^r$ defined by $x_i = a_i $ if $i$ appears in $C_1$, and $x_i = b_i$ otherwise.  
Looking at \eqref{eq:wreathpower}, it is clear that the $j$-th entry of $\varpi^i (x_1,\ldots, x_r)$ will depend only on $x_k$ such that $k$ is in the same $\tau'$-orbit as $j$.  In particular, as $(x_1,\ldots ,x_r)$ equals $(a_1,\ldots, a_r)$ at $C_1$, as $\varpi$ does not fix $(a_1,\ldots,a_r)$ at $C_1$, and as the $\varpi$-orbit of $(a_1,\ldots,a_r)$ has size $q^*$, we deduce that
\[
\varpi^i (x_1,\ldots,x_r) = (x_1,\ldots,x_r)
\]
implies that $q \mid i$.  Likewise, working with $(b_1,\ldots,b_r)$ and $C_2$ we deduce that $s\mid i$.  Thus the $\varpi$-orbit of $(x_1,\ldots, x_r)$ is has size a multiple of $q s$.  This establishes statement Proposition~\ref{prop:wreaths}\eqref{item:eb-1} in this case.

It remains to consider the case that there is a cycle $C$ for $\tau'$ such that $\varpi$ fixes both $(a_1,\ldots,a_r)$ and $(b_1,\ldots, b_r)$ away from $C$.  Let $e$ be the maximal divisor of $|C|$ that is not divisible by $q$ or $s$.  Then $C$ breaks into $e$ disjoint $\tau'^e$-cycles of size $|C|/e$, and we will study the action of $\varpi^e$.  If there are two cycles $C_1$ and $C_2$ for $\tau'^e$ on which $\varpi^e$ doesn't fix $(a_1,\ldots,a_r)$ and $(b_1,\ldots,b_r)$ respectively, then we can repeat the argument of the previous paragraph, establishing statement Proposition~\ref{prop:wreaths}\eqref{item:eb-1}. 

Otherwise, there is a unique cycle $C$ for $\tau'^e$ away from which $\varpi^e$ fixes $(a_1,\ldots,a_r)$ and $(b_1,\ldots,b_r)$.  The length of this cycle is $d:=|C|/e =q^{n_1} s^{n_2}$.  The sizes of the orbits of $(a_1,\ldots,a_r)$ and $(b_1,\ldots, b_r)$ under $\varpi^e$ are a power of $q$ and a power of $s$ respectively, because $e$ is relatively prime to $q$ and $s$.  A cycle for $\varpi^e$ of size divisible by $qs$ gives the same for $\pi$, and statement Proposition~\ref{prop:wreaths}\eqref{item:eb-2} for $\varpi^e = \pi^{ABe}$ gives the same for $\pi$ as $A$, $B$, and $e$ are coprime to $q$ and $s$.  This completes the reduction.
\end{proof}

\begin{proof}[Proof of Proposition~\ref{prop:wreaths}]
We may assume we are in the special case described in Lemma~\ref{lem:reduction1}.  Let $\pi = (\sigma,\tau)$.  Without loss of generality, the cycle $C$ consists of the integers $\{1,2,\ldots, d \} \subset \Delta$ and that $\tau$ acts on $C$ via
\[
\tau (j) = (j \Mod{d})+1 \text{ for } 1 \leq j \leq d = q^{n_1} s^{n_2}.
\]
Here $(j \Mod{d}) \in \{0,1,\ldots, d-1\}$ indicates the remainder when $j$ is divided by $d$.
Furthermore, since $\pi$ fixes $(a_1,\ldots,a_r)$ and $(b_1,\ldots,b_r)$ away from $C$, but the tuples lie in cycles of different lengths, they must differ at some element of  $C$.  So assume $a_1 \neq b_1$.

\begin{description}
			\item[Case 1] Suppose $d=1$. As $\pi$ fixes $(a_1,\ldots,a_r)$ away from $C = (1)$, by \eqref{eq:action} we see that
\[
\sigma_{1,j}( a_{\tau^{-1}(j)} ) = a_j \text{ for } j >1.
\]			
Because $\tau(1) =1$ and $\tau$ permutes $\{2,3,\ldots, r\}$, we see that
\[
\pi (a_1,a_2,\ldots, a_r) = (\sigma_{1,1}(a_1), \sigma_{1,2}( a_{\tau^{-1}(2)}) , \ldots, \sigma_{1,r}(a_{\tau^{-1}(r)})) = (\sigma_{1,1}(a_1), a_2, \ldots , a_r).
\]
We can make a similar calculation with $(b_1,b_2,\ldots, b_r)$, so statement Proposition~\ref{prop:wreaths}\eqref{item:eb-2} with $i=1$ and $t=1$ holds in this case.
		
			\item[Case 2] 
	Suppose $d$ is divisible by only one of $q$ and $s$. Without loss of generality, we assume $d$ is a power of $q$. We let $q^*$ and $s^*$ denote the size of the $\pi$-cycles that $(a_1, \ldots, a_r)$ and $(b_1, \ldots, b_r)$ belong to, and denote $q':=\max(d, q^*)$. Note that by construction we have $\tau^{q'}(j)=j$ for $1\leq j \leq d$.
	
	We write $\pi^i = ((\sigma_{i,1},\ldots, \sigma_{i,r}),\tau^i)$, and first suppose $\sigma_{q',1}(b_1)=b_1$.  We know that $\pi^{q's^*}$ fixes $(a_1, \ldots, a_r)$ and $(b_1, \ldots, b_r)$, and hence in light of \eqref{eq:wreathpower}	it fixes $(a_1, b_2, \ldots, b_r)$ since $\tau^{q's^*}(1)=1$. As $\pi^{q'}$ does not fix $(b_1, \ldots, b_r)$ and 
	\[
		\pi^{q'}(b_1, \ldots, b_r) = (\sigma_{q', 1}(b_{\tau^{-q'}(1)}), \ldots, \sigma_{q', r}(b_{\tau^{-q'}(r)}))=(b_1, \sigma_{q',2}(b_{\tau^{-q'}(2)}), \ldots, \sigma_{q', r}(b_{\tau^{-q'}(r)})),
	\]
	there exists $i>1$ such that $\sigma_{q', i}(b_{\tau^{-q'}(i)})\neq b_i$.
	 Therefore, it follows that
	\begin{equation}\label{eq:neq-q'}
		\pi^{q'}(a_1, b_2, \ldots, b_r) \neq (a_1, b_2, \ldots, b_r).
	\end{equation} 
	On the other hand, as $\gcd(d, s^*)=1$, we see $\tau^{s^*}$ sends $1$ to $\tau^{s^*}(1)\neq 1$. Then we obtain $b_{\tau^{s^*}(1)}=\sigma_{s^*, \tau^{s^*}(1)}(b_1)$, because $\pi^{s^*}$ fixes $(b_1, \ldots, b_r)$. Thus we obtain 
	\begin{equation}\label{eq:neq-s*}
		\pi^{s^*}(a_1, b_2, \ldots, b_r) \neq (a_1, b_2, \ldots, b_r)
	\end{equation}
	because the $\tau^{s^*}(1)$-th component of the two sides are $\sigma_{s^*, \tau^{s^*}(1)}(a_1)$ and $b_{\tau^{s^*}(1)}=\sigma_{s^*, \tau^{s^*}(1)}(b_1)$, which are not equal by the assumption $a_1\neq b_1$. By \eqref{eq:neq-q'} and \eqref{eq:neq-s*}, $(a_1, b_2, \ldots, b_r)$ belongs to a $\pi$-cycle of size divisible by $q$ and $s$ since we knew $\pi^{q' s^*}$ fixes $(a_1,b_2,\ldots,b_r)$, which proves Proposition~\ref{prop:wreaths}\eqref{item:eb-1}.
	
	Now suppose $\sigma_{q', 1}(b_1)\neq b_1$. We consider the element $(b_1, a_2, \ldots, a_r)$, which, similarly as above,  we will show belongs to a $\pi$-cycle of size divisible by $q's^*$. Note that
	\begin{equation*}
		\pi^{q'} (b_1, a_2, \ldots, a_r) \neq (b_1, a_2, \ldots, a_r)
	\end{equation*}
	as the first components are $\sigma_{q',1}(b_1)$ and $b_1$ respectively. So in this case it suffices to show
	\begin{equation}\label{eq:neq-s*-2}
		\pi^{s^*} (b_1, a_2, \ldots, a_r) \neq (b_1, a_2, \ldots, a_r).
	\end{equation}
	As $\gcd(q', s^*)=1$, for each $1<i\leq d$, there is a positive integer $j$ such that $s^*j\equiv i-1 \mod{q'}$. If \eqref{eq:neq-s*-2} does not hold, then $\pi^{s^*j}$ fixes $(b_1, a_2, \ldots, a_r)$. Then by studying the $i$-th component we have 
	\begin{equation}\label{eq:express-a}
		\sigma_{s^*j, i}(b_1) = a_i,
	\end{equation}
	and it implies $b_i=a_i$ for each $1<i\leq d$ because $\sigma_{s^* j, i}(b_1)$ is also the $i$-th component of $\pi^{s^*j} (b_1, \ldots, b_r)$ which is $b_i$.  Consider
	\begin{eqnarray*}
		\pi^{q'} (b_1, \ldots, b_r) &=& \pi^{q'} (b_1, a_2, \ldots, a_d, b_{d+1}, \ldots, b_r) \\
		&=& (\sigma_{q', 1}(b_1), a_2, \ldots, a_d, \sigma_{q', d+1}(b_{\tau^{-q'}(d+1)}), \ldots, \sigma_{q', r}(b_{\tau^{-q'}(r)}))
	\end{eqnarray*}
where the second equality uses that $\sigma_{q',i}(a_i) = a_i$ for $1 \leq i \leq d$ since $\pi^{q'}$ fixes $(a_1,a_2,\ldots,a_r)$.  Notice that $\pi^{q'} (b_1, \ldots, b_r)$ belongs to a cycle of size $s^*$.  Then for $j$ with $s^* j \equiv i-1 \mod{q'}$, looking at the $i$th components of $\pi^{s^*j} \circ \pi^{q'}  (b_1,\ldots,b_r)$ we conclude, for each $1< i \leq d$, that
 	\begin{equation}\label{eq:express-a-2}
		a_i =\sigma_{s^* j, i}(\sigma_{q',1}(b_1)),
	\end{equation}
  Then \eqref{eq:express-a} and \eqref{eq:express-a-2} contradict the assumption $\sigma_{q',1}(b_1)\neq b_1$, establishing \eqref{eq:neq-s*-2}.
		
			\item[Case 3] Suppose $d = q^{n_1} s^{n_2}$ with $n_1, n_2 >0$. We denote $q':=\max(q^*, q^{n_1})$ and $s':=\max(s^*, s^{n_2})$. We will show that statement \eqref{item:eb-1} holds in this case.  	
We begin by computing that 
\begin{equation}\label{eq:compute}
\pi^{q'} (b_1,a_2,\ldots a_r) = ( a_1, \ldots, a_{\tau^{q'}(1) -1}, \sigma_{q', \tau^{q'}(1)} (b_1), a_{\tau^{q'}(1) +1}, \ldots , a_r)
\end{equation}		
using \eqref{eq:action} and the hypothesis that $\pi^{q^*} (a_1,\ldots, a_r) = (a_1,\ldots, a_r)$.  As $a_1 \neq b_1$, we see that 
	\begin{equation} \label{eq:firstclaim}
		\pi^{q'}(b_1, a_2, \ldots, a_r) \neq (b_1, a_2, \ldots a_r)
	\end{equation}
	
We next prove that 
\begin{equation} \label{eq:secondclaim}
\pi^{s'}(b_1,a_2,\ldots ,a_r) \neq (b_1, a_2,\ldots, a_r).
\end{equation}
Assume otherwise, that $\pi^{s'} (b_1, a_2, \ldots, a_r) = (b_1, a_2, \ldots, a_r)$; by considering the $\tau^{s'}(1)$-th component of this equality we have that 
	\begin{equation}\label{eq:comp-1}
		\sigma_{s', \tau^{s'}(1)}(b_1)=a_{\tau^{s'}(1)}.
	\end{equation}
	On the other hand, $\pi^{q'}(b_1, a_2, \ldots, a_r)$ is also fixed by $\pi^{s'}$ by our assumption. So, similarly, by \eqref{eq:compute} and considering the $\tau^{s'}(1)$-th component of 
\[
	\pi^{s'}  (\pi^{q'}(b_1, a_2, \ldots, a_r) )= \pi^{q'} (b_1, a_2, \ldots, a_r),
\]
we see that 
	\begin{equation}\label{eq:comp-2}
		\sigma_{s', \tau^{s'}(1)}(a_1)=a_{\tau^{s'}(1)}.
	\end{equation}
	Then \eqref{eq:comp-1} and \eqref{eq:comp-2} contradict the assumption that $a_1\neq b_1$, so we proved \eqref{eq:secondclaim}.  
	
On the other hand, since $\tau^{q' s'}=1$ and $\pi^{q' s'}$ fixes $(a_1,\ldots,a_r)$ and $(b_1,\ldots, b_r)$, we see that $\pi^{q' s'}$ fixes $(b_1,a_2,\ldots , a_r)$.  Then \eqref{eq:firstclaim} and \eqref{eq:secondclaim} show that $(b_1,a_2,\ldots,a_r)$ lies in a cycle of order divisible by $qs$.
\end{description}
\end{proof}

\begin{corollary}\label{cor:embedding}
		Let  $m, r, n$ be positive integers with $n=m^r$, and let $q, s$ be distinct primes. Assume that $G$ is a permutation group acting on $\Delta=\{1, \ldots, m\}$, such that if $g\in G$ has two cycles of sizes divisible by $q$ and $s$ respectively, then $g$ has a cycle of size divisible by $qs$. Consider the embedding $\iota: G \wr S_r \hookrightarrow S_n$ defined by the natural action of $G\wr S_r$ on $\Delta^r$.
		If, for some $\pi \in G\wr S_r$, the image $\iota(\pi)$ has a cycle of size divisible by $q$ and a cycle of size divisible by $s$, then $\iota(\pi)$ also has a cycle of size divisible by $qs$.
	\end{corollary}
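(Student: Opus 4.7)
The plan is to derive Corollary \ref{cor:embedding} directly from Proposition \ref{prop:wreaths}. Let $\pi=(\sigma,\tau) \in G \wr S_r$ and pick $(a_1,\ldots,a_r)$ in an $\iota(\pi)$-cycle of size divisible by $q$ and $(b_1,\ldots,b_r)$ in an $\iota(\pi)$-cycle of size divisible by $s$. Apply Proposition \ref{prop:wreaths}. If conclusion \eqref{item:eb-1} holds we are done, so assume conclusion \eqref{item:eb-2}: there exist $i \in \{1,\ldots,r\}$ and an integer $t$ coprime to $qs$ with $\tau^t(i)=i$ such that $\pi^t$ fixes $(a_1,\ldots,a_r)$ and $(b_1,\ldots,b_r)$ at every coordinate except possibly $i$.

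Write $\pi^t=(\sigma^{(t)},\tau^t)$ and set $g := \sigma^{(t)}_i \in G$. Because $\tau^t(i)=i$, the formula \eqref{eq:wreathpower} specialized to the $i$-th component gives that $\pi^t$ acts on the $i$-th coordinate via $g$. Combined with the fact that $\pi^t$ fixes $(a_1,\ldots,a_r)$ on the complement of $\{i\}$, this shows that the $\iota(\pi^t)$-cycle of $(a_1,\ldots,a_r)$ has the same length as the $g$-cycle of $a_i$; an identical statement holds for $(b_1,\ldots,b_r)$ and $b_i$. Now the general fact that the $\sigma^t$-orbit of a point has length (the $\sigma$-orbit length)$/\gcd(\cdot,t)$ implies, since $\gcd(t,qs)=1$, that the $\iota(\pi^t)$-cycle of $(a_1,\ldots,a_r)$ still has length divisible by $q$, and similarly the $\iota(\pi^t)$-cycle of $(b_1,\ldots,b_r)$ has length divisible by $s$. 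Therefore the $g$-cycle of $a_i$ has length divisible by $q$ and the $g$-cycle of $b_i$ has length divisible by $s$.

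By the hypothesis on $G$, the element $g$ has a cycle of length divisible by $qs$; pick some $c \in \Delta$ in such a $g$-cycle. Consider $v:=(a_1,\ldots,a_{i-1},c,a_{i+1},\ldots,a_r)$. Since $\pi^t$ fixes $(a_1,\ldots,a_r)$ away from $i$, and $\tau^t(i)=i$, the same formulas show that $\pi^t$ fixes $v$ away from $i$ and acts on the $i$-th coordinate of $v$ by $g$. Hence the $\iota(\pi^t)$-cycle of $v$ equals the $g$-cycle of $c$, which has length divisible by $qs$. Finally, applying once more the relation between $\sigma$-orbits and $\sigma^t$-orbits shows that the $\iota(\pi)$-cycle of $v$ has length divisible by the $\iota(\pi^t)$-cycle length, hence divisible by $qs$, as required.

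The main (and only substantive) obstacle is keeping track of how divisibility properties of cycle lengths transfer between $\iota(\pi)$ and $\iota(\pi^t)$; this is handled uniformly by the observation that taking a $t$-th power with $\gcd(t,qs)=1$ preserves divisibility by $q$ and by $s$ of cycle lengths. Everything else is a direct application of Proposition \ref{prop:wreaths} together with the hypothesis on $G$.
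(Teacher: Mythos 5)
Your proof is correct and follows essentially the same route as the paper: apply Proposition~\ref{prop:wreaths}, and in case (2) observe that $a_i$ and $b_i$ lie in $\sigma_{t,i}$-cycles of sizes divisible by $q$ and $s$, invoke the hypothesis on $G$, and transfer the resulting cycle back to $\pi$. The only difference is that you spell out the final transfer (constructing the tuple with $c$ in the $i$-th slot and passing from $\pi^t$-cycles to $\pi$-cycles), which the paper leaves implicit.
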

	
	\begin{proof}
		Assume that $(a_1, \ldots, a_r)$ and $(b_1, \ldots, b_r)$ belong to $\pi$-cycles of size divisible by $q$ and $s$ respectively. By Proposition~\ref{prop:wreaths}, it's enough to consider the situation in the case \eqref{item:eb-2}. Let $t$ and $i$ be described in Proposition~\ref{prop:wreaths}\eqref{item:eb-2}, and write $\pi=(\sigma, \tau)$. Then for $j \neq i$, $\pi^t$ fixes the $j$-th component of $(a_1, \ldots, a_r)$ and $(b_1, \ldots, b_r)$, and $\tau^t$ fixes $i$. Therefore $a_i$ and $b_i$ belongs to $\sigma_{t,i}$-cycles in $\Delta$ of size divisible by $q$ and $s$ respectively. Then by the assumption in this corollary, we see that $\sigma_{t,i} \in G$ has a cycle of size divisible by $qs$, and hence $\pi^t$ also has a cycle of size divisible by $qs$.
	\end{proof}

\begin{corollary} \label{cor:eliminatepower}
Fix a prime $q$, an integer $A>1$, and an element $\pi =(\sigma,\tau) \in S_m \wr S_r$.  Assume $(a_1,\ldots,a_r)$ and $(b_1,\ldots,b_r)$ belong to $\pi$-cycles of length divisible by $q$ and length exactly $A$ respectively.  
Suppose that for every prime $s \mid A$, $\iota(\pi)$ does not have a cycle of size divisible by $qs$.
Then there exists $i$ such that $\tau(i)=i$ and $\pi(b_1, \ldots, b_r)_j = b_j$ for any $j\neq i$.
\end{corollary}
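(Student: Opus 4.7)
The plan is to analyze the $\pi$-orbit structure on $\Delta^r$ via the cycle decomposition of $\tau$, constructing hybrid tuples whose $\pi$-orbit lengths are forced to be divisible by $qs$ whenever the contributions from $a$ and $b$ come from distinct cycles of $\tau$. Concretely, I would write $\tau$ as a product of disjoint cycles $C_1, \ldots, C_k$ on $\{1, \ldots, r\}$, and observe that the $\pi$-orbit length of any tuple $(x_1, \ldots, x_r) \in \Delta^r$ equals $\operatorname{lcm}_u L_u$, where $L_u$ denotes the orbit length of the sub-tuple $(x_j)_{j \in C_u}$ under the restriction of $\pi$ to coordinates in $C_u$ (this restriction depends only on the $\sigma_j$ for $j \in C_u$ and the cyclic permutation of those coordinates by $\tau$, so the $k$ restrictions are independent). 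Denoting the corresponding quantities for $(a_j)$ and $(b_j)$ by $L_{a,u}$ and $A_u$ respectively, we have $L_a = \operatorname{lcm}_u L_{a,u}$, $A = \operatorname{lcm}_u A_u$, and $A_u \mid A$ for every $u$.

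The first key step is a \emph{gluing contradiction}: if indices $u_q \neq u_s$ satisfy $q \mid L_{a,u_q}$ and $s \mid A_{u_s}$ for some prime $s \mid A$ with $s \neq q$, then the hybrid tuple $z$ defined by $z_j = a_j$ for $j \in C_{u_q}$ and $z_j = b_j$ otherwise has $\pi$-orbit length $\operatorname{lcm}(L_{a,u_q}, \{A_v\}_{v \neq u_q})$, divisible by both $q$ and $s$, hence by $qs$, contradicting the hypothesis. It follows that a single index $u^*$ satisfies $q \mid L_{a,u^*}$ and $s \mid A_{u^*}$ for every prime $s \mid A$, while $A_u = 1$ for $u \neq u^*$; in particular $A_{u^*} = A$ and $\pi$ fixes $b$ at every coordinate outside $C_{u^*}$.

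To complete the proof I show $|C_{u^*}| = 1$. Since the restriction of $\pi$ to $C_{u^*}$ has $\tau$-part a single cycle of length $|C_{u^*}|$, its orbit length on any sub-tuple is automatically a multiple of $|C_{u^*}|$ (concretely, equal to $|C_{u^*}|$ times the order of the holonomy product $\sigma_{j_0} \cdots \sigma_{j_{|C_{u^*}|-1}}$ applied to the starting coordinate). Thus $|C_{u^*}|$ divides $A_{u^*} = A$, and if $|C_{u^*}| > 1$ any prime $s$ dividing $|C_{u^*}|$ also divides $A$; the hybrid tuple with $a_j$ on $C_{u^*}$ and $b_j$ elsewhere then has $\pi$-orbit of length $L_{a,u^*}$, divisible by both $q$ and $|C_{u^*}|$ (hence by $s$), and thus by $qs$, another contradiction. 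Therefore $C_{u^*} = \{i\}$ for some $i$, which gives $\tau(i) = i$, and the conclusion $\pi(b_1, \ldots, b_r)_j = b_j$ for every $j \neq i$ is immediate from $A_u = 1$ for $u \neq u^*$.

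The principal obstacle is the potential coincidence $q = s$, which would prevent $qs$ from being an ``extra'' constraint beyond $q$ alone; this is avoided in the intended application (the proof of Theorem~\ref{thm:max-orbit}), where $q \mid \frac{\ell-1}{2}$ and $A \mid \frac{\ell+1}{2}$ are coprime, so $s \neq q$ automatically for every prime $s \mid A$.
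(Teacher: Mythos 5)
Your first two paragraphs are sound and in fact parallel the paper's own easy reduction: decomposing by the cycles of $\tau$, noting that the $\pi$-orbit length of a tuple is the least common multiple of the block orbit lengths, and using a gluing hybrid to force $A_u=1$ for every block other than a single $C_{u^*}$ with $q\mid L_{a,u^*}$ and $A_{u^*}=A$ is essentially the content of the reduction step (Lemma~\ref{lem:reduction1}) in the proof of Proposition~\ref{prop:wreaths}. Your caveat about $q=s$ is also acceptable: the statement is indeed false as literally written when $q\mid A$ (take $r=2$, $\tau=(1\,2)$, $\sigma_1=\sigma_2=\mathrm{id}$, $q=A=2$, $a=b=(1,2)$), and the paper's own proof implicitly assumes $q\nmid A$ since it invokes Proposition~\ref{prop:wreaths}, which requires distinct primes; in the application $q\mid\frac{\ell-1}{2}$ and $A\mid\frac{\ell+1}{2}$, so this is harmless.

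The genuine gap is in your final step. The claim that the block restriction, whose $\tau$-part is a single $c$-cycle, has all orbit lengths divisible by $c$ (``equal to $c$ times the order of the holonomy on the starting coordinate'') is false. Take $C_{u^*}=\{1,2\}$ with $\tau|_{C_{u^*}}=(1\,2)$, so the block action is $(x_1,x_2)\mapsto(\sigma_1(x_2),\sigma_2(x_1))$: with $\sigma_1=\sigma_2=\mathrm{id}$ the constant tuple $(x,x)$ is fixed (orbit length $1$, not a multiple of $2$); with $\sigma_1=(1\,2\,3)$, $\sigma_2=\mathrm{id}$ the tuple $(1,2)$ lies in the $3$-cycle $(1,2)\mapsto(3,1)\mapsto(2,3)\mapsto(1,2)$, again not a multiple of $2$ and not $2\cdot 3$ as your formula predicts. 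Both uses of the claim therefore collapse: neither ``$|C_{u^*}|$ divides $A_{u^*}=A$'' nor ``the hybrid's orbit length $L_{a,u^*}$ is divisible by $|C_{u^*}|$'' is justified. This is not a repairable technicality: after your reduction, what remains is exactly the hard configuration in which both distinguished tuples are nontrivially supported on a single $\tau$-cycle, and showing this forces $|C_{u^*}|=1$ (or else produces a $qs$-divisible cycle) is precisely the content of Proposition~\ref{prop:wreaths} — Cases 2 and 3 of its proof work with within-cycle hybrids such as $(b_1,a_2,\ldots,a_r)$ and $(a_1,b_2,\ldots,b_r)$ and a delicate comparison of the twisted fixing conditions, since cycles of length coprime to the block size (the very phenomenon your claim denies) genuinely occur. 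The paper's proof of Corollary~\ref{cor:eliminatepower} then applies that proposition once for each prime $s\mid A$ and combines the resulting exponents $t_{s_j}$ by the Chinese Remainder Theorem. To complete your argument you would need to invoke Proposition~\ref{prop:wreaths} (or reproduce an analysis of comparable depth) at this point; the one-line holonomy assertion cannot substitute for it.
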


Here $\pi(b_1, \ldots, b_r)_j$ represents the $j$-th coordinate of $\pi(b_1, \ldots, b_r)$.

\begin{proof}		
Let $\{s_1,s_2,\ldots, s_e\}$ be the prime divisors of $A$, 
and write $\pi= (\sigma,\tau)$.  
By Proposition~\ref{prop:wreaths}, for each $s_j$, we have $t_{s_j}$ and $i_{s_j}$ such that
	\begin{equation} \label{eq:mostlyfixed}
	   \pi^{t_{s_j}}(a_1, \ldots, a_r)_k = a_k, \text{ and }  \pi^{t_{s_j}}(b_1, \ldots, b_r)_k = b_k,  \text{ for any } k\neq i_{s_j}
	\end{equation}
and such that $\tau^{t_{s_j}}(i_{s_j}) = i_{s_j}$.  Notice that this last condition together with \eqref{eq:wreathpower} shows that \eqref{eq:mostlyfixed} continues to hold if we modify the $i_{s_j}$-th entry of $(a_1, \ldots, a_r)$ or of $(b_1, \ldots, b_r)$.  

If $i_{s_j} \neq i_{s_{j'}}$, we would have that $\pi^{t_{s_{j}}t_{s_{j'}}}(a_1, \ldots, a_r)=(a_1, \ldots, a_r)$ which contradicts the fact that $q\nmid t_{s_{j}}t_{s_{j'}}$. So $i_{s_j}$ does not depend on the choice of $j$ and we define $i := i_{s_1}$.  

Since $s_j \nmid t_{s_j}$ for each $j$, the integers $s_1,s_2,\ldots, s_e$ are relatively prime and so by the Chinese Remainder Theorem there exists positive integers 
 $c_1, \ldots, c_{e}$ such that $\sum_{j=1}^{e} c_j t_{s_j} \equiv 1 \mod{A} $; as $(b_1,\ldots, b_r)$ lies in a cycle of length $A$ we conclude that 
\[
\pi^{\sum c_j t_{s_j}} (b_1, \ldots, b_r) = \pi( b_1, \ldots, b_r).
\]
On the other hand, since each $\pi^{t_{s_j}}$ only modifies the $i$th entry of $(b_1,\ldots,b_{i-1}, b, b_{i+1},\ldots,b_r)$ for any $b \in \{1,\ldots, m\}$, we have $\pi^{\sum c_j t_{s_j}} (b_1, \ldots, b_r)_k =b_k$ for $k\neq i$ as desired.
\end{proof}
	
\subsection{The Case \texorpdfstring{$\ell \equiv 1 \mod{4}$}{p=1 mod 4}} \label{sec:p1mod4}
We now give a short proof of the  $\ell \equiv 1 \mod{4}$ case of Theorem~\ref{thm:max-orbit}, following the strategy of \cite[\S3]{mp18}.  The idea is to apply Jordan's Theorem \cite[Thm.~3.1]{mp18}, which states that every primitive permutation group on $n$ elements which contains a cycle of length a prime $p$ with $p \leq n-3$ must contain the alternating group.

The key observation is that $C_1(\pm2) \subset \cO(\ell)$, as $C_1(\pm 2)$ is a single $\overline{Q}_\ell$ orbit and has size $\ell$ (see \cite[Lem.~2.2]{mp18}), while $|Y^*(\ell) - \cO(\ell)| \leq \ell^{1/2}$ since $\ell\geq N_{1/2}$.  By studying the action of $\rot_1$, as in the original it can be deduced that an appropriate power is a $\ell$-cycle $\cO(\ell)$ containing the conic $C_1(\pm2)$.  We also see that $|\cO(\ell)| \geq |Y^*(\ell)|-\ell^{1/2} = \frac{\ell(\ell+3)}{4} - \ell^{1/2} > \ell+3$ since $\ell \geq 13$.  Thus it suffices to check the permutation action is primitive and apply Jordan's theorem.

The proof then proceeds as in \cite[S3]{mp18}, analyzing blocks of $\cO(\ell)$ and again using that $C_1(\pm 2) \subset \cO(\ell)$.

\subsection{Finishing the Proof} 
Finally, we will deduce Theorem~\ref{thm:main_A} from Theorem~\ref{thm:max-orbit}.  Let $\ell$ be an odd prime with the property $\BP(\ell)$.
If $\ell \equiv 1 \mod{4}$ then assume that $\ell \geq \max(N_{1/2},13)$, while if $\ell \equiv 3 \mod{4}$ then $\ell \geq \max(N_{1/2},23)$.
Let $\cM(\overline{\pi_\ell})$ be the connected component of $\cM(\PSL_2(\F_\ell))^\abs$ containing the $\bQ$-rational point $\ol{\pi_\ell}$ from \S\ref{ss_rational_point}.  In fact, $\ol{\pi_\ell}$ is defined over $\bZ[1/|\SL_2(\F_\ell)|]$ (Remark~\ref{rmk:ringofdef}).  As $\cM(\PSL_2(\F_\ell))^\abs$ is defined over $\ZZ[1/|\SL_2(\F_\ell)|]$, it follows that $\cM(\overline{\pi_\ell})$ is also defined over $\ZZ[1/|\SL_2(\F_\ell)|]$.

Note that $R_3([3,3,3]) = [3,3,6]$, so $[3,3,6] \in \cO(\ell)$.  
Using the identification of Propositions~\ref{prop:identifyfibers} and \ref{prop:fibermarkoff} and the Galois correspondence, we know from \S\ref{ss_rational_point} that $\ol{\pi_\ell}$ corresponds to $[3,3,6]$, that 
the geometric fibers of $\cM(\overline{\pi_\ell})$ are identified with $\cO(\ell)$,
and that the monodromy group is identified with $\ol{Q}_\ell^+$.  From Theorem~\ref{thm:max-orbit}, we know that $\ol{Q}_\ell$ contains the alternating group on $\cO(\ell)$.  Since $\ol{Q}_\ell^+$ is a normal subgroup of $\ol{Q}_\ell$ of index $1$ or $2$ and the alternating group is simple, we deduce that $\ol{Q}_\ell^+$ is the alternating or symmetric group on $\cO(\ell)$.  Then applying Proposition~\ref{prop_tame} completes the proof of Theorem~\ref{thm:main_A}. \qed

\appendix
\section{Explicit ramification calculations} 

\subsection{Details for the Proof of Proposition~\ref{prop_explicit_ramification}}

Note that $X^*(3)$ is empty, and $\Tr$ is not a bijection for $\ell = 2$ (see \ref{prop:fibermarkoff}).

\begin{lemma}\label{lemma_explicit} Let $\ell\ge 5$ be a prime. We have the following descriptions of the automorphisms $r,s,t,\gamma_0,\gamma_{1728}$ and the permutations they induce on $X^*(\ell)$ via $\Tr$ (see \ref{prop:fibermarkoff}):
$$\begin{array}{rcl}
r : (a,b) & \mapsto & (a^{-1},b)\\
s : (a,b) & \mapsto & (b,a) \\
t : (a,b) & \mapsto & (a^{-1},ab) \\
\gamma_0 : (a,b) & \mapsto & (ab^{-1},a) \\
\gamma_{1728} : (a,b) & \mapsto & (b^{-1},a)
\end{array}\quad\stackrel{\Tr_*}{\longrightarrow}\quad
\begin{array}{rcl}
\ol{r} = R_3 : (x,y,z) & \mapsto & (x,y,xy-z) \\
\ol{s} = \tau_{12} : (x,y,z) & \mapsto & (y,x,z) \\
\ol{t} = \tau_{23} : (x,y,z) & \mapsto & (x,z,y) \\
\ol{\gamma}_0 : (x,y,z) & \mapsto & (xy-z,x,x^2y-xz-y) \\
\ol{\gamma}_{1728} : (x,y,z) & \mapsto & (y,x,xy-z)
\end{array}$$

Moreover, we have
\begin{enumerate}
    \item $\gamma_0$ acts on $X^*(\ell)$ and $Y^*(\ell)$ as permutations of order 3 with exactly one fixed point. This fixed point is given by the triple $(3,3,6)$.
    \item $\gamma_{1728}$ acts on $X^*(\ell)$ as a permutation of order 2 with exactly two fixed points if $\ell\equiv 1,7\mod 8$ and with no fixed points otherwise.  When $\ell\equiv 1,7\mod 8$, the fixed points are given by $(\pm 2\alpha,\pm 2\alpha,4)$, where $\alpha$ is a root of $x^2-2$ in $\bF_\ell$. Thus in $Y^*(\ell)$, $\gamma_{1728}$ acts as a permutation of order 2 with a unique fixed point if $\ell\equiv 1,7\mod 8$ and no fixed points otherwise.
\end{enumerate}
\end{lemma}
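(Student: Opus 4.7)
The plan is a direct mechanical computation in three steps. First, for the correspondence under $\Tr_*$, we already know from Proposition~\ref{prop:fibermarkoff}(2) the formulas for $\Tr_*(r), \Tr_*(s), \Tr_*(t)$, so it suffices to compute $\Tr_*(\gamma_0)$ and $\Tr_*(\gamma_{1728})$ from the definitions. Given $\varphi\in\Hom(F_2,\SL_2(\F_\ell))$ with $(A,B) = (\varphi(a),\varphi(b))$ and trace coordinates $(x,y,z) = (\tr A, \tr B, \tr(AB))$, precomposition with $\gamma_{1728}$ produces the pair $(B^{-1}, A)$ whose trace coordinates are $(\tr(B^{-1}), \tr(A), \tr(B^{-1}A)) = (y, x, xy - z)$, where the last equality uses the Fricke identity $\tr(B^{-1}A) + \tr(BA) = \tr(B)\tr(A)$ already invoked in Proposition~\ref{prop:fibermarkoff}(2). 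For $\gamma_0$, the pair becomes $(AB^{-1}, A)$ with traces $(\tr(AB^{-1}), x, \tr(AB^{-1}A))$; Fricke handles the first entry, and Cayley--Hamilton ($A^2 = xA - I$) rewrites $\tr(AB^{-1}A) = \tr(A^2 B^{-1}) = x\tr(AB^{-1}) - \tr(B^{-1}) = x(xy-z) - y$, yielding the stated formula for $\bar{\gamma}_0$.

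Next, the orders of $\bar{\gamma}_0$ and $\bar{\gamma}_{1728}$ on $\mathbb{A}^3$ follow immediately: $\bar{\gamma}_{1728}^2(x,y,z) = (x, y, xy - (xy-z)) = (x,y,z)$, and a two-step iteration of $\bar{\gamma}_0$ verifies $\bar{\gamma}_0^3 = \mathrm{id}$ (this also follows abstractly, since the images of $\gamma_0, \gamma_{1728}$ in $\Out^+(F_2)/\langle\gamma_{-I}\rangle \cong \PSL_2(\ZZ)$ have orders $3$ and $2$). For fixed points of $\bar{\gamma}_0$ on $X^*(\ell)$, the equations $xy - z = x$ and $y = x$ force $z = x(x-1)$; substituting into the Markoff equation \eqref{eq:markoff} collapses everything to $x^2(3-x) = 0$, whose only nonzero root gives the unique fixed point $(3,3,6)$.

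For fixed blocks in $Y^*(\ell)$, I would check whether $\bar{\gamma}_0(x,y,z)$ can equal any of the three nontrivial $\cV$-translates of $(x,y,z)$; each case produces an equation of the form $x^2(3\pm x) = 0$ on the Markoff surface, and the four resulting points $(\pm 3, \pm 3, \pm 6)$ (with the appropriate sign patterns) all lie in the single block $[3,3,6]$. For $\bar{\gamma}_{1728}$, the equation $\bar{\gamma}_{1728}(x,y,z) = (x,y,z)$ yields $x = y$ and $2z = xy = x^2$, so the Markoff equation reduces to $x^2(8 - x^2) = 0$ after clearing denominators; a nonzero solution in $\F_\ell$ exists iff $2$ is a nonzero square, equivalently $\ell\equiv \pm 1 \mod 8$, in which case $x = \pm 2\alpha$ with $\alpha^2 = 2$ and $z = 4$. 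Of the three remaining $\cV$-cases, two force $x = 0$ (hence give nothing in $X^*(\ell)$), while the third produces the $\cV$-translate $(\pm 2\alpha, \mp 2\alpha, -4)$ of the two solutions already found; thus both fixed points of $X^*(\ell)$ collapse into the single block $[2\alpha, 2\alpha, 4]$ in $Y^*(\ell)$. No substantive obstacle arises in this proof; it is a direct calculation whose only subtle point is a careful, duplication-free enumeration of the four $\cV$-cases in the block analysis.
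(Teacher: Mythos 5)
Your proposal is correct, and its overall structure --- compute the induced maps on trace coordinates, then solve the sign-twisted fixed-point equations on the Markoff surface one $\cV$-case at a time --- is the same as the paper's. The one genuine difference is how you obtain $\ol{\gamma}_0$ and $\ol{\gamma}_{1728}$: the paper factors them in $\Aut(F_2)$ as $\gamma_{1728}=s\circ r$ and $\gamma_0=s\circ r\circ s\circ t\circ r\circ s$ and then composes the known permutations $R_3,\tau_{12},\tau_{23}$ (minding the anti-homomorphism), whereas you compute the trace triples of the new pairs $(B^{-1},A)$ and $(AB^{-1},A)$ directly from the Fricke identity and Cayley--Hamilton; your route avoids the word-decomposition bookkeeping, and your direct verification that $\ol{\gamma}_0^3=\ol{\gamma}_{1728}^2=\mathrm{id}$ already on $\bA^3$ likewise replaces the paper's appeal to the image in $\GL_2(\ZZ)$ and the triviality of the $\gamma_{-I}$-action. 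Two small remarks. First, in the $\gamma_0$ fixed-point step you use only the first two coordinate equations as necessary conditions; note (or check in one line) that for points of the form $(x,x,x^2-x)$ the third equation holds automatically, so $(3,3,6)$ really is fixed and your count is complete. Second, your computation $x^2(3-x)=0$ with fixed point $(3,3,6)$ agrees with the statement of the lemma; the paper's own proof has a sign slip here (it writes $x^2(\epsilon_1\epsilon_2 x+3)=0$ and names $(-3,-3,6)$, which is the $\cV$-translate of $(3,3,6)$ and is not literally fixed in $X^*(\ell)$, though the conclusion about the unique fixed block in $Y^*(\ell)$ is unaffected). Your $\gamma_{1728}$ analysis --- the condition $\ell\equiv\pm1\bmod 8$ via $x^2=8$, the two fixed points $(\pm2\alpha,\pm2\alpha,4)$, the two $\cV$-cases forcing $x=y=0$ and hence nothing in $X^*(\ell)$, and the remaining case landing in the same block $[2\alpha,2\alpha,4]$ --- matches the paper's in substance.
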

\begin{proof} The formulas for $r,s,t$ were checked in Proposition~\ref{prop:fibermarkoff}. For $\gamma_0,\gamma_{1728}$, one can check that the following equalities hold in $\Aut(F_2)$:
$$\gamma_0 = s\circ r\circ s\circ t\circ r\circ s,\qquad \gamma_{1728} = s\circ r$$
The descriptions of $\ol{\gamma}_0,\ol{\gamma}_{1728}$ then follow from those of $\ol{r},\ol{s},\ol{t}$, noting that $\Tr$ induces an \emph{anti-homomorphism} $\Tr_* : \Aut(F_2)\rightarrow\Aut(\bA^3)$.

Next, we show that $\gamma_0$ has the desired properties. The image of $\gamma_0$ in $\GL_2(F_2^\ab)$ is the order 6 matrix $\spmatrix{1}{1}{-1}{0}$. However $\gamma_0^3$ is given by $(a,b)\mapsto (a^{-1},b^{-1})$ (up to $\Inn(F_2)$) which visibly acts trivially on $X^*(\ell)$ and hence also on $Y^*(\ell)$. Thus, $\gamma_0$ acts with order 3 on $Y^*(\ell)$, and we wish to show that it has exactly one fixed point.

From the description of $\gamma_0$, we find that the image of $(x,y,z)$ in $Y^*(\ell)$ is a fixed point if and only if
\begin{itemize}
\item[(a)] $xy-z = \epsilon_1x$
\item[(b)] $x = \epsilon_2y$
\item[(c)] $x^2y-xz-y = \epsilon_3z$
\end{itemize}
where $\epsilon_i = \pm1$ and $\epsilon_1\epsilon_2\epsilon_3 = 1$. Moreover $(x,y,z)$ is a fixed point in $X^*(\ell)$ if (a),(b),(c) hold with $\epsilon_i = 1$. Substituting (a) into (c) we find
$$\epsilon_1x^2 - y = \epsilon_3z$$
Thus by (b), we find that the fixed points are of the form
$$P_{\epsilon_1,\epsilon_2,x} := (x,\epsilon_2x,\epsilon_2x^2 - \epsilon_1x)$$
for $x$ arbitrary and $\epsilon_1,\epsilon_2 = \pm1$. We wish to count the number of points of this form which lie on $X^*(\ell)\subset\bF_\ell^3$. Any such point must satisfy
\begin{eqnarray*}
x^2 + x^2 + (\epsilon_2x^2-\epsilon_1x)^2 - x^2(x^2 - \epsilon_1\epsilon_2x) & = & 0 \\
x^2(\epsilon_1\epsilon_2x+3) & = & 0
\end{eqnarray*}
If $\epsilon_1 = \epsilon_2 = 1$ then the only solutions are $x = 0,-3$, corresponding to the points $P_{1,1,0} = (0,0,0)$ and $P_{1,1,-3} = (-3,-3,6)$. The former does not lie on $X^*(\ell)$, so $(-3,-3,6)$ is the unique fixed point in $X^*(\ell)$. If we allow $\epsilon_1,\epsilon_2$ to be arbitrary in $\{\pm 1\}$, then one obtains additionally the points $P_{-1,-1,-3} = (-3,3,-6)$, $P_{1,-1,-3} = (-3,3,-6)$, and $P_{-1,1,-3} = (-3,-3,6)$ which all describe the same point in $Y^*(\ell)$.

Next we consider $\gamma_{1728}$. Again, in $\Out(F_2)$, $\gamma_{1728}$ has order 4, with $\gamma_{1728}^2 = \gamma_0^3$ and hence $\gamma_{1728}$ acts with order 2 on $X^*(\ell)$ and $Y^*(\ell)$.

Thus the image of $(x,y,z)$ in $Y^*(\ell)$ is a fixed point if and only if
\begin{itemize}
\item[(a)] $y = \epsilon_1x$
\item[(b)] $x = \epsilon_2y$
\item[(c)] $xy-z = \epsilon_3z$
\end{itemize}
where again $\epsilon_i = \pm1, \epsilon_1\epsilon_2\epsilon_3 = 1$, and $(x,y,z)$ is a fixed point in $X^*(\ell)$ if (a),(b),(c) hold with $\epsilon_i = 1$. Substituting (a) into (c) we get
$$z + \epsilon_3 z = \epsilon_1 x^2$$
In this case we must have $\epsilon_3 = 1$, or else $x = y = 0$, which would not yield any solutions in $X^*(\ell)$. Thus, any solution in $X^*(\ell)$ has the form:
$$(x,\epsilon_1x,\frac{\epsilon_1}{2}x^2)$$
Again the choice $\epsilon_1$ is irrelevant in $Y^*(\ell)$, so we may take $\epsilon_1 = 1$, in which case $(x,x,\frac{1}{2}x^2)\in X^*(\ell)$ if and only if
$$x\ne 0\quad\text{and}\quad  x^2 + x^2 + \frac{1}{4}x^4 = \frac{1}{2}x^4$$
Rearranging, we get
$$\frac{1}{4}x^2(x^2-8) = 0$$
Since $x\ne 0$, we must have $x = \pm\alpha$ where $\alpha\in\bF_\ell^\times$ is a root of $x^2-8$. Such an $\alpha$ exists if and only if $\left(\frac{2}{\ell}\right) = 1$, which occurs if and only if $\ell\equiv 1,7\mod 8$. In $Y^*(\ell)$ both $x = \alpha, x = -\alpha$ give the same point, whereas in $X^*(\ell)$, we obtain two fixed points.
\end{proof}

\bibliographystyle{amsalpha}
\bibliography{mrc}

\end{document}